\newtheorem{theorem}{Theorem}[section]
\newtheorem{proposition}[theorem]{Proposition} 
\newtheorem{corollary}[theorem]{Corollary}
\newtheorem{lemma}[theorem]{Lemma}
\newtheorem{remark}[theorem]{Remark}
\newtheorem{remark&definition}[theorem]{Remark and Definition}
\newtheorem{definition}[theorem]{Definition}
\begin{document}
\title[Spherical representations of $C^*$-flows I]{Spherical representations of $C^*$-flows I%: \\ \small Basic theorems in operator algebra setup
} 
\author{Yoshimichi UEDA}
\address{
Graduate School of Mathematics, Nagoya University, 
Furocho, Chikusaku, Nagoya, 464-8602, Japan
}
\email{ueda@math.nagoya-u.ac.jp}
\date{\today}
\keywords{$C^*$-flow; Spherical representation; KMS state; Gelfand pair; Ergodic method; Integrable probability.}
\thanks{Supported by Grant-in-Aid for Scientific Research (B) JP18H01122.}

\maketitle
\begin{abstract}
We propose an abstract framework of a kind of representation theory for $C^*$-flows, i.e., $C^*$-algebras equipped with one-parameter automorphism groups, as a proper generalization of Olshanski's formalism of unitary representation theory for  infinite-dimensional groups such as the infinite-dimensional unitary group $\mathrm{U}(\infty)$. The present framework, in particular, clarifies some overlaps and/or similarities between a certain unitary representation theory of infinite-dimensional groups and existing works in operator algebras, and captures arbitrary projective chains arising from links. 
\end{abstract}

\allowdisplaybreaks{

\tableofcontents

\section{Introduction} It is well known that the concept of unitary representations of locally compact groups and that of $*$-representations of $C^*$-algebras are essentially of the same kind, and the latter includes the former essentially. More precisely, there is a one-to-one correspondence between the unitary representations of a locally compact group $G$ and the $*$-representations of the corresponding (universal) $C^*$-algebra $C^*(G)$. We refer the reader to Dixmier's classical treatise \cite{Dixmier:Book2}, where this point of view is systematically explained. From this point of view, it is quite a natural attempt to re-formulate any type of theories on unitary representations of groups as a theory for operator algebras. 

In the early 70s, some representation theorists, e.g.\ A.A.\ Kirillov, started to look at unitary representations of infinite-dimensional groups, including inductive limits of (locally) compact groups,  whose group $C^*$-algebras are no longer available because of the lack of Haar measures. In the direction, it was probably the first remarkable result that Voiculescu \cite{Voiculescu:CRParis74},\cite{Voiculescu:JMathPuresAppl76} gave a list of extreme characters (which correspond to finite factor unitary representations) of the infinite-dimensional unitary group $\mathrm{U}(\infty) = \varinjlim \mathrm{U}(n)$. After that, Voiculescu pushed his work further with Stratila in a series of papers (see \cite{StratilaVoiculescu:BanachPubl82} as its summary) utilizing operator algebra tools crucially. At the beginning of the 80's, Vershik--Kerov \cite{VershikKerov:SovMathDokl82} and Boyer \cite{Boyer:JOT83} independently established that Voiculescu's list of extreme characters is indeed complete. Moreover, Vershik--Kerov's method is probabilistic and created the so-called asymptotic representation theory (see e.g.\ \cite{Kerov:Book},\cite{BorodinOlshanski:Book}), which grew up to an important part of the so-called integrable probability theory, which we call \emph{the representation-theoretic integrable probability theory}; see \cite{BorodinPetrov:ProbSurv14} for a survey.     

Around the time when Vershik--Kerov's work appeared, Olshanski started his rather original harmonic analysis on infinite-dimensional groups based on the notion of Gelfand pairs. See \cite{Olshanski:Proc90} for the early stage of his study and also \cite{Olshanski:LNM03} for a more recent overview focusing on the infinite symmetric group case. He has used two notions: spherical representations and admissible representations. The former includes finite factor unitary representations of groups as a special case, and the viewpoint of spherical representations has own merit even in the study of finite factor unitary representations; see \cite{Olshanski:JFA03}.  

In the context of integrable probability theory, Gorin introduced in \cite{Gorin:AdvMath12} and has investigated partly with Olshanski \cite{GorinOlshanski:JFA16} a $q$-analogue of the coherent systems of probability measures arising from finite characters of $\mathrm{U}(\infty)$ without any explicit group(-like) objects. Nevertheless, Gorin's probabilistic objects themselves admit an interpretation in terms of random tilings, and moreover, he gave a little speculation on a connection to a certain quantum group. As complements to those works, Ryosuke Sato \cite{Sato:JFA19} gave a successful interpretation of Gorin's $q$-analogue in terms of the inductive system of (duals of) quantum unitary groups $\mathrm{U}_q(n)$ in the sense of Woronowicz (\cite{Woronowicz:CMP87}, one of the most original references). Woronowicz's notion of compact quantum groups is formulated in terms of operator algebras, and thus Sato used the idea of Stratila--Voiculescu using AF-algebras. Besides detailed representation-theoretic stuffs, the key feature of Sato's work is the use of KMS states as a correct quantum group analogue of normalized traces, and the canonical flow is generated by $S\circ S$ ($\neq \mathrm{id}$ when the quantum group in question is not Kac type like the $q$-deformations of  classical groups), where $S$ denotes the standard (not necessarily unitary) antipode (i.e., the `dual' of inverse operation) of the quantum group in question. Then, Sato went on his study further and proposed in \cite{Sato:preprint19} an explicit concept of inductive limits of compact quantum groups in terms of Hopf $W^*$-algebras after the idea due to Masuda--Nakagami \cite{MasudaNakagami:PRIMS94}. As a bonus he could formulate the notion of tensor products of unitary representations for `infinite-dimensional quantum groups'. Study of fusion rules for tensor product representations has basically been missing in the direction, and his further study is clearly promising.           

Unlike Sato's direction mentioned above, our initial motivation was, on one hand, to re-organize the existing works on harmonic analysis on infinite-dimensional groups and/or to justify Sato's works from a rather general point of view like the spirit of Vershik--Kerov's paper \cite{VershikKerov:JSovietMath87}, and on the other hand, to translate the representation-theoretic integrable probability theory into operator algebras. The reason why we wanted to do so was that almost all the existing works in the representation-theoretic integrable probability theory seems (to us) not to need the whole group structure. Moreover, the $q$-analogue should naturally be assisted with \emph{the Tomita--Takesaki theory} similarly to Masuda--Nakagami's approach to quantum groups. These altogether suggested us to start with re-formulating the unitary representation theory for infinite-dimensional groups mainly attributed to Olshanski in such a way that the resulting reformulation includes $q$-deformed ones. As the natural consequences, we realized that the existing harmonic analysis on infinite-dimensional groups has overlaps with and/or connections to several existing works on operator algebras in rather different contexts, and moreover, that any projective chains arising from \emph{links} (see \cite[section 7]{BorodinOlshanski:Book} for this notion) over branching graphs, which are main objects in the representation-theoretic integrable probability theory nowadays, naturally arise from operator algebras. 

In this paper we would like to report the first abstract part of our studies based on the motivations explained above with emphasis on an operator algebraic interpretation of the representation-theoretic integrable probability theory. As the reader might notice, the present work concentrates on only giving an abstract framework and establishing basic facts in a (hopefully) comprehensive way. In the next paper \cite{Ueda:Preprint}, we generalize both the dimension group of the infinite symmetric group $\mathfrak{S}_\infty$ and Olshanski's representation ring of $\mathrm{U}(\infty)$ to the general setup of \S7 in a unified way. It may be remarkable that the generalization also works even for inductive limits of compact quantum groups. We also give, in the next paper, a quantum group version of Olshanski's theory of spherical representations based on the present work to justify Sato's approach to asymptotic representation theory for quantum groups.  

\section{General setup and notations}  

Let $A$ be a unital $C^*$-algebra and $\alpha^t$ be a flow on $A$, i.e., an action $t \in \mathbb{R} \mapsto \alpha^t \in \mathrm{Aut}(A)$. We denote by $K_\beta(\alpha^t)$ all the $(\alpha^t,\beta)$-KMS states, where $\beta \in \mathbb{R}$ means the inverse temperature. When the flow $\alpha^t$ is pointwise norm-continuous, we denote by $A_{\alpha^t}^\infty$ the norm-dense $*$-subalgebra of $A$ consisting of all $\alpha^t$-analytic elements (\cite[subsection 2.5.3]{BratteliRobinson:Book1}). When either $\alpha^t$ is the trivial action or $\beta=0$, the reader should notice that $(\alpha^t,\beta)$-KMS states are nothing but tracial states on $A$. Thus, the formalism below includes the analysis of tracial states. 

Let  $A^\mathrm{op}$ be the opposite $C^*$-algebra with canonical $*$-anti-isomorphism $a \in A \mapsto a^\mathrm{op} \in A^\mathrm{op}$. Let $A^{(2)}:=A\otimes_\mathrm{max}A^\mathrm{op}$ denote the maximal $C^*$-tensor product of $A$ and $A^\mathrm{op}$, in which the algebraic tensor product $A\otimes A^\mathrm{op}$ faithfully sits. Once passing to the conjugate $C^*$-algebra of $A^\mathrm{op}$ one can construct a unique isometric, $*$-preserving, multiplicative, conjugate-linear map $j : A^{(2)} \to A^{(2)}$ such that $j(a\otimes(b^*)^\mathrm{op}) = b\otimes(a^*)^\mathrm{op}$ for all $a, b \in A$. It is clear that $j^2 = \mathrm{id}$ holds and hence $j$ must be bijective. 

In what follows, we will consider only \emph{non-degenerate} $*$-representations of $C^*$ or $W^*$-algebras. Our main references on operator algebras are Bratteli--Robinson's two-volume book \cite{BratteliRobinson:Book1,BratteliRobinson:Book2}, but we also refer to Takesaki's second volume \cite{Takesaki:Book2} at a few points about the Tomita--Takesaki theory.  

\section{$(\alpha^t,\beta)$-spherical representations}

Here is one of the main concepts we introduce and study throughout. Throughout this and the next sections, we assume that the flow $\alpha^t$ is pointwise norm-continuous. This continuity assumption on the flow $\alpha^t$ is natural in the usual setup, and makes the definition intuitive. Later, we will relax this assumption to deal with inductive limits of $W^*$-algebras that naturally appear in the study of infinite-dimensional (quantum) groups. 

\begin{definition} \label{D3.1} 
Let $\Pi : A^{(2)} \curvearrowright \mathcal{H}_\Pi$ be a $*$-representation. A vector $\xi \in \mathcal{H}_\Pi$ is called \emph{an $(\alpha^t,\beta)$-spherical vector in $\Pi \curvearrowright \mathcal{H}_\Pi$}, if 
\begin{equation}\label{Eq3.1} 
\Pi(a\otimes1^\mathrm{op})\xi = \Pi(1\otimes (\alpha^{-i\beta/2}(a))^\mathrm{op})\xi, \quad a \in A_{\alpha^t}^\infty. 
\end{equation}
The subspace consisting of all $(\alpha^t,\beta)$-spherical vectors in $\Pi \curvearrowright \mathcal{H}_\Pi$ is denoted by $\mathcal{H}_\Pi^{(\alpha^t,\beta)}$. 

A $*$-representation $\Pi : A^{(2)} \curvearrowright \mathcal{H}_\Pi$ equipped with a distinguished unit vector $\xi \in \mathcal{H}_\Pi$ is called an \emph{$(\alpha^t,\beta)$-spherical representation of $A$}, if $\xi$ is cyclic for $\Pi(A^{(2)})$ and an $(\alpha^t,\beta)$-spherical vector in $\Pi \curvearrowright \mathcal{H}_\Pi$. 
\end{definition}

We note that condition \eqref{Eq3.1} is equivalent to a more well-balanced formula:  
\begin{equation}\label{Eq3.2}
\Pi(\alpha^{i\beta/4}(a)\otimes1^\mathrm{op})\xi = \Pi(1\otimes(\alpha^{-i\beta/4}(a))^\mathrm{op})\xi, \quad a \in A_{\alpha^t}^\infty. 
\end{equation}

\medskip
The reason why we call the pair $(\Pi : A^{(2)} \curvearrowright \mathcal{H}_\Pi,\xi)$ an $(\alpha^t,\beta)$-spherical representation was already explained in \cite[Appendix]{Sato:preprint19}, where Ryosuke Sato borrowed/adapted our idea presented in this paper to his framework capturing the unitary representation theory of `infinite-dimensional quantum groups' newly introduced by him. However, we will explain, in a clearer way, how our definition can be regarded as a natural generalization of spherical representations for groups in Olshanski's sense. (See also Remark \ref{R9.4}.) Let us consider the tractable case when $A = C^*(\Gamma)$, the group $C^*$-algebra of a countable discrete group $\Gamma$ (e.g.\ the infinite symmetric group $\mathfrak{S}_\infty = \varinjlim\mathfrak{S}_N$) and $\alpha^t = \iota$, the trivial flow. Then, $C^*(\Gamma\times\Gamma) \cong C^*(\Gamma)\otimes_\mathrm{max} C^*(\Gamma) \to C^*(\Gamma)\otimes_\mathrm{max} C^*(\Gamma)^\mathrm{op}$ by $(g_1,g_2) \mapsto g_1\otimes g_2 \mapsto (g_1\otimes g_2^{-1} \mapsto)\, g_1 \otimes (g_2^{-1})^\mathrm{op}$ with $g_1,g_2 \in \Gamma$. Hence any $(\iota,\beta)$-spherical representation $(\Pi : A^{(2)} \curvearrowright \mathcal{H}_\Pi,\xi)$ of $C^*(\Gamma)$ gives a unitary representation $\pi$ of $\Gamma\times\Gamma$, that is, $\pi(g_1,g_2) := \Pi(g_1\otimes(g_2^{-1})^\mathrm{op})$ for every $(g_1,g_2) \in \Gamma\times\Gamma$. Moreover, identity \eqref{Eq3.1} implies that $\pi(g^{-1},e)\xi = \pi(e,g)\xi$ and thus $\pi(g,g)\xi = \xi$ for all $g \in \Gamma$. Consequently, the unitarily equivalent classes of $(\iota,\beta)$-spherical representations of $C^*(\Gamma)$ coincide with the unitarily equivalent classes of spherical representations of the Gelfand pair $(\Gamma\times\Gamma, \Delta(\Gamma))$ with diagonal subgroup $\Delta(\Gamma) = \{(g,g); g \in \Gamma\}$. This special type of Gelfand pairs has been proved to be of great importance in the unitary representation theory for infinite-dimensional groups (see e.g.\  \cite{Olshanski:Proc90}). Actually, the present formalism was originally motivated when we realized that there was a certain similarity between Woronowicz's works on purification map and Olshanski's approach to unitary representations of infinite-dimensional groups.  

\medskip
The natural equivalence relation among $(\alpha^t,\beta)$-spherical representations is defined in the usual manner as follows. 

\begin{definition}\label{D3.2} Two $(\alpha^t,\beta)$-spherical representations $(\Pi_i : A^{(2)} \curvearrowright \mathcal{H}_{\Pi_i},\xi_i)$, $i=1,2$, are \emph{equivalent}, if there is a unitary transform $u : \mathcal{H}_{\Pi_1} \to \mathcal{H}_{\Pi_2}$ so that 
\[
u\Pi_1(x) = \Pi_2(x)u \quad (x \in A^{(2)}) \qquad \text{and} \qquad u\xi_1 = \xi_2. 
\]
We denote by $\mathrm{Rep}_\beta(\alpha^t)$ all the equivalent classes of $(\alpha^t,\beta)$-spherical representation of $A$. 
\end{definition}    

Here is a simple lemma. 

\begin{lemma}\label{L3.3} For an $(\alpha^t,\beta)$-spherical representation $(\Pi : A^{(2)} \curvearrowright \mathcal{H}_\Pi,\xi)$ of $A$, we define 
\begin{equation}\label{Eq3.3}
\varphi_{(\Pi,\xi)}(x) := (\Pi(x)\xi|\xi)_{\mathcal{H}_\Pi}, \qquad x \in A^{(2)}. 
\end{equation}
Then we have 
\[
(a\otimes1^\mathrm{op})\cdot\varphi_{(\Pi,\xi)} = (1\otimes(\alpha^{-i\beta/2}(a))^\mathrm{op})\cdot\varphi_{(\Pi,\xi)}, \quad 
\varphi_{(\Pi,\xi)}\cdot(a\otimes1^\mathrm{op}) = \varphi_{(\Pi,\xi)}\cdot(1\otimes(\alpha^{i\beta/2}(a))^\mathrm{op})
\]
for every $a\in A_{\alpha^t}^\infty$. 
\end{lemma}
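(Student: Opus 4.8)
The plan is to prove both identities by evaluating the functionals against an arbitrary $x \in A^{(2)}$, using only the defining relation \eqref{Eq3.1} for the spherical vector $\xi$ together with elementary bookkeeping of the involutions on $A$, $A^{\mathrm{op}}$, and $A^{(2)}$. Throughout I use the standard bimodule structure on the dual of $A^{(2)}$, namely $(y\cdot\varphi)(x) = \varphi(xy)$ and $(\varphi\cdot y)(x) = \varphi(yx)$, which is precisely the convention making the two asserted formulas come out as stated.

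The first identity is essentially immediate. For $a \in A_{\alpha^t}^\infty$ and $x \in A^{(2)}$ one has $((a\otimes1^{\mathrm{op}})\cdot\varphi_{(\Pi,\xi)})(x) = (\Pi(x)\Pi(a\otimes1^{\mathrm{op}})\xi|\xi)_{\mathcal{H}_\Pi}$, and since $a$ is analytic we may substitute \eqref{Eq3.1} in the second factor, replacing $\Pi(a\otimes1^{\mathrm{op}})\xi$ by $\Pi(1\otimes(\alpha^{-i\beta/2}(a))^{\mathrm{op}})\xi$; reading the computation backwards yields $((1\otimes(\alpha^{-i\beta/2}(a))^{\mathrm{op}})\cdot\varphi_{(\Pi,\xi)})(x)$, which is what we want.

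For the second identity I would first move $\Pi$ across the inner product: $(\varphi_{(\Pi,\xi)}\cdot(a\otimes1^{\mathrm{op}}))(x) = (\Pi(a\otimes1^{\mathrm{op}})\Pi(x)\xi|\xi)_{\mathcal{H}_\Pi} = (\Pi(x)\xi|\Pi(a^*\otimes1^{\mathrm{op}})\xi)_{\mathcal{H}_\Pi}$, using $(a\otimes1^{\mathrm{op}})^* = a^*\otimes1^{\mathrm{op}}$. Now $A_{\alpha^t}^\infty$ is a $*$-subalgebra of $A$, so $a^* \in A_{\alpha^t}^\infty$ and \eqref{Eq3.1} applies to $a^*$; combined with the relation $\alpha^z(b)^* = \alpha^{\bar z}(b^*)$ for $\alpha^t$-analytic elements (see \cite[subsection 2.5.3]{BratteliRobinson:Book1}), applied with $b = a$ and $z = i\beta/2$ so that $\alpha^{-i\beta/2}(a^*) = (\alpha^{i\beta/2}(a))^*$, and with the rule $(b^{\mathrm{op}})^* = (b^*)^{\mathrm{op}}$ in $A^{\mathrm{op}}$, this rewrites $\Pi(a^*\otimes1^{\mathrm{op}})\xi$ as $\Pi(1\otimes((\alpha^{i\beta/2}(a))^*)^{\mathrm{op}})\xi$. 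Moving that operator back to the left factor of the inner product and using $(1\otimes((\alpha^{i\beta/2}(a))^*)^{\mathrm{op}})^* = 1\otimes(\alpha^{i\beta/2}(a))^{\mathrm{op}}$ produces $(\varphi_{(\Pi,\xi)}\cdot(1\otimes(\alpha^{i\beta/2}(a))^{\mathrm{op}}))(x)$, as desired.

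The argument is entirely routine; no density or closure argument is needed, since the claimed identities are between bounded functionals evaluated directly on all of $A^{(2)}$. The only point requiring a little care — and the only place where the sign of the imaginary shift flips from $-i\beta/2$ to $+i\beta/2$ — is the chain of involutions in the second identity: passing $\Pi(\cdot)$ across the inner product, taking $*$ inside $A\otimes A^{\mathrm{op}}$, and invoking $\alpha^z(a)^* = \alpha^{\bar z}(a^*)$. One can alternatively run the same computation starting from the balanced form \eqref{Eq3.2}, which makes the symmetry between the two identities more transparent.
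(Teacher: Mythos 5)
Your proof is correct and follows essentially the same route as the paper: the first identity is exactly the paper's computation using \eqref{Eq3.1}, and your second argument is just the explicit, vector-level unfolding of the paper's one-line remark that the second formula follows ``by taking the adjoint (as linear functionals)'' of the first, with the same involution bookkeeping $\alpha^z(a)^*=\alpha^{\bar z}(a^*)$ and $a\mapsto a^*$ hidden in that remark.
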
 
\begin{proof}
We have 
\begin{align*}
((a\otimes1^\mathrm{op})\cdot\varphi_{(\Pi,\xi)})(x) 
&=
(\Pi(x)\Pi(a\otimes1^\mathrm{op})\xi|\xi)_{\mathcal{H}_\Pi}  \\
&=
(\Pi(x)\Pi(1\otimes(\alpha^{-i\beta/2}(a))^\mathrm{op})\xi|\xi)_{\mathcal{H}_\Pi} \\
&= 
((1\otimes(\alpha^{-i\beta/2}(a))^\mathrm{op})\cdot\varphi_{(\Pi,\xi)})(x) 
\end{align*}
for every $x \in A^{(2)}$. Hence we get the first formula. The second formula is just obtained by taking the adjoint (as linear functionals) for the first formula.    
\end{proof}

Motivated by the above lemma we give the next definition. 

\begin{definition}\label{D3.4}
We call $\varphi_{(\Pi,\xi)}$ in \eqref{Eq3.3} \emph{the $(\alpha^t,\beta)$-spherical state associated with a given $(\alpha^t,\beta)$-spherical representation $(\Pi : A^{(2)} \curvearrowright \mathcal{H}_\Pi, \xi)$}. Moreover, we call a state $\varphi$ on $A^{(2)}$ an \emph{abstract $(\alpha^t,\beta)$-spherical state} of $A$, if 
\begin{equation}\label{Eq3.4}
\begin{gathered} 
(a\otimes1^\mathrm{op})\cdot\varphi = (1\otimes(\alpha^{-i\beta/2}(a))^\mathrm{op})\cdot\varphi \\ 
\Big(\text{or equivalently}, \varphi\cdot(a\otimes1^\mathrm{op}) = \varphi\cdot(1\otimes(\alpha^{i\beta/2}(a))^\mathrm{op}))\Big) 
\end{gathered}
\end{equation}
holds for every $a \in A_{\alpha^t}^\infty$. We denote by $S_\beta(\alpha^t)$ all the abstract $(\alpha^t,\beta)$-spherical states of $A$. 
\end{definition}

We clarify the relationship between $(\alpha^t,\beta)$-spherical representations and (abstract) $(\alpha^t,\beta)$-spherical states as follows.  

\begin{proposition}\label{P3.5} For an abstract $(\alpha^t,\beta)$-spherical state $\varphi$ of $A$, the GNS triple $(\Pi_\varphi : A^{(2)} \curvearrowright \mathcal{H}_\varphi,\xi_\varphi)$ associated with $\varphi$ becomes an $(\alpha^t,\beta)$-spherical representation of $A$ such that $\varphi_{(\Pi_\varphi,\xi_\varphi)} = \varphi$. Moreover, 
\[
[(\Pi : A^{(2)} \curvearrowright \mathcal{H}_\Pi,\xi)] \in \mathrm{Rep}_\beta(\alpha^t) \mapsto \varphi_{(\Pi,\xi)} \in S_\beta(\alpha^t)
\] 
is a well-defined bijection with inverse map $\varphi \mapsto [(\Pi_\varphi : A^{(2)} \curvearrowright \mathcal{H}_\varphi,\xi_\varphi)]$. 
\end{proposition}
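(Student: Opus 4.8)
The plan is to verify this in two directions, matching the explicit maps that are already on the table. First I would show that for an abstract $(\alpha^t,\beta)$-spherical state $\varphi$, the GNS representation $(\Pi_\varphi : A^{(2)} \curvearrowright \mathcal{H}_\varphi, \xi_\varphi)$ is an $(\alpha^t,\beta)$-spherical representation. Cyclicity of $\xi_\varphi$ for $\Pi_\varphi(A^{(2)})$ is part of the GNS construction, so the only point is that $\xi_\varphi$ is an $(\alpha^t,\beta)$-spherical vector, i.e.\ that \eqref{Eq3.1} holds. For this I would put $\eta := \Pi_\varphi(a\otimes1^\mathrm{op})\xi_\varphi - \Pi_\varphi(1\otimes(\alpha^{-i\beta/2}(a))^\mathrm{op})\xi_\varphi$ for $a \in A_{\alpha^t}^\infty$ and compute $\|\eta\|^2$ by expanding the inner product and using $(\Pi_\varphi(x)\xi_\varphi|\xi_\varphi) = \varphi(x)$. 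Each of the four resulting terms is a value of $\varphi$ at an element of the form $y^* z$ with $y, z \in \{a\otimes1^\mathrm{op},\, 1\otimes(\alpha^{-i\beta/2}(a))^\mathrm{op}\}$; here one uses that $A_{\alpha^t}^\infty$ is a $*$-subalgebra, that $\alpha^{z}$ restricts to an automorphism of $A_{\alpha^t}^\infty$ for each $z\in\mathbb{C}$, and the compatibility $\alpha^{-i\beta/2}(a)^* = \alpha^{\overline{-i\beta/2}}(a^*) = \alpha^{i\beta/2}(a^*)$ together with $j$ or direct manipulation of the opposite algebra. The defining relation \eqref{Eq3.4}, applied suitably (and its companion form), collapses these four terms so that $\|\eta\|^2 = 0$; hence $\eta = 0$, which is \eqref{Eq3.1}. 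That $\varphi_{(\Pi_\varphi,\xi_\varphi)} = \varphi$ is immediate from $(\Pi_\varphi(x)\xi_\varphi|\xi_\varphi) = \varphi(x)$.

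Next I would check that the map $[(\Pi,\xi)] \mapsto \varphi_{(\Pi,\xi)}$ is well defined on equivalence classes and lands in $S_\beta(\alpha^t)$. Well-definedness is the observation that if $u$ intertwines $\Pi_1, \Pi_2$ and sends $\xi_1$ to $\xi_2$, then $(\Pi_1(x)\xi_1|\xi_1) = (u\Pi_1(x)\xi_1|u\xi_1) = (\Pi_2(x)\xi_2|\xi_2)$ for all $x$, so $\varphi_{(\Pi_1,\xi_1)} = \varphi_{(\Pi_2,\xi_2)}$; positivity and normalization of $\varphi_{(\Pi,\xi)}$ are standard since $\xi$ is a unit vector; and that $\varphi_{(\Pi,\xi)}$ satisfies \eqref{Eq3.4} is exactly the content of Lemma~\ref{L3.3}.

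Finally I would assemble the bijection. One composite is trivial: starting from $\varphi \in S_\beta(\alpha^t)$, passing to $(\Pi_\varphi,\xi_\varphi)$ and then to $\varphi_{(\Pi_\varphi,\xi_\varphi)}$ returns $\varphi$ by the identity established in the first step. For the other composite, starting from a class $[(\Pi : A^{(2)} \curvearrowright \mathcal{H}_\Pi,\xi)]$, forming $\varphi := \varphi_{(\Pi,\xi)}$ and then its GNS triple $(\Pi_\varphi,\xi_\varphi)$, I would invoke the uniqueness clause of the GNS construction: since $\xi$ is cyclic for $\Pi(A^{(2)})$ and $(\Pi(x)\xi|\xi) = \varphi(x)$, there is a unitary $u : \mathcal{H}_\varphi \to \mathcal{H}_\Pi$ with $u\Pi_\varphi(x) = \Pi(x)u$ and $u\xi_\varphi = \xi$, which is precisely the equivalence of Definition~\ref{D3.2}; hence the two composites are the respective identities. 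I expect the only genuinely non-routine point to be the first step --- the algebraic bookkeeping in the opposite algebra $A^\mathrm{op}$ showing that the four inner-product terms cancel, in particular keeping track of where $\alpha^{-i\beta/2}$ versus $\alpha^{i\beta/2}$ appears when taking adjoints of $1\otimes(\alpha^{-i\beta/2}(a))^\mathrm{op}$; the equivalent balanced form \eqref{Eq3.2} and the map $j$ of \S2 are the natural tools to streamline this, and everything else is bookkeeping with the GNS construction.
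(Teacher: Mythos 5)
Your proposal is correct and follows essentially the same route as the paper: both deduce the spherical-vector identity \eqref{Eq3.1} for $\xi_\varphi$ from the defining relation \eqref{Eq3.4} via the GNS formula $(\Pi_\varphi(x)\xi_\varphi\,|\,\xi_\varphi)=\varphi(x)$, and both obtain the bijection from the definition of equivalence together with the uniqueness clause of the GNS construction. The only cosmetic difference is that you expand $\Vert\eta\Vert^2$ for the difference vector (where evaluating \eqref{Eq3.4} at $x=y^*$ and $x=z^*$ makes the four terms cancel, with no adjoint bookkeeping actually needed), whereas the paper shows the two vectors have equal inner products against the dense subspace $\Pi_\varphi(A^{(2)})\xi_\varphi$.
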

\begin{proof} 
We have
\begin{equation*}
\begin{aligned}
(\Pi_\varphi(a\otimes1^\mathrm{op})\xi_\varphi|\Pi_\varphi(x)\xi_\varphi)_{\mathcal{H}_\varphi} 
&= 
\varphi(x^*(a\otimes1^\mathrm{op})) \\
&= 
((a\otimes1^\mathrm{op})\cdot\varphi)(x^*) \\
&= 
((1\otimes(\alpha^{-i\beta/2}(a))^\mathrm{op})\cdot\varphi)(x^*) \\
&=
\varphi(x^*(1\otimes(\alpha^{-i\beta/2}(a))^\mathrm{op})) \\
&=
(\Pi_\varphi(1\otimes(\alpha^{-i\beta/2}(a))^\mathrm{op}))\xi_\varphi|\Pi_\varphi(x)\xi_\varphi)_{\mathcal{H}_\varphi} 
\end{aligned}
\end{equation*}
for every $x \in A^{(2)}$. Hence we obtain that $\Pi_\varphi(a\otimes1^\mathrm{op})\xi_\varphi=\Pi_\varphi(1\otimes(\alpha^{-i\beta/2}(a))^\mathrm{op})\xi_\varphi$. The formula $\varphi_{(\Pi_\varphi,\xi_\varphi)}=\varphi$ is trivial by definition. 

By the definition of equivalence among $(\alpha^t,\beta)$-spherical representations, $[(\Pi,\xi)]\mapsto \varphi_{(\Pi,\xi)}$ is clearly well-defined and injective. The first part of the statement also shows the surjectivity. Moreover, by the uniqueness statement on GNS triples, we observe that the mapping $\varphi \mapsto [(\Pi_\varphi : A^{(2)} \curvearrowright \mathcal{H}_\varphi,\xi_\varphi)]$ is well defined. 
\end{proof} 

For an arbitrary $\omega \in K_\beta(\alpha^t)$, let $(\pi_\omega : A \curvearrowright \mathcal{H}_\omega,\xi_\omega)$ be the GNS triple associated with $\omega$. The mapping $\pi_\omega(a)\xi_\omega \in \pi_\omega(A_{\alpha^t}^\infty)\xi_\omega \mapsto \pi_\omega(\alpha^{i\beta/2}(a^*))\xi_\omega \in \mathcal{H}_\omega$ has a unique isometric conjugate-linear extension $J_\omega : \mathcal{H}_\omega \to \mathcal{H}_\omega$ with $J_\omega^2 = I$. Then we have 
\begin{equation}\label{Eq3.5}
\pi_\omega(a)J_\omega\pi_\omega(b^*)J_\omega \pi_\omega(c)\xi_\omega 
= 
\pi_\omega(a\,c\,\alpha^{i\beta/2}(b))\xi_\omega =J_\omega\pi_\omega(b^*)J_\omega \pi_\omega(a) \pi_\omega(c)\xi_\omega  
\end{equation}
for any $a,b,c \in A_{\alpha^t}^\infty$, and hence $J_\omega \pi_\omega(A) J_\omega \subseteq \pi_\omega(A)'$. By universality, there is a $*$-representation $\pi_\omega^{(2)} : A^{(2)} \curvearrowright \mathcal{H}_\omega$ sending $a\otimes b^\mathrm{op} \mapsto \pi_\omega(a) J_\omega \pi_\omega(b^*)J_\omega$ for any simple tensor $a\otimes b^\mathrm{op} \in A^{(2)}$. It is known that $\xi_\omega$ also is separating for $\pi_\omega(A)''$ (see e.g.\ \cite[Corollary 5.3.9]{BratteliRobinson:Book2}) and there is a unique faithful normal state $\bar{\omega}$ on $\pi_\omega(A)''$ such that $\bar{\omega}(\pi_\omega(a))=\omega(a)$ for every $a \in A$. Then, by Takesaki's characterization,  the mapping $\pi_\omega(a) \mapsto \pi_\omega(\alpha^{-\beta t}(a))$ gives a modular automorphism $\sigma_t^{\bar{\omega}}$ ($t\in\mathbb{R}$) of the faithful normal state $\bar{\omega}$, and by Tomita's theorem 
\begin{equation}\label{Eq3.6}
\pi_\omega(A)'=J_\omega\pi_\omega(A)'' J_\omega
\end{equation} 
holds. 

\medskip
Before giving the next lemma, we recall \emph{the purification map} $\omega \mapsto \omega^{(2)}$ from the state space $S(A)$ of $A$ into the state space $S(A^{(2)})$ introduced by Woronowicz \cite{Woronowicz:CMP72,Woronowicz:CMP73,Woronowicz:RMP74}. The purification $\omega^{(2)}$ of a given $\omega \in S(A)$ is a unique exact, $j$-positive state on $A^{(2)}$ with subject to 
$\omega^{(2)}(a\otimes1^\mathrm{op}) = \omega(a)$ for every $a \in A$ (see \cite[section 2]{Woronowicz:RMP74}). Here a $\varphi \in S(A^{(2)})$ is said to be \emph{exact}, if $\pi_\varphi(A\otimes\mathbb{C}1^\mathrm{op})' = \pi_{\varphi}(\mathbb{C}1\otimes A^\mathrm{op})''$ in $\mathcal{H}_\varphi$ holds, where $\pi_\varphi : A^{(2)} \curvearrowright \mathcal{H}_\varphi$ is the GNS representation associated with $\varphi$. Also, $\varphi$ is said to be \emph{$j$-positive}, if $\varphi(a\otimes(a^*)^\mathrm{op}) \geq 0$ for all $a \in A$. The $j$-positivity implies that $\varphi\circ j(x) = \overline{\varphi(x)}$ for every $x \in A^{(2)}$. In particular, any $j$-positive state $\varphi$ enjoys that $\varphi(a\otimes 1^\mathrm{op}) = \varphi(1\otimes a^\mathrm{op})$ for every $a \in A$.

\begin{lemma}\label{L3.6} The pair $(\pi_\omega^{(2)} : A^{(2)} \curvearrowright \mathcal{H}_{\omega},\xi_\omega)$ is an $(\alpha^t,\beta)$-spherical representation of $A$ and the associated $(\alpha^t,\beta)$-spherical state $\varphi_{(\pi_\omega^{(2)},\xi_\omega)}$ enjoys that 
\begin{equation}\label{Eq3.7}
\begin{aligned}
\varphi_{(\pi_\omega^{(2)},\xi_\omega)}(a\otimes (b^*)^\mathrm{op}) 
&= 
\omega(a\,\alpha^{i\beta/2}(b^*)) = \omega(\alpha^{i\beta/4}(b)^*\alpha^{i\beta/4}(a)) \\
&= 
(\pi_\omega(\alpha^{i\beta/4}(a))\xi_\omega|\pi_\omega(\alpha^{i\beta/4}(b))\xi_\omega)_{\mathcal{H}_\omega}
\end{aligned}
\end{equation}
 holds for any $a,b \in A_{\alpha^t}^\infty$. Moreover, $\omega \in K_\beta(\alpha^t) \mapsto \varphi_{(\pi_\omega^{(2)},\xi_\omega)} \in S_\beta(\alpha^t)$ is the restriction of the purification map $\omega \in S(A) \mapsto \omega^{(2)} \in S(A^{(2)})$ to $K_\beta(\alpha^t)$. 
\end{lemma}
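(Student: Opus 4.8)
The plan is to verify the spherical condition \eqref{Eq3.1} directly from the defining relation of $\pi_\omega^{(2)}$ and the KMS condition, then compute $\varphi_{(\pi_\omega^{(2)},\xi_\omega)}$ explicitly, and finally match the resulting state with Woronowicz's purification by checking the three characterizing properties (the compatibility $\omega^{(2)}(a\otimes 1^\mathrm{op})=\omega(a)$, exactness, and $j$-positivity). First I would take $a\in A_{\alpha^t}^\infty$ and apply $\pi_\omega^{(2)}(a\otimes 1^\mathrm{op}) = \pi_\omega(a)$ to $\xi_\omega$; on the other side, $\pi_\omega^{(2)}(1\otimes(\alpha^{-i\beta/2}(a))^\mathrm{op}) = J_\omega\pi_\omega(\alpha^{-i\beta/2}(a)^*)J_\omega = J_\omega\pi_\omega(\alpha^{i\beta/2}(a^*))J_\omega$. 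Applying this to $\xi_\omega$ and using $J_\omega\xi_\omega = \xi_\omega$ together with the very definition of $J_\omega$ (namely $J_\omega\pi_\omega(b)\xi_\omega = \pi_\omega(\alpha^{i\beta/2}(b^*))\xi_\omega$), one gets $J_\omega\pi_\omega(\alpha^{i\beta/2}(a^*))\xi_\omega = \pi_\omega(\alpha^{i\beta/2}(\alpha^{i\beta/2}(a^*)^*))\xi_\omega$; since $\alpha^{i\beta/2}$ commutes with the $*$ up to the sign flip built into the analytic continuation, this collapses to $\pi_\omega(a)\xi_\omega$. Hence \eqref{Eq3.1} holds, and cyclicity of $\xi_\omega$ for $\pi_\omega^{(2)}(A^{(2)})$ is immediate from cyclicity of $\xi_\omega$ for $\pi_\omega(A)$ and $\pi_\omega^{(2)}(A\otimes 1^\mathrm{op}) = \pi_\omega(A)$, so $(\pi_\omega^{(2)},\xi_\omega)$ is an $(\alpha^t,\beta)$-spherical representation.

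Next I would establish \eqref{Eq3.7}. By definition $\varphi_{(\pi_\omega^{(2)},\xi_\omega)}(a\otimes(b^*)^\mathrm{op}) = (\pi_\omega(a)J_\omega\pi_\omega((b^*)^*)J_\omega\xi_\omega|\xi_\omega) = (\pi_\omega(a)J_\omega\pi_\omega(b)J_\omega\xi_\omega|\xi_\omega)$. Using $J_\omega\xi_\omega = \xi_\omega$ and $J_\omega\pi_\omega(b)\xi_\omega = \pi_\omega(\alpha^{i\beta/2}(b^*))\xi_\omega$, and then the affiliation $J_\omega\pi_\omega(A)J_\omega\subseteq\pi_\omega(A)'$ together with formula \eqref{Eq3.5}, this becomes $(\pi_\omega(a\,\alpha^{i\beta/2}(b^*))\xi_\omega|\xi_\omega) = \omega(a\,\alpha^{i\beta/2}(b^*))$. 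The second equality $\omega(a\,\alpha^{i\beta/2}(b^*)) = \omega(\alpha^{i\beta/4}(b)^*\alpha^{i\beta/4}(a))$ is the standard rewriting of the KMS condition with the inverse temperature split symmetrically (equivalently, using the modular automorphism $\sigma_t^{\bar\omega}(\pi_\omega(a)) = \pi_\omega(\alpha^{-\beta t}(a))$ recorded above, one shifts the analytic parameter by $i\beta/4$ on each factor), and the third equality is then just the inner-product reformulation via the GNS construction for $\omega$. This also shows $\varphi_{(\pi_\omega^{(2)},\xi_\omega)}(a\otimes 1^\mathrm{op}) = \omega(a)$ by putting $b = 1$.

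Finally I would identify $\varphi_{(\pi_\omega^{(2)},\xi_\omega)}$ with the purification $\omega^{(2)}$. By Woronowicz's uniqueness statement (\cite[section 2]{Woronowicz:RMP74}), it suffices to check that $\varphi := \varphi_{(\pi_\omega^{(2)},\xi_\omega)}$ is exact and $j$-positive and restricts to $\omega$ on $A\otimes 1^\mathrm{op}$; the last point was just noted (extended from $A_{\alpha^t}^\infty$ to $A$ by norm-density and continuity). For $j$-positivity, $\varphi(a\otimes(a^*)^\mathrm{op}) = (\pi_\omega(a)J_\omega\pi_\omega(a)J_\omega\xi_\omega|\xi_\omega) = (J_\omega\pi_\omega(a)J_\omega\pi_\omega(a)\xi_\omega|\xi_\omega)$, and since $J_\omega\pi_\omega(a)J_\omega\in\pi_\omega(A)'$ is, via Tomita's theorem, of the form controlling the modular structure, a short computation with $J_\omega$ shows this equals $\|\pi_\omega(a)\xi_\omega\|^2$-type quantity up to the modular twist, hence is $\geq 0$; more cleanly, it equals $(\pi_\omega(a)\xi_\omega|J_\omega\pi_\omega(a)^*J_\omega\xi_\omega)$ and unwinding $J_\omega$ gives a manifestly nonnegative expression. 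For exactness, one must show $\pi_\varphi(A\otimes\mathbb{C}1^\mathrm{op})' = \pi_\varphi(\mathbb{C}1\otimes A^\mathrm{op})''$ in $\mathcal{H}_\varphi = \mathcal{H}_\omega$; since $\pi_\varphi(A\otimes 1^\mathrm{op}) = \pi_\omega(A)$ and $\pi_\varphi(1\otimes A^\mathrm{op}) = J_\omega\pi_\omega(A)J_\omega$, this is exactly the content of \eqref{Eq3.6}, $\pi_\omega(A)' = J_\omega\pi_\omega(A)''J_\omega$, once one passes to double commutants. I expect the main obstacle to be the bookkeeping of the analytic continuation signs in the first two steps — in particular verifying carefully that $J_\omega$ is genuinely isometric, conjugate-linear, and involutive, and that the identity $J_\omega\pi_\omega(b)\xi_\omega = \pi_\omega(\alpha^{i\beta/2}(b^*))\xi_\omega$ interacts correctly with a second application — since these are precisely the places where a misplaced $\beta/2$ versus $\beta/4$ or a conjugation error would silently break the argument; everything else (exactness, $j$-positivity, restriction) reduces to results already recorded above.
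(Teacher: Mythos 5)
Your proposal is correct and follows essentially the same route as the paper: verify \eqref{Eq3.1} from the defining formula for $J_\omega$, compute $\varphi_{(\pi_\omega^{(2)},\xi_\omega)}$ via \eqref{Eq3.5} and the KMS condition to get \eqref{Eq3.7}, and then identify the state with $\omega^{(2)}$ by checking exactness (from \eqref{Eq3.6}), $j$-positivity, and the left marginal, invoking Woronowicz's uniqueness. The only spot to tighten is $j$-positivity: your ``unwinding $J_\omega$'' expression $\omega(\alpha^{i\beta/2}(a)^*a)$ is not manifestly nonnegative as it stands; the clean fix (and the paper's) is to use the already-proved formula \eqref{Eq3.7} with $b=a$, giving $\varphi(a\otimes(a^*)^\mathrm{op})=\Vert\pi_\omega(\alpha^{i\beta/4}(a))\xi_\omega\Vert^2\geq0$ for $\alpha^t$-analytic $a$, and then conclude by norm-density of $A_{\alpha^t}^\infty$.
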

\begin{proof}
For every $a \in A_{\alpha^t}^\infty$ we have
\begin{equation*}
\begin{aligned}
\pi_\omega^{(2)}(a\otimes1^\mathrm{op})\xi_\omega
&=
\pi_\omega(a)\xi_\omega 
=
\pi_\omega(\alpha^{i\beta/2}((\alpha^{-i\beta/2}(a)^*)^*))\xi_\omega \\
&=
J_\omega\pi_\omega(\alpha^{-i\beta/2}(a)^*)J_\omega\xi_\omega
=
\pi_\omega^{(2)}(1\otimes(\alpha^{-i\beta/2}(a))^\mathrm{op})\xi_\omega. 
\end{aligned}
\end{equation*}
Since $\xi_\omega$ is trivially cyclic for $\pi_\omega^{(2)}(A^{(2)})$, we conclude that $(\pi_\omega^{(2)} : A^{(2)} \curvearrowright \mathcal{H}_{\omega},\xi_\omega)$ is an $(\alpha^t,\beta)$-spherical representation of $A$. By the construction of $\varphi_{(\pi_\omega^{(2)},\xi_\omega)}$ and the uniqueness of GNS representations, the triple $(\pi_\omega^{(2)} : A^{(2)} \curvearrowright \mathcal{H}_{\omega},\xi_\omega)$ is indeed the GNS triple associated with $\varphi_{(\pi_\omega^{(2)},\xi_\omega)}$. 

For any $a,b \in A_{\alpha^t}^\infty$, by equation \eqref{Eq3.5} we also have
\begin{equation*}
\begin{aligned}
\varphi_{(\pi_\omega^{(2)},\xi_\omega)}(a\otimes (b^*)^\mathrm{op}) 
&=
(\pi_\omega(a)J_\omega\pi_\omega(b)J_\omega\xi_\omega|\xi_\omega)_{\mathcal{H}_\omega} \\
&= 
(\pi_\omega(a\,\alpha^{i\beta/2}(b^*))\xi_\omega|\xi_\omega)_{\mathcal{H}_\omega} 
= \omega(a\,\alpha^{i\beta/2}(b^*)), 
\end{aligned}
\end{equation*}
and hence the $(\alpha^t,\beta)$-KMS condition with $\omega$ gives the desired formula \eqref{Eq3.7}. 

The formula we have just proved shows that $\varphi_{(\pi_\omega^{(2)},\xi_\omega)}(a\otimes(a^*)^\mathrm{op}) = \Vert \pi_\omega(\alpha^{i\beta/4}(a))\xi_\omega\Vert_{\mathcal{H}_\omega}^2 \geq 0$ for all $a \in A_{\alpha^t}^\infty$. Hence, by the density of $A_{\alpha^t}^\infty$, we see that $\varphi_{(\pi_\omega^{(2)},\xi_\omega)}$ is $j$-positive. By identity \eqref{Eq3.6}, the $\varphi_{(\pi_\omega^{(2)},\xi_\omega)}$ is trivially exact. Moreover, $\varphi_{(\pi_\omega^{(2)},\xi_\omega)}(a\otimes1^\mathrm{op}) = (\pi_\omega^{(2)}(a\otimes1^\mathrm{op})\xi_\omega\,|\,\xi_\omega)_{\mathcal{H}_\omega} = (\pi_\omega(a)\xi_\omega\,|\,\xi_\omega)_{\mathcal{H}_\omega} = \omega(a)$ for all $a \in A$. Hence $\varphi_{(\pi_\omega^{(2)},\xi_\omega)}$ is nothing but the purification $\omega^{(2)}$. 
\end{proof}

Due to the last statement in the above lemma, we often denote by $\omega^{(2)}$ the $(\alpha^t,\beta)$-spherical state associated with $(\pi_\omega^{(2)} : A^{(2)} \curvearrowright \mathcal{H}_\omega, \xi_\omega)$ for simplicity. The next proposition tells us that the image of $K_\beta(\alpha^t)$ under the purification map is exactly $S_\beta(\alpha^t)$. 

\begin{proposition}\label{P3.7}
For a given $\varphi \in S_\beta(\alpha^t)$, the left partial state $\varphi_L \in S(A)$ defined to be $a \in A \mapsto \varphi(a\otimes1^\mathrm{op}) \in \mathbb{C}$ falls into $K_\beta(\alpha^t)$ and $\varphi_{(\pi_{\varphi_L}^{(2)},\xi_{\varphi_L})} = \varphi$ holds.  In particular,  the mapping $\omega \in K_\beta(\alpha^t) \mapsto \omega^{(2)} \in S_\beta(\alpha^t)$ is bijective with inverse map $\varphi \mapsto \varphi_L$. 
\end{proposition}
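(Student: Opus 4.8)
The plan is to establish two things: first, that the left partial state $\varphi_L$ of an arbitrary $\varphi \in S_\beta(\alpha^t)$ is an $(\alpha^t,\beta)$-KMS state; and second, that purifying $\varphi_L$ recovers $\varphi$ exactly. Combined with Lemma~\ref{L3.6}, which already shows $\omega \mapsto \omega^{(2)}$ maps $K_\beta(\alpha^t)$ into $S_\beta(\alpha^t)$ and agrees with the purification map, these two facts immediately give that $\omega \mapsto \omega^{(2)}$ is a bijection onto $S_\beta(\alpha^t)$ with inverse $\varphi \mapsto \varphi_L$.

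For the first step, I would work in the GNS representation $(\Pi_\varphi : A^{(2)} \curvearrowright \mathcal{H}_\varphi, \xi_\varphi)$. Write $\pi(a) := \Pi_\varphi(a\otimes 1^\mathrm{op})$ and $\pi'(b) := \Pi_\varphi(1\otimes b^\mathrm{op})$; these are commuting $*$-representations of $A$ and $A^\mathrm{op}$. Then $\varphi_L(a) = (\pi(a)\xi_\varphi|\xi_\varphi)$ and the defining spherical identity \eqref{Eq3.1} reads $\pi(a)\xi_\varphi = \pi'((\alpha^{-i\beta/2}(a))^\mathrm{op})\xi_\varphi$ for $a\in A_{\alpha^t}^\infty$. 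To verify the KMS condition for $\varphi_L$ it suffices to check, for $a,b \in A_{\alpha^t}^\infty$, the identity $\varphi_L(a\,b) = \varphi_L(b\,\alpha^{-i\beta}(a))$ (the standard analytic form of the $(\alpha^t,\beta)$-KMS condition on the dense analytic subalgebra). The key computation: using \eqref{Eq3.1} twice together with the fact that $\pi$ and $\pi'$ commute and that $b^\mathrm{op} \mapsto (\alpha^t(b))^\mathrm{op}$ is the appropriate opposite-algebra flow,
\[
\varphi_L(a\,b) = (\pi(b)\xi_\varphi|\pi(a^*)\xi_\varphi) = (\pi(b)\xi_\varphi|\pi'((\alpha^{-i\beta/2}(a^*))^\mathrm{op})\xi_\varphi) = (\pi'((\alpha^{-i\beta/2}(a^*))^\mathrm{op})^*\pi(b)\xi_\varphi|\xi_\varphi),
\]
and one pushes the $\pi'$-factor through, re-expresses it via \eqref{Eq3.1} as a $\pi$-factor, and collects analytic continuations to land on $\varphi_L(b\,\alpha^{-i\beta}(a))$. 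One must be slightly careful tracking the adjoints and the sign conventions in passing between $a$ and $(\alpha^{-i\beta/2}(a^*))^\mathrm{op}$, and one should record the requisite analyticity so that the various $\alpha^{i\beta/2}$, $\alpha^{-i\beta}$ all make sense — this is the routine analytic-continuation bookkeeping.

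For the second step, note that by Lemma~\ref{L3.6} the state $\varphi_{(\pi_{\varphi_L}^{(2)},\xi_{\varphi_L})}$ equals the purification $(\varphi_L)^{(2)}$, which by Woronowicz's characterization is the unique exact, $j$-positive state on $A^{(2)}$ restricting to $\varphi_L$ on $A\otimes\mathbb{C}1^\mathrm{op}$. So it suffices to show $\varphi$ itself is exact and $j$-positive with $\varphi(a\otimes1^\mathrm{op}) = \varphi_L(a)$; the last is the definition of $\varphi_L$. For $j$-positivity: from \eqref{Eq3.1}, $\varphi(a\otimes(a^*)^\mathrm{op}) = (\pi'((a^*)^\mathrm{op})\pi(a)\xi_\varphi|\xi_\varphi) = (\pi(a)\xi_\varphi|\pi'(a^\mathrm{op})\xi_\varphi)$, and rewriting $\pi'(a^\mathrm{op})\xi_\varphi$ via \eqref{Eq3.1} (applied to $\alpha^{i\beta/2}$ shifts) shows this equals $\|\pi(\alpha^{i\beta/4}(a))\xi_\varphi\|^2 \ge 0$ on analytic elements, hence everywhere by density and continuity. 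For exactness, I expect $\pi'(A^\mathrm{op})\xi_\varphi \subseteq \overline{\pi(A)\xi_\varphi}$ and symmetrically (again via \eqref{Eq3.1} on analytic elements), which forces $\xi_\varphi$ to be cyclic for both $\pi(A)$ and $\pi'(A^\mathrm{op})$ separately; combined with the KMS structure on $\overline{\pi(A)\xi_\varphi}$ established in step one — so that the modular conjugation $J$ there satisfies $J\pi(A)J = \pi(A)'$ — one identifies $\pi'(A^\mathrm{op})'' = J\pi(A)J = \pi(A)'$, which is precisely exactness. The main obstacle is this exactness verification: one has to genuinely use the Tomita–Takesaki structure (as in the displayed equations \eqref{Eq3.5}–\eqref{Eq3.6}) on the cyclic subspace $\overline{\pi(A)\xi_\varphi}$ and argue that it is in fact all of $\mathcal{H}_\varphi$, rather than a proper subrepresentation, before one can invoke Tomita's commutation theorem. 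Everything else is the uniqueness clause of Woronowicz's purification theorem plus Lemma~\ref{L3.6}.
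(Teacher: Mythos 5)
Your overall plan is sound, and for the second half it is genuinely different from the paper's. The paper never touches exactness or Woronowicz's uniqueness theorem at this point: it proves $\varphi_{(\pi_{\varphi_L}^{(2)},\xi_{\varphi_L})}=\varphi$ by a short direct computation, evaluating both states on simple tensors $a\otimes(b^*)^{\mathrm{op}}$ with $a,b\in A_{\alpha^t}^\infty$ via the explicit formula of Lemma \ref{L3.6} and then using \eqref{Eq3.4} once to convert $\varphi(a\,\alpha^{i\beta/2}(b^*)\otimes 1^{\mathrm{op}})$ into $\varphi(a\otimes(b^*)^{\mathrm{op}})$, finishing by density. Your route instead verifies that $\varphi$ itself is $j$-positive and exact and invokes the uniqueness clause of the purification; this can be made to work (the spherical identity \eqref{Eq3.1} does give $\overline{\pi(A)\xi_\varphi}=\overline{\pi'(A^{\mathrm{op}})\xi_\varphi}=\mathcal{H}_\varphi$, and then Tomita--Takesaki applies because the vector state of $\xi_\varphi$ on $\pi(A)''$ is KMS, hence $\xi_\varphi$ is separating), but it is heavier machinery for the same conclusion, and note that the paper deliberately emphasizes that exactness is in general hard to verify --- part of the point of Proposition \ref{P3.7} is that the spherical-state condition \eqref{Eq3.4} lets one bypass it. Also, your exactness sketch stops one step short: cyclicity of $\xi_\varphi$ for both legs plus $J\pi(A)''J=\pi(A)'$ does not by itself give $\pi'(A^{\mathrm{op}})''=\pi(A)'$; you must identify $\pi'(b^{\mathrm{op}})=J\pi(b^*)J$, e.g.\ by checking that both operators lie in $\pi(A)'$ and agree on the cyclic vector $\xi_\varphi$ (via $Jx\xi=\sigma^{\omega_\xi}_{-i/2}(x^*)\xi$ and $\sigma_t^{\omega_\xi}(\pi(a))=\pi(\alpha^{-\beta t}(a))$), and then use density.

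One concrete error must be fixed in the first step: under the conventions of this paper (definition \eqref{Eq3.1} and the KMS convention used in Lemmas \ref{L3.6} and in the paper's own proof), the identity to verify is $\varphi_L(ab)=\varphi_L(b\,\alpha^{i\beta}(a))$, not $\varphi_L(ab)=\varphi_L(b\,\alpha^{-i\beta}(a))$. Indeed, continuing your own computation, $\pi'((\alpha^{-i\beta/2}(a^*))^{\mathrm{op}})^*=\pi'((\alpha^{i\beta/2}(a))^{\mathrm{op}})$, this commutes past $\pi(b)$, and \eqref{Eq3.1} applied backwards gives $\pi'((\alpha^{i\beta/2}(a))^{\mathrm{op}})\xi_\varphi=\pi(\alpha^{i\beta}(a))\xi_\varphi$, landing on $\varphi_L(b\,\alpha^{i\beta}(a))$. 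With your stated target you would be proving $\varphi_L\in K_{-\beta}(\alpha^t)$. (For comparison, the paper proves exactly the $+i\beta$ identity, entirely at the level of the functional $\varphi$ by applying \eqref{Eq3.4} twice, without passing to the GNS representation; your GNS-level computation is essentially the same manipulation once the sign is corrected.) Finally, your $j$-positivity argument tacitly uses the $\alpha$-invariance of $\varphi_L$ (to shift $\varphi_L(\alpha^{-i\beta/2}(a^*)a)$ to $\Vert\pi(\alpha^{i\beta/4}(a))\xi_\varphi\Vert^2$), so it should be stated that this relies on step one; with that made explicit, the argument is fine.
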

\begin{proof}
For every $a \in A_{\alpha^t}^\infty$ and every $b \in A$ we have 
\begin{equation*}
\begin{aligned} 
\varphi_L(ab) 
&= 
\varphi(ab\otimes1^\mathrm{op}) \\
&= 
(\varphi\cdot(a\otimes1^\mathrm{op}))(b\otimes1^\mathrm{op}) \\
&= 
(\varphi\cdot(1\otimes(\alpha^{i\beta/2}(a))^\mathrm{op})))(b\otimes1^\mathrm{op}) \quad \text{(by equation \eqref{Eq3.4})} \\
&= 
\varphi(b\otimes(\alpha^{i\beta/2}(a))^\mathrm{op}) \\ 
&= 
((1\otimes(\alpha^{i\beta/2}(a))^\mathrm{op})\cdot\varphi)(b\otimes1^\mathrm{op}) \\
&= 
((\alpha^{i\beta}(a)\otimes1^\mathrm{op})\cdot\varphi)(b\otimes1^\mathrm{op}) \quad \text{(by equation \eqref{Eq3.4} again)} \\
&=
\varphi(b\,\alpha^{i\beta}(a)\otimes1^\mathrm{op}) \\
&= 
\varphi_L(b\,\alpha^{i\beta}(a)), 
\end{aligned}
\end{equation*} 
showing that $\varphi_L$ falls into $K_\beta(\alpha^t)$. Moreover, by Lemma \ref{L3.6} we observe that 
\begin{equation*}
\begin{aligned} 
\varphi_{(\pi_{\varphi_L}^{(2)},\xi_{\varphi_L})}(a\otimes(b^*)^\mathrm{op}) 
&= 
\varphi_L(a\,\alpha^{i\beta/2}(b^*)) 
= 
\varphi(a\,\alpha^{i\beta/2}(b^*)\otimes1^\mathrm{op}) \\
&= 
((\alpha^{i\beta}(b^*)\otimes1^\mathrm{op})\cdot\varphi)(a\otimes1^\mathrm{op}) \\
&= 
((1\otimes(b^*)^\mathrm{op})\cdot\varphi)(a\otimes1^\mathrm{op}) \quad \text{(by equation \eqref{Eq3.4})} \\
&= 
\varphi(a\otimes(b^*)^\mathrm{op})
\end{aligned}
\end{equation*}
for all $a,b \in A_{\alpha^t}^\infty$. Hence we obtain that $\varphi_{(\pi_{\varphi_L}^{(2)},\xi_{\varphi_L})}=\varphi$ by the density of $A_{\alpha^t}^\infty$. The last assertion is trivial now, and hence we are done.  
\end{proof}

Summing up the discussion so far, we have the following fundamental theorem:  

\begin{theorem}\label{T3.8} There are one-to-one correspondences among $K_\beta(\alpha^t)$, $S_\beta(\alpha^t)$ and $\mathrm{Rep}_\beta(\alpha^t)$ as follows. 
\begin{equation*}
\begin{matrix}
K_\beta(\alpha^t) & \longleftrightarrow & S_\beta(\alpha^t) & \longleftrightarrow & \mathrm{Rep}_\beta(\alpha^t) \\
\omega & \longmapsto & \omega^{(2)} & \longmapsto & [(\pi_\omega^{(2)} : A^{(2)} \curvearrowright \mathcal{H}_\omega, \xi_\omega)] \\
\varphi_L & \longmapsfrom & \varphi := \varphi_{(\Pi,\xi)} & \longmapsfrom & [(\Pi : A^{(2)} \curvearrowright \mathcal{H}_\Pi, \xi)],  
\end{matrix}
\end{equation*}
where the map $K_\beta(\alpha^t) \to S_\beta(\alpha^t)$ is the restriction of the purification map to $K_\beta(\alpha^t)$. Moreover, the restriction of $\pi_{\omega^{(2)}} : A^{(2)} \curvearrowright \mathcal{H}_\omega$ to $A \cong A\otimes\mathbb{C}1^\mathrm{op}$ equipped with $\xi_\omega$ is nothing less than the GNS triple associated with $\omega$. 
\end{theorem}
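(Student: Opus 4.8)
The plan is to treat Theorem \ref{T3.8} as a bookkeeping statement that assembles the results established above, supplemented by one short direct computation for the final ``moreover'' clause. First I would dispose of the correspondence $S_\beta(\alpha^t)\leftrightarrow\mathrm{Rep}_\beta(\alpha^t)$: this, together with the explicit description of the two maps and the fact that they are mutually inverse bijections, is exactly Proposition \ref{P3.5}, so there is nothing new to do there.

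Next, for the correspondence $K_\beta(\alpha^t)\leftrightarrow S_\beta(\alpha^t)$ I would invoke Lemma \ref{L3.6}, which identifies $\omega\mapsto\varphi_{(\pi_\omega^{(2)},\xi_\omega)}$ with the restriction of the purification map to $K_\beta(\alpha^t)$, shows that it takes values in $S_\beta(\alpha^t)$, and records that $(\pi_\omega^{(2)} : A^{(2)}\curvearrowright\mathcal{H}_\omega,\xi_\omega)$ is itself the GNS triple of $\omega^{(2)}$. Proposition \ref{P3.7} then yields that this map is bijective with inverse $\varphi\mapsto\varphi_L$. Composing the two correspondences produces the displayed diagram; the one point to check is that the composite $\omega\mapsto[(\pi_{\omega^{(2)}} : A^{(2)}\curvearrowright\mathcal{H}_{\omega^{(2)}},\xi_{\omega^{(2)}})]$ agrees with $\omega\mapsto[(\pi_\omega^{(2)} : A^{(2)}\curvearrowright\mathcal{H}_\omega,\xi_\omega)]$, and this is immediate from the uniqueness of GNS triples together with the observation from Lemma \ref{L3.6} just recalled.

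It remains to verify the ``moreover'' clause. Here I would simply restrict $\pi_\omega^{(2)}$ to $A\otimes\mathbb{C}1^\mathrm{op}$ and use its defining formula $\pi_\omega^{(2)}(a\otimes b^\mathrm{op})=\pi_\omega(a)J_\omega\pi_\omega(b^*)J_\omega$: putting $b=1$ and using $\pi_\omega(1)=I$ (non-degeneracy) together with $J_\omega^2=I$ gives $\pi_\omega^{(2)}(a\otimes1^\mathrm{op})=\pi_\omega(a)$ for all $a\in A$. Since $\xi_\omega$ is cyclic for $\pi_\omega(A)$ by the GNS construction and $(\pi_\omega(a)\xi_\omega|\xi_\omega)_{\mathcal{H}_\omega}=\omega(a)$, the pair $(\pi_\omega^{(2)}|_{A\otimes\mathbb{C}1^\mathrm{op}},\xi_\omega)$ is, under the identification $A\cong A\otimes\mathbb{C}1^\mathrm{op}$, precisely the GNS triple of $\omega$.

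I do not expect a genuine obstacle: the theorem is essentially a summary, and the only mild subtlety is reconciling the two descriptions of the spherical representation attached to $\omega$ --- directly as $(\pi_\omega^{(2)},\xi_\omega)$ versus abstractly as the GNS triple of $\omega^{(2)}$ --- which is absorbed into the uniqueness of GNS triples, exactly as in the proof of Lemma \ref{L3.6}.
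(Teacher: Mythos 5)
Your proposal is correct and matches the paper's intent exactly: the paper offers no separate proof, stating the theorem as a summary of Proposition \ref{P3.5}, Lemma \ref{L3.6} and Proposition \ref{P3.7}, which is precisely how you assemble it. Your explicit check of the ``moreover'' clause via $\pi_\omega^{(2)}(a\otimes1^\mathrm{op})=\pi_\omega(a)J_\omega\pi_\omega(1)J_\omega=\pi_\omega(a)$ is a correct and harmless addition already implicit in the proof of Lemma \ref{L3.6}.
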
 

We would like to emphasize that the purification map $\omega \in S(A) \mapsto \omega^{(2)} \in S(A^{(2)})$ is not surjective, though the image of its restriction to $K_\beta(\alpha^t)$ is completely determined as all the abstract $(\alpha^t,\beta)$-spherical states of $A$, which is a much simpler notion than that of exact, $j$-positive states. In fact, it seems very difficult to confirm the exactness for an arbitrarily given state on $A^{(2)}$. Moreover, Woronowicz established in \cite[Theorem 1.2]{Woronowicz:CMP72}\footnote{The theorem does not require the separability assumption for $A$ nor the factoriality for states; see \cite[section 2]{Woronowicz:RMP74}.} that \emph{the purification map $\omega \mapsto \omega^{(2)}$ sends each quasi-equivalent class to an equivalent class, where the {\rm(}quasi-{\rm)}equivalence for states is by means of their GNS representations}. Hence we can discuss the quasi-equivalence for $(\alpha^t,\beta)$-KMS states through the correspondences established in Theorem \ref{T3.8}.   

\medskip
Here is a corollary. 

\begin{corollary} \label{C3.9} We have the following assertions{\rm:} 
\begin{itemize} 
\item[(1)] $\omega \in K_\beta(\alpha) \mapsto \omega^{(2)} \in S_\beta(\alpha)$ is bijective and affine homeomorphic with respect to weak$^*$ topology. 
\item[(2)] $\omega \in K_\beta(\alpha^t)$ is a factor state if and only if $\omega^{(2)} \in S_\beta(\alpha^t)$ is a pure state. In other words, any extreme points in $S_\beta(\alpha^t)$ are pure states. 
\item[(3)] Let $\Pi : A^{(2)} \curvearrowright \mathcal{H}_\Pi$ be a $*$-representation. If $\Pi : A^{(2)} \curvearrowright \mathcal{H}_\Pi$ is irreducible, then $\dim \mathcal{H}_\Pi^{(\alpha^t,\beta)} \leq 1$. Conversely, if $\dim \mathcal{H}_\Pi^{(\alpha^t,\beta)} = 1$, then the cyclic sub-representation of $\Pi  : A^{(2)}\curvearrowright \mathcal{H}_\Pi $ associated with any unit vector in $\mathcal{H}_\Pi^{(\alpha^t,\beta)}$ must be irreducible.  
\end{itemize} 
\end{corollary}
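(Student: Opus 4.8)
My plan is to derive all three parts from the correspondences of Theorem~\ref{T3.8} and from elementary Tomita--Takesaki bookkeeping, so that the only non-formal input is a uniqueness property of extremal KMS states. \emph{For (1):} bijectivity is already Proposition~\ref{P3.7}. For affinity and weak$^*$-continuity I would use the explicit formula $\omega^{(2)}(a\otimes(b^*)^\mathrm{op})=\omega(a\,\alpha^{i\beta/2}(b^*))$ of Lemma~\ref{L3.6}, whose right-hand side is linear in $\omega$ for fixed $a,b\in A_{\alpha^t}^\infty$; since the linear span of such simple tensors is norm-dense in $A^{(2)}$ and all states in play have norm $1$, a convex combination (resp.\ a weak$^*$-convergent net) in $K_\beta(\alpha^t)$ is carried by the purification map to the corresponding convex combination (resp.\ weak$^*$-limit) in $S_\beta(\alpha^t)$. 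Since $A$ is unital, $K_\beta(\alpha^t)$ is a weak$^*$-closed, hence weak$^*$-compact, subset of $S(A)$, and $S_\beta(\alpha^t)$ is weak$^*$-compact because each linear constraint encoded by \eqref{Eq3.4} is weak$^*$-closed; a continuous affine bijection between weak$^*$-compact sets (both Hausdorff) is an affine homeomorphism, finishing (1).

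\emph{For (2):} put $M:=\pi_\omega(A)''$. From the formula $a\otimes b^\mathrm{op}\mapsto\pi_\omega(a)J_\omega\pi_\omega(b^*)J_\omega$ defining $\pi_\omega^{(2)}$ together with Tomita's theorem \eqref{Eq3.6} one gets $\pi_\omega^{(2)}(A\otimes\mathbb{C}1^\mathrm{op})''=M$ and $\pi_\omega^{(2)}(\mathbb{C}1\otimes A^\mathrm{op})''=J_\omega M J_\omega=M'$, so $\pi_{\omega^{(2)}}=\pi_\omega^{(2)}$ (by the uniqueness statement in Lemma~\ref{L3.6}) has commutant $(M\vee M')'=M\cap M'=Z(M)$. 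Hence $\omega^{(2)}$ is pure (equivalently $\pi_{\omega^{(2)}}$ is irreducible) if and only if $Z(M)=\mathbb{C}I$, i.e.\ if and only if $\omega$ is a factor state. The remaining sentence of (2) is then immediate from (1): the extreme points of $S_\beta(\alpha^t)$ correspond to those of $K_\beta(\alpha^t)$, which are exactly the factor $(\alpha^t,\beta)$-KMS states \cite{BratteliRobinson:Book2}, and these we have just shown to have pure purifications.

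\emph{For (3):} for the ``conversely'' assertion, assume $\dim\mathcal{H}_\Pi^{(\alpha^t,\beta)}=1$ with spanning unit vector $\xi$, set $\mathcal{K}:=\overline{\Pi(A^{(2)})\xi}$ and $\Pi_0:=\Pi|_\mathcal{K}$, so $(\Pi_0,\xi)$ is an $(\alpha^t,\beta)$-spherical representation. If $\Pi_0$ were reducible there would be a nontrivial projection $p\in\Pi_0(A^{(2)})'$; since \eqref{Eq3.1} is preserved by commutant projections, $p\xi$ and $(1-p)\xi$ would be mutually orthogonal $(\alpha^t,\beta)$-spherical vectors of $\mathcal{H}_\Pi$, both nonzero because $\xi$, being cyclic for $\Pi_0(A^{(2)})$, is separating for $\Pi_0(A^{(2)})'$, contradicting $\dim\mathcal{H}_\Pi^{(\alpha^t,\beta)}=1$. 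For the first assertion, let $\xi_1,\xi_2\in\mathcal{H}_\Pi^{(\alpha^t,\beta)}$ be unit vectors with $\Pi$ irreducible; each $\xi_i$ is cyclic, so $(\Pi,\xi_i)$ is an $(\alpha^t,\beta)$-spherical representation whose spherical state $\varphi_i:=\varphi_{(\Pi,\xi_i)}$ is pure. By Theorem~\ref{T3.8}, $\varphi_i=\omega_i^{(2)}$ with $\omega_i:=(\varphi_i)_L\in K_\beta(\alpha^t)$, each $\omega_i$ is a factor state by (2), and restricting the unitary equivalence $\Pi\cong\pi_{\omega_i}^{(2)}$ to $A\otimes\mathbb{C}1^\mathrm{op}$ gives (via the last sentence of Theorem~\ref{T3.8}) $\pi_{\omega_1}\cong\pi_{\omega_2}$, so $\omega_1,\omega_2$ are quasi-equivalent factor $(\alpha^t,\beta)$-KMS states. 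Granting that such states coincide (the point discussed below), we get $\varphi_1=\varphi_2$, whence by uniqueness of the GNS triple and irreducibility of $\Pi$ the unitary intertwiner carrying $\xi_1$ to $\xi_2$ lies in $\Pi(A^{(2)})'=\mathbb{C}I$, so $\xi_2\in\mathbb{C}\xi_1$ and $\dim\mathcal{H}_\Pi^{(\alpha^t,\beta)}\le1$.

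\emph{The main obstacle} is the assertion just invoked: two quasi-equivalent extremal $(\alpha^t,\beta)$-KMS states are equal — this is the one genuinely non-formal ingredient, and it is exactly where the ``Gelfand pair'' nature of the setup (one-dimensionality of the space of spherical vectors) enters. I would prove it directly from modular theory: an isomorphism $\pi_{\omega_2}(A)''\to\pi_{\omega_1}(A)''$ implementing the quasi-equivalence transports $\bar\omega_2$ to a faithful normal state on the factor $\pi_{\omega_1}(A)''$ whose modular automorphism group agrees with that of $\bar\omega_1$ on generators (both being the one induced by $\alpha^{-\beta t}$, by Takesaki's characterization recalled before Lemma~\ref{L3.6}); two faithful normal states on a factor with the same modular automorphism group coincide, so $\omega_1=\omega_2$. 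Alternatively one may cite the disjointness of distinct extremal KMS states from \cite{BratteliRobinson:Book2}. Everything else — the bookkeeping with Theorem~\ref{T3.8} and the commutant computations — is routine.
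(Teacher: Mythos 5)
Your proposal is correct, and its overall architecture coincides with the paper's: (1) via compactness of the relevant state sets plus Theorem~\ref{T3.8}, (2) via the commutant computation $\pi_\omega^{(2)}(A^{(2)})'=\pi_\omega(A)'\cap\pi_\omega(A)''=\mathcal{Z}(\pi_\omega(A)'')$ using \eqref{Eq3.6}, and (3) via reducing two spherical vectors to two left KMS states that are quasi-equivalent factor states and hence equal. The genuine differences are local. In (1) you prove weak$^*$-continuity of the forward map $\omega\mapsto\omega^{(2)}$ from the explicit formula of Lemma~\ref{L3.6} and use compactness of $K_\beta(\alpha^t)$, whereas the paper goes the other way: it observes that the inverse $\varphi\mapsto\varphi_L$ is obviously continuous and affine and only needs weak$^*$-compactness of $S_\beta(\alpha^t)$ (that \eqref{Eq3.4} passes to weak$^*$ limits); both are sound, and yours costs the extra (standard) density/uniform-boundedness argument. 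In (3) the paper obtains quasi-equivalence of $\pi_{\omega_1}$ and $\pi_{\omega_2}$ by invoking Woronowicz's theorem \cite[Theorem 1.2]{Woronowicz:CMP72} on the purification map, while you get it more directly by restricting the unitary equivalence $\Pi\cong\pi_{\omega_i}^{(2)}$ to $A\otimes\mathbb{C}1^\mathrm{op}$ and using the last clause of Theorem~\ref{T3.8}; this is a small simplification that keeps the argument internal to the paper. Finally, for the step you single out as the main obstacle, the paper simply cites \cite[Theorem 5.3.30(3),(4)]{BratteliRobinson:Book2} (factor KMS states are equal or disjoint), which is your alternative route; your direct modular argument (the isomorphism intertwines the GNS representations, so the transported state has the same modular group $\sigma_t=\bar\alpha^{-\beta t}$, and on a factor the Connes cocycle is then a scalar character, forcing equality of the two faithful normal states) is correct and self-contained, so you gain independence from the cited result at the price of a short cocycle computation. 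The remaining bookkeeping (purity of vector states of irreducible representations, the projection argument for the converse direction of (3)) matches the paper's proof.
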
 
\begin{proof} 
Item (1): Clearly $\varphi \in S_\beta(\alpha^t) \mapsto \varphi_L \in K_\beta(\alpha^t)$ is a weak$^*$ continuous, affine map, which is the inverse of the purification map $\omega \mapsto \omega^{(2)}$ by Theorem \ref{T3.8}. Hence it suffices to prove that $S_\beta(\alpha^t)$ is weak$^*$ compact. However, it is easy to see that identity \eqref{Eq3.4} is preserved after passing the weak$^*$ limit. Hence we are done. 

\medskip
Item (2): The only if part is a special case of Woronowicz's result \cite{Woronowicz:CMP72},\cite[section 2]{Woronowicz:RMP74}. However, it is easy, by the construction of $\omega^{(2)}$ with $\omega \in K_\beta(\alpha)$, to show the desired assertion as follows. Observe that $\pi^{(2)}(A^{(2)})' = \pi_\omega(A)' \cap (J_\omega \pi_\omega(A) J_\omega)' = \pi_\omega(A)' \cap \pi_\omega(A)''$ by identity \eqref{Eq3.6}. Hence $\pi_\omega(A)''$ is a factor, or equivalently $\omega$ is a factor state, if and only if $\pi_\omega^{(2)}(A^{(2)})'$ is $1$-dimensional, that is, $\pi_{\omega}^{(2)}$ is irreducible, or equivalently $\omega^{(2)}$ is a pure state.  

\medskip
The first part of item (3): On the contrary, suppose that $\dim\mathcal{H}_\Pi^{(\alpha^t,\beta)} \geq 2$. Then there is an orthogonal pair of unit vectors $\xi_1, \xi_2 \in \mathcal{H}_\Pi^{(\alpha^t,\beta)}$. Remark that any nonzero vector in $\mathcal{H}_\Pi$ is cyclic for $\Pi(A^{(2)})$ thanks to the irreducibility. Hence we have two $(\alpha^t,\beta)$-spherical representations $(\Pi : A^{(2)} \curvearrowright \mathcal{H}_\Pi,\xi_1)$ and $(\Pi : A^{(2)} \curvearrowright \mathcal{H}_\Pi,\xi_2)$. Since $\Pi(A^{(2)})'$ is $1$-dimensional by the irreducibility, these two $(\alpha^t,\beta)$-spherical representations are not equivalent. Therefore, Theorem \ref{T3.8}  tells us that the $(\alpha^t,\beta)$-spherical states $\varphi_1$ and $\varphi_2$ associated with them are not the same and hence $(\varphi_1)_L \neq (\varphi_2)_L$. For the ease of notation, we write $\omega_i := (\varphi_i)_L \in K_\beta(\alpha^t)$, $i=1,2$. Since the GNS representation of the $\omega_i^{(2)} = \varphi_i$ is equivalent to $\Pi : A^{(2)} \curvearrowright \mathcal{H}_\Pi$ for both $i=1,2$, we conclude, by \cite[Theorem 1.2]{Woronowicz:CMP72}, that the GNS representations $\pi_{\omega_i} : A \curvearrowright \mathcal{H}_{\omega_i}$, $i=1,2$, are quasi-equivalent. However, any factor states in $K_\beta(\alpha^t)$ are equal or disjoint (see \cite[Theorem 5.3.30(3),(4)]{BratteliRobinson:Book2}), which shows $\omega_1 = \omega_2$, a contradiction. 

The second part of item (3): By assumption we have $\mathcal{H}_\Pi^{(\alpha^t,\beta)} = \mathbb{C}\xi$ with a unit vector $\xi$. Let $\Pi^\xi : A^{(2)} \curvearrowright \mathcal{H}_{\Pi^\xi}$ with $\mathcal{H}_{\Pi^\xi} := [\Pi(A^{(2)})\xi]$ be the cyclic sub-representation associated with $\xi$. On the contrary, suppose that $\Pi^\xi$ is not irreducible. Then $\Pi^\xi(A^{(2)})' \neq \mathbb{C}1$, and thus there is an orthogonal pair $\xi_1,\xi_2 \in \mathcal{H}_{\Pi^\xi}^{(\alpha^t,\beta)}$ such that $\xi = \xi_1 + \xi_2$. By construction, $\mathcal{H}_{\Pi^\xi}^{(\alpha^t,\beta)} \subseteq \mathcal{H}_\Pi^{(\alpha^t,\beta)}$, a contradiction to the orthogonality of $\xi_1, \xi_2$. Hence $\Pi^\xi : A^{(2)} \curvearrowright \mathcal{H}_{\Pi^\xi}$ must be irreducible.         
\end{proof}  

A clearer treatment on item (3) above from a wider point of view is possible; see section 6. 

\section{Abstract investigations}

We keep the setting and the notation in section 3. 

Let $\Pi_\mathrm{univ} : A^{(2)} \curvearrowright \mathcal{H}_\mathrm{univ}$ be the universal representation of $A^{(2)}$, that is, 
\[
\Pi_\mathrm{univ} := \bigoplus_{\varphi \in S(A^{(2)})} \pi_\varphi : A^{(2)}\ \curvearrowright\ \mathcal{H}_\mathrm{univ} := \bigoplus_{\varphi \in S(A^{(2)})} \mathcal{H}_\varphi, 
\]
where $\pi_\varphi : A^{(2)} \curvearrowright \mathcal{H}_\varphi$ is the GNS representation associated with a state $\varphi \in S(A^{(2)})$. The existence of $(\alpha^t,\beta)$-spherical representations of $A$, or equivalently $(\alpha^t,\beta)$-KMS states on $A$, can be examined with calculating the closed subspace $\mathcal{H}_\mathrm{univ}^{(\alpha^t,\beta)}$.  In the subsequent subsection, we will examine this idea at the following two aspects; one is to give a reconstruction of Woronowicz's existence theorem for KMS states in the spherical representation point of view and the other is to investigate inductive flows. For the latter we will give a characterization for $(\alpha^t,\beta)$-spherical vectors, which enables us to deal with non-continuous flows in section 5. 

\subsection{Woronowicz's existence result on KMS states revisited} Following Woronowicz \cite{Woronowicz:LMP85} we let $\mathcal{L}^\beta(\alpha^t)$ denote the left ideal of $A^{(2)}$ generated by all the $a\otimes1^\mathrm{op} - 1\otimes(\alpha^{-i\beta/2}(a))^\mathrm{op}$ with $a \in A_{\alpha^t}^\infty$. 

\begin{lemma}\label{L4.1} 
The projection $e$ onto $\mathcal{H}_{\Pi_\mathrm{univ}}^{(\alpha^t,\beta)}$ falls into $\Pi_\mathrm{univ}(A^{(2)})''$ and the closure $\overline{\Pi_\mathrm{univ}(\mathcal{L}^\beta(\alpha^t))}$ of $\Pi_\mathrm{univ}(\mathcal{L}_\beta(\alpha^t))$ in the weak operator topology coincides with $\Pi_\mathrm{univ}(A^{(2)})''(1-e)$.  
\end{lemma}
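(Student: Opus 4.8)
The plan is to identify the subspace $\mathcal{H}_{\Pi_\mathrm{univ}}^{(\alpha^t,\beta)}$ with the orthogonal complement of the closed left ideal generated (in the von Neumann algebra sense) by $\mathcal{L}^\beta(\alpha^t)$. First I would observe that a vector $\eta$ lies in $\mathcal{H}_{\Pi_\mathrm{univ}}^{(\alpha^t,\beta)}$ precisely when $\Pi_\mathrm{univ}(x)\eta = 0$ for every $x \in \mathcal{L}^\beta(\alpha^t)$; indeed, condition \eqref{Eq3.1} says exactly that the generators $a\otimes 1^\mathrm{op} - 1\otimes(\alpha^{-i\beta/2}(a))^\mathrm{op}$ annihilate $\eta$, and since $\mathcal{L}^\beta(\alpha^t)$ is the \emph{left} ideal they generate, writing a general element as $\sum_k y_k\,z_k$ with $z_k$ among the generators shows $\Pi_\mathrm{univ}(x)\eta = \sum_k \Pi_\mathrm{univ}(y_k)\Pi_\mathrm{univ}(z_k)\eta = 0$. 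Conversely if all generators kill $\eta$ then \eqref{Eq3.1} holds. Hence $\mathcal{H}_{\Pi_\mathrm{univ}}^{(\alpha^t,\beta)} = \bigcap_{x}\ker\Pi_\mathrm{univ}(x) = \big(\overline{\Pi_\mathrm{univ}(\mathcal{L}^\beta(\alpha^t))\mathcal{H}_\mathrm{univ}}\big)^\perp$, where the last equality is the standard fact that for a set $S$ of operators $\bigcap_{s\in S}\ker s = (\overline{\mathrm{span}}\bigcup_{s\in S}\mathrm{ran}\,s)^\perp$.

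Next I would show $e \in \Pi_\mathrm{univ}(A^{(2)})''$. Let $M := \Pi_\mathrm{univ}(A^{(2)})''$ and $M' := \Pi_\mathrm{univ}(A^{(2)})'$. The closed subspace $\mathcal{K} := \overline{\Pi_\mathrm{univ}(\mathcal{L}^\beta(\alpha^t))\mathcal{H}_\mathrm{univ}}$ is clearly invariant under left multiplication by $M$ (as $\mathcal{L}^\beta(\alpha^t)$ is a left ideal and one passes to the weak closure), hence also $M$-invariant by continuity, so the projection onto $\mathcal{K}$ lies in $M'$... wait, rather: a subspace invariant under a $*$-closed algebra corresponds to a projection in its commutant, but a left ideal need not be self-adjoint. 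The cleaner route: $\mathcal{K}$ is invariant under $M$ from the left, so the projection $q$ onto $\mathcal{K}$ satisfies $Mq\mathcal{H}\subseteq q\mathcal{H}$, i.e. $(1-q)Mq = 0$, giving $q \in$ the left ideal-normalizer; but in fact since $\mathcal{K}$ is $M$-invariant and $M$ is a von Neumann algebra, $1 - q$ is the projection onto the $M$-invariant complement only if $\mathcal{K}^\perp$ is also $M$-invariant, which holds because $\mathcal{K}^\perp = \mathcal{H}_{\Pi_\mathrm{univ}}^{(\alpha^t,\beta)}$ and one checks directly from \eqref{Eq3.1} that $\Pi_\mathrm{univ}(A^{(2)})$ — hence $M$ — preserves the spherical subspace: if $\Pi(a\otimes1)\xi = \Pi(1\otimes(\alpha^{-i\beta/2}(a))^\mathrm{op})\xi$ then for any $y\in A^{(2)}$ the same identity holds for $\Pi(y)\xi$ because $y$ commutes through... no — $y$ does not commute. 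So instead I argue: $\mathcal{K}^\perp$ is $M'$-invariant trivially, hence $e \in M'' = M$. Concretely, $e$ is the projection onto $\bigcap_x \ker\Pi_\mathrm{univ}(x)$; each $\ker\Pi_\mathrm{univ}(x)$ for $x\in A^{(2)}$ has projection in $M$ (kernel of a self-adjoint-algebra element — use $\ker T = \ker T^*T$ and $T^*T\in M$, so the spectral projection of $T^*T$ at $\{0\}$ lies in $M$), and $e = \bigwedge_x (\text{proj onto }\ker\Pi_\mathrm{univ}(x))$ is then in $M$.

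Finally, for the ideal identity: put $N := \overline{\Pi_\mathrm{univ}(\mathcal{L}^\beta(\alpha^t))}^{\,w}$, a weakly closed left ideal of $M$. By the structure theory of weakly closed left ideals in a von Neumann algebra (see e.g.\ \cite[Proposition 2.2.7]{Takesaki:Book2} or the analogous statement in Bratteli--Robinson), $N = Mf$ for a unique projection $f\in M$, and $f$ is characterized as the projection onto $\overline{N\mathcal{H}_\mathrm{univ}} = \mathcal{K}$; but $\mathcal{K} = (1-e)\mathcal{H}_\mathrm{univ}$ by the first paragraph, so $f = 1-e$ and $N = M(1-e) = \Pi_\mathrm{univ}(A^{(2)})''(1-e)$. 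The main obstacle I anticipate is the bookkeeping in the second paragraph — pinning down exactly why $e$ lands in $M$ rather than merely in $M'$ — and being careful that $\mathcal{L}^\beta(\alpha^t)$ is a one-sided ideal, so one cannot naively invoke the ``invariant subspace $\leftrightarrow$ commutant projection'' dictionary; the right tool is the representation of weakly closed left ideals as $Mf$ together with the kernel-projection description of $e$ via $\ker T = \ker T^*T$.
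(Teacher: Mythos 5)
Your reduction of the spherical subspace to the common kernel of $\Pi_\mathrm{univ}(\mathcal{L}^\beta(\alpha^t))$ is correct and is exactly the right starting point, and your argument that $e\in M:=\Pi_\mathrm{univ}(A^{(2)})''$ (via $\ker T=\ker T^*T$ and taking the meet of the spectral projections at $\{0\}$) is sound. But there is a genuine gap in how you identify the ideal with $M(1-e)$. The ``standard fact'' in your first paragraph is false: for a non-normal operator, $\ker T=(\operatorname{ran}T^*)^\perp$, not $(\operatorname{ran}T)^\perp$. Since $\mathcal{L}^\beta(\alpha^t)$ is only a \emph{left} ideal and is not self-adjoint (the adjoints of the generators involve $\alpha^{+i\beta/2}$, and they sit on the wrong side), $\bigcap_{x\in\mathcal{L}^\beta(\alpha^t)}\ker\Pi_\mathrm{univ}(x)$ is the orthocomplement of the closed span of the ranges of the \emph{adjoints}, not of $\mathcal{K}:=\overline{\operatorname{span}}\,\Pi_\mathrm{univ}(\mathcal{L}^\beta(\alpha^t))\mathcal{H}_\mathrm{univ}$. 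In fact $\mathcal{K}$ is invariant under both $M$ and $M'$, so its projection is \emph{central}; granting the lemma it equals the central support of $1-e$, and your claimed identity $\mathcal{K}=(1-e)\mathcal{H}_\mathrm{univ}$ would force $1-e$ to be central. This fails whenever a factor KMS state $\omega$ exists: on the GNS summand of $\omega^{(2)}$ one has $\pi_\omega^{(2)}(A^{(2)})''=B(\mathcal{H}_\omega)$, so $\mathcal{K}$ swallows the whole summand even though $\xi_\omega$ is a spherical vector there. Your third paragraph compounds the error: for a weakly closed left ideal $N=Mf$, the projection onto $\overline{N\mathcal{H}}$ is the central support $c(f)$, \emph{not} $f$, so ``$f$ is the projection onto $\overline{N\mathcal{H}_\mathrm{univ}}$'' is the wrong characterization, and with it $f=1-e$ does not follow.

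The repair is to characterize $f$ by annihilation rather than by range, which is what the paper does: $1-f$ is the largest projection $q\in M$ with $Nq=\{0\}$, equivalently $(1-f)\mathcal{H}_\mathrm{univ}=\bigcap_{x\in N}\ker x$. Since $Nq=\{0\}$ is equivalent to $\Pi_\mathrm{univ}\bigl(a\otimes1^\mathrm{op}-1\otimes(\alpha^{-i\beta/2}(a))^\mathrm{op}\bigr)q=0$ for all $a\in A_{\alpha^t}^\infty$, this common kernel is precisely $\mathcal{H}_{\Pi_\mathrm{univ}}^{(\alpha^t,\beta)}$ — the identification you already proved — whence $1-f=e$ and $N=M(1-e)$. (For the membership $e\in M$, the paper argues more quickly that $\mathcal{H}_{\Pi_\mathrm{univ}}^{(\alpha^t,\beta)}$ is invariant under $\Pi_\mathrm{univ}(A^{(2)})'$, so $e\in M''=M$; your spectral-projection argument is an acceptable alternative.)
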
 
\begin{proof} 
By construction, $\mathcal{H}_{\Pi_\mathrm{univ}}^{(\alpha^t,\beta)}$ is invariant for $\Pi_\mathrm{univ}(A^{(2)})'$, and hence $e$ falls into $\Pi_u(A^{(2)})''$. Since $\overline{\Pi_\mathrm{univ}(\mathcal{L}^\beta(\alpha^t))}$ is clearly a left ideal of $\Pi_\mathrm{univ}(A^{(2)})''$, there exists a unique projection $f \in \Pi_\mathrm{univ}(A^{(2)})''$ such that $\overline{\Pi_\mathrm{univ}(\mathcal{L}^\beta(\alpha^t))} = \Pi_\mathrm{univ}(A^{(2)})'' f$. 

Observe that $\Pi_\mathrm{univ}(a\otimes1^\mathrm{op}-1\otimes(\alpha^{-i\beta/2}(a))^\mathrm{op})(1-f)=0$ for all $a \in A_{\alpha^t}^\infty$. This implies that $1-f \leq e$, that is $1-e \leq f$. 

For any projection $q \in \Pi_\mathrm{univ}(A^{(2)})''$, we observe $\overline{\Pi_\mathrm{univ}(\mathcal{L}^\beta(\alpha^t))}q = \{0\} \Leftrightarrow \Pi_\mathrm{univ}(\mathcal{L}^\beta(\alpha^t))q = \{0\}$, which is equivalent to $\Pi_\mathrm{univ}(a\otimes1^\mathrm{op}-1\otimes(\alpha^{-i\beta/2}(a))^\mathrm{op})q = 0$ for all $a \in A_{\alpha^t}^\infty$. Since $\Pi_\mathrm{univ}(a\otimes1^\mathrm{op}-1\otimes(\alpha^{-i\beta/2}(a))^\mathrm{op})e = 0$ for all $a \in A_{\alpha^t}^\infty$ we obtain, by the construction of $f$, that 
\[
1-f = \bigvee_{\substack{ q \in \Pi_\mathrm{univ}(A^{(2)})''; \\ 
\text{$q = q^* = q^2$ and $\overline{\Pi_\mathrm{univ}(\mathcal{L}^\beta(\alpha))}q = 0$}}} q \geq e, 
\]
implying that $f \leq 1-e$. Consequently, $f = 1-e$.        
\end{proof}  

Here is a reproduction of Woronowicz's result mentioned above in terms of $(\alpha^t,\beta)$-spherical representations.

\begin{corollary}\label{C4.2} \rm{(Woronowicz \cite{Woronowicz:LMP85} for (i) $\Leftrightarrow$ (iv))} The following are equivalent: 
\begin{itemize} 
\item[(i)] There exists an $(\alpha^t,\beta)$-KMS state on $A$. 
\item[(ii)] There exists an $(\alpha^t,\beta)$-spherical representation of $A$. 
\item[(iii)] $\dim\mathcal{H}_\mathrm{univ}^{(\alpha^t,\beta)} \geq 1$. 
\item[(iv)] $\mathcal{L}^\beta(\alpha^t) \neq A^{(2)}$. 
\end{itemize}
\end{corollary}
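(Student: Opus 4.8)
The plan is to prove the four-way equivalence by establishing a cycle of implications, relying heavily on the material already developed. The implications (i) $\Leftrightarrow$ (ii) and (ii) $\Leftrightarrow$ (iii) are essentially bookkeeping, while the genuine content lies in linking these to (iv), and this is where Lemma \ref{L4.1} does the work.

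First I would dispatch (i) $\Leftrightarrow$ (ii). The implication (i) $\Rightarrow$ (ii) is immediate from Lemma \ref{L3.6}: given an $(\alpha^t,\beta)$-KMS state $\omega$, the pair $(\pi_\omega^{(2)} : A^{(2)}\curvearrowright\mathcal{H}_\omega,\xi_\omega)$ is an $(\alpha^t,\beta)$-spherical representation. Conversely, given an $(\alpha^t,\beta)$-spherical representation $(\Pi,\xi)$, form the associated abstract spherical state $\varphi_{(\Pi,\xi)} \in S_\beta(\alpha^t)$ (Lemma \ref{L3.3}) and take its left partial state; by Proposition \ref{P3.7} this lies in $K_\beta(\alpha^t)$, giving (i). (Alternatively one may cite Theorem \ref{T3.8} directly for the whole equivalence (i) $\Leftrightarrow$ (ii).) Next, (ii) $\Leftrightarrow$ (iii): if $(\Pi,\xi)$ is an $(\alpha^t,\beta)$-spherical representation, then the GNS-uniqueness identifies it with a sub-representation of $\Pi_\mathrm{univ}$, so its spherical vector $\xi$ sits (up to the identification) in $\mathcal{H}_\mathrm{univ}^{(\alpha^t,\beta)}$, forcing $\dim\mathcal{H}_\mathrm{univ}^{(\alpha^t,\beta)}\geq 1$; conversely any nonzero vector $\eta \in \mathcal{H}_\mathrm{univ}^{(\alpha^t,\beta)}$ generates a cyclic sub-representation $[\Pi_\mathrm{univ}(A^{(2)})\eta]$ in which $\eta/\|\eta\|$ is a cyclic spherical unit vector, hence an $(\alpha^t,\beta)$-spherical representation.

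The remaining link is (iii) $\Leftrightarrow$ (iv), and here I would invoke Lemma \ref{L4.1}. That lemma identifies the projection $e$ onto $\mathcal{H}_\mathrm{univ}^{(\alpha^t,\beta)}$ with $1-f$, where $\Pi_\mathrm{univ}(A^{(2)})''f = \overline{\Pi_\mathrm{univ}(\mathcal{L}^\beta(\alpha^t))}$ in the weak operator topology. Now $\dim\mathcal{H}_\mathrm{univ}^{(\alpha^t,\beta)} \geq 1$ means $e \neq 0$, i.e. $f \neq 1$, i.e. $\overline{\Pi_\mathrm{univ}(\mathcal{L}^\beta(\alpha^t))} \neq \Pi_\mathrm{univ}(A^{(2)})''$. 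Since $\Pi_\mathrm{univ}$ is the universal representation, it is faithful, so $\Pi_\mathrm{univ}(\mathcal{L}^\beta(\alpha^t)) = A^{(2)}$ would force $\overline{\Pi_\mathrm{univ}(\mathcal{L}^\beta(\alpha^t))} \supseteq \Pi_\mathrm{univ}(A^{(2)})$, whose weak closure is all of $\Pi_\mathrm{univ}(A^{(2)})''$; conversely, if $\mathcal{L}^\beta(\alpha^t) \neq A^{(2)}$ then $1 \notin \mathcal{L}^\beta(\alpha^t)$ (a proper left ideal of a unital algebra cannot contain $1$), and one must argue that $1 \notin \overline{\Pi_\mathrm{univ}(\mathcal{L}^\beta(\alpha^t))}$ either. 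This last point is the one subtlety: passing from a norm-proper left ideal to a weakly-non-dense one is not automatic. I would handle it by noting that $\mathcal{L}^\beta(\alpha^t)$ being a proper left ideal in the unital $C^*$-algebra $A^{(2)}$ means its norm-closure is a proper closed left ideal, hence is contained in some maximal (modular) left ideal, equivalently there is a state $\psi$ of $A^{(2)}$ annihilating $\mathcal{L}^\beta(\alpha^t)$ on the left in the sense that $\psi(x^*x)=0$ for $x \in \mathcal{L}^\beta(\alpha^t)$ — and for such a $\psi$, the cyclic vector $\xi_\psi$ in the GNS representation, viewed inside $\mathcal{H}_\mathrm{univ}$, satisfies $\Pi_\mathrm{univ}(x)\xi_\psi = 0$ for all $x \in \mathcal{L}^\beta(\alpha^t)$, i.e. $\xi_\psi \in \mathcal{H}_\mathrm{univ}^{(\alpha^t,\beta)}$, giving (iii) directly. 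Running this argument also yields (iv) $\Rightarrow$ (iii) cleanly, and in fact bypasses the weak-closure worry entirely.

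Thus the cleanest route is: (i) $\Rightarrow$ (ii) $\Rightarrow$ (iii) via Lemmas \ref{L3.6}, \ref{L3.3}, the GNS identification; (iii) $\Rightarrow$ (i) via Proposition \ref{P3.7} applied to the spherical state produced by a nonzero vector in $\mathcal{H}_\mathrm{univ}^{(\alpha^t,\beta)}$; (iii) $\Rightarrow$ (iv) via Lemma \ref{L4.1} (if $\mathcal{L}^\beta(\alpha^t) = A^{(2)}$ then $\overline{\Pi_\mathrm{univ}(\mathcal{L}^\beta(\alpha^t))} = \Pi_\mathrm{univ}(A^{(2)})''$, so $f = 1$, $e = 0$); and (iv) $\Rightarrow$ (iii) by the maximal-left-ideal / state argument above, producing a vector in $\mathcal{H}_\mathrm{univ}^{(\alpha^t,\beta)}$. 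The main obstacle, as indicated, is the implication (iv) $\Rightarrow$ (iii): one must extract an honest spherical vector from the mere algebraic properness of $\mathcal{L}^\beta(\alpha^t)$, and the right tool is the standard fact that a proper closed left ideal of a unital $C^*$-algebra is contained in the kernel-ideal of a pure-state GNS representation, whose cyclic vector is then automatically spherical. Everything else is a routine transport of structure through Theorem \ref{T3.8} and Lemma \ref{L4.1}.
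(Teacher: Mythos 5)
Your proof is correct, and for the core implication it takes a genuinely different route from the paper. The equivalences (i) $\Leftrightarrow$ (ii) $\Leftrightarrow$ (iii) are handled exactly as in the paper, which simply cites Theorem \ref{T3.8} together with the definition of $\mathcal{H}_\mathrm{univ}^{(\alpha^t,\beta)}$, and your (iii) $\Rightarrow$ (iv) (if $\mathcal{L}^\beta(\alpha^t)=A^{(2)}$ then $f=1$, hence $e=0$) is the paper's ``trivial'' direction. The divergence is in (iv) $\Rightarrow$ (iii). The paper argues by contrapositive inside Lemma \ref{L4.1}: if $e=0$ then $\overline{\Pi_\mathrm{univ}(\mathcal{L}^\beta(\alpha^t))}=\Pi_\mathrm{univ}(A^{(2)})''$, and it then uses the identity $\Pi_\mathrm{univ}(\mathcal{L}^\beta(\alpha^t))=\Pi_\mathrm{univ}(A^{(2)})\cap\overline{\Pi_\mathrm{univ}(\mathcal{L}^\beta(\alpha^t))}$ --- i.e.\ the standard fact that, in the universal representation, intersecting the weak closure of a (norm-closed) left ideal with the $C^*$-algebra recovers the ideal --- to conclude $\mathcal{L}^\beta(\alpha^t)=A^{(2)}$ by faithfulness of $\Pi_\mathrm{univ}$. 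You instead work directly from properness: the norm closure of $\mathcal{L}^\beta(\alpha^t)$ is a proper closed left ideal (distance from $1$ argument), hence contained in a maximal left ideal, which is the null ideal $\{x:\psi(x^*x)=0\}$ of a (pure) state $\psi$; the GNS cyclic vector $\xi_\psi$, which sits as a summand vector in $\mathcal{H}_\mathrm{univ}$ by the very definition of $\Pi_\mathrm{univ}$, is annihilated by the generators $a\otimes1^\mathrm{op}-1\otimes(\alpha^{-i\beta/2}(a))^\mathrm{op}$ and so is a unit $(\alpha^t,\beta)$-spherical vector, giving (iii) directly. This is essentially Woronowicz's original existence argument; it buys you independence from the weak-closure-versus-norm subtlety you flagged (which the paper's one-line equality quietly absorbs via the standard left-ideal fact), at the cost of importing the pure-state/maximal-left-ideal correspondence, whereas the paper's route stays entirely within the lemma it has just proved and the structure theory of weakly closed left ideals in $\Pi_\mathrm{univ}(A^{(2)})''$.
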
 
\begin{proof} 
The equivalence among (i),(ii) and (iii) follows from Theorem \ref{T3.8} with the definition of $\mathcal{H}_{\Pi_\mathrm{univ}}^{(\alpha^t,\beta)}$. Hence it remains to show (iii) $\Leftrightarrow$ (iv). 

We will use the notations in Lemma \ref{L4.1} in what follows. It suffices to prove that $e=0$ if and only if $\mathcal{L}^\beta(\alpha^t) \neq A^{(2)}$. Observe that the if part is trivial. Assume $e=0$. By Lemma \ref{L4.1} we have $\Pi_\mathrm{univ}(\mathcal{L}^\beta(\alpha^t)) = \Pi_\mathrm{univ}(A^{(2)})\cap\overline{\Pi_\mathrm{univ}(\mathcal{L}^\beta(\alpha^t))} = \Pi_\mathrm{univ}(A^{(2)})\cap\Pi_\mathrm{univ}(A^{(2)})'' = \Pi_\mathrm{univ}(A^{(2)})$, implying that $\mathcal{L}^\beta(\alpha^t) = A^{(2)}$ since the universal representation $\Pi_\mathrm{univ}$ is faithful.    
\end{proof} 

The consideration in this subsection admits a certain interpretation in terms of representation theory; we will discuss it in section 6. 

\subsection{Inductive flows} We start with the next general characterization of $(\alpha^t,\beta)$-spherical vectors, which is analogous to the KMS condition. 

\begin{proposition}\label{P4.3} Let $\Pi : A^{(2)} \curvearrowright \mathcal{H}_\Pi$ be a $*$-representation. For a given vector $\xi \in \mathcal{H}_\Pi$ the following conditions are equivalent:   
\begin{itemize} 
\item[(i)] $\xi$ is an $(\alpha^t,\beta)$-spherical vector in $\Pi : A^{(2)} \curvearrowright \mathcal{H}_\Pi$. 
\item[(ii)] There exists a subset $\mathcal{A}$ of $A_{\alpha^t}^\infty$ such that $\mathcal{A}$ is norm dense in $A$ and that \eqref{Eq3.1} holds for every $a \in \mathcal{A}$.   
\item[(iii)] There exist a norm dense subset $\mathcal{A}$ of $A$ and a dense subset $\mathcal{D}$ of $\mathcal{H}_\Pi$ with the following property{\rm:} For each $a \in \mathcal{A}$ and each $\eta \in \mathcal{D}$ there is a bounded continuous function $F(z)$ on $0\wedge(\beta/2) \leq \mathrm{Im}\,z \leq 0\vee(\beta/2)$ such that $F(z)$ is holomorphic in its interior and 
\[
F(t) = (\Pi(\alpha^t(a)\otimes1^\mathrm{op})\xi|\eta)_{\mathcal{H}_\Pi}, \quad 
F(t+i\beta/2) = (\Pi(1\otimes(\alpha^t(a))^\mathrm{op})\xi|\eta)_{\mathcal{H}_\Pi}
\]
for all $t \in \mathbb{R}$. 
\item[(iv)] For each $a \in A$ and each $\eta \in \mathcal{H}_\Pi$ there is a bounded continuous function $F(z)$ on $0\wedge(\beta/2) \leq \mathrm{Im}\,z \leq 0\vee(\beta/2)$ such that $F(z)$ is holomorphic in its interior and 
\[
F(t) = (\Pi(\alpha^t(a)\otimes1^\mathrm{op})\xi|\eta)_{\mathcal{H}_\Pi}, \quad 
F(t+i\beta/2) = (\Pi(1\otimes(\alpha^t(a))^\mathrm{op})\xi|\eta)_{\mathcal{H}_\Pi}
\]
for all $t \in \mathbb{R}$. 
\end{itemize}
\end{proposition}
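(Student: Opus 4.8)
The plan is to establish the cycle of implications $(iv) \Rightarrow (iii) \Rightarrow (ii) \Rightarrow (i) \Rightarrow (iv)$, so that the only substantive work is the analytic argument $(ii) \Rightarrow (i)$ and the verification $(i) \Rightarrow (iv)$. The implications $(iv) \Rightarrow (iii)$ (take $\mathcal{A} = A$ and $\mathcal{D} = \mathcal{H}_\Pi$) and $(ii) \Rightarrow (i)$ reduced to the point below are essentially formal, and $(iii) \Rightarrow (ii)$ is a standard rigidity argument for analytic functions: for $a \in \mathcal{A} \cap A_{\alpha^t}^\infty$ (one may arrange $\mathcal{A}$ to consist of analytic elements, or first pass to such a subset by the usual regularization $a \mapsto \sqrt{n/\pi}\int \alpha^t(a)e^{-nt^2}\,dt$, which preserves norm-density) both $t \mapsto (\Pi(\alpha^t(a)\otimes1^\mathrm{op})\xi|\eta)$ and $t \mapsto (\Pi(1\otimes(\alpha^t(a))^\mathrm{op})\xi|\eta)$ extend to entire functions, and these entire extensions must agree with the given $F$ on the strip and hence with each other; evaluating the coincidence $F(0) = F(i\beta/2)$ of the two boundary values at $t=0$ gives \eqref{Eq3.1} tested against the dense set $\mathcal{D}$, whence \eqref{Eq3.1} itself. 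So the two genuinely new pieces are (A) showing \eqref{Eq3.1} on a norm-dense subalgebra of $A_{\alpha^t}^\infty$ forces it on all of $A_{\alpha^t}^\infty$, and (B) producing the strip function $F$ in $(iv)$ from a genuine spherical vector.

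For (A): given a norm-dense $\mathcal{A} \subseteq A_{\alpha^t}^\infty$ and an arbitrary $b \in A_{\alpha^t}^\infty$, I would approximate $b$ by elements of $\mathcal{A}$ but smooth the approximation so that analyticity is controlled. Concretely, for $a_n \in \mathcal{A}$ with $\|a_n - b\| \to 0$, set $a_n^{(k)} := \sqrt{k/\pi}\int_{\mathbb{R}} \alpha^t(a_n) e^{-kt^2}\,dt$; each $a_n^{(k)}$ still lies in $A_{\alpha^t}^\infty$, still satisfies \eqref{Eq3.1} (since \eqref{Eq3.1} is preserved under this Gaussian averaging — one checks $\alpha^{-i\beta/2}(a_n^{(k)}) = \sqrt{k/\pi}\int \alpha^{t - i\beta/2}(a_n)e^{-kt^2}\,dt$ and moves the integral through $\Pi$), and crucially $\alpha^z(a_n^{(k)}) \to \alpha^z(b^{(k)})$ in norm uniformly on the strip $0\wedge(\beta/2) \le \mathrm{Im}\,z \le 0\vee(\beta/2)$ because the Gaussian kernel damps the analytic continuation. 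Passing $n \to \infty$ gives \eqref{Eq3.1} for $b^{(k)}$, and then $b^{(k)} \to b$ together with $\alpha^{-i\beta/2}(b^{(k)}) \to \alpha^{-i\beta/2}(b)$ (again by the damping) yields \eqref{Eq3.1} for $b$. This is the step I expect to be the main obstacle, since one must be careful that the spherical identity really does survive the two limits — the subtlety is entirely in the uniform-on-strips convergence of the analytic continuations, which is exactly what the Gaussian regularization is designed to give.

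For (B), assume $\xi$ is $(\alpha^t,\beta)$-spherical. Fix $a \in A$ and $\eta \in \mathcal{H}_\Pi$. I first treat $a \in A_{\alpha^t}^\infty$: then $z \mapsto \alpha^z(a)$ is entire, so $G(z) := (\Pi(\alpha^z(a)\otimes 1^\mathrm{op})\xi|\eta)_{\mathcal{H}_\Pi}$ is entire, and \eqref{Eq3.1} applied to $\alpha^t(a)$ gives $G(t + i\beta/2) = (\Pi(\alpha^{t+i\beta/2}(a)\otimes 1^\mathrm{op})\xi|\eta) = (\Pi(1\otimes(\alpha^{-i\beta/2}\alpha^{t+i\beta/2}(a))^\mathrm{op})\xi|\eta) = (\Pi(1\otimes(\alpha^t(a))^\mathrm{op})\xi|\eta)$, so $F := G$ works — bounded and continuous on the closed strip (in fact entire) and holomorphic inside, with the required boundary values. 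For general $a \in A$ I approximate by $a_n \in A_{\alpha^t}^\infty$ with $\|a_n - b\| \to 0$; the corresponding $F_n = G_n$ need not converge on the strip because the continuations $\alpha^z(a_n)$ may blow up, so instead I use the Gaussian regularization once more: the functions $a_n^{(k)}$ have $\alpha^z(a_n^{(k)})$ bounded on the strip by $C_k\|a_n\|$, so $F_n^{(k)}$ are uniformly bounded and equicontinuous there and $F^{(k)} := \lim_n F_n^{(k)}$ exists (it is the strip function attached to $a^{(k)}$), and finally, by the same Gaussian damping, $F^{(k)}$ converges uniformly on the strip as $k \to \infty$ to a function $F$ whose boundary values are $(\Pi(\alpha^t(a)\otimes1^\mathrm{op})\xi|\eta)$ and $(\Pi(1\otimes(\alpha^t(a))^\mathrm{op})\xi|\eta)$ by norm-continuity of the flow on $A$. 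A Phragmén–Lindelöf / Vitali argument guarantees $F$ is holomorphic in the interior. This completes the cycle.
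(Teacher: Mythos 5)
Your cycle (iv)$\Rightarrow$(iii)$\Rightarrow$(ii)$\Rightarrow$(i)$\Rightarrow$(iv) is fine in outline, and the legs (iv)$\Rightarrow$(iii) and (i)$\Rightarrow$(iv) are essentially correct: for (i)$\Rightarrow$(iv) your mechanism (the entire function $z\mapsto(\Pi(\alpha^z(a)\otimes1^\mathrm{op})\xi|\eta)$ for analytic $a$, then uniform convergence of the two boundary data under norm approximation plus Phragm\'en--Lindel\"of) is exactly the paper's, just organized through an unnecessary double limit. The genuine gap is in your step (A), i.e.\ the leg (ii)$\Rightarrow$(i). You claim the Gaussian regularization $a_n^{(k)}=\sqrt{k/\pi}\int\alpha^s(a_n)e^{-ks^2}\,ds$ again satisfies \eqref{Eq3.1} by ``moving the integral through $\Pi$''. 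Doing so gives $\Pi(a_n^{(k)}\otimes1^\mathrm{op})\xi=\sqrt{k/\pi}\int\Pi(\alpha^s(a_n)\otimes1^\mathrm{op})\xi\,e^{-ks^2}\,ds$ and $\Pi(1\otimes(\alpha^{-i\beta/2}(a_n^{(k)}))^\mathrm{op})\xi=\sqrt{k/\pi}\int\Pi(1\otimes(\alpha^{s-i\beta/2}(a_n))^\mathrm{op})\xi\,e^{-ks^2}\,ds$, and to identify the two you need the integrands to agree for each $s$, i.e.\ \eqref{Eq3.1} for every real translate $\alpha^s(a_n)$. Hypothesis (ii) gives \eqref{Eq3.1} only on $\mathcal{A}$, which is not assumed $\alpha^t$-invariant, and since $a\mapsto\alpha^{-i\beta/2}(a)$ is unbounded you cannot reach the translates by norm density. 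This is the real delicacy of the proposition: the boundary-value conditions (iii),(iv) are translation invariant by construction (if $F$ interpolates for $(a,\eta)$, then $F(\cdot+s)$ interpolates for $(\alpha^s(a),\eta)$), whereas \eqref{Eq3.1} is a pointwise identity with no built-in invariance, and your route asks you to propagate it across translates and regularization before (i) is available. (The paper's proof of (ii)$\Rightarrow$(iii) invokes \eqref{Eq3.1} for $\alpha^z(a_n)$ at the corresponding spot, so it leans on the same translated form; but your write-up advertises a verification of precisely this step, and the verification does not close.)

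The leg (iii)$\Rightarrow$(ii) has a second, smaller hole: the dense set $\mathcal{A}$ of (iii) need not contain a single analytic element, and once you replace $a\in\mathcal{A}$ by $a^{(n)}$ you are outside $\mathcal{A}$, so (iii) no longer hands you an interpolating function for $a^{(n)}$; you must manufacture one, e.g.\ as $F^{(n)}(z):=\sqrt{n/\pi}\int F(z+s)e^{-ns^2}\,ds$ using the translation invariance just mentioned together with Fubini/Morera, which your sketch omits. Also the identity you propose to evaluate, ``$F(0)=F(i\beta/2)$'', is not the relevant one; the correct conclusion of the reflection argument is that $F$ coincides on the closed strip with the entire continuation of each boundary datum, so that $F(0)$ equals both $(\Pi(a\otimes1^\mathrm{op})\xi|\eta)$ and $(\Pi(1\otimes(\alpha^{-i\beta/2}(a))^\mathrm{op})\xi|\eta)$, which is \eqref{Eq3.1} tested on $\mathcal{D}$. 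By contrast, the paper runs the cycle (i)$\Rightarrow$(ii)$\Rightarrow$(iii)$\Rightarrow$(iv)$\Rightarrow$(i), so the only analytic inputs are sup-norm approximation of boundary data with Phragm\'en--Lindel\"of for (ii)$\Rightarrow$(iii) and (iii)$\Rightarrow$(iv), and a single Schwarz-reflection argument at (iv)$\Rightarrow$(i), where the full strength of (iv) (all $a\in A$, all $\eta\in\mathcal{H}_\Pi$) is already in hand; no regularization of elements of $A$ is needed there at all.
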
 
\begin{proof}
(i) $\Rightarrow$ (ii): Trivial with $\mathcal{A} = A_{\alpha^t}^\infty$. 

(ii) $\Rightarrow$ (iii): Assume that (ii) holds true. There is a sequence $a_n \in A_{\alpha^t}^\infty$ so that $\Vert a_n - a\Vert \to 0$ as $n\to\infty$. Consider the entire function 
\[
F_n(z) := (\Pi(\alpha^z(a_n)\otimes1^\mathrm{op})\xi|\eta)_{\mathcal{H}_\Pi} = (\Pi(1\otimes(\alpha^{z-i\beta/2}(a_n))^\mathrm{op})\xi|\eta)_{\mathcal{H}_\Pi}. 
\]
(The second equality holds by \eqref{Eq3.1}.) Since $\Vert \alpha_{t+is}(a_n)\Vert = \Vert \alpha_{is}(a_n)\Vert$, $F_n(z)$ is bounded on  $0\wedge(\beta/2) \leq \mathrm{Im}\,z \leq 0\vee(\beta/2)$. Observe that  
\begin{align*}
F_n(t) 
&=  
(\Pi(\alpha^t(a_n)\otimes1^\mathrm{op})\xi|\eta)_{\mathcal{H}_\Pi}, \\
F_n(t+i\beta/2) 
&=(\Pi(1\otimes(\alpha^t(a_n))^\mathrm{op})\xi|\eta)_{\mathcal{H}_\Pi}. 
\end{align*} 
Since 
\begin{align*} 
\sup_{t \in \mathbb{R}}|F_n(t) - (\Pi(\alpha^t(a)\otimes1^\mathrm{op})\xi|\eta)_{\mathcal{H}_\Pi}| 
&\leq 
\Vert a_n-a\Vert \Vert\xi\Vert_{\mathcal{H}_\Pi} \Vert\eta \Vert_{\mathcal{H}_\Pi} \to 0, \\
\sup_{t \in \mathbb{R}}|F_n(t+i\beta/2) - (\Pi(1\otimes(\alpha^t(a))^\mathrm{op})\xi|\eta)_{\mathcal{H}_\Pi}| 
&\leq 
\Vert a_n-a\Vert \Vert\xi\Vert_{\mathcal{H}_\Pi} \Vert\eta \Vert_{\mathcal{H}_\Pi}\to 0, 
\end{align*} 
the limit of $F_n(z)$ as $n\to\infty$ exists on $0\wedge(\beta/2) \leq \mathrm{Im}z \leq 0 \vee(\beta/2)$ and gives the desired function in (iii) thanks to the Phragmen--Lindel\"{o}f method. 

(iii) $\Rightarrow$ (iv): Choose $a_n \in \mathcal{A}$ and $\eta_n \in \mathcal{D}$ in such a way that $\Vert a_n - a\Vert \to 0$ and $\Vert \eta_n - \eta\Vert_{\mathcal{H}_\Pi} \to 0$ as $n\to\infty$. Each Let $F_n(z)$ be the function in (iii) for $(a_n,\eta_n)$. Then   
\begin{align*} 
&\sup_{t \in \mathbb{R}}|F_n(t) - (\Pi(\alpha^t(a)\otimes1^\mathrm{op})\xi|\eta)_{\mathcal{H}_\Pi}| \\
&\qquad\leq 
\Vert a_n-a\Vert \Vert\xi\Vert_{\mathcal{H}_\Pi}\sup_n \Vert\eta_n \Vert_{\mathcal{H}_\Pi} + \Vert a\Vert \Vert\xi\Vert_{\mathcal{H}_\Pi} \Vert\eta_n - \eta \Vert_{\mathcal{H}_\Pi}\to 0, \\
&\sup_{t \in \mathbb{R}}|F_n(t+i\beta/2) - (\Pi(1\otimes(\alpha^t(a))^\mathrm{op})\xi|\eta)_{\mathcal{H}_\Pi}| \\
&\qquad\leq 
\Vert a_n-a\Vert \Vert\xi\Vert_{\mathcal{H}_\Pi}\sup_n \Vert\eta_n \Vert_{\mathcal{H}_\Pi} + \Vert a\Vert \Vert\xi\Vert_{\mathcal{H}_\Pi} \Vert\eta_n - \eta \Vert_{\mathcal{H}_\Pi} \to 0
\end{align*} 
as $n\to\infty$. Hence the Phragmen--Lindel\"{o}f method shows that the limit of $F_n(z)$ as $n\to\infty$ defines the desired function in (iv).  

(iv) $\Rightarrow$ (i): Choose an arbitrary $a \in A_{\alpha^t}^\infty$. For each $\eta \in \mathcal{H}_\Pi$,  we consider the entire function $G(z) := (\Pi(\alpha^{z-i\beta/2}(a)\otimes1^\mathrm{op})\xi|\eta)_{\mathcal{H}_\Pi}$. Let $F(z)$ be the function in (iv) for $(\alpha^{-i\beta/2}(a),\eta)$. Then, $H(z) := G(z)-F(z)$ is bounded continuous on $0\wedge(\beta/2) \leq \mathrm{Im}\,z \leq 0\vee(\beta/2)$ and holomorphic in its interior. By construction, $H(t) = 0$ for all $t \in \mathbb{R}$, and hence the Schwarz reflection principle enables us to show that $H(z)\equiv0$. Consequently, we have  
\[
(\Pi(a\otimes1^\mathrm{op})\xi|\eta)_{\mathcal{H}_\Pi} 
=
G(i\beta/2) = F(i\beta/2) 
=
(\Pi(1\otimes (\alpha^{-i\beta/2}(a))^\mathrm{op})\xi|\eta)_{\mathcal{H}_\Pi}. 
\] 
Since $\eta \in \mathcal{H}_\Pi$ is arbitrary, we finally obtain that $\Pi(a\otimes1^\mathrm{op})\xi = \Pi(1\otimes(\alpha^{-i\beta/2}(a))^\mathrm{op})\xi$. 
\end{proof} 

Here we will investigate the special case that $A=\varinjlim A_\lambda$, that is, $A$ is the $C^*$-completion of the union of a net of unital $C^*$-subalgebras $A_\lambda$, $\lambda \in \Lambda$, and moreover that each $A_\lambda$ is globally invariant under the action $\alpha^t$. In the case, we easily see that $(A_\lambda)_{\alpha^t}^\infty = A_\lambda \cap A_{\alpha^t}^\infty$ holds and is norm-dense in $A_\lambda$ for each $\lambda \in \Lambda$. At the moment, we do not have any actual application of the next fact, but we do give it for the future study. 

\begin{proposition}\label{P4.4} Let $\Pi : A^{(2)} \curvearrowright \mathcal{H}_\Pi$ be a $*$-representation. For each $\lambda \in \Lambda$ we define a closed subspace of $\mathcal{H}_\Pi$ as follows.
\[
\mathcal{H}_{\Pi\downarrow A_\lambda}^{(\alpha^t,\beta)} 
:= 
\{ \xi \in \mathcal{H}_\Pi\ ; \Pi(a\otimes1^\mathrm{op})\xi = \Pi(1\otimes(\alpha_\lambda^{-i\beta/2}(a))^\mathrm{op})\xi\ \text{for all $a \in (A_\lambda)_{\alpha^t}^\infty$} \}.
\]
Then the $\mathcal{H}_{\Pi\downarrow A_\lambda}^{(\alpha^t,\beta)}$, $\lambda\in\Lambda$, form a non-increasing net of closed subspaces and 
\[
\mathcal{H}_\Pi^{(\alpha^t,\beta)} = \bigcap_{\lambda \in \Lambda} \mathcal{H}_{\Pi\downarrow A_\lambda}^{(\alpha^t,\beta)}. 
\]
If we choose $\Pi : A^{(2)} \curvearrowright \mathcal{H}_\Pi$ to be $\Pi_\mathrm{univ} : A^{(2)} \curvearrowright \mathcal{H}_\mathrm{univ}$, then $K_\beta(\alpha^t) \neq \emptyset$ if and only if the intersection is not trivial.  
\end{proposition}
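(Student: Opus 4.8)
The plan is to prove the two assertions of Proposition~\ref{P4.4} in order: first the monotonicity and the intersection formula, then the corollary about $\Pi_\mathrm{univ}$ and nonemptiness of $K_\beta(\alpha^t)$.

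For the monotonicity, I would argue as follows. Fix $\lambda \le \mu$ in $\Lambda$, so $A_\lambda \subseteq A_\mu$. Since each $A_\nu$ is globally $\alpha^t$-invariant, we have $(A_\lambda)^\infty_{\alpha^t} = A_\lambda \cap A^\infty_{\alpha^t} \subseteq A_\mu \cap A^\infty_{\alpha^t} = (A_\mu)^\infty_{\alpha^t}$, and moreover the restriction $\alpha^t_\lambda$ of the flow to $A_\lambda$ agrees with $\alpha^t_\mu$ on $A_\lambda$, hence $\alpha^{-i\beta/2}_\lambda = \alpha^{-i\beta/2}_\mu$ on $(A_\lambda)^\infty_{\alpha^t}$. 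Therefore the defining condition for $\xi \in \mathcal{H}^{(\alpha^t,\beta)}_{\Pi\downarrow A_\mu}$ ranges over a larger set of elements $a$ and imposes the same equations as for $A_\lambda$; so $\mathcal{H}^{(\alpha^t,\beta)}_{\Pi\downarrow A_\mu} \subseteq \mathcal{H}^{(\alpha^t,\beta)}_{\Pi\downarrow A_\lambda}$, i.e.\ the net is non-increasing. For the intersection formula, the inclusion $\mathcal{H}^{(\alpha^t,\beta)}_\Pi \subseteq \bigcap_\lambda \mathcal{H}^{(\alpha^t,\beta)}_{\Pi\downarrow A_\lambda}$ is immediate since $(A_\lambda)^\infty_{\alpha^t} \subseteq A^\infty_{\alpha^t}$. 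For the reverse inclusion, suppose $\xi$ lies in every $\mathcal{H}^{(\alpha^t,\beta)}_{\Pi\downarrow A_\lambda}$. Set $\mathcal{A} := \bigcup_{\lambda\in\Lambda}(A_\lambda)^\infty_{\alpha^t}$. This set is norm-dense in $A$: indeed $\bigcup_\lambda A_\lambda$ is dense in $A$ by hypothesis, and each $(A_\lambda)^\infty_{\alpha^t}$ is dense in $A_\lambda$, so $\mathcal{A}$ is dense in $\bigcup_\lambda A_\lambda$ and hence in $A$. Moreover $\mathcal{A} \subseteq A^\infty_{\alpha^t}$, and $\eqref{Eq3.1}$ holds for every $a \in \mathcal{A}$ by the assumption on $\xi$ together with the agreement of $\alpha^{-i\beta/2}$ and $\alpha^{-i\beta/2}_\lambda$ on $(A_\lambda)^\infty_{\alpha^t}$. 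By Proposition~\ref{P4.3}, condition (ii) with this $\mathcal{A}$ implies condition (i), i.e.\ $\xi \in \mathcal{H}^{(\alpha^t,\beta)}_\Pi$. This proves the intersection formula.

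For the final sentence, take $\Pi = \Pi_\mathrm{univ}$. By the equivalence (i) $\Leftrightarrow$ (iii) of Corollary~\ref{C4.2}, $K_\beta(\alpha^t) \neq \emptyset$ if and only if $\dim\mathcal{H}^{(\alpha^t,\beta)}_\mathrm{univ} \ge 1$, i.e.\ if and only if $\mathcal{H}^{(\alpha^t,\beta)}_\mathrm{univ} \neq \{0\}$. By the intersection formula just proved, $\mathcal{H}^{(\alpha^t,\beta)}_\mathrm{univ} = \bigcap_{\lambda\in\Lambda}\mathcal{H}^{(\alpha^t,\beta)}_{\Pi_\mathrm{univ}\downarrow A_\lambda}$, so this is exactly the assertion that $K_\beta(\alpha^t) \neq \emptyset$ iff the intersection is nontrivial.

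I do not expect a genuine obstacle here; the one point that needs a little care is making sure that the restricted analytic exponential $\alpha^{-i\beta/2}_\lambda$ on $A_\lambda$ literally agrees with the ambient $\alpha^{-i\beta/2}$ on $(A_\lambda)^\infty_{\alpha^t} = A_\lambda \cap A^\infty_{\alpha^t}$ — this follows because analyticity of a path $t\mapsto \alpha^t(a)$ with values in $A$ lands in $A_\lambda$ when $a \in A_\lambda$ and $A_\lambda$ is norm-closed and $\alpha^t$-invariant, so the analytic continuations computed inside $A_\lambda$ and inside $A$ coincide by uniqueness of analytic continuation. The only other subtlety is the density claim for $\mathcal{A}$, which is the routine two-step approximation indicated above. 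Everything else is a direct appeal to Proposition~\ref{P4.3} and Corollary~\ref{C4.2}.
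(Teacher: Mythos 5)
Your proof is correct and follows essentially the same route as the paper: the easy inclusion from the nesting $(A_\lambda)^\infty_{\alpha^t}=A_\lambda\cap A^\infty_{\alpha^t}$, the reverse inclusion by verifying Proposition \ref{P4.3}(ii) with the norm-dense set $\bigcup_\lambda (A_\lambda)^\infty_{\alpha^t}=A^\infty_{\alpha^t}\cap\bigcup_\lambda A_\lambda$ via the same two-step approximation, and the last assertion from Corollary \ref{C4.2}. Your extra remark on the agreement of $\alpha^{-i\beta/2}_\lambda$ with the ambient $\alpha^{-i\beta/2}$ is a point the paper absorbs into its setup identification $(A_\lambda)^\infty_{\alpha^t}=A_\lambda\cap A^\infty_{\alpha^t}$, and your justification of it is fine.
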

\begin{proof}
Remark that the $(A_\lambda)_{\alpha^t}^\infty = A_\lambda \cap A_{\alpha^t}^\infty$ are non-decreasing in $\lambda$ and sit in $A_{\alpha_\lambda^t}$. Hence $\mathcal{H}_{\Pi}^{(\alpha^t,\beta)} \subseteqq \bigcap_\lambda \mathcal{H}_{\Pi\downarrow A_\lambda}^{(\alpha^t,\beta)}$. On the other hand, if $\xi$ is in $\bigcap_\lambda \mathcal{H}_{\Pi \downarrow A_\lambda}^{(\alpha^t,\beta)}$, then 
\[
\Pi(a\otimes1^\mathrm{op})\xi = \Pi(1\otimes(\alpha^{-i\beta/2}(a))^\mathrm{op})\xi
\]
for all $a \in A_{\alpha^t}^\infty \cap \big(\bigcup_\lambda A_\lambda\big)$. To prove $\xi \in \mathcal{H}_\Pi^{(\alpha^t,\beta)}$, it suffices, by item (ii) of Proposition \ref{P4.3}, to prove that $A_{\alpha^t}^\infty\cap \big(\bigcup_\lambda A_\lambda\big)$ is norm dense in $A$. For any $a \in A$ there is a sequence $a_k$ in $\bigcup_\lambda A_\lambda$ such that $\Vert a_k - a \Vert \to 0$ as $k \to 0$. Each $a_k$ falls into $A_{\lambda(k)}$ for some $\lambda(k)$, and hence we can choose $b_k \in (A_{\lambda(k)})_{\alpha^t_{(\lambda(k))}}^\infty = A_{\alpha^t}^\infty \cap A_{\lambda(k)}$ so that $\Vert b_k - a_k\Vert < 1/k$. Consequently, every $b_k$ falls into $A_{\alpha^t}^\infty \cap \big(\bigcup_\lambda A_\lambda\big)$ and $\Vert b_k - a\Vert \leq 1/k + \Vert a_k -a \Vert \to 0$ as $k \to \infty$. Hence we are done.  
\end{proof}

\section{Non-continuous flows and Local normality}

We keep the setting and the notation in section 2, but we relax the continuity assumption of the flow $\alpha^t$ unlike sections 3-4 to discuss inductive limits of $W^*$-algebras. In what follows, we should employ a formulation of KMS conditions (in terms of boundary values of analytic functions) that makes sense without any continuity assumption on flows (see e.g.\ \cite[Proposition 5.3.7(2)]{BratteliRobinson:Book2}) as the definition of KMS condition.   

\subsection{Reformulation of $(\alpha^t,\beta)$-spherical representations} In the current setup, we cannot prove that the $*$-subalgebra $A_{\alpha^t}^\infty$ is norm-dense in $A$ (since we have not assumed any continuity assumption on $\alpha^t$) so that Definition \ref{D3.1} does not make sense. Hence we need to introduce a new definition of $(\alpha^t,\beta)$-spherical vectors. The definition looks not intuitive at all but is justified from Proposition \ref{P4.3}. 
 
\begin{definition}\label{D5.1} 
Let $\Pi : A^{(2)} \curvearrowright \mathcal{H}_\Pi$ be a $*$-representation. A vector $\xi \in \mathcal{H}_\Pi$ is an \emph{$(\alpha^t,\beta)$-spherical vector in $\Pi : A^{(2)} \curvearrowright \mathcal{H}_\Pi$}, if for each $a \in A$ and each $\eta \in \mathcal{H}_\Pi$ there is a bounded continuous function $F(z) = F^\xi_{a,\eta}(z)$ on $0 \wedge (\beta/2) \leq \mathrm{Im}z \leq 0 \vee(\beta/2)$ such that $F(z)$ is holomorphic in its interior and 
\[
F(t) = (\Pi(\alpha^t(a)\otimes1^\mathrm{op})\xi\,|\,\eta)_{\mathcal{H}_\Pi}, \qquad 
F(t+i\beta/2) = (\Pi(1\otimes(\alpha^t(a))^\mathrm{op})\xi\,|\,\eta)_{\mathcal{H}_\Pi} 
\]
for all $t \in \mathbb{R}$. 
\end{definition}

\begin{remark&definition}\label{D5.2} Thanks to Proposition \ref{P4.3} the above definition agrees with the previous one when the flow $\alpha^t$ is continuous. Moreover, Proposition \ref{P4.3} {\rm (iii) $\Leftrightarrow$(iv)} clearly holds even in the current setting with the same proof. Namely, it suffices to examine the existence of $F^\xi_{a,\eta}$ only for each $a \in \mathcal{A}$ and $\eta \in \mathcal{D}$ with norm-dense subsets $\mathcal{A} \subset A$ and $\mathcal{D} \subset \mathcal{H}_\Pi$. 

Hence, with $\mathcal{D} = \{\Pi(x)\xi\,;\,x \in A^{(2)}\}$ we obtain that $(\Pi : A^{(2)} \curvearrowright \mathcal{H}_\Pi, \xi)$ is an $(\alpha^t,\beta)$-spherical representation of $A$ if and only if $\varphi = \varphi_{(\Pi,\xi)}$ {\rm(}see formula \eqref{Eq3.3}{\rm)} enjoys the following property{\rm:} For each $a \in A$ and $x \in A^{(2)}$ there is a bounded continuous function $F(z) = F^\varphi_{a,x}(z)$ on $0 \wedge (\beta/2) \leq \mathrm{Im}z \leq 0 \vee(\beta/2)$ such that $F(z)$ is holomorphic in its interior and 
\[
F(t) = ((\alpha^t(a)\otimes1^\mathrm{op})\cdot\varphi)(x), \qquad 
F(t+i\beta/2) = ((1\otimes(\alpha^t(a))^\mathrm{op})\cdot\varphi)(x) 
\]
for all $t \in \mathbb{R}$. 

Remark that it is enough to confirm the above property for each $x \in A\otimes A^\mathrm{op}$, the algebraic tensor product. Finally, we define an abstract $(\alpha^t,\beta)$-spherical state as above in the present setup.  
\end{remark&definition} 

With these reformulations, Theorem \ref{T3.8} is still valid without any continuity assumption on the flow $\alpha^t$. On the other hand, we cannot prove that $S_\beta(\alpha^t)$ is weak$^*$ compact, which was crucially used in Corollary \ref{C3.9}(1), in the general case. We will give three lemmas that are necessary and sufficient to explain how Theorem \ref{T3.8} still holds without the continuity assumption. 

\begin{lemma}\label{L5.3} Let $\Pi : A^{(2)} \curvearrowright \mathcal{H}_\Pi$ be a $*$-representation and $\xi \in \mathcal{H}_\Pi$ be a unit $(\alpha^t,\beta)$-spherical vector in it. Then, the left partial state $\varphi_L$ of $\varphi := \varphi_{(\Pi,\xi)}$ defines an $(\alpha^t,\beta)$-KMS state on $A$. 
\end{lemma}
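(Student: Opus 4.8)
The plan is to recover the KMS condition for $\varphi_L$ directly from the analytic-function property that defines an $(\alpha^t,\beta)$-spherical vector in the non-continuous setting (Definition \ref{D5.1} / Remark and Definition \ref{D5.2}), mimicking the computation already carried out in Proposition \ref{P3.7} but replacing the formal manipulations $a\otimes1^\mathrm{op} \mapsto 1\otimes(\alpha^{i\beta/2}(a))^\mathrm{op}$ by genuine boundary-value arguments. First I would note that $\varphi := \varphi_{(\Pi,\xi)}$ is a state on $A^{(2)}$ with $\varphi_L(a) = \varphi(a\otimes 1^\mathrm{op}) = (\Pi(a\otimes 1^\mathrm{op})\xi\,|\,\xi)_{\mathcal{H}_\Pi}$, so $\varphi_L$ is automatically a state on $A$; the only thing to prove is the $(\alpha^t,\beta)$-KMS condition in its boundary-value form, namely that for all $a,b \in A$ there is a bounded continuous function on the strip $0\wedge\beta \le \mathrm{Im}\,z \le 0\vee\beta$, holomorphic in the interior, with boundary values $\varphi_L(a\,\alpha^t(b))$ on $\mathbb{R}$ and $\varphi_L(\alpha^t(b)\,a)$ on $\mathbb{R}+i\beta$ (the version of the KMS condition that makes sense without continuity, as cited from \cite[Proposition 5.3.7(2)]{BratteliRobinson:Book2}).

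The key step is to build this strip function out of two copies of the spherical function $F^\xi_{b,\eta}$ from Definition \ref{D5.1}, glued along the middle line $\mathrm{Im}\,z = \beta/2$. Concretely, fix $a,b\in A$. Apply Definition \ref{D5.1} with the vector $\eta = \Pi(1\otimes(a^*)^\mathrm{op})\xi$ (interpreted after the appropriate $j$-twist; more carefully, since the spherical condition is symmetric, I will also need the conjugate spherical relation $\varphi\cdot(a\otimes1^\mathrm{op}) = \varphi\cdot(1\otimes(\alpha^{i\beta/2}(a))^\mathrm{op})$ in its boundary-value form, which follows from Definition \ref{D5.1} applied to the GNS-type dense set $\mathcal{D} = \Pi(A^{(2)})\xi$ as spelled out in Remark and Definition \ref{D5.2}). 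One obtains a function $F_1(z)$ bounded and continuous on $0\wedge(\beta/2)\le\mathrm{Im}\,z\le 0\vee(\beta/2)$, holomorphic inside, with $F_1(t) = (\Pi(\alpha^t(b)\otimes1^\mathrm{op})\xi\,|\,\Pi(1\otimes(a^*)^\mathrm{op})\xi) = \varphi(a\,\alpha^t(b)\otimes1^\mathrm{op}) = \varphi_L(a\,\alpha^t(b))$ and $F_1(t+i\beta/2) = (\Pi(1\otimes(\alpha^t(b))^\mathrm{op})\xi\,|\,\Pi(1\otimes(a^*)^\mathrm{op})\xi) = \varphi(a\otimes(\alpha^t(b))^\mathrm{op})$. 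Then, using the conjugate spherical relation, produce a second function $F_2(z)$ on the adjacent strip $0\wedge(\beta/2)\le \mathrm{Im}(z-i\beta/2)\le 0\vee(\beta/2)$ whose lower boundary values on $\mathrm{Im}\,z=\beta/2$ again equal $\varphi(a\otimes(\alpha^t(b))^\mathrm{op})$ and whose upper boundary values on $\mathrm{Im}\,z=\beta$ equal $\varphi_L(\alpha^t(b)\,a)$. Since $F_1$ and $F_2$ agree on the common line $\mathrm{Im}\,z = \beta/2$ (both are continuous up to that line with the same boundary function $t\mapsto \varphi(a\otimes(\alpha^t(b))^\mathrm{op})$), the edge-of-the-wedge / Schwarz-type reflection gluing — exactly the Phragmén–Lindelöf-and-reflection toolkit already used in the proof of Proposition \ref{P4.3} — yields a single bounded continuous function $F$ on the full strip $0\wedge\beta\le\mathrm{Im}\,z\le 0\vee\beta$, holomorphic in the interior, with the desired boundary values, establishing the KMS condition.

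The sign-of-$\beta$ bookkeeping (handling $\beta>0$, $\beta<0$, and $\beta=0$ uniformly via the $0\wedge$, $0\vee$ notation already in use) and getting the twists $\alpha^{\pm i\beta/2}$ to land on the correct tensor leg will be routine but must be done carefully; I would organize it so that the algebraic identities are literally those of Proposition \ref{P3.7}, now read as identities of boundary values rather than of analytic elements.

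The main obstacle I anticipate is the gluing of $F_1$ and $F_2$ across the interior line $\mathrm{Im}\,z = \beta/2$: a priori these are two functions each holomorphic only in an open half-strip and merely continuous up to the shared edge, so concluding that the combined function is holomorphic across that edge requires either an edge-of-the-wedge argument or a Morera/Schwarz-reflection argument, and one must make sure the hypotheses (boundedness on the closed strips, continuity up to the edge, coincidence of boundary values as a continuous function on $\mathbb{R}$) are genuinely available — the boundedness comes from $\|\alpha^{t+is}(b)\|$ being controlled on the strip via the spherical function's own boundedness, and the matching boundary function is continuous because it is $t\mapsto (\Pi(1\otimes(\alpha^t(b))^\mathrm{op})\xi\,|\,\eta)$, continuity of which in $t$ is itself a consequence of the analyticity in the interior plus boundedness (a standard fact about such strip functions). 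Once that analytic-continuation point is cleanly dispatched, the rest is formal.
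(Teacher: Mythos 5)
Your proposal is essentially the paper's own proof: two bounded holomorphic half-strip functions with matching boundary values on $\mathrm{Im}\,z=\beta/2$, glued by the standard reflection/edge-of-the-wedge lemma (the paper cites \cite[Lemma 9.2.11]{KadisonRingrose:Book2}), where the second function realizing your ``conjugate spherical relation'' is exactly the reflection $F_3(z):=\overline{F^\varphi_{a^*,\,b^*\otimes1^\mathrm{op}}(\overline{z-i\beta})}$ of the spherical strip function for the adjoint pair, using only that $\varphi$ is hermitian. The one small correction: to get the claimed boundary values $\varphi_L(a\,\alpha^t(b))$ and $\varphi(a\otimes(\alpha^t(b))^\mathrm{op})$ you should pair against $\eta=\Pi(a^*\otimes1^\mathrm{op})\xi$ (left tensor leg), not $\Pi(1\otimes(a^*)^\mathrm{op})\xi$; with that choice the computation closes exactly as in the paper.
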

\begin{proof} For any $a, b \in A$ there are functions $F_1(z) = F^\varphi_{a,b\otimes1^\mathrm{op}}(z)$ and $F_2(z) = F^\varphi_{a^*,b^*\otimes1^\mathrm{op}}(z)$ as in Remark and Definition \ref{D5.2}. Then $F_3(z) := \overline{F_2(\overline{z-i\beta})}$ is bounded continuous on $(\beta/2)\wedge\beta \leq \mathrm{Im}z \leq (\beta/2)\vee\beta$ and holomorphic in its interior. Observe that 
\begin{align*}
F_1(t) &= \varphi((b\alpha^t(a))\otimes1^\mathrm{op})) = \varphi_L(b\alpha^t(a)), \\
F_1(t+i\beta/2) &= \overline{\varphi(b^*\otimes(\alpha^t(a^*))^\mathrm{op})} = F_3(t+i\beta/2), \\
F_3(t+i\beta) &= \overline{F_2(t)} = \overline{\varphi(b^*\alpha^t(a^*)\otimes1^\mathrm{op})} = \varphi_L(\alpha^t(a)b)
\end{align*}
for all $t \in \mathbb{R}$. The second identity enables us, by a standard method of complex analysis (see e.g.\ \cite[Lemma 9.2.11]{KadisonRingrose:Book2}), to prove that there is a common bounded continuous extension $F(z)$ of $F_1(z)$ and $F_3(z)$ to $0\wedge\beta \leq \mathrm{Im}z \leq 0\vee\beta$ such that it is holomorphic in its interior. Hence, the desired assertion follows thanks to the first and the third identities above.  
\end{proof} 

For an $\omega \in K_\beta(\alpha^t)$, the construction of its purification $\omega^{(2)}$ in section 3 is known to work without the continuity assumption of the flow $\alpha^t$ nor the use of $A^\infty_{\alpha^t}$. Let us explain this fact briefly for the reader's convenience. 

When $\beta=0$, the $\omega$ must be tracial and hence the construction of purification $\omega^{(2)}$ in section 3 trivially works without the presence of $\alpha^t$. Thus, we will assume $\beta \neq 0$ in what follows. 

Let $(\pi_\omega : A \curvearrowright \mathcal{H}_\omega, \xi_\omega)$ be the GNS triple associated with $\omega$. By the KMS condition that we have employed (see e.g.\ \cite[Proposition 5.3.7(2)]{BratteliRobinson:Book2}), the standard procedure of analytic continuation for periodic functions and Liouville's theorem shows that $\omega\circ\alpha^t = \alpha$ for all $t\in\mathbb{R}$. Thus, we can construct a pointwise strongly continuous flow $\bar{\alpha^t}$ on $\pi_\omega(A)''$ such that 
\[
\bar{\alpha}^t(\pi_\omega(a))=\pi_\omega(\alpha^t(a))
\]
for every $a\in A$ and $t\in\mathbb{R}$. The same pattern as in the proof of Proposition \ref{P4.3}(ii)$\Rightarrow$(iii) using the Phragmen--Lindel\"{o}f method (also see the proof of Lemma \ref{L5.4}(2) below for the method of approximation) enables us to confirm that the vector state of $\xi_\omega$ satisfies the $(\bar{\alpha}^t,\beta)$-KMS condition. Hence, by \cite[Corollary 5.3.9]{BratteliRobinson:Book2} (whose proof uses only \cite[Proposition 5.3.7(2)]{BratteliRobinson:Book2} as the KMS condition) we can apply Tomita's theorem to the pair $(\pi_\omega(A)'',\xi_\omega)$ and the operator $J_\omega$ in section 2 should be the modular conjugation operator associated with the pair. With this $J_\omega$ we can construct all the members that appeared around equations \eqref{Eq3.5} and \eqref{Eq3.6}. We remark that the modular automorphism group $\sigma^{\bar{\omega}}_t$ associated with $\bar{\omega}$ is given by $\sigma_t^{\bar{\omega}}=\bar{\alpha}^{-\beta t}$ for every $t\in\mathbb{R}$. This last formula trivially holds when $\beta = 0$, because $\omega$ and hence $\bar{\omega}$ are tracial in the case. 

We denote by $(\pi_\omega(A)'')_{\sigma_t^{\bar{\omega}}}^\infty$ the unital $*$-subalgebra of all $\sigma_t^{\bar{\omega}}$-analytic elements in $\pi_\omega(A)''$, which is known to be dense e.g.\ in the strong$^*$ (operator) topology on $B(\mathcal{H}_\omega)$ (see \cite[Theorem 2.4.7, Proposition 2.5.22]{BratteliRobinson:Book1}).  

\begin{lemma}\label{L5.4} We have the following assertions{\rm:} 
\begin{itemize} 
\item[(1)] $J_\omega x \xi_\omega = \sigma_{-i/2}^{\bar{\omega}}(x^*)\xi_\omega$ for all $x \in (\pi_\omega(A)'')_{\sigma_t^{\bar{\omega}}}^\infty$. 
\item[(2)] $(\pi_\omega^{(2)} : A^{(2)} \curvearrowright \mathcal{H}_\omega, \xi_\omega)$ is an $(\alpha^t,\beta)$-spherical representation of $A$. 
\item[(3)] If $\omega = \varphi_L$ with an $(\alpha^t,\beta)$-spherical state $\varphi$ of $A$, then $\omega^{(2)} = \varphi$ holds. 
\end{itemize} 
\end{lemma}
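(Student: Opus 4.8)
\textbf{Plan of proof for Lemma \ref{L5.4}.}

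For item (1), the plan is to use the standard characterization of the modular conjugation acting on analytic elements. Since $\xi_\omega$ is cyclic and separating for $\pi_\omega(A)''$ and $J_\omega$ has been identified (via \cite[Corollary 5.3.9]{BratteliRobinson:Book2}) as the modular conjugation for the pair $(\pi_\omega(A)'',\xi_\omega)$, the Tomita--Takesaki theory gives $S_\omega = J_\omega \Delta_\omega^{1/2}$ with $S_\omega x\xi_\omega = x^*\xi_\omega$. For $x \in (\pi_\omega(A)'')_{\sigma_t^{\bar\omega}}^\infty$ one has $\Delta_\omega^{1/2} x \xi_\omega = \sigma_{-i/2}^{\bar\omega}(x)\xi_\omega$ by analytic continuation of $t \mapsto \Delta_\omega^{it} x\xi_\omega = \sigma_t^{\bar\omega}(x)\xi_\omega$, so $J_\omega x\xi_\omega = J_\omega\Delta_\omega^{1/2}\,\Delta_\omega^{-1/2}x\xi_\omega = S_\omega \sigma_{i/2}^{\bar\omega}(x)\xi_\omega = \sigma_{i/2}^{\bar\omega}(x)^*\xi_\omega = \sigma_{-i/2}^{\bar\omega}(x^*)\xi_\omega$, using that $\sigma_{i/2}^{\bar\omega}(x)^* = \sigma_{-i/2}^{\bar\omega}(x^*)$. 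This is routine once the identification of $J_\omega$ from the preceding discussion is in hand.

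For item (2), I would verify the defining condition of Definition \ref{D5.1} for the vector $\xi_\omega$ in $\pi_\omega^{(2)}$. Fix $a \in A$ and $\eta \in \mathcal{H}_\omega$; by Remark and Definition \ref{D5.2} it suffices to handle $\eta$ in the dense set $\pi_\omega^{(2)}(A^{(2)})\xi_\omega$, and by density of analytic elements it suffices to take $a \in A$ with $\pi_\omega(a)$ replaced by an element of $(\pi_\omega(A)'')_{\sigma_t^{\bar\omega}}^\infty$ — more precisely, one approximates $\pi_\omega(a)$ strong${}^*$ by analytic elements $x_n$ and runs the Phragmen--Lindel\"of approximation argument exactly as in Proposition \ref{P4.3}(ii)$\Rightarrow$(iii). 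For an analytic $x = \pi_\omega(a)$, using part (1) one computes $\pi_\omega^{(2)}(\alpha^t(a)\otimes 1^\mathrm{op})\xi_\omega = \bar\alpha^t(x)\xi_\omega$ and $\pi_\omega^{(2)}(1\otimes(\alpha^t(a))^\mathrm{op})\xi_\omega = J_\omega \bar\alpha^t(x)^* J_\omega\xi_\omega = J_\omega\bar\alpha^t(x)^*\xi_\omega = \sigma_{-i/2}^{\bar\omega}(\bar\alpha^t(x))\xi_\omega = \bar\alpha^{t-i\beta/2}(x)\xi_\omega$, using $\sigma_s^{\bar\omega} = \bar\alpha^{-\beta s}$ so that $\sigma_{-i/2}^{\bar\omega} = \bar\alpha^{i\beta/2}$. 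Hence $F(z) := (\bar\alpha^z(x)\xi_\omega\,|\,\eta)_{\mathcal{H}_\omega}$ is the required entire function whose boundary values at $t$ and $t + i\beta/2$ match, and it is bounded on the strip since $\Vert\bar\alpha^{t+is}(x)\Vert = \Vert\bar\alpha^{is}(x)\Vert$ is bounded there by analyticity of $x$. The density/approximation step is the main technical point but it is entirely parallel to work already done.

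For item (3), suppose $\omega = \varphi_L$ for some $\varphi \in S_\beta(\alpha^t)$. I would show $\omega^{(2)} = \varphi$ by the same computation as in the proof of Proposition \ref{P3.7}, adapted to the present boundary-value formulation. By item (2), $(\pi_\omega^{(2)} : A^{(2)} \curvearrowright \mathcal{H}_\omega,\xi_\omega)$ is an $(\alpha^t,\beta)$-spherical representation with associated state $\omega^{(2)}$; it remains to identify it with $\varphi$. It is enough to check agreement on the algebraic tensor product $A\otimes A^\mathrm{op}$, i.e.\ on elements $a\otimes(b^*)^\mathrm{op}$ with $\pi_\omega(a),\pi_\omega(b)$ analytic (by density and continuity of both states). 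For such elements, $\omega^{(2)}(a\otimes(b^*)^\mathrm{op}) = (\pi_\omega(a)J_\omega\pi_\omega(b)J_\omega\xi_\omega\,|\,\xi_\omega)$, and using part (1) this equals $\omega(a\,\bar\alpha^{i\beta/2}(\pi_\omega(b^*)))$ in the obvious sense, i.e.\ the analogue of formula \eqref{Eq3.7}; on the other hand, running the boundary-value defining property of $\varphi \in S_\beta(\alpha^t)$ (Remark and Definition \ref{D5.2}) exactly as in the chain of equalities in Proposition \ref{P3.7} reduces $\varphi(a\otimes(b^*)^\mathrm{op})$ to $\varphi_L(a\,\alpha^{i\beta/2}(b^*)) = \omega(a\,\alpha^{i\beta/2}(b^*))$. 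Comparing the two gives $\omega^{(2)} = \varphi$. The only delicate point throughout is keeping careful track of which elements are analytic for which flow, but no new ideas beyond sections 3--4 are needed.
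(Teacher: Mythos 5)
Your treatments of items (1) and (2) are essentially the paper's: (1) is the standard Tomita--Takesaki computation (the paper just cites Takesaki's book), and for (2) the paper does exactly what you describe --- approximate $\pi_\omega(a)$ by $\sigma_t^{\bar\omega}$-analytic elements $x_n$ of $\pi_\omega(A)''$ with $x_n\xi_\omega\to\pi_\omega(a)\xi_\omega$, $x_n^*\xi_\omega\to\pi_\omega(a^*)\xi_\omega$, write the explicit entire function for analytic elements, and pass to the limit by Phragmen--Lindel\"of. One sign slip in your (2): by item (1), $J_\omega\bar\alpha^t(x)^*\xi_\omega=\sigma_{-i/2}^{\bar\omega}(\bar\alpha^t(x))\xi_\omega=\bar\alpha^{t+i\beta/2}(x)\xi_\omega$, not $\bar\alpha^{t-i\beta/2}(x)\xi_\omega$; it is precisely the value $\bar\alpha^{t+i\beta/2}(x)\xi_\omega$ that makes your candidate $F(z)=(\bar\alpha^z(x)\xi_\omega\,|\,\eta)_{\mathcal{H}_\omega}$ have the required boundary value on $\mathrm{Im}\,z=\beta/2$, so the conclusion stands once the sign is fixed.

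Item (3) as you propose it has a genuine gap. Your reduction ``it is enough to check on $a\otimes(b^*)^{\mathrm{op}}$ with $\pi_\omega(a),\pi_\omega(b)$ analytic, by density and continuity of both states'' is not available in this section: the flow $\alpha^t$ is not assumed pointwise norm-continuous, so there is no norm-dense (possibly no nontrivial) supply of elements of $A$ whose images under $\pi_\omega$ are analytic --- this is the whole reason Section 5 exists, and the paper even remarks after Theorem \ref{T6.1} that no general construction of analytic elements inside $A$ is available. The analytic elements of $\pi_\omega(A)''$ are only strong$^*$-dense, and $\varphi$ is merely a norm-continuous state on $A^{(2)}$ with no normality, so you cannot transfer the comparison to the von Neumann algebra; moreover $\varphi_L(a\,\alpha^{i\beta/2}(b^*))$ does not even typecheck, since $\alpha^{i\beta/2}(b^*)$ need not lie in $A$, and the chain of equalities in Proposition \ref{P3.7} uses identity \eqref{Eq3.4} on $A^\infty_{\alpha^t}$ and cannot be run verbatim in the boundary-value formulation. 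The paper's argument avoids all of this: for fixed $a,b\in A$ take the two functions $F_1=F^{\varphi}_{b,\,a\otimes1^{\mathrm{op}}}$ and $F_2=F^{\omega^{(2)}}_{b,\,a\otimes1^{\mathrm{op}}}$ furnished by Remark and Definition \ref{D5.2} (the second exists by item (2)); on the real line both equal $t\mapsto\omega(a\,\alpha^t(b))$ because only the left partial state enters there and $\varphi_L=\omega=(\omega^{(2)})_L$, so Schwarz reflection gives $F_1\equiv F_2$ on the whole strip, and evaluating at $i\beta/2$ yields $\varphi(a\otimes b^{\mathrm{op}})=\omega^{(2)}(a\otimes b^{\mathrm{op}})$. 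You should replace your density reduction by this comparison of boundary-value functions.
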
 

Remark that item (3) holds in a more general context due to Woronowicz \cite{Woronowicz:RMP74} with a completely different proof from the one given below. 

\begin{proof} 
Item (1): This is well known; see e.g.\ \cite[Lemma VIII.3.18(ii)]{Takesaki:Book2}. 

\medskip
Item (2): Let $a \in A$ and $\eta \in \mathcal{H}_\omega$ be arbitrarily given. Choose an sequence $x_n \in (\pi_\omega(A)'')_{\sigma_t^{\bar{\omega}}}^\infty$ in such a way that $x_n\xi_\omega \to \pi_\omega(a)\xi_\omega$ and $x_n^*\xi_\omega \to \pi_\omega(a^*)\xi_\omega$ as $n\to\infty$. For each $n$, we consider the entire function $G_n(z) := (\sigma_z^{\bar{\omega}}(x_n)\xi_\omega\,|\,\eta)_{\mathcal{H}_\omega}$. Observe that $\Vert \sigma_z^{\bar{\omega}}(x_n)\Vert $ is bounded in $z$ on $-i/2 \leq \mathrm{Im}z \leq 0$ and so is $G_n(z)$. Since 
\[
G_n(t-i/2) = (\sigma_{-i/2}^{\bar{\omega}}(\sigma_t^{\bar{\omega}}(x_n))\xi_\omega\,|\,\eta)_{\mathcal{H}_\omega} = (J_\omega \sigma_t^{\bar{\omega}}(x_n)^*\xi_\omega\,|\,\eta)_{\mathcal{H}_\omega} 
\]
by item (1), we obtain that
\begin{align*} 
\sup_{t\in\mathbb{R}}|G_n(t) - (\sigma_t^{\bar{\omega}}(\pi_\omega(a))\xi_\omega\,|\,\eta))_{\mathcal{H}_\omega}| 
&\leq 
\Vert x_n\xi_\omega - \pi_\omega(a)\xi_\omega\Vert_{\mathcal{H}_\omega} \Vert \eta \Vert_{\mathcal{H}_\omega} \to 0, \\
\sup_{t\in\mathbb{R}}|G_n(t-i/2) - (J_\omega \sigma_t^{\bar{\omega}}(\pi_\omega(a^*))\xi_\omega\,|\,\eta)_{\mathcal{H}_\omega}| 
&\leq 
\Vert x_n^*\xi_\omega - \pi_\omega(a^*)\xi_\omega\Vert_{\mathcal{H}_\omega} \Vert\eta\Vert_{\mathcal{H}_\omega} \to 0 
\end{align*}
as $n \to \infty$, where we used that $\Vert\sigma_t^{\bar{\omega}}(x)\xi_\omega\Vert_{\mathcal{H}_\omega} = \Vert x\xi_\omega\Vert_{\mathcal{H}_\omega}$ for all $x \in \pi_\omega(A)''$. Hence the Phragmen--Lindel\"{o}f method shows that the limit of $G_n(z)$ as $n\to \infty$ defines a bounded continuous function $G(z)$ on $-1/2 \leq \mathrm{Im}z \leq 0$ such that $G(z)$ is holomorphic in its interior and 
\[
G(t) = (\sigma_t^{\bar{\omega}}(\pi_\omega(a))\xi_\omega\,|\,\eta))_{\mathcal{H}_\omega}, \qquad 
G(t-i/2) = (J_\omega \sigma_t^{\bar{\omega}}(\pi_\omega(a^*))\xi_\omega\,|\,\eta)_{\mathcal{H}_\omega}
\]
for all $t \in \mathbb{R}$. Then $F(z) := G(-z/\beta)$ is bounded continuous on $0\wedge(\beta/2) \leq \mathrm{Im}z \leq 0 \vee (\beta/2)$, holomorphic in its interior, and 
\begin{align*}
F(t) &= G(-t/\beta) = (\pi_\omega(\alpha^t(a))\xi_\omega\,|\,\eta)_{\mathcal{H}_\omega}, \\
F(t+i\beta/2) &= G(-t/\beta - i/2) = (J_\omega\pi_\omega(\alpha^t(a))^*\xi_\omega\,|\,\eta)_{\mathcal{H}_\omega}
\end{align*}
for all $t \in \mathbb{R}$. This shows the desired assertion by the definition of $\pi_\omega^{(2)} : A^{(2)} \curvearrowright \mathcal{H}_\omega$. 

\medskip
Item (3): Let $a,b \in A$ be arbitrarily given, and it suffices to prove $\omega^{(2)}(a\otimes b^\mathrm{op}) = \varphi(a\otimes b^\mathrm{op})$. By item (2), there are functions $F_1(z) = F^\varphi_{b,a\otimes1^\mathrm{op}}(z)$ and $F_2(z) = F^{\omega^{(2)}}_{b,a\otimes1^\mathrm{op}}(z)$ as in Definition \ref{D5.2}. Since $\varphi_L=\omega$, we have $F_1(t) = F_2(t)$ for all $t \in \mathbb{R}$. By the Schwarz reflection principle enables us to show $F_1(z) \equiv F_2(z)$, and in particular, $F_1(i\beta/2) = F_2(i\beta/2)$, which is nothing but the desired identity. 
\end{proof} 

The next lemma is a bit non-trivial in the present setting, and thus we give its proof for the sake of completeness.  

\begin{lemma}\label{L5.5} For any $*$-representation $\Pi : A^{(2)} \curvearrowright \mathcal{H}_\Pi$,  all the $(\alpha^t,\beta)$-spherical vectors in $\Pi : A^{(2)} \curvearrowright \mathcal{H}_\Pi$, denoted by $\mathcal{H}_\Pi^{(\alpha^t,\beta)}$ as before, form a closed subspace of $\mathcal{H}_\Pi$  that is invariant under $\Pi(A^{(2)})'$ even in the present setup. 
\end{lemma}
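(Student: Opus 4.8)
The plan is to show two things: that $\mathcal{H}_\Pi^{(\alpha^t,\beta)}$ is a linear subspace closed under norm limits, and that it is invariant under the commutant $\Pi(A^{(2)})'$. For linearity, suppose $\xi_1,\xi_2$ are $(\alpha^t,\beta)$-spherical vectors and fix $a\in A$, $\eta\in\mathcal{H}_\Pi$. Then for $\xi = c_1\xi_1 + c_2\xi_2$ the function $F^\xi_{a,\eta}(z) := c_1 F^{\xi_1}_{a,\eta}(z) + c_2 F^{\xi_2}_{a,\eta}(z)$ is bounded continuous on the strip, holomorphic in its interior, and has the required boundary values since the maps $\xi\mapsto(\Pi(\alpha^t(a)\otimes1^\mathrm{op})\xi\,|\,\eta)$ and $\xi\mapsto(\Pi(1\otimes(\alpha^t(a))^\mathrm{op})\xi\,|\,\eta)$ are linear in $\xi$. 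So $\xi$ is again spherical.

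For closedness under norm limits, let $\xi_n\to\xi$ with each $\xi_n$ spherical, and fix $a\in A$, $\eta\in\mathcal{H}_\Pi$. The functions $F^{\xi_n}_{a,\eta}(z)$ satisfy
\[
\sup_{t\in\mathbb{R}}|F^{\xi_n}_{a,\eta}(t) - (\Pi(\alpha^t(a)\otimes1^\mathrm{op})\xi\,|\,\eta)_{\mathcal{H}_\Pi}| \leq \Vert a\Vert\,\Vert\xi_n-\xi\Vert_{\mathcal{H}_\Pi}\,\Vert\eta\Vert_{\mathcal{H}_\Pi} \to 0,
\]
and similarly on the line $\mathrm{Im}\,z = \beta/2$, using $\Vert\alpha^t(a)\Vert = \Vert a\Vert$. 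Hence $\{F^{\xi_n}_{a,\eta}\}_n$ is uniformly Cauchy on the two boundary lines, so by the Phragmén--Lindelöf method (exactly as in the proof of Proposition \ref{P4.3}(ii)$\Rightarrow$(iii)) it converges uniformly on the whole strip to a bounded continuous function $F^\xi_{a,\eta}(z)$, holomorphic in the interior, whose boundary values are the required ones. Thus $\xi$ is spherical, and $\mathcal{H}_\Pi^{(\alpha^t,\beta)}$ is a closed subspace.

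For invariance under $\Pi(A^{(2)})'$, let $u\in\Pi(A^{(2)})'$ and $\xi$ be spherical; we want $u\xi$ spherical. Fix $a\in A$, $\eta\in\mathcal{H}_\Pi$. Since $u$ commutes with $\Pi(a\otimes1^\mathrm{op})$ and with $\Pi(1\otimes(\alpha^t(a))^\mathrm{op})$, the function
\[
G(z) := F^\xi_{a,u^*\eta}(z)
\]
satisfies $G(t) = (\Pi(\alpha^t(a)\otimes1^\mathrm{op})\xi\,|\,u^*\eta)_{\mathcal{H}_\Pi} = (u\,\Pi(\alpha^t(a)\otimes1^\mathrm{op})\xi\,|\,\eta)_{\mathcal{H}_\Pi} = (\Pi(\alpha^t(a)\otimes1^\mathrm{op})u\xi\,|\,\eta)_{\mathcal{H}_\Pi}$, and likewise $G(t+i\beta/2) = (\Pi(1\otimes(\alpha^t(a))^\mathrm{op})u\xi\,|\,\eta)_{\mathcal{H}_\Pi}$; it is bounded continuous on the strip and holomorphic in its interior because $F^\xi_{a,u^*\eta}$ is. So $G = F^{u\xi}_{a,\eta}$ witnesses that $u\xi$ is spherical. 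I expect the only genuinely delicate point to be the Phragmén--Lindelöf step in the closedness argument — one must check the uniform bound $\sup_z\Vert\alpha^z(a)\Vert$ is not available directly but is circumvented exactly as in Proposition \ref{P4.3}, where boundedness on the strip follows from boundedness on the two boundary lines together with the three-lines theorem; the invariance and linearity parts are formal.
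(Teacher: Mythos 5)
Your proposal is correct and takes essentially the same route as the paper: the closedness step is exactly the paper's argument (uniform estimates on the two boundary lines of the strip followed by the Phragm\'en--Lindel\"of method, noting each $F^{\xi_n}_{a,\eta}$ is bounded on the strip by Definition \ref{D5.1} itself), while the linearity and the invariance under $\Pi(A^{(2)})'$ that you spell out via $F^{u\xi}_{a,\eta}=F^{\xi}_{a,u^*\eta}$ are precisely the parts the paper dismisses as ``not difficult to see.''
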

\begin{proof} 
It is not difficult to see that $\mathcal{H}_\Pi^{(\alpha^t,\beta)}$ is a subspace and also invariant under $\Pi(A^{(2)})'$. Hence we will only prove that it is closed. 

Assume that a sequence $\xi_n \in \mathcal{H}_\Pi^{(\alpha^t,\beta)}$ converges to a $\xi \in \mathcal{H}_\Pi$ as $n\to\infty$. Let $a \in A$ and $\eta \in \mathcal{H}_\Pi$ be arbitrarily given. For each $n$ we have a function $F_n(z) = F^{\xi_n}_{a,\eta}(z)$ as in Definition \ref{D5.1}. Then 
\begin{align*} 
\sup_{t\in\mathbb{R}} |F_n(t) - (\Pi(\alpha^t(a)\otimes1^\mathrm{op})\xi\,|\,\eta)_{\mathcal{H}_\Pi}| 
&\leq \Vert a \Vert\,\Vert\xi_n - \xi\Vert_{\mathcal{H}_\Pi}\,\Vert\eta\Vert_{\mathcal{H}_\Pi} \to 0, \\
\sup_{t\in\mathbb{R}} |F_n(t+i\beta/2) - (\Pi(1\otimes(\alpha^t(a))^\mathrm{op})\xi\,|\,\eta)_{\mathcal{H}_\Pi}| 
&\leq \Vert a \Vert\,\Vert\xi_n - \xi\Vert_{\mathcal{H}_\Pi}\,\Vert\eta\Vert_{\mathcal{H}_\Pi} \to 0. 
\end{align*}
Hence the Phragmen--Lindel\"{o}f method shows that the limit of $F_n(z)$ as $n\to\infty$ defines a function on $0\wedge(\beta/2) \leq \mathrm{Im}z \leq 0\vee(\beta/2)$ that shows that $\xi$ falls into $\mathcal{H}_\Pi^{(\alpha^t,\beta)}$. 
\end{proof}  

\subsection{Review on $W^*$-inductive limits} In the rest of this section, we will investigate $A=\varinjlim A_\lambda$, where every $A_\lambda$ is a $W^*$-algebra and any inclusion $A_\lambda \hookrightarrow A_{\lambda'}$ with $\lambda < \lambda'$ is normal, that is, $A_\lambda$ is a $W^*$-subalgebra of $A_{\lambda'}$. In this setting, it is natural \emph
{not to assume that the flow $\alpha^t$ is pointwise norm-continuous}. 

The consideration in this and the next sections is originally motivated from the recent study of inductive limits of compact quantum groups due to Ryosuke Sato \cite{Sato:preprint19} as well as the algebraic quantum field theory. In fact, this situation naturally appeared there. In both, \emph{locally normal states} are of main importance rather than arbitrary states. Here, an $\omega \in A^*$ is \emph{locally normal} if $\omega$ is normal on each $A_\lambda$. A \emph{locally normal state} means a state on $A$ that is locally normal. 

\medskip
To understand locally normal $(\alpha^t,\beta)$-KMS states, we have to recall some facts on locally normal states; see Takeda \cite{Takeda:PJA54},\cite{Takeda:TohokuMathJ55}\footnote{We would like to point out that it was Takeda \cite{Takeda:TohokuMathJ55} who first considered locally normal states to define $W^*$-inductive limits, but the proof of \cite[Lemma 2]{Takeda:TohokuMathJ55}, asserting that the locally normal states are full in the sense of \cite{Kadison:Topology65}, seems (to us) containing a gap. The gap we thought is not any trouble when a given inductive system contains a `sequential subnet'.}, Kadison \cite{Kadison:Topology65}, Haag--Kadison--Kastler \cite{HaagKadisonKastler:CMP70} and Takesaki \cite{Takesaki:PacificJMath70}, etc., mainly in the context of algebraic quantum field theory. For a technical reason\footnote{See the previous footnote.}, \emph{we will assume, in the rest of this subsection, that the $A_\lambda$ has a sequential subnet $A_{\lambda(n)}$, that is, $n \mapsto \lambda(n)$ is monotone and each $A_{\lambda'}$ is contained in $A_{\lambda(n)}$ for some $n$ {\rm(}depending on $\lambda'${\rm)}}. The assumption is trivially fulfilled in the usual setting of algebraic quantum field theory (see e.g.\ \cite[Lemma 7]{HugenholtzWierinca:CMP69}). 

Observe that any normal state $\omega$ on $A_\lambda$ can inductively be extended to $A_{\lambda(n)}$ as normal states for all sufficiently large $n$, and the sequence of normal states that we just obtained clearly defines a locally normal state on the whole $A$ by its norm-continuity. With this argument, we can easily see that any normal state on $A_\lambda$ is obtained as the restriction of a locally normal state on $A$ to $A_\lambda$,  and this fact enables us to see that the locally normal states separate points in $A$. The same thing has essentially been explained in the proof of \cite[Proposition 7]{HaagKadisonKastler:CMP70}. 

The second dual $A^{**}$ is well known to be a $W^*$-algebra whose unique predual is $A^*$. By \cite[Proposition 2]{Takesaki:PacificJMath70} there is a unique central projection $z$ of $A^{**}$ such that the unique predual $M_*$ of $M := zA^{**}$ is exactly all the locally normal linear functionals in $A^*$ via the dual pairing between $A^*$ and $A^{**}$. The bi-dual map $(\alpha^t)^{**}$ defines a flow on $A^{**}$, and we observe that if $\alpha^t(A_\lambda) = A_\lambda$ for every $t \in \mathbb{R}$ and $\lambda \in \Lambda$, then $(\alpha^t)^{**}(z) = z$ and the restriction of $(\alpha^t)^{**}$ to $M$ defines a flow $\hat{\alpha}^t$ on $M$. Since the restriction of $(\alpha^t)^{**}$ to $A$ via the embedding $A \hookrightarrow A^{**}$ is exactly the original $\alpha^t$, the unital $*$-homomorphism $A \hookrightarrow A^{**} \to zA^{**} = M$ intertwines $\alpha^t$ with $\hat{\alpha}^t$. 

Since the locally normal states on $A$ separate points in $A$ as remarked before, the unital $*$-homomorphism $A \to M$ must be injective. Thus we may think of $A$ as a $C^*$-subalgebra of $M$, and any locally normal linear functional on $A$ is obtained as just the restriction of a member of $M_*$ to $A$. Moreover, the relative topology on $A_\lambda$ induced from the $\sigma$-weak topology on $M$ is exactly the $\sigma$-weak topology on $A_\lambda$, since we have remarked that any normal state on $A_\lambda$ is obtained as the restriction of a normal state on $M$ to $A_\lambda$ in the setting here. In particular, each $A_\lambda$ is a $W^*$-subalgebra of $M$. It is also trivial that $A$ is $\sigma$-weakly dense in $M$. 

Here is a summary; we have constructed a $W^*$-algebra $M$ enlarging $A$, such that the following three assertions hold:  
\begin{itemize}
\item[(i)] $A$ is $\sigma$-weakly dense in $M$, 
\item[(ii)] each $A_\lambda$ is a $W^*$-subalgebra of $M$, 
\item[(iii)] the dual map $M^* \to A^*$ of the inclusion map $A \subset M$ sends $M_*$ onto the subspace consisting of all the locally normal linear functionals on $A$. 
\end{itemize}
Moreover, if $\alpha^t(A_\lambda) = A_\lambda$ holds for every $\lambda \in \Lambda$, then there is a unique flow $\hat{\alpha}^t$ on $M$ as an extension of $\alpha^t$. 

Remark that items (i) and (iii) show that $M_*$ is canonically identified with all the locally normal ones in $A^*$. Hence $M$ is uniquely determined. It should also be pointed out that $M$ is nothing less than the $W^*$-inductive limit of the $A_\lambda$ in the sense of Takeda  \cite{Takeda:TohokuMathJ55}. In what follows, we call this $M$ the \emph{locally normal $W^*$-envelope} of $A$. 

\subsection{Locally bi-normal $(\alpha^t,\beta)$-spherical representations} 
We keep the setting in the previous subsection. Following \cite{EffrosLance:AdvMath77} we will introduce the notion of \emph{local bi-normality} for linear functionals on $A$ as follows. A $\varphi \in (A^{(2)})^*$ is said to be \emph{locally bi-normal}, if $(a,b^\mathrm{op}) \mapsto \varphi(a\otimes b^\mathrm{op})$ is separately normal on each $A_\lambda \times A_\lambda^\mathrm{op}$. We denote by $S_\mathrm{lbin}(A^{(2)})$ all the locally bi-normal states on $A^{(2)}$. Similarly, we say that a $*$-representation $\Pi : A^{(2)} \curvearrowright \mathcal{H}_\Pi$ is \emph{locally bi-normal}, if $(a,b^\mathrm{op}) \mapsto \Pi(a\otimes b^\mathrm{op})$ is separately normal on each $A_\lambda\times A_\lambda^\mathrm{op}$. Here is a simple lemma. 

\begin{lemma}\label{L5.6} Let $M^{(2)} := M\otimes_\mathrm{bin}M^\mathrm{op}$ be the bi-normal tensor product of the locally normal $W^*$-envelope $M$ and its opposite $W^*$-algebra $M^\mathrm{op}$, and denote by $S_\mathrm{bin}(M^{(2)})$ all the bi-normal states on $M^{(2)}$, i.e., states on $M^{(2)}$ that are separately normal. {\rm(}See {\rm\cite{EffrosLance:AdvMath77}} for the notion.{\rm)} Then the following assertions hold{\rm:} 
\begin{itemize}
\item[(1)] There is a unique unital $*$-homomorphism $\rho : A^{(2)} \to M^{(2)}$ such that $\rho(x) = x$ on the algebraic tensor products $A\otimes A^\mathrm{op} \subset M\otimes M^\mathrm{op}$. 
\item[(2)] Each $\varphi \in S_\mathrm{lbin}(A^{(2)})$ has a unique $\hat{\varphi} \in S_\mathrm{bin}(M^{(2)})$ such that $\hat{\varphi}\circ\rho = \varphi$, and any member of $S_\mathrm{bin}(M^{(2)})$ arises in this way. 
\item[(3)] For any locally bi-normal $*$-representation $\Pi : A^{(2)} \curvearrowright \mathcal{H}_\Pi$, there exists a unique $*$-representation $\widehat{\Pi} : M^{(2)} \curvearrowright \mathcal{H}_\Pi$ on the same space such that 
\begin{itemize} 
\item[(i)] $\widehat{\Pi} : M^{(2)} \curvearrowright \mathcal{H}_\Pi$ is bi-normal, that is, $(a,b^\mathrm{op}) \in M \times M^\mathrm{op} \mapsto \widehat{\Pi}(a\otimes b^\mathrm{op}) \in B(\mathcal{H}_\Pi)$ is separately normal,  
\item[(ii)] $\Pi : A^{(2)} \curvearrowright \mathcal{H}_\Pi$ is factor through $M^{(2)}$ by $\rho$ and $\widehat{\Pi}$, that is, the diagram 
\[
\xymatrix{ 
A^{(2)}  \ar[rr]^{\Pi} \ar[dr]_\rho & & B(\mathcal{H}_\Pi) \\
  & M^{(2)} \ar[ur]_{\widehat{\Pi}}
}  
\]
commutes.
\end{itemize}
\end{itemize}
\end{lemma}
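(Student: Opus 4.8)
The plan is to build each of the three items from the fundamental property of the bi-normal tensor product $M\otimes_{\mathrm{bin}}M^{\mathrm{op}}$: by definition (see \cite{EffrosLance:AdvMath77}), $M^{(2)}$ is the enveloping $C^*$-algebra of the algebraic tensor product $M\otimes M^{\mathrm{op}}$ with respect to the family of all separately normal $*$-representations, so that a $*$-representation of $M\otimes M^{\mathrm{op}}$ extends to $M^{(2)}$ precisely when it is bi-normal, and likewise a state on $M\otimes M^{\mathrm{op}}$ extends to $M^{(2)}$ precisely when it is bi-normal. The recurring technical point is that a $*$-homomorphism, representation, or state defined on the algebraic tensor product $A\otimes A^{\mathrm{op}}$ (or $M\otimes M^{\mathrm{op}}$) is determined by its values on simple tensors, and local bi-normality is exactly a separate-continuity condition on each block $A_\lambda\times A_\lambda^{\mathrm{op}}$, which one upgrades to the $W^*$-completions $M$, $M^{\mathrm{op}}$ using that each $A_\lambda$ is $\sigma$-weakly dense in the $W^*$-subalgebra $\overline{A_\lambda}^{\,\sigma\text{-weak}}$ of $M$ and that $A=\varinjlim A_\lambda$ is $\sigma$-weakly dense in $M$.

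For item (1): the algebraic inclusion $A\otimes A^{\mathrm{op}}\hookrightarrow M\otimes M^{\mathrm{op}}$, followed by the canonical map $M\otimes M^{\mathrm{op}}\to M^{(2)}$, is a $*$-homomorphism $A\otimes A^{\mathrm{op}}\to M^{(2)}$; I would check it is contractive for the maximal $C^*$-norm on $A^{(2)}=A\otimes_{\mathrm{max}}A^{\mathrm{op}}$, which is automatic since any $*$-homomorphism out of $A\otimes A^{\mathrm{op}}$ into a $C^*$-algebra extends to $A^{(2)}$ by the universal property of the maximal tensor norm. This produces $\rho$, and $\rho$ is unital since $1_A\otimes 1_{A^{\mathrm{op}}}\mapsto 1_M\otimes 1_{M^{\mathrm{op}}}$; uniqueness is clear from density of $A\otimes A^{\mathrm{op}}$ in $A^{(2)}$. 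For item (2): given $\varphi\in S_{\mathrm{lbin}}(A^{(2)})$, restrict to $A\otimes A^{\mathrm{op}}$ and, for fixed $b$, extend $a\mapsto\varphi(a\otimes b^{\mathrm{op}})$ from each $A_\lambda$ to its $\sigma$-weak closure in $M$ by normality, then pass to the $\sigma$-weak limit over $\lambda$ (using $A=\varinjlim A_\lambda$ $\sigma$-weakly dense in $M$ and uniform boundedness of these functionals by $\|b^{\mathrm{op}}\|$); do the symmetric thing in the second variable, and check the two extensions agree on $M\otimes M^{\mathrm{op}}$ and are separately normal there, hence define a bi-normal state $\widehat\varphi$ on $M^{(2)}$ with $\widehat\varphi\circ\rho=\varphi$. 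Conversely, any $\psi\in S_{\mathrm{bin}}(M^{(2)})$ pulls back along $\rho$ to a state on $A^{(2)}$ whose restriction to each $A_\lambda\times A_\lambda^{\mathrm{op}}$ is separately normal because $A_\lambda$ carries the $\sigma$-weak topology from $M$; uniqueness of $\widehat\varphi$ follows from $\sigma$-weak density of $A$ in $M$ together with separate normality.

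For item (3): given a locally bi-normal $\Pi:A^{(2)}\curvearrowright\mathcal H_\Pi$, its restriction to the simple tensors gives separately normal maps on each $A_\lambda\times A_\lambda^{\mathrm{op}}$; extend each slice $a\mapsto\Pi(a\otimes b^{\mathrm{op}})$ (for $b\in A$ fixed) first to $\overline{A_\lambda}\subset M$ by normality and then $\sigma$-weakly over $\lambda$ to all of $M$, and symmetrically in $b^{\mathrm{op}}$, obtaining commuting families of normal representations of $M$ and $M^{\mathrm{op}}$ on $\mathcal H_\Pi$; their combination is a separately normal $*$-representation of the algebraic tensor product $M\otimes M^{\mathrm{op}}$, which by the universal property extends uniquely to a bi-normal $\widehat\Pi:M^{(2)}\curvearrowright\mathcal H_\Pi$. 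The factorization $\Pi=\widehat\Pi\circ\rho$ holds on simple tensors hence on all of $A^{(2)}$ by density and boundedness, and uniqueness of $\widehat\Pi$ again follows from $\sigma$-weak density of $A$ in $M$. The main obstacle I anticipate is the bookkeeping in item (3) that the two one-sided extensions (from $A$ to $M$ in the first variable, and from $A^{\mathrm{op}}$ to $M^{\mathrm{op}}$ in the second) are mutually compatible and yield a genuinely separately normal representation of $M\otimes M^{\mathrm{op}}$ — one must be careful to take the $W^*$-closures compatibly across the net of blocks $A_\lambda$, using the sequential-subnet hypothesis to ensure the relative $\sigma$-weak topology on each $A_\lambda$ really is its own $\sigma$-weak topology; once that is in place, invoking the universal property of $\otimes_{\mathrm{bin}}$ from \cite{EffrosLance:AdvMath77} closes the argument.
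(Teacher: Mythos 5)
Your item (1) is the paper's argument verbatim (universal property of the maximal norm), but items (2) and (3) each have a genuine gap. In (2), after extending the bilinear form $(a,b^{\mathrm{op}})\mapsto\varphi(a\otimes b^{\mathrm{op}})$ to $M\times M^{\mathrm{op}}$ you simply declare that the result "defines a bi-normal state". Positivity of the extended functional on $M\otimes M^{\mathrm{op}}$ is not automatic and is exactly the point the paper flags with "it remains to confirm the positivity": it is proved there by a Kaplansky-density argument, choosing bounded nets $a_{i,\lambda}\in A$ with $a_{i,\lambda}\to x_i$ strong$^*$ (so that $a_{i,\lambda}^*a_{j,\lambda}\to x_i^*x_j$ strongly) and passing to the limit one tensor leg at a time using separate normality; without this you have a normalized separately normal functional but no state, and hence no element of $S_\mathrm{bin}(M^{(2)})$. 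Relatedly, your variable-by-variable extension itself is not routine: after extending in the first variable you must know that for each fixed $x\in M\setminus A$ the functional $y^{\mathrm{op}}\mapsto\tilde\varphi(x\otimes y^{\mathrm{op}})$ is again normal and that the two iterated extensions coincide. You acknowledge this but give no argument; the paper disposes of it by invoking \cite[Theorem 2.3]{JohnsonKadisonRingrose:BSMF72} on extending separately $\sigma$-weakly continuous bilinear forms from $\sigma$-weakly dense $C^*$-subalgebras, which is a nontrivial theorem, not bookkeeping.

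In (3) you take a different route from the paper: you want to extend the slice representations of $A$ and $A^{\mathrm{op}}$ directly to normal representations of $M$ and $M^{\mathrm{op}}$ "first to $\overline{A_\lambda}$ by normality and then $\sigma$-weakly over $\lambda$". There is no such naive limiting procedure: a locally normal $*$-representation of $A$ does not extend to $M$ by taking $\sigma$-weak limits over the net; one needs an actual argument (e.g.\ that $\Pi^{**}$ kills $1-z$ for the central projection $z$ with $M=zA^{**}$, because all vector states of $\Pi$ are locally normal, or some substitute), and you would also have to check the two extended representations still commute. The paper sidesteps all of this: it decomposes $\Pi$ into cyclic pieces, applies item (2) to the locally bi-normal vector state, takes the GNS representation of the resulting $\hat\varphi\in S_\mathrm{bin}(M^{(2)})$, and identifies it with $\Pi$ by uniqueness of GNS triples together with $\rho(x)=x$ on $A\otimes A^{\mathrm{op}}$. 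Your alternative could probably be made to work, but as written the key extension step is asserted rather than proved, so the argument is incomplete at precisely the points where the paper does real work.
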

\begin{proof}
Item (1): Recall the canonical inclusion $A\otimes A^\mathrm{op} \subset M\otimes M^\mathrm{op}$ of algebraic tensor products, and the $A^{(2)}$ and $M^{(2)}$ are obtained as their completions with respect to appropriate norms. Since we have considered the maximal $C^*$-norm on $A\otimes A^\mathrm{op}$ to obtain $A^{(2)}$, the inclusion map $A \otimes A^\mathrm{op} \hookrightarrow M\otimes M^\mathrm{op}$ extends to $A^{(2)}$ as a unital $*$-homomorphism $\rho : A^{(2)} \to M^{(2)}$.  

\medskip
Item (2): For any $\varphi \in S_\mathrm{lbin}(A^{(2)})$, the bi-linear form $(a,b^\mathrm{op}) \in A\times A^\mathrm{op} \mapsto s_\varphi(a,b^\mathrm{op}) := \varphi(a\otimes b^\mathrm{op}) \in \mathbb{C}$ is separately, $\sigma$-weakly continuous, where $A$ and $A^\mathrm{op}$ are regarded as $\sigma$-weakly dense $C^*$-algebras of $M$ and $M^\mathrm{op}$, respectively. By \cite[Theorem 2.3]{JohnsonKadisonRingrose:BSMF72} the form $s_\varphi$ extends to a unique bi-linear form $\tilde{s}_\varphi : M\times M^\mathrm{op} \to \mathbb{C}$ that is separately normal. Thus, we have a linear map $\tilde{\varphi} : M\otimes M^\mathrm{op} \to \mathbb{C}$ defined in such a way that $\tilde{\varphi}(x\otimes y^\mathrm{op}) := \tilde{s}_\varphi(x,y^\mathrm{op})$ for any simple tensor $x\otimes y^\mathrm{op} \in M\otimes M^\mathrm{op}$. By construction, $\tilde{\varphi}$ is separately normal and $\tilde{\varphi}(1\otimes 1^\mathrm{op}) = 1$. It remains to confirm the positivity. 

Let $x_i \otimes b_i^\mathrm{op} \in M \otimes A^\mathrm{op}$, $i=1,\dots,n$, be given. By \cite[Theorem 2.4.7]{BratteliRobinson:Book1} one can choose nets $a_{i,\lambda} \in A$ such that $\Vert a_{i,\lambda} \Vert \leq \Vert x_i\Vert$ and $a_{i,\lambda} \to x_i$ in the strong$^*$ (operator) topology, where we regard $M$ as a concrete von Neumann algebra on a Hilbert space. Observe that $a_{i,\lambda}^* a_{j,\lambda} \to x_i^* x_j$ strongly, and hence $\sum_{i,j=1}^n \tilde{\varphi}(x_i^* x_j \otimes b_i^\mathrm{op}{}^* b_j^\mathrm{op}) = \lim_\lambda \sum_{i,j=1}^n \varphi(a_{i,\lambda}^* a_{j,\lambda} \otimes b_i^\mathrm{op}{}^* b_j^\mathrm{op}) \geq 0$ by the separate normality. Thus, $\tilde{\varphi}$ is positive on $M\otimes A^\mathrm{op}$. Repeating the same approximation in the second tensor component, we conclude that $\tilde{\varphi}$ is positive on the whole $M\otimes M^\mathrm{op}$. Hence, it is a bi-normal state on $M\otimes M^\mathrm{op}$. By means of bi-normal tensor products (see \cite[section 2]{EffrosLance:AdvMath77}), the state $\tilde{\varphi}$ has a unique extension $\hat{\varphi} \in S_\mathrm{bin}(M^{(2)})$ with $\hat{\varphi}\circ\rho = \varphi$. 

On the other hand, for any $\psi \in S_\mathrm{bin}(M^{(2)})$ we have a state $\psi\circ\rho$ on $A^{(2)}$. Since $\rho(x) = x$ on $A\otimes A^\mathrm{op} \subset M\otimes M^\mathrm{op}$, we easily see that $\psi\circ\rho$ falls into $S_\mathrm{lbin}(A^{(2)})$.   

\medskip
Item (3): Decomposing $\Pi : A^{(2)} \curvearrowright \mathcal{H}_\Pi$ into a direct sum of cyclic representations, we may and do assume that there is a unit cyclic vector $\xi \in \mathcal{H}_\Pi$. 

Consider the state $\varphi$ on $A^{(2)}$ defined by $\varphi(x) := (\Pi(x)\xi\,|\,\xi)_{\mathcal{H}_\Pi}$ for every $x \in A^{(2)}$. This is clearly locally bi-normal. By item (2) there exists a (unique) bi-normal state $\hat{\varphi}$ on $M^{(2)}$ with $\hat{\varphi}\circ\rho = \varphi$. 

Let $(\Pi_{\hat{\varphi}} : M^{(2)} \curvearrowright \mathcal{H}_{\hat{\varphi}},\xi_{\hat{\varphi}})$ be the GNS triple associated with $\hat{\varphi}$. Since $\hat{\varphi}$ is bi-normal, it is easy to see that $(a,b^\mathrm{op}) \in M\times M^\mathrm{op} \mapsto \Pi_{\hat{\varphi}}(a\otimes b^\mathrm{op}) \in B(\mathcal{H}_{\hat{\varphi}})$ is separately normal. Hence, $[\Pi_{\hat{\varphi}}(A\otimes A^\mathrm{op})\xi] = \mathcal{H}_{\hat{\varphi}}$, and the uniqueness of GNS representations together with $\rho(x) = x$ for every $x \in A\otimes A^\mathrm{op}$ enables us to assume that $\mathcal{H}_\Pi = \mathcal{H}_{\hat{\varphi}}$, $\xi=\xi_{\hat{\varphi}}$ and $\Pi = \Pi_{\hat{\varphi}}\circ\rho$. Consequently, $\widehat{\Pi} : = \Pi_{\hat{\varphi}} : M^{(2)} \curvearrowright \mathcal{H}_\Pi$ works well. 

The uniqueness of $\widehat{\Pi} : M^{(2)} \curvearrowright \mathcal{H}_\Pi$ immediately follows from its bi-normality and $\rho(x) = x$ for all $x \in A\otimes A^\mathrm{op}$.    
\end{proof} 

We introduce a few notations. Set 
\[
K_\beta^\mathrm{ln}(\alpha^t) := K_\beta(\alpha^t) \cap M_*, \quad S_\beta^\mathrm{lbin}(\alpha^t) := S_\beta(\alpha^t) \cap S_\mathrm{lbin}(A^{(2)}), 
\]
the locally normal $(\alpha^t,\beta)$-KMS states on $A$ and the locally bi-normal $(\alpha^t,\beta)$-spherical states of $A$, respectively. 
We also denote by $\mathrm{Rep}_\beta^\mathrm{lbin}(\alpha^t)$ all the equivalent classes of locally bi-normal $(\alpha^t,\beta)$-spherical representations of $A$. 

\begin{theorem}\label{T5.7} 
The following assertions hold true{\rm:} 
\begin{itemize} 
\item[(1)] The correspondences in Theorem \ref{T3.8} induce the bijections among $K_\beta^\mathrm{ln}(\alpha^t)$, $S_\beta^\mathrm{lbin}(\alpha^t)$ and $\mathrm{Rep}_\beta^\mathrm{lbin}(\alpha^t)$ as follows. 
\begin{equation*}
\begin{matrix} 
K_\beta^\mathrm{ln}(\alpha^t) & \longleftrightarrow & S^\mathrm{lbin}_\beta(\alpha^t) & \longleftrightarrow & \mathrm{Rep}^\mathrm{lbin}_\beta(\alpha^t) \\
\omega  & \longmapsto & \omega^{(2)} & \longmapsto & [(\pi_\omega^{(2)}=\pi_{\omega^{(2)}} : A^{(2)} \curvearrowright \mathcal{H}_\omega,\xi_\omega)], \\
(\varphi_{(\Pi,\xi)})_L = (\varphi_{(\Pi,\xi)})_R & \longmapsfrom & \varphi_{(\Pi,\xi)} & \longmapsfrom & [(\Pi : A^{(2)} \curvearrowright \mathcal{H}_\Pi,\xi)],   
\end{matrix}
\end{equation*}
and the bijection between $K_\beta^\mathrm{ln}(\alpha^t)$ and $S_\beta^\mathrm{lbin}(\alpha^t)$ is affine. 
\item[(2)] Assume that the flow $\alpha^t$ admits a {\rm(}unique{\rm)} extension $\hat{\alpha}^t$ to $M$ {\rm(}This is the case when $\alpha^t(A_\lambda) = A_\lambda$ holds for every $t\in\mathbb{R}$ and $\lambda\in\Lambda$.{\rm)} Set $K^\mathrm{n}_\beta(\hat{\alpha}^t) := K_\beta(\hat{\alpha}^t) \cap M_*$, the normal $(\hat{\alpha}^t,\beta)$-KMS states on $M$, and $S_\beta^\mathrm{bin}(\hat{\alpha}^t) := S_\beta(\hat{\alpha}^t)\cap S_\mathrm{bin}(M^{(2)})$, the bi-normal $(\hat{\alpha}^t,\beta)$-spherical states of $M$. Let $\mathrm{Rep}_\beta^\mathrm{bin}(\hat{\alpha}^t)$ be all the equivalent classes of bi-normal $(\hat{\alpha}^t,\beta)$-spherical representations of $M$. Here, we employ Definitions \ref{D5.1}-\ref{D5.2} but replace $A^{(2)}$ with $M^{(2)}$. Then, the mappings  
\[
\begin{matrix} 
K^\mathrm{n}_\beta(\hat{\alpha}^t) & \longrightarrow & K_\beta^\mathrm{ln}(\alpha^t), & \qquad & S^\mathrm{n}_\beta(\hat{\alpha}^t) & \longrightarrow & S_\beta^\mathrm{lbin}(\alpha^t) \\
\hat{\omega} & \longmapsto & \hat{\omega}\!\upharpoonright_A & & \hat{\varphi} & \longmapsto & \hat{\varphi}\circ\rho 
\end{matrix}
\]
are bijective and affine. The mapping 
\[
\begin{matrix}
\mathrm{Rep}_\beta^\mathrm{bin}(\hat{\alpha}^t) & \longrightarrow & \mathrm{Rep}_\beta^\mathrm{lbin}(\alpha^t) \\
[(\widehat{\Pi} : M^{(2)} \curvearrowright \mathcal{H}_{\widehat{\Pi}},\xi)] & \longmapsto & [\Pi := \widehat{\Pi}\circ\rho : A^{(2)} \curvearrowright \mathcal{H}_{\widehat{\Pi}},\xi)] 
\end{matrix}
\]
is also bijective. 
\end{itemize} 
\end{theorem}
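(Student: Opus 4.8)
The plan is to prove the two parts of Theorem~\ref{T5.7} essentially by combining the reformulated machinery of \S5.1 (Lemmas~\ref{L5.3}--\ref{L5.5}, together with the fact stated in Remark and Definition~\ref{D5.2} that Theorem~\ref{T3.8} survives verbatim in the non-continuous setting) with the transfer lemma~\ref{L5.6} to the locally normal $W^*$-envelope $M$. For part (1) I would first observe that the correspondences of Theorem~\ref{T3.8} already give bijections $K_\beta(\alpha^t) \leftrightarrow S_\beta(\alpha^t) \leftrightarrow \mathrm{Rep}_\beta(\alpha^t)$; the task is to check that they respect the three ``locally (bi-)normal'' refinements. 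The key point is a round-trip compatibility check: (a) if $\omega \in K_\beta^\mathrm{ln}(\alpha^t)$, then its purification $\omega^{(2)}$ is locally bi-normal; (b) if $\varphi \in S_\beta^\mathrm{lbin}(\alpha^t)$, then $\varphi_L = \varphi_R \in K_\beta^\mathrm{ln}(\alpha^t)$; and (c) the GNS representation of a locally bi-normal state is locally bi-normal and conversely. Item (c) is immediate from the GNS construction. Item (b) is nearly immediate: $\varphi_L(a) = \varphi(a\otimes 1^\mathrm{op})$ and $\varphi_R(a) = \varphi(1\otimes a^\mathrm{op})$ are restrictions of a separately-normal functional to a slice, hence normal on each $A_\lambda$, and they agree because $j$-positivity of $\varphi$ (which holds by Lemma~\ref{L3.6}'s analogue) forces $\varphi(a\otimes 1^\mathrm{op}) = \varphi(1\otimes a^\mathrm{op})$. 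For item (a), I would use formula \eqref{Eq3.7}: on analytic elements $\omega^{(2)}(a\otimes(b^*)^\mathrm{op}) = (\pi_\omega(\alpha^{i\beta/4}(a))\xi_\omega \mid \pi_\omega(\alpha^{i\beta/4}(b))\xi_\omega)$, and since $\pi_\omega$ extends to a normal representation $\bar\alpha$-equivariantly on $\pi_\omega(A)'' \supseteq \pi_\omega(A_\lambda)$ and the modular data are internal to this von Neumann algebra (as recalled right before Lemma~\ref{L5.4}), one sees separate normality on each $A_\lambda\times A_\lambda^\mathrm{op}$ by a strong$^*$-density approximation of the same flavor as in the proof of Lemma~\ref{L5.4}(2); the affineness of $\omega \mapsto \omega^{(2)}$ on $K_\beta^\mathrm{ln}$ then follows from the affineness already noted (or from uniqueness of the purification).

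For part (2), the strategy is to reduce everything to Lemma~\ref{L5.6} and to the (already established) part~(1) applied once to $A$ and once to $M$ in place of $A$. Concretely: Lemma~\ref{L5.6}(2) gives the bijection $S_\beta^\mathrm{bin}(\hat\alpha^t) \leftrightarrow S_\beta^\mathrm{lbin}(\alpha^t)$ via $\hat\varphi \mapsto \hat\varphi\circ\rho$ \emph{provided} one checks this bijection intertwines the two spherical-state conditions. That check is where the flow comes in: since $\rho: A^{(2)}\to M^{(2)}$ satisfies $\rho\circ(\alpha^t\otimes(\alpha^t)^\mathrm{op}) = (\hat\alpha^t\otimes(\hat\alpha^t)^\mathrm{op})\circ\rho$ on the algebraic tensor product (because $\hat\alpha^t$ extends $\alpha^t$), the defining analytic-function condition of Remark and Definition~\ref{D5.2} for $\hat\varphi$ on $M^{(2)}$ restricts to the corresponding condition for $\hat\varphi\circ\rho$ on $A^{(2)}$, and conversely one lifts a given family $F^{\hat\varphi\circ\rho}_{a,x}$ ($a\in A$, $x\in A\otimes A^\mathrm{op}$) to $M^{(2)}$ using separate normality and strong$^*$-density of $A$ in $M$ (same Phragmén–Lindelöf/approximation pattern as in Lemmas~\ref{L5.4}(2) and \ref{L5.5}). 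For the KMS side, $K_\beta^\mathrm{n}(\hat\alpha^t) \leftrightarrow K_\beta^\mathrm{ln}(\alpha^t)$ via restriction: a normal $(\hat\alpha^t,\beta)$-KMS state on $M$ restricts to a locally normal state on $A$ that satisfies the KMS boundary-value condition against $A$ (again by strong$^*$-density and Phragmén–Lindelöf), and conversely a locally normal $(\alpha^t,\beta)$-KMS state on $A$ extends uniquely to an element of $M_*$, which one checks is $(\hat\alpha^t,\beta)$-KMS on all of $M$ by density. The representation-level bijection $\mathrm{Rep}_\beta^\mathrm{bin}(\hat\alpha^t) \leftrightarrow \mathrm{Rep}_\beta^\mathrm{lbin}(\alpha^t)$ is then read off Lemma~\ref{L5.6}(3): $\widehat\Pi \mapsto \widehat\Pi\circ\rho$ is well-defined on equivalence classes, injective because a locally bi-normal $\Pi$ determines $\widehat\Pi$ uniquely, and surjective because every locally bi-normal $\Pi$ on $A^{(2)}$ factors as $\widehat\Pi\circ\rho$; finally one verifies that the distinguished vector $\xi$ is $(\hat\alpha^t,\beta)$-spherical for $\widehat\Pi$ iff it is $(\alpha^t,\beta)$-spherical for $\Pi$, which is again the $\rho$-equivariance of the flows feeding into Definition~\ref{D5.1}. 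Assembling these three squares and checking they commute with the maps in part~(1) yields the claim.

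The main obstacle I anticipate is item~(a) in part~(1): verifying that the purification $\omega^{(2)}$ of a \emph{locally normal} $\omega$ is genuinely \emph{locally bi-normal}, i.e.\ separately normal on each $A_\lambda\times A_\lambda^\mathrm{op}$. The subtlety is that normality of $\omega$ on $A_\lambda$ does not a priori control the modular/analytic continuation $\alpha^{i\beta/4}$ applied to elements of $A_\lambda$, since $\alpha^t$ need not preserve $A_\lambda$ in the generality of Theorem~\ref{T5.7}(1), and the relevant modular objects $J_\omega$, $\sigma^{\bar\omega}_t$ live on $\pi_\omega(A)''$ rather than on the images of the $A_\lambda$. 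The way around this is precisely the construction recalled just before Lemma~\ref{L5.4} — $\bar\alpha^t$ is pointwise strongly continuous on $\pi_\omega(A)''$ and $\sigma^{\bar\omega}_t = \bar\alpha^{-\beta t}$ — together with the formula $\omega^{(2)}(a\otimes(b^*)^\mathrm{op}) = (\pi_\omega(\alpha^{i\beta/4}(a))\xi_\omega\mid \pi_\omega(\alpha^{i\beta/4}(b))\xi_\omega)$ interpreted via boundary values rather than via analytic elements, combined with the fact that on $\pi_\omega(A)''$ the $\sigma^{\bar\omega}_t$-analytic elements are strong$^*$-dense and that $\pi_\omega$ is normal on each $W^*$-subalgebra $A_\lambda \subset M$ (by the identification in \S5.2 of the $\sigma$-weak topology of $A_\lambda$ with its relative topology from $M$). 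Once separate normality is obtained by this density-and-boundary-value argument, the rest of part~(1) and all of part~(2) are bookkeeping of the kind already carried out in Lemmas~\ref{L5.3}--\ref{L5.6}.
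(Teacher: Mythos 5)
Your proposal is correct in substance and shares the paper's overall skeleton: part (1) is reduced to checking that the Theorem~\ref{T3.8} correspondences preserve local (bi-)normality, and part (2) rests on Lemma~\ref{L5.6} together with part (1); but at the two delicate points your tactics differ from the paper's. For (1)(a), the obstacle you anticipate is not actually there: the purification is defined through $J_\omega$, not through $\alpha^{i\beta/4}$, namely $\omega^{(2)}(a\otimes b^{\mathrm{op}})=(\pi_\omega(a)J_\omega\pi_\omega(b^*)J_\omega\xi_\omega\,|\,\xi_\omega)_{\mathcal{H}_\omega}$ for \emph{all} $a,b\in A$, so once one observes (as the paper does) that $\pi_\omega$ restricted to each $A_\lambda$ is normal, separate normality of $\omega^{(2)}$ and of $\pi_\omega^{(2)}$ on $A_\lambda\times A_\lambda^{\mathrm{op}}$ is immediate; your boundary-value/analytic-element detour is sound but unnecessary. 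For part (2), the paper does not lift the boundary-value functions of a spherical state directly from $A^{(2)}$ to $M^{(2)}$: to get surjectivity of $\hat\varphi\mapsto\hat\varphi\circ\rho$ (and then of the map on representations) it lifts the KMS state $\omega=\varphi_L$ to $\hat\omega\in K_\beta^{\mathrm{n}}(\hat\alpha^t)$, forms $\pi_{\hat\omega}^{(2)}(a\otimes b^{\mathrm{op}})=\pi_{\hat\omega}(a)J_{\hat\omega}\pi_{\hat\omega}(b^*)J_{\hat\omega}$, shows its vector state lies in $S_\beta(\hat\alpha^t)$ by repeating the Lemma~\ref{L5.4}(2) argument, identifies $\hat\varphi\circ\rho=\omega^{(2)}=\varphi$ via Lemma~\ref{L5.4}(3), and settles the representation level by GNS uniqueness as in Lemma~\ref{L5.6}(3). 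Your direct lifting can be made to work, but the uniform-in-$t$ estimates needed for the Phragm\'en--Lindel\"of passage from $a\in A$ to $x\in M$ do not follow from Kaplansky density alone: you must first establish that the partial states $\hat\varphi_L=\hat\varphi_R$ (respectively the relevant vector state on $M$ in the representation-level check) are normal and $\hat\alpha^t$-invariant, so that Cauchy--Schwarz yields bounds of the form $\sup_t|\hat\varphi(y(\hat\alpha^t(c)\otimes 1^{\mathrm{op}}))|\le \hat\varphi(yy^*)^{1/2}\,\hat\varphi_L(c^*c)^{1/2}$ with $c=a_n-x$; the invariance comes from Lemma~\ref{L5.3} together with the discussion preceding Lemma~\ref{L5.4}, plus normality and the $\sigma$-weak density of $A$ in $M$ --- exactly the ingredients the paper uses to show that the normal extension $\hat\omega$ is $\hat\alpha^t$-invariant before running the KMS approximation (a step your ``by density'' also tacitly requires). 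With those estimates spelled out your route closes; what the paper's detour through $\hat\omega$ and the modular conjugation buys is that all the hard analysis is done once, in Lemma~\ref{L5.4}, in the standard-form setting where $\Vert\sigma_t^{\bar\omega}(x)\xi_\omega\Vert=\Vert x\xi_\omega\Vert$ is available for free.
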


Before giving the proof, we point out that item (2) above explains how the present approach includes Ryosuke Sato's work \cite{Sato:preprint19} as a special case; also see the end of subsection 9.2. Actually, he treated only $K^\mathrm{n}_\beta(\hat{\alpha}^t)$, $S^\mathrm{n}_\beta(\hat{\alpha}^t)$ and $\mathrm{Rep}_\beta^\mathrm{bin}(\hat{\alpha}^t)$ (with $\beta=-1$) instead. The present approach emphasizing the notion of local (bi-)normality is more natural, as we will see below, to discuss the spectral decomposition of KMS states (quantized characters) as well as spherical representations. Therefore, we will not touch $K^\mathrm{n}_\beta(\hat{\alpha}^t)$, $S^\mathrm{n}_\beta(\hat{\alpha}^t)$ and $\mathrm{Rep}_\beta^\mathrm{bin}(\hat{\alpha}^t)$ in the subsequent sections.   

\begin{proof} 
Item (1): As explained in subsection 5.1, Theorem \ref{T3.8} holds without the continuity of the flow $\alpha^t$. Hence it suffices to confirm all the correspondences there preserve the local (bi-)normality. 

Assume that $\omega \in K_\beta(\alpha^t)$ is locally normal. Since the $A_\lambda$, $\lambda \in \Lambda$, form a net with respect to the set-inclusion and since $\bigcup_\lambda A_\lambda$ is norm-dense in $A$, it is easy to see that the restriction of the GNS representation $\pi_\omega$ to each $A_\lambda$ is normal, i.e., continuous on the unit ball in the weak operator topologies. By their constructions, the purification $\omega^{(2)}$ and its GNS representations $\pi_\omega^{(2)} : A^{(2)} \curvearrowright \mathcal{H}_\omega$ must be locally bi-normal. The opposite direction in the correspondences is easy to treat. 

\medskip
Item (2); $K^\mathrm{n}_\beta(\hat{\alpha}^t) \to K_\beta^\mathrm{ln}(\alpha^t)$: Let $\hat{\omega} \in K^\mathrm{n}_\beta(\hat{\alpha}^t)$ be arbitrarily chosen. By property (ii) of $M$, the restriction of $\hat{\omega}$ to each $A_\lambda$ must be normal and hence $\omega := \hat{\omega}\!\upharpoonright_A$ is locally normal. This together with $\hat{\alpha}^t\!\upharpoonright_A = \alpha^t$ shows that $\omega$ falls in $K_\beta^\mathrm{ln}(\alpha^t)$.  

By property (i) of $M$ we see that $\hat{\omega}_1\!\upharpoonright_A = \hat{\omega}_2\!\upharpoonright_A$ implies $\hat{\omega}_1 = \hat{\omega}_2$ for any pair $\hat{\omega}_1, \hat{\omega}_2 \in K^\mathrm{n}_\beta(\hat{\alpha}^t)$.  Hence the map $K^\mathrm{n}_\beta(\hat{\alpha}^t) \to K_\beta^\mathrm{ln}(\alpha^t)$ is injective, and also it is trivially affine. 

Let $\omega \in K_\beta^\mathrm{ln}(\alpha^t)$ be arbitrarily chosen. By property (iii) of $M$, there is an $\hat{\omega} \in M_*$ such that $\hat{\omega}\!\upharpoonright_A = \omega$. We may and do assume that $\beta\neq0$. As remarked in the discussion above Lemma \ref{L5.4}, $\hat{\omega}\circ\hat{\alpha}^t(a) = \omega\circ\alpha^t(a) = \omega(a) = \hat{\omega}(a)$ holds for every $a \in A$, and hence $\hat{\omega}\circ\hat{\alpha}^t = \hat{\omega}$ holds by normality together with property (i) of $M$. A standard argument like the proof of Proposition \ref{P4.3} (iii) $\Rightarrow$ (iv) with the same approximation procedure as in the proof of Lemma \ref{L5.4}(2) enables us to see that $\hat{\omega}$ falls in $K^\mathrm{n}_\beta(\hat{\alpha}^t)$. (See also the proof of \cite[Proposition 5.3.7(1)$\Rightarrow$(2)]{BratteliRobinson:Book2}.)  Hence the map $K^\mathrm{n}_\beta(\hat{\alpha}^t) \to K_\beta^\mathrm{ln}(\alpha^t)$ is surjective.  

\medskip
Item (2); $S^\mathrm{n}_\beta(\hat{\alpha}^t) \to S_\beta^\mathrm{lbin}(\alpha^t)$: Let $\hat{\varphi} \in S^\mathrm{n}_\beta(\hat{\alpha}^t)$ be arbitrarily chosen. By property (ii) of $M$, the mapping $(a,b^\mathrm{op}) \in A \times A^\mathrm{op} \mapsto \hat{\varphi}\circ\rho(a\otimes b^\mathrm{op}) = \hat{\varphi}(a\otimes b^\mathrm{op}) \in \mathbb{C}$ is separately normal on each $A_\lambda \times A_\lambda^\mathrm{op}$. Hence $\hat{\varphi}\circ\rho$ is locally bi-nomal. By Remark \ref{D5.2} we also observe that $\hat{\varphi}\circ\rho$ is an $(\alpha^t,\beta)$-spherical state. Therefore, we conclude that $\hat{\varphi}\circ\rho \in S_\beta^\mathrm{lbin}(\alpha^t)$.  

Let $\hat{\varphi}_1, \hat{\varphi}_2 \in S^\mathrm{n}_\beta(\hat{\alpha}^t)$ be arbitrarily chosen. Assume that $\hat{\varphi}_1\circ\rho = \hat{\varphi}_2\circ\rho$. Then $\hat{\varphi}_1(x) = \hat{\varphi}_2(x)$ for all $x \in A\otimes A^\mathrm{op}$. By the bi-normality, $\hat{\varphi}_1$ and $\hat{\varphi}_2$ agree on $M\otimes M^\mathrm{op}$, and so do they on $M^{(2)}$ too by the norm-density of $M\otimes M^\mathrm{op}$ in $M^{(2)}$. Hence the map $S^\mathrm{n}_\beta(\hat{\alpha}^t) \to S_\beta^\mathrm{lbin}(\alpha^t)$ is injective and trivially affine.  
 
Let $\varphi \in S_\beta^\mathrm{lbin}(\alpha^t)$ be arbitrarily chosen. By item (1) there is an $\omega \in K_\beta^\mathrm{ln}(\alpha^t)$ so that $\varphi = \omega^{(2)}$. As above, there is a unique $\hat{\omega} \in K^\mathrm{n}_\beta(\hat{\alpha}^t)$ such that $\hat{\omega}\!\upharpoonright_A = \omega$ holds. Let $(\pi_{\hat{\omega}} : M \curvearrowright \mathcal{H}_{\hat{\omega}}, \xi_{\hat{\omega}})$ be the GNS triple of $\hat{\omega}$ and $J_{\hat{\omega}} : \mathcal{H}_{\hat{\omega}} \to \mathcal{H}_{\hat{\omega}}$ be the modular conjugation operator constructed from $\xi_{\hat{\omega}}$. By the `bi-normal universality' of $M^{(2)}$ we have a unique $*$-representation $\pi_{\hat{\omega}}^{(2)} : M^{(2)} \curvearrowright \mathcal{H}_{\hat{\omega}}$ defined by 
\[
\pi_{\hat{\omega}}^{(2)}(a\otimes b^\mathrm{op}) := \pi_{\hat{\omega}}(a) J_{\hat{\omega}}\pi_{\hat{\omega}}(b^*)J_{\hat{\omega}}, \qquad 
a\otimes b^\mathrm{op} \in M^{(2)}.  
\]
This is clearly separately normal, and hence uniquely determined by its restriction to $A\otimes A^\mathrm{op}$ ($\subset M\otimes M^\mathrm{op} \subset M^{(2)}$). In the exactly same way as in the proof of Lemma \ref{L5.4}(2) we see that $\hat{\varphi} = \varphi_{(\pi_{\hat{\omega}}^{(2)},\xi_{\hat{\omega}})}$ falls in $S_\beta(\hat{\alpha}^t)$. Thus, thanks to Remark \ref{D5.2}, we can confirm that $\hat{\varphi}\circ\rho$ falls in $S_\beta^\mathrm{lbin}(\alpha^t)$, because $\rho(x) = x$ on $A\otimes A^\mathrm{op}$  ($\subset M\otimes M^\mathrm{op} \subset M^{(2)}$). Since 
\[
\hat{\varphi}\circ\rho(a\otimes1^\mathrm{op}) = \hat{\varphi}(a\otimes1^\mathrm{op}) =  \hat{\omega}(a) = \omega(a), \qquad a \in A,
\]  
we have $\hat{\varphi}\circ\rho = \omega^{(2)} = \varphi$ by Lemma \ref{L5.4}(3). Hence the map $S^\mathrm{n}_\beta(\hat{\alpha}^t) \to S_\beta^\mathrm{lbin}(\alpha^t)$ is surjective. 

\medskip
Item (2); $\mathrm{Rep}_\beta^\mathrm{bin}(\hat{\alpha}^t) \to \mathrm{Rep}_\beta^\mathrm{lbin}(\alpha^t)$: Let $(\widehat{\Pi} : M^{(2)} \curvearrowright \mathcal{H}_{\widehat{\Pi}},\xi)$ be an arbitrary bi-normal $(\hat{\alpha}^t,\beta)$-spherical representation of $M$. It is clear that $(a,b^\mathrm{op}) \mapsto \Pi(a\otimes b^\mathrm{op}) := \widehat{\Pi}(\rho(a\otimes b^\mathrm{op})) = \widehat{\Pi}(a\otimes b^\mathrm{op})$ is separately normal on each $A_\lambda\times A_\lambda^\mathrm{op}$. Moreover, it is trivial that $\xi$ is an $(\alpha^t,\beta)$-spherical vector in $\Pi : A^{(2)} \curvearrowright \mathcal{H}_{\widehat{\Pi}}$ in the sense of Definition \ref{D5.1}. Since $\Pi(A\otimes A^\mathrm{op})\xi = \widehat{\Pi}(A\otimes A^\mathrm{op})\xi$ is dense in $\mathcal{H}_{\widehat{\Pi}}$ by the bi-normality of $\widehat{\Pi}$, we conclude that $(\Pi:=\widehat{\Pi}\circ\rho : A^{(2)} \curvearrowright \mathcal{H}_{\widehat{\Pi}},\xi)$ is a locally bi-normal $(\alpha^t,\beta)$-spherical representation of $A$. 

 Let $(\widehat{\Pi}_i : M^{(2)} \curvearrowright \mathcal{H}_{\widehat{\Pi}_i},\xi_i)$, $i=1,2$, be bi-normal $(\hat{\alpha}^t,\beta)$-spherical representations of $M$. If those are equivalent, then so are the corresponding $(\widehat{\Pi}_i=\widehat{\Pi}_i\circ\rho : M^{(2)} \curvearrowright \mathcal{H}_{\widehat{\Pi}_i},\xi_i)$. Conversely, we assume that $(\widehat{\Pi}_i\circ\rho : A^{(2)} \curvearrowright \mathcal{H}_{\widehat{\Pi}_i},\xi_i)$ are equivalent with unitary transform $u$. Then we have $u\widehat{\Pi}(a\otimes b^\mathrm{op}) = \widehat{\Pi}(a\otimes b^\mathrm{op})u$ for every simple tensor $a\otimes b^\mathrm{op} \in A\otimes A^\mathrm{op}$ ($\subset M^{(2)}$). By the bi-normality, we have $u\widehat{\Pi}(a\otimes b^\mathrm{op}) = \widehat{\Pi}(a\otimes b^\mathrm{op})u$ for every simple tensor $a\otimes b^\mathrm{op} \in M\otimes M^\mathrm{op}$ ($\subset M^{(2)}$). By the norm-continuity, we conclude that $u\widehat{\Pi}(x)=\widehat{\Pi}(x)u$ holds for every $x \in M^{(2)}$. Consequently, the mapping $[(\widehat{\Pi} : M^{(2)} \curvearrowright \mathcal{H}_{\widehat{\Pi}},\xi)] \mapsto [\Pi := \widehat{\Pi}\circ\rho : A^{(2)} \curvearrowright \mathcal{H}_{\widehat{\Pi}},\xi)]$ is well defined and injective. Therefore, it suffices to prove only the surjectivity. 
 
 Let $(\Pi : A^{(2)} \curvearrowright \mathcal{H}_\Pi,\xi)$ be a locally bi-normal $(\alpha^t,\beta)$-spherical representation of $A$. By item (1), we have $\varphi := \varphi_{(\Pi,\xi)} \in S_\beta^\mathrm{lbin}(\alpha^t)$. Then, what we have proved above shows that there is a unique $\hat{\varphi} \in S_\beta^\mathrm{bin}(\hat{\alpha}^t)$ with $\hat{\varphi}\circ\rho = \varphi$. Then, the GNS triple $(\Pi_{\hat{\varphi}} : M^{(2)} \curvearrowright \mathcal{H}_{\hat{\varphi}},\xi_{\hat{\varphi}})$ must be a bi-normal $(\hat{\alpha}^t,\beta)$-spherical representation of $M$. As in the proof of Lemma \ref{L5.6}(3), the uniqueness of GNS representations implies that $[(\Pi_{\hat{\varphi}}\circ\rho : A^{(2)} \curvearrowright \mathcal{H}_{\hat{\varphi}},\xi_{\hat{\varphi}})] = [(\Pi : A^{(2)} \curvearrowright \mathcal{H}_\Pi,\xi)]$. Hence we are done. 
\end{proof} 

We are closing this section with a fact on the geometry of $K_\beta^\mathrm{ln}(\alpha^t)$. 

\begin{proposition} \label{P5.8} 
The locally normal $(\alpha^t,\beta)$-KMS states $K_\beta^\mathrm{ln}(\alpha^t)$ form a face of all the $(\alpha^t,\beta)$-KMS states $K_\beta(\alpha^t)$. In particular, a locally normal $(\alpha^t,\beta)$-KMS state $\omega$ is an extreme point in $K_\beta^\mathrm{ln}(\alpha^t)$ if and only if $\omega$ is a factor state. 
\end{proposition}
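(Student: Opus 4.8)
The plan is to prove the face property directly and then to read off the ``in particular'' statement from the standard identification of the extreme points of $K_\beta(\alpha^t)$ with the factor $(\alpha^t,\beta)$-KMS states \cite[Theorem 5.3.30]{BratteliRobinson:Book2}. As a preliminary observation, $K_\beta^{\mathrm{ln}}(\alpha^t) = K_\beta(\alpha^t)\cap M_*$ is convex, since $K_\beta(\alpha^t)$ is convex and a convex combination of locally normal states is again locally normal (on each $A_\lambda$ the normal positive functionals form a subcone of $(A_\lambda)^*$ stable under sums and positive scalars).

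For the face property I would argue as follows. Let $\omega\in K_\beta^{\mathrm{ln}}(\alpha^t)$ and suppose $\omega=\theta\,\omega_1+(1-\theta)\,\omega_2$ with $\omega_1,\omega_2\in K_\beta(\alpha^t)$ and $\theta\in(0,1)$; the goal is to show $\omega_1,\omega_2$ are again locally normal. Fix $\lambda\in\Lambda$. On the $W^*$-algebra $A_\lambda$ one has $0\le\theta\,\omega_1\!\upharpoonright_{A_\lambda}\le\omega\!\upharpoonright_{A_\lambda}$ as positive linear functionals (because $\omega-\theta\,\omega_1=(1-\theta)\,\omega_2\ge0$), and $\omega\!\upharpoonright_{A_\lambda}$ is normal by hypothesis. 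The key classical fact is that a positive linear functional on a $W^*$-algebra dominated by a normal one is itself normal: for a bounded increasing net $x_i\nearrow x$ in $A_\lambda^+$ we get $0\le(\theta\,\omega_1)(x-x_i)\le\omega(x-x_i)\to0$, whence $(\theta\,\omega_1)(x_i)\to(\theta\,\omega_1)(x)$. Thus $\theta\,\omega_1\!\upharpoonright_{A_\lambda}$, and hence $\omega_1\!\upharpoonright_{A_\lambda}$, is normal. Since $\lambda$ is arbitrary, $\omega_1\in K_\beta^{\mathrm{ln}}(\alpha^t)$, and likewise $\omega_2\in K_\beta^{\mathrm{ln}}(\alpha^t)$. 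This proves that $K_\beta^{\mathrm{ln}}(\alpha^t)$ is a face of $K_\beta(\alpha^t)$.

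The extreme-point characterization would then follow formally. If $\omega\in K_\beta^{\mathrm{ln}}(\alpha^t)$ is a factor state, it is an extreme point of the larger set $K_\beta(\alpha^t)$ by \cite[Theorem 5.3.30]{BratteliRobinson:Book2}, hence a fortiori an extreme point of $K_\beta^{\mathrm{ln}}(\alpha^t)$. Conversely, if $\omega\in K_\beta^{\mathrm{ln}}(\alpha^t)$ is extreme in $K_\beta^{\mathrm{ln}}(\alpha^t)$ but not a factor state, then by the same theorem it fails to be extreme in $K_\beta(\alpha^t)$, so $\omega=\theta\,\omega_1+(1-\theta)\,\omega_2$ with $\omega_1\ne\omega_2$ in $K_\beta(\alpha^t)$ and $\theta\in(0,1)$; the face property just established forces $\omega_1,\omega_2\in K_\beta^{\mathrm{ln}}(\alpha^t)$, contradicting extremality. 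Hence $\omega$ is a factor state.

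I do not expect a genuine obstacle here: the only nontrivial inputs are the hereditarity of normal positive functionals (the elementary monotone-net estimate used above) and \cite[Theorem 5.3.30]{BratteliRobinson:Book2}. The single point to be careful about is that the latter must be invoked in the present generality --- a possibly non-norm-continuous flow, with the KMS condition in the boundary-value form of \cite[Proposition 5.3.7(2)]{BratteliRobinson:Book2} --- which is exactly the setting in which Bratteli--Robinson state and prove it, and which is already used freely in this section.
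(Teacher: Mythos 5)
Your proof is correct, and its skeleton coincides with the paper's: establish that $K_\beta^{\mathrm{ln}}(\alpha^t)$ is a face via the domination $\theta\,\omega_1\le\omega$, then deduce the extreme-point characterization from \cite[Theorem 5.3.30]{BratteliRobinson:Book2}, which indeed applies with the boundary-value form of the KMS condition and no continuity assumption on the flow. The one place you diverge is the transfer of normality to the dominated functional: the paper writes $\theta\,\omega_1(a)=(\pi_\omega(a)\xi_\omega\,|\,h'\xi_\omega)_{\mathcal{H}_\omega}$ with a positive $h'\in\pi_\omega(A)'$ via \cite[Theorem 2.3.19]{BratteliRobinson:Book1} and then uses that $\pi_\omega\!\upharpoonright_{A_\lambda}$ is normal (as in the proof of Theorem \ref{T5.7}(1)), whereas you invoke directly the hereditarity of normality under domination through the monotone-net estimate $0\le\theta\,\omega_1(x-x_i)\le\omega(x-x_i)\to0$. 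Both are standard and correct; your route is slightly more elementary in that it avoids the GNS representation and the commutant Radon--Nikodym theorem, while the paper's route simply reuses machinery it has already set up. Your formal derivation of the ``in particular'' statement from the face property together with \cite[Theorem 5.3.30]{BratteliRobinson:Book2} is exactly what the paper means by ``immediately follows,'' and your caveat about invoking that theorem in the non-continuous setting matches the remark the paper itself makes.
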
 
\begin{proof} Assume that a convex combination $\omega = \sum_{i=1}^n \lambda_i \omega_i$ in $K_\beta(\alpha^t)$ falls in $K_\beta^\mathrm{ln}(\alpha^t)$. Then, for each $1 \leq i \leq n$, $\lambda_i \omega_i \leq \omega$ and hence there is a positive $h'_i \in \pi_\omega(A)'$ so that $\lambda_i \omega_i(a) = (\pi_\omega(a)\xi_\omega\,|\,h'_i \xi_\omega)_{\mathcal{H}_\omega}$ for all $a \in A$, where $(\pi_\omega : A \curvearrowright \mathcal{H}_\omega,\xi_\omega)$ is the GNS triple associated with $\omega$. See \cite[Theorem 2.3.19]{BratteliRobinson:Book1}. It is easy to see that the restriction of $\pi_\omega$ to each $A_\lambda$ is normal. (See the proof of Theorem \ref{T5.7}(1).) Hence $\lambda_i \omega_i$ and thus $\omega_i$ are locally normal. Hence we have proved the first part. 

The second part immediately follows from \cite[Theorem 5.3.30(3)]{BratteliRobinson:Book2}, which does not need the continuity assumption of the flow $\alpha^t$.        
\end{proof} 

\section{Viewpoint of Gelfand pairs} 

Olshanski formulated the notion of Gelfand pairs for general topological groups as follows. Let $G > H$ be a pair of topological group and its closed subgroup. He calls $(G,H)$ a \emph{Gelfand pair}, if for any unitary representation $\pi : G \curvearrowright \mathcal{H}_\pi$, the projection $e$ onto $\mathcal{H}_\pi^H := \{\xi \in \mathcal{H}\,; \pi(h)\xi = \xi\ \text{for all $h \in H$}\}$ satisfies that   
\[
e\pi(g_1)e\pi(g_2)e = e\pi(g_2)e\pi(g_1)e 
\]
for all $g_1, g_2 \in G$. It is known, see \cite{Olshanski:AdvSovietMath91}, that for any topological group $G$, $(G\times G, \Delta(G))$ with diagonal subgroup $\Delta(G)$ becomes a Gelfand pair. 

\medskip
In this section, we will establish the same property for $C^*$-flows in any  representations. In what follows, we do not assume any continuity of the flow $\alpha^t$ in question as in section 5 so that we employ Definition \ref{D5.1} here.  

\begin{theorem}\label{T6.1} Let $\Pi : A^{(2)} \curvearrowright \mathcal{H}_\Pi$ be a $*$-representation, and $e$ be the projection onto all the $(\alpha^t,\beta)$-spherical vectors 
$\mathcal{H}_\Pi^{(\alpha^t,\beta)}$ in $\Pi : A^{(2)} \curvearrowright \mathcal{H}_\Pi$. Then,   
\[
e\Pi(x)e\Pi(y)e = e\Pi(y)e\Pi(x)e
\]
holds for every pair $x,y \in A^{(2)}$. 

Moreover, if $\Pi : A^{(2)} \curvearrowright \mathcal{H}_\Pi$ is an $(\alpha^t,\beta)$-spherical representation with $(\alpha^t,\beta)$-spherical vector $\xi$, then the projection $e$ is exactly the Jones projection associated with the faithful normal conditional expectation $E_{\mathcal{Z}(L(\Pi))}$ from $L(\Pi) : = \Pi(A\otimes\mathbb{C}1^\mathrm{op})''$ onto the center $\mathcal{Z}(L(\Pi))$ with respect to the vector $\xi$, that is, $e = [\mathcal{Z}(L(\Pi))\xi]$, so that $e x\xi = E_{\mathcal{Z}(L(\Pi))}(x)\xi$ holds for every $x \in L(\Pi)$. In particular, $e\Pi(A^{(2)})''e = \mathcal{Z}(L(\Pi))e$ holds.  
\end{theorem}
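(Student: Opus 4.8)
\textbf{Plan of proof for Theorem \ref{T6.1}.}
The plan is to treat the two assertions in turn, deriving the Gelfand-pair identity first from the characterization of $(\alpha^t,\beta)$-spherical vectors, and then identifying $e$ concretely in the cyclic case by Tomita--Takesaki theory. For the first part, I would begin by recalling from Lemma \ref{L5.5} (or Lemma \ref{L4.1}) that $\mathcal{H}_\Pi^{(\alpha^t,\beta)}$ is a closed subspace invariant under the commutant $\Pi(A^{(2)})'$, so that $e \in \Pi(A^{(2)})''$. The key observation is then that for a spherical vector $\eta$, the left and right actions are linked: by Definition \ref{D5.1} applied through the full $j$-structure of $A^{(2)}$ (and by the equivalent formulations in Proposition \ref{P4.3}), one has, for $\eta \in \mathcal{H}_\Pi^{(\alpha^t,\beta)}$ and $a \in A$, that $\Pi(a\otimes 1^\mathrm{op})\eta$ and $\Pi(1\otimes(\cdot)^\mathrm{op})\eta$ are related by the analytic continuation in the flow, and crucially $\Pi(1\otimes b^\mathrm{op})\eta$ again lies in $\mathcal{H}_\Pi^{(\alpha^t,\beta)}$ for all $b$, because the right action of $A^\mathrm{op}$ commutes with the left action of $A$ (the two tensor legs commute in $A^{(2)}$) and the spherical condition is phrased purely in terms of the left-versus-right balance. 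Thus for $x = a\otimes b^\mathrm{op}$ one gets $e\Pi(x)e = e\Pi(a\otimes 1^\mathrm{op})\Pi(1\otimes b^\mathrm{op})e = e\Pi(a\otimes1^\mathrm{op})e\,\Pi(1\otimes b^\mathrm{op})e$ after inserting $e$ in the middle, and $e\Pi(a\otimes1^\mathrm{op})e = e\Pi(1\otimes(\alpha^{i\beta/2}(a))^\mathrm{op})e$ (in the analytic sense). Consequently, on $\mathcal{H}_\Pi^{(\alpha^t,\beta)}$ all compressed operators $e\Pi(x)e$ are realized by the single commuting family $\{e\Pi(1\otimes b^\mathrm{op})e\}_b$, hence they commute with each other; by norm-density of simple tensors this gives $e\Pi(x)e\,\Pi(y)e = e\Pi(y)e\,\Pi(x)e$ for all $x,y \in A^{(2)}$.

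For the second part, I would specialize to an $(\alpha^t,\beta)$-spherical representation $(\Pi,\xi)$ and use Theorem \ref{T3.8}: up to equivalence $\Pi = \pi_\omega^{(2)}$ with $\omega = \varphi_L \in K_\beta(\alpha^t)$, acting on $\mathcal{H}_\omega$ with $\xi = \xi_\omega$, where $\pi_\omega^{(2)}(a\otimes b^\mathrm{op}) = \pi_\omega(a)J_\omega\pi_\omega(b^*)J_\omega$. Write $L(\Pi) = \pi_\omega(A)'' = L(\Pi)$. The point is that, by equation \eqref{Eq3.6}, $\Pi(A^{(2)})'' = \pi_\omega(A)'' \vee J_\omega\pi_\omega(A)''J_\omega = \pi_\omega(A)'' \vee \pi_\omega(A)'$, and by the computation already given in the proof of Corollary \ref{C3.9}(2), $\Pi(A^{(2)})' = \pi_\omega(A)'\cap\pi_\omega(A)'' = \mathcal{Z}(L(\Pi))$. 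So $e \in \Pi(A^{(2)})'' = B(\mathcal{H}_\omega)$ — no restriction there — but more usefully $\mathcal{H}_\Pi^{(\alpha^t,\beta)}$ is the space of vectors $\eta$ with $\pi_\omega(a)\eta = J_\omega\pi_\omega(\alpha^{-i\beta/2}(a))^*J_\omega\,\eta$, i.e.\ the space fixed (in the appropriate analytic sense) under the ``diagonal'' action $\pi_\omega(a)$ versus $J_\omega\pi_\omega(\cdot)J_\omega$. The standard fact here (analogous to the group case $\pi(g,g)\xi=\xi$) is that $\eta \in \mathcal{H}_\Pi^{(\alpha^t,\beta)}$ if and only if $\eta$ is fixed by all unitaries of the form $u J_\omega u J_\omega$ with $u$ unitary in $\pi_\omega(A)''$, equivalently $\eta \perp \overline{\mathrm{span}}\{(x - J_\omega x^* J_\omega)\xi_\omega : x \in \pi_\omega(A)''_{\sigma}\}$ — and by the Tomita--Takesaki identification of the relative commutant, this orthogonal complement is precisely $\overline{\mathcal{Z}(L(\Pi))\xi_\omega}$. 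Hence $e = [\mathcal{Z}(L(\Pi))\xi]$.

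To finish, I would invoke the standard description of the Jones projection: since $\xi_\omega$ is cyclic and separating for $L(\Pi) = \pi_\omega(A)''$ and $\mathcal{Z}(L(\Pi))$ is globally invariant under the modular group $\sigma_t^{\bar\omega}$ (it is the center), there is a faithful normal conditional expectation $E_{\mathcal{Z}(L(\Pi))} : L(\Pi) \to \mathcal{Z}(L(\Pi))$ preserving $\bar\omega$ (Takesaki's theorem), and the projection $[\mathcal{Z}(L(\Pi))\xi_\omega]$ is exactly the Jones projection implementing it, i.e.\ $e x\xi_\omega = E_{\mathcal{Z}(L(\Pi))}(x)\xi_\omega$ for all $x \in L(\Pi)$. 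The final identity $e\Pi(A^{(2)})''e = \mathcal{Z}(L(\Pi))e$ then follows because $e\Pi(A^{(2)})''e = e(\pi_\omega(A)''\vee\pi_\omega(A)')e$; compressing by the Jones projection $e$ collapses both $\pi_\omega(A)''$ and $\pi_\omega(A)' = J_\omega\pi_\omega(A)''J_\omega$ to $\mathcal{Z}(L(\Pi))e$ (using $exe = E_{\mathcal{Z}(L(\Pi))}(x)e$ and the $J_\omega$-invariance of the center), and $\mathcal{Z}(L(\Pi))e$ is already an algebra. I expect the main obstacle to be the second part: carefully justifying, \emph{without} a continuity hypothesis on $\alpha^t$, that $\mathcal{H}_\Pi^{(\alpha^t,\beta)}$ coincides with $\overline{\mathcal{Z}(L(\Pi))\xi_\omega}$ and that $e$ is genuinely the $\bar\omega$-preserving Jones projection onto the center — this requires pushing the analytic-function characterization of Definition \ref{D5.1} through the Tomita--Takesaki apparatus (the operators $J_\omega$, $\Delta_\omega$, and the $\sigma$-analytic elements $(\pi_\omega(A)'')^\infty_{\sigma_t^{\bar\omega}}$) exactly as in Lemma \ref{L5.4}, rather than relying on the cleaner norm-continuous picture.
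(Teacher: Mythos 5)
Your proposal does not go through as written; both halves contain genuine gaps. For the first part, your argument hinges on the claim that $\Pi(1\otimes b^\mathrm{op})\eta$ is again an $(\alpha^t,\beta)$-spherical vector whenever $\eta$ is, so that $e$ may be inserted in the middle of $e\Pi(a\otimes1^\mathrm{op})\Pi(1\otimes b^\mathrm{op})e$. That is false: if $\eta$ satisfies \eqref{Eq3.1}, then commuting $\Pi(1\otimes b^\mathrm{op})$ past $\Pi(a\otimes1^\mathrm{op})$ produces $\Pi(1\otimes(\alpha^{-i\beta/2}(a)\,b)^\mathrm{op})\eta$, whereas sphericity of $\Pi(1\otimes b^\mathrm{op})\eta$ would require $\Pi(1\otimes(b\,\alpha^{-i\beta/2}(a))^\mathrm{op})\eta$; these agree only when $b$ commutes with $\alpha^{-i\beta/2}(a)$. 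Already in the tracial case the spherical subspace of $\pi_\omega^{(2)}$ is $[\mathcal{Z}(\pi_\omega(A)'')\xi_\omega]$, which is visibly not invariant under $J_\omega\pi_\omega(A)J_\omega$ (e.g.\ $\Pi(1\otimes b^\mathrm{op})\xi_\omega=\pi_\omega(b)\xi_\omega$). Without this invariance, $e\Pi(a\otimes1^\mathrm{op})\Pi(1\otimes b^\mathrm{op})e\neq e\Pi(a\otimes1^\mathrm{op})e\,\Pi(1\otimes b^\mathrm{op})e$, and the compressions $e\Pi(1\otimes b^\mathrm{op})e$ do not form a commuting family for free — their commutativity is essentially what has to be proved. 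The paper instead obtains the first part \emph{from} the second: it decomposes $[\Pi(A^{(2)})\mathcal{H}_\Pi^{(\alpha^t,\beta)}]$ into cyclic pieces generated by spherical vectors and shows in the cyclic case that $e\Pi(A^{(2)})''e=\mathcal{Z}(L(\Pi))e$, which is commutative.

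For the second part, the identity $\mathcal{H}_\Pi^{(\alpha^t,\beta)}=[\mathcal{Z}(L(\Pi))\xi]$ is the real content, and the ``standard fact'' you invoke is not correct. The characterization ``$\eta$ is spherical iff it is fixed by all $uJ_\omega uJ_\omega$ with $u$ unitary in $\pi_\omega(A)''$'' fails whenever $\omega$ is not tracial: for a generic unitary $u$ one has $uJ_\omega uJ_\omega\xi_\omega=u\Delta_\omega^{1/2}u^*\xi_\omega\neq\xi_\omega$, so not even the spherical vector $\xi_\omega$ is fixed. The alternative formulation ``$\eta\perp\overline{\mathrm{span}}\{(x-J_\omega x^*J_\omega)\xi_\omega\}$'' fails even in the tracial case, where $(x-J_\omega x^*J_\omega)\xi_\omega=0$ for every $x$, so that orthocomplement is all of $\mathcal{H}_\omega$ rather than $[\mathcal{Z}(L(\Pi))\xi_\omega]$; the spherical condition is the operator identity $x\zeta=J_\omega\sigma_{i/2}^{\bar{\omega}}(x)^*J_\omega\zeta$ on $\zeta$ itself (Lemma \ref{L6.2}(iii)), not an orthogonality relation against vectors built from $\xi_\omega$. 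Deducing $\zeta\in[\mathcal{Z}(L(\Pi))\xi]$ from that identity is exactly the paper's Proposition \ref{P6.3}, whose proof requires Takesaki's characterization of modular automorphism groups, the Pedersen--Takesaki Radon--Nikodym theorem, and the polar decomposition/uniqueness of implementing vectors in the natural cone; your sketch supplies no substitute, and you yourself flag this step as the main obstacle without resolving it. The surrounding steps you outline — the easy inclusion $[\mathcal{Z}(L(\Pi))\xi]\subseteq\mathcal{H}_\Pi^{(\alpha^t,\beta)}$, the Jones-projection identity $ex\xi=E_{\mathcal{Z}(L(\Pi))}(x)\xi$, and $e\Pi(A^{(2)})''e=\mathcal{Z}(L(\Pi))e$ (which should be carried out via $eJ_\omega y^*J_\omega e=e\sigma_{-i/2}^{\bar{\omega}}(y)e$ for $\sigma_t^{\bar{\omega}}$-analytic $y$, as in \eqref{Eq6.1}, rather than by splitting $exy'e$) — do match the paper once that core is in place.
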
    

It should be noticed that the latter assertion is related to \cite[Theorem 4.2.4]{BratteliRobinson:Book1}. 

\medskip
We remark that the theorem gives a more conceptual explanation of Corollary \ref{C3.9}(3). Namely, $\mathcal{H}_\Pi^{(\alpha^t,\beta)} = [\mathcal{Z}(L(\Pi))\xi]$ immediately explains everything there. Moreover, $e\Pi(A^{(2)})e$ is nothing but (almost) the center of $\mathcal{Z}(M)$. This is quite natural. In fact, when $G$ is a finite group, any function $f$ on $\Delta(G)\backslash G\times G /\Delta(G)$ gives and arises from a central function on $G$, and the central functions on $G$ with convolution product form the center of the group algebra of $G$. Thus, $e\Pi_\mathrm{univ}(A^{(2)})e$ should be understood as an analogue of $\Delta(G)\backslash G\times G /\Delta(G)$. By means of Lemma \ref{L4.1}, the `dual' of embedding $\Delta(G) \hookrightarrow G\times G$ has two non-commutative analogues: $A^{(2)} \ni a\otimes1^\mathrm{op}-1\otimes(\alpha^{-i\beta/2}(a))^\mathrm{op} \mapsfrom a \in A_{\alpha^t}^\infty$ from the right and $a \in A_{\alpha^t}^\infty \mapsto \alpha^{-i\beta/2}(a)\otimes1^\mathrm{op}-1\otimes a^\mathrm{op} \in A^{(2)}$ from the left. (Remark that these two analogues are the same under no presence of flow.) In this point of view, Woronowicz's theorem, which was re-produced in subsection 4.1 of this paper, can be interpreted as an analogue of the non-triviality of $(G\times G)/\Delta(G)$ `in the universal representation'. This is quite natural because $G\times G \curvearrowright (G\times G)/\Delta(G)$ is naturally identified with the `two-sided action' of $G\times G$.  

\medskip
The proof needs several facts that seem to be of independent interest. In the next lemma and proposition, we assume that $(L,\mathcal{H},J,\mathcal{P})$ is a standard form of a $\sigma$-finite von Neumann algebra $L$, and $\xi \in \mathcal{P}$ be a cyclic (and automatically separating) vector. Denote by $\omega_\xi$ the vector state on $L$ of $\xi$ and by $\sigma_t^{\omega_\xi}$ its modular automorphism. 

\begin{lemma}\label{L6.2} For a $\zeta \in \mathcal{H}$ the following are equivalent{\rm:} 
\begin{itemize} 
\item[(i)] For any $x \in L$ and $\eta \in \mathcal{H}$, there exists a bounded continuous function $F(z) = F_{x,\eta}(z)$ on $-1/2 \leq \mathrm{Im}z \leq 0$ such that $F(z)$ is holomorphic in its interior and 
\[
F(t) = (\sigma_t^{\omega_\xi}(x)\zeta\,|\,\eta)_\mathcal{H}, \qquad F(t-i/2) = (J\sigma_t^{\omega_\xi}(x)^*J\zeta\,|\,\eta)_\mathcal{H}
\]
for all $t \in \mathbb{R}$. 
\item[(ii)] the above {\rm(i)} holds when $L$ is replaced with a certain $\sigma$-weakly dense subspace $L_0$ of $L$. 
\item[(iii)] For any $\sigma_t^{\omega_\xi}$-analytic element $x$ of $L$,  
\[
x\zeta = J\sigma_{i/2}^{\omega_\xi}(x)^*J\zeta
\]
holds. 
\end{itemize}    
\end{lemma}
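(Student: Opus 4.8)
The plan is to prove the cycle of equivalences $(i) \Rightarrow (ii) \Rightarrow (iii) \Rightarrow (i)$, which is the standard way to establish such characterizations and matches the pattern already used in Proposition \ref{P4.3}. The implication $(i) \Rightarrow (ii)$ is immediate, taking $L_0 = L$.

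For $(ii) \Rightarrow (iii)$, I would fix a $\sigma_t^{\omega_\xi}$-analytic element $x \in L$ and an arbitrary $\eta \in \mathcal{H}$, and produce two analytic functions whose boundary values agree on $\mathbb{R}$. On one hand, choose a net (or sequence, exploiting $\sigma$-finiteness) $x_n \in L_0$ with $x_n \to x$ and $x_n^* \to x^*$ $\sigma$-strongly$^*$, and apply the hypothesis to each $x_n$ to get functions $F_n(z) = F_{x_n,\eta}(z)$. Using $\Vert \sigma_t^{\omega_\xi}(y)\zeta \Vert$-type bounds together with $J\sigma_t^{\omega_\xi}(\cdot)^*J$ being an isometry-compatible operation, the boundary values converge uniformly in $t$, so the Phragm\'en--Lindel\"of method gives a limit function $F(z)$ bounded continuous on $-1/2 \leq \mathrm{Im}\,z \leq 0$, holomorphic in the interior, with $F(t) = (\sigma_t^{\omega_\xi}(x)\zeta\,|\,\eta)_\mathcal{H}$ and $F(t-i/2) = (J\sigma_t^{\omega_\xi}(x)^*J\zeta\,|\,\eta)_\mathcal{H}$. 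On the other hand, since $x$ is $\sigma_t^{\omega_\xi}$-analytic, $G(z) := (\sigma_z^{\omega_\xi}(x)\zeta\,|\,\eta)_\mathcal{H}$ is entire and bounded on the strip; I want to identify $G(t-i/2) = (J\sigma_{i/2}^{\omega_\xi}(x)^*J\zeta\,|\,\eta)_\mathcal{H}$ suitably — but the cleaner route is to compare $G$ with $F$ directly on $-1/2 \leq \mathrm{Im}\,z \leq 0$: both are bounded holomorphic with the same boundary value $G(t) = F(t)$ on $\mathbb{R}$, so by the Schwarz reflection principle (or a uniqueness theorem for strips, as in \cite[Lemma 9.2.11]{KadisonRingrose:Book2}) $F \equiv G$, and then evaluating at $z = -i/2$ and letting $\eta$ vary yields $x\zeta = \sigma_{-i/2}^{\omega_\xi}(x)\zeta$-type identity; matching this against the known formula $Jy\xi = \sigma_{-i/2}^{\omega_\xi}(y^*)\xi$ on analytic $y$ (Lemma \ref{L5.4}(1)) converts it into $x\zeta = J\sigma_{i/2}^{\omega_\xi}(x)^*J\zeta$.

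For $(iii) \Rightarrow (i)$, I would run the Phragm\'en--Lindel\"of/approximation argument in reverse. Given arbitrary $x \in L$ and $\eta \in \mathcal{H}$, approximate $x$ $\sigma$-strongly$^*$ by analytic elements $x_n$; the entire functions $z \mapsto (\sigma_z^{\omega_\xi}(x_n)\zeta\,|\,\eta)_\mathcal{H}$ are bounded on the strip $-1/2 \leq \mathrm{Im}\,z \leq 0$, their value at $t$ is $(\sigma_t^{\omega_\xi}(x_n)\zeta\,|\,\eta)_\mathcal{H}$, and at $t - i/2$, using $(iii)$ applied to $\sigma_t^{\omega_\xi}(x_n)$ (still analytic), equals $(J\sigma_{i/2}^{\omega_\xi}(\sigma_t^{\omega_\xi}(x_n))^*J\zeta\,|\,\eta)_\mathcal{H} = (J\sigma_t^{\omega_\xi}(x_n)^*J\zeta\,|\,\eta)_\mathcal{H}$ after shifting; the two boundary sequences converge uniformly in $t$, and Phragm\'en--Lindel\"of produces the desired $F_{x,\eta}$.

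\textbf{Main obstacle.} I expect the delicate point to be the convergence estimates on the lower boundary $\mathrm{Im}\,z = -1/2$: controlling $\Vert J\sigma_t^{\omega_\xi}(x_n)^*J\zeta - J\sigma_t^{\omega_\xi}(x)^*J\zeta\Vert$ requires $x_n^* \to x^*$ in a topology strong enough to act well on the (possibly unbounded) vector $\zeta$, which is why one must choose the approximating net $\sigma$-strongly$^*$ rather than merely $\sigma$-weakly, and invoke density of the analytic elements in that topology (\cite[Theorem 2.4.7, Proposition 2.5.22]{BratteliRobinson:Book1}); some care is also needed because $\zeta$ is a general vector in $\mathcal{H}$, not in $L\xi$, so only the Hilbert-space norm bounds $\Vert y\zeta\Vert \leq \Vert y\Vert\,\Vert\zeta\Vert$ and the isometry property of $t \mapsto \sigma_t^{\omega_\xi}$ and $J$ are available — these suffice, but must be applied, not the $\Vert\cdot\Vert_\xi$ bounds used when $\zeta = \xi$.
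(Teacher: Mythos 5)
Your cycle (i)$\Rightarrow$(ii)$\Rightarrow$(iii)$\Rightarrow$(i) is legitimate in principle (the paper closes the cycle the other way, proving (i)$\Rightarrow$(iii), (iii)$\Rightarrow$(ii), (ii)$\Rightarrow$(i), so that the hard approximation occurs only once), but both of your nontrivial implications rest on a uniform-convergence claim that is false as stated, and it is exactly the point the paper flags before its proof: one cannot confirm $\Vert\sigma_t^{\omega_\xi}(x)\zeta\Vert_{\mathcal H}=\Vert x\zeta\Vert_{\mathcal H}$ for a general $\zeta$. If $x_n\to x$ $\sigma$-strongly$^*$ --- which is all that density of $L_0$, resp.\ of the analytic elements, provides --- there is no reason that $\sup_{t\in\mathbb R}|(\sigma_t^{\omega_\xi}(x_n-x)\zeta\,|\,\eta)_{\mathcal H}|\to0$: writing $\sigma_t^{\omega_\xi}(a)=\Delta^{it}a\Delta^{-it}$, you would need $x_n-x$ to be small uniformly on the orbit $\{\Delta^{-it}\zeta\}_{t\in\mathbb R}$, which strong$^*$ convergence does not give, while the only bound uniform in $t$, namely $\Vert\sigma_t^{\omega_\xi}(x_n-x)\Vert=\Vert x_n-x\Vert$, would require norm convergence, which is unavailable. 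Your ``main obstacle'' paragraph correctly locates this issue but then asserts that the operator-norm bound and the ``isometry property of $\sigma_t^{\omega_\xi}$ and $J$'' suffice; they do not, so the Phragm\'en--Lindel\"of limits in both your (ii)$\Rightarrow$(iii) and (iii)$\Rightarrow$(i) steps are not justified.

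The missing idea --- which is the actual content of the paper's proof of (ii)$\Rightarrow$(i) --- is to test first only against $\eta=y\xi$ with $y$ a $\sigma_t^{\omega_\xi}$-analytic element of $L$ (such vectors are dense, since $\xi$ is cyclic and the analytic elements are strong$^*$ dense). By Lemma \ref{L5.4}(1) one has $y\xi=J\sigma_{i/2}^{\omega_\xi}(y)^*J\xi$ with $J\sigma_{i/2}^{\omega_\xi}(y)^*J\in L'$, so the approximation error can be commuted off $\zeta$ and onto the modular-invariant vector $\xi$: one gets $|F_n(t)-(\sigma_t^{\omega_\xi}(x)\zeta\,|\,y\xi)_{\mathcal H}|\le\Vert\zeta\Vert\,\Vert\sigma_{i/2}^{\omega_\xi}(y)\Vert\,\Vert x_n^*\xi-x^*\xi\Vert$ and $|F_n(t-i/2)-(J\sigma_t^{\omega_\xi}(x)^*J\zeta\,|\,y\xi)_{\mathcal H}|\le\Vert\zeta\Vert\,\Vert y\Vert\,\Vert x_n\xi-x\xi\Vert$, and these are uniform in $t$ precisely because $\Vert\sigma_t^{\omega_\xi}(a)\xi\Vert_{\mathcal H}=\Vert a\xi\Vert_{\mathcal H}$ (it is enough to pick $x_n\in L_0$ with $x_n\xi\to x\xi$ and $x_n^*\xi\to x^*\xi$). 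Phragm\'en--Lindel\"of then yields $F_{x,\eta}$ for these special $\eta$, and a second approximation in $\eta$ (where the error is bounded by $\Vert x\Vert\,\Vert\zeta\Vert\,\Vert\eta_n-\eta\Vert$, uniformly in $t$) removes the restriction. This device must be inserted at both places where you approximate; note that in your (ii)$\Rightarrow$(iii) step it even suffices to have $F_{x,\eta}$ for $\eta$ in a dense set before running the reflection/uniqueness comparison with the entire function $G(z)=(\sigma_z^{\omega_\xi}(x)\zeta\,|\,\eta)_{\mathcal H}$. Without it, the argument does not go through.
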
 
\begin{proof}  
%(i) $\Rightarrow$ (ii) is trivial. 
(iii) $\Rightarrow$ (ii) is easy to confirm with choosing $L_0$ to be the $*$-subalgebra consisting of all $\sigma_t^{\omega_\xi}$-analytic elements of $L$. (i) $\Rightarrow$ (iii) is shown in the same way as in the proof of Proposition \ref{P4.3} (iv) $\Rightarrow$ (i), and the detail is left to the reader. Thus, we need to prove only (ii) $\Rightarrow$ (i). 

The following discussion is based on the same idea as in the proof of Lemma \ref{L5.4}(2), but some care of approximation is necessary because we cannot directly confirm that $\Vert\sigma_t^{\omega_\xi}(x)\zeta\Vert_\mathcal{H} = \Vert x\zeta\Vert_\mathcal{H}$ for $x \in L$, etc. 

By \cite[Theorem 2.4.7]{BratteliRobinson:Book1}, $L_0$ is dense in $L$ in the strong$^*$ (operator) topology. For any $x \in L$ one can choose $x_n \in L_0$ so that $x_n\xi \to x\xi$ and $x_n^*\xi \to x^*\xi$ as $n\to\infty$. By assumption, for each $\sigma^{\omega_\xi}_t$-analystic $y \in L$ there is a bounded continuous function $F_n(z)$ on $-i/2 \leq \mathrm{Im}z \leq 0$ such that $F_n(z)$ is holomorphic in its interior and 
\[
F_n(t) = (\sigma_t^{\omega_\xi}(x_n)\zeta\,|\,y\xi)_\mathcal{H}, \quad F_n(t-i/2) = (J\sigma_t^{\omega_\xi}(x_n)^*J\zeta\,|\,y\xi)_\mathcal{H}
\] 
for all $t \in \mathbb{R}$. Then, we have 
\begin{align*} 
|F_n(t) -  (\sigma_t^{\omega_\xi}(x)\zeta\,|\,y\xi)_\mathcal{H}| 
&\leq 
\Vert \zeta\Vert_\mathcal{H}\,\Vert J\sigma_{i/2}^{\omega_\xi}(y)^*J\sigma_t^{\omega_\xi}(x_n^*-x^*)\xi\Vert_\mathcal{H} \quad \text{(see Lemma \ref{L5.4}(1))} \\
&\leq 
\Vert \zeta\Vert_\mathcal{H}\,\Vert\sigma_{i/2}^{\omega_\xi}(y)\Vert_\mathcal{H}\,\Vert x_n^*\xi - x^*\xi\Vert_\mathcal{H} \to 0, \\
|F_n(t-i/2) -  (J\sigma_t^{\omega_\xi}(x)^*J\zeta\,|\,y\xi)_\mathcal{H}| 
&\leq 
\Vert \zeta\Vert_\mathcal{H}\,\Vert y\Vert_\mathcal{H}\,\Vert x_n\xi - x\xi\Vert_\mathcal{H} \to 0, 
\end{align*} 
and thus we can conclude that (i) holds for only $\eta$ of the special form $\eta= y\xi$ with $\sigma^{\omega_\xi}_t$-analytic $y \in L$ as in the proof of Proposition \ref{P4.3} using the Phragmen--Lindel\"{o}f method. 

Since the elements $y\xi$ with $\sigma^{\omega_\xi}_t$-analytic $y \in L$ form a  dense subspace of $\mathcal{H}$, it is easy to relax the above restriction of $\eta$ by an approximation procedure based on the Phragmen--Lindel\"{o}f method again.  
\end{proof} 

The next proposition shows how the Tomita--Takesaki theory is powerful. 

\begin{proposition} \label{P6.3} Assume that $\zeta \in \mathcal{H}$ enjoys the equivalent conditions of Lemma \ref{L6.2}. Then, the $\zeta$ belongs to $[\mathcal{Z}(L)\xi]$. 
\end{proposition}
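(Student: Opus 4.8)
The plan is to recognize the hypothesis as a spherical-vector condition for a purification-type representation, and then to transport through the correspondences of \S3 the known structure of normal KMS states for the modular flow.

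First I would read the data through the framework of \S5. Regard the $W^*$-algebra $L$ as a $C^*$-algebra carrying the (not necessarily norm-continuous) flow $\alpha^t := \sigma_t^{\omega_\xi}$; in the sign convention of this paper (cf.\ the discussion following \eqref{Eq3.6}) the state $\omega_\xi$ is then an $(\alpha^t,\beta)$-KMS state on $L$ with $\beta = -1$. Identifying $(L,\mathcal{H},J,\mathcal{P})$ with the GNS data of $\omega_\xi$, so that $\pi_{\omega_\xi} = \mathrm{id}_L$ and $J_{\omega_\xi} = J$, the representation $\pi_{\omega_\xi}^{(2)} : L^{(2)} \curvearrowright \mathcal{H}$ (with $L^{(2)} := L \otimes_\mathrm{max} L^\mathrm{op}$) built in Lemma \ref{L3.6}/Lemma \ref{L5.4} is exactly $a\otimes b^\mathrm{op} \mapsto \pi_{\omega_\xi}(a)J_{\omega_\xi}\pi_{\omega_\xi}(b^*)J_{\omega_\xi} = aJb^*J$, and $\xi$ is the canonical $(\alpha^t,-1)$-spherical vector in it. A direct comparison of the strip-function data shows that the equivalent conditions of Lemma \ref{L6.2} for $\zeta$ say precisely that $\zeta$ is an $(\alpha^t,-1)$-spherical vector in $\pi_{\omega_\xi}^{(2)}$ in the sense of Definition \ref{D5.1}. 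We may assume $\zeta \neq 0$ and, rescaling, $\|\zeta\| = 1$.

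Next, Lemma \ref{L5.3} applies: the left partial state $\omega_\zeta := (\varphi_{(\pi_{\omega_\xi}^{(2)},\zeta)})_L = \big(a \mapsto (a\zeta|\zeta)_\mathcal{H}\big)$ is an $(\alpha^t,-1)$-KMS state on $L$, and it is normal, being a vector state. Here I would invoke the standard structure of normal KMS states for the modular automorphism group: such a state is of the form $\omega_\xi(h\,\cdot\,)$ for a unique positive $h$ affiliated with the center $\mathcal{Z}(L)$ with $\omega_\xi(h) = 1$ (equivalently, one uses Takesaki's characterization of the modular flow together with the Pedersen--Takesaki Radon--Nikodym theorem). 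Set $\zeta_0 := h^{1/2}\xi$. Then $\zeta_0$ is a unit vector lying in $\overline{\mathcal{Z}(L)\xi}$ (approximate $h^{1/2}$ by $h^{1/2}$ times its spectral projections), it satisfies $(a\zeta_0|\zeta_0)_\mathcal{H} = \omega_\xi(h^{1/2}ah^{1/2}) = \omega_\zeta(a)$ for $a \in L$, and — because $h^{1/2}$ commutes with $L$ and with $JLJ = L'$, while $\xi$ is spherical — $\zeta_0$ is again an $(\alpha^t,-1)$-spherical vector in $\pi_{\omega_\xi}^{(2)}$.

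Finally I would feed this back through \S3. The cyclic spherical subrepresentations of $\pi_{\omega_\xi}^{(2)}$ generated by $\zeta$ and by $\zeta_0$ have the same left partial state $\omega_\zeta$, hence the same abstract spherical state by Proposition \ref{P3.7}, hence are equivalent by Proposition \ref{P3.5}; so there is a unitary $u$ from $[\pi_{\omega_\xi}^{(2)}(L^{(2)})\zeta]$ onto $[\pi_{\omega_\xi}^{(2)}(L^{(2)})\zeta_0]$ intertwining $\pi_{\omega_\xi}^{(2)}$ with $u\zeta = \zeta_0$. Extending $u$ by $0$ on the orthogonal complement gives $\tilde u \in \pi_{\omega_\xi}^{(2)}(L^{(2)})' = (L \vee JLJ)' = \mathcal{Z}(L)$, so $\zeta = \tilde u^*\zeta_0 = (\tilde u^*h^{1/2})\xi$ with $\tilde u^*h^{1/2}$ affiliated with $\mathcal{Z}(L)$ and $\xi$ in its domain; the same spectral-projection approximation then gives $\zeta \in \overline{\mathcal{Z}(L)\xi} = [\mathcal{Z}(L)\xi]$. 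The main obstacle, i.e.\ the one genuinely non-formal ingredient, is the structure theorem for normal KMS states used in the third step; everything else is bookkeeping with the correspondences already established (Lemmas \ref{L5.3}--\ref{L5.4}, Propositions \ref{P3.5} and \ref{P3.7}, Theorem \ref{T3.8}) together with elementary Tomita--Takesaki theory. One could instead sidestep that theorem by first reducing to the case where $\zeta$ is cyclic and separating for $L$ — using that $q := [\overline{L\zeta}] = [\overline{L'\zeta}] \in \mathcal{Z}(L)$ because $\zeta$ is spherical — and then arguing on $Lq$ with the positive-cone representative of $\omega_\zeta$; but the route above is shorter.
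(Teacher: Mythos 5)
Your proposal is correct in outline, but it takes a genuinely different route from the paper's. The paper works on the commutant: it uses Lemma \ref{L6.2}(iii) to see that $s(\omega'_\zeta)=[L\zeta]=[L'\zeta]$ is central, checks that $\omega'_\zeta$ and $\omega'_\xi$ have the same modular group on the reduced algebra, applies Pedersen--Takesaki to the comparison $\omega'_\zeta\le\omega'_\zeta+\omega'_\xi$ (so the Radon--Nikodym derivative stays bounded), and then---this is the part you reorganize---recovers the \emph{vector} via the polar decomposition $\zeta=u|\zeta|$ in the standard form and the uniqueness of positive-cone representatives, which gives $(s-h)^{1/2}|\zeta|=h^{1/2}\xi$, hence $|\zeta|\in[\mathcal{Z}(L)\xi]$, followed by a separate computation showing the partial isometry $u\in L$ is central. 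You instead invoke, as a black box, the classification of normal states satisfying the KMS condition for $\sigma^{\omega_\xi}$ as $\omega_\xi(h\,\cdot\,)$ with $h\ge 0$ affiliated with $\mathcal{Z}(L)$---a true and standard fact, but one assembled from exactly the ingredients the paper deploys by hand (central support of a KMS state, Takesaki's uniqueness theorem, Pedersen--Takesaki), now with a possibly unbounded $h$---and you replace the positive-cone/polar-decomposition step by the machinery of sections 3 and 5: both $\zeta$ and $\zeta_0=h^{1/2}\xi$ are spherical vectors for $\pi^{(2)}_{\omega_\xi}$ with the same left partial state, so Lemma \ref{L5.3}, Lemma \ref{L5.4}(3) and GNS uniqueness yield a unitary intertwiner sending $\zeta$ to $\zeta_0$, whose extension by zero lies in $\pi^{(2)}_{\omega_\xi}(L^{(2)})'=\mathcal{Z}(L)$, and spectral truncation of $h$ then gives $\zeta\in[\mathcal{Z}(L)\xi]$. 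What your route buys is conceptual economy: the intertwiner argument makes the centrality of the ``phase'' automatic and reuses the spherical-state correspondence already established, at the price of outsourcing precisely the statement the paper chooses to prove and of handling an unbounded derivative (your truncation does this correctly). If you write it up, make explicit the two points you gloss over: the centrality of the support of $\omega_\zeta$ (by the usual left-kernel-equals-right-kernel argument for KMS states, or directly from Lemma \ref{L6.2}(iii) as in the paper), and the domain bookkeeping showing that $\zeta_0$ satisfies Lemma \ref{L6.2}(iii), which rests on the spectral projections of $h$ commuting with both $L$ and $L'$.
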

\begin{proof}
Consider the vector states $\omega'_\xi$, $\omega'_\zeta$ on $L' = JLJ$ of $\xi$, $\zeta$, respectively. Then, it is easy to see that the modular automorphism $\sigma_t^{\omega'_\xi}$ of $\omega'_\xi$ enjoys 
\[
\sigma_t^{\omega'_\xi}(x') = J \sigma_t^{\omega_\xi}(Jx'J)J, \quad x' \in L'. 
\]

By Lemma \ref{L6.2}(iii) the support projection $s(\omega'_\zeta)$ of $\omega'_\zeta$ becomes $[L\zeta] = [L'\zeta]$, falling into the center $\mathcal{Z}(L) = \mathcal{Z}(L')$. We write $s := s(\omega'_\zeta)$ for simplicity. For any $x' \in L'$ and any $\sigma_t^\xi$-analytic $y \in L$, letting $y' := Jy^* J \in L'$, we have 
\begin{align*}
\omega'_\zeta(x' y') 
&= 
(x' Jy^* J\zeta\,|\,\zeta)_\mathcal{H} \\
&= 
(x' \sigma_{-i/2}^{\omega_\xi}(y)\zeta\,|\,\zeta)_\mathcal{H} \quad \text{(by Lemma \ref{L6.2}(iii))}\\
&= 
(x'\zeta\,|\,\sigma_{i/2}^{\omega_\xi}(y^*)\zeta)_\mathcal{H} \\
&= 
(x'\zeta\,|\,J\sigma_{-i}^{\omega_\xi}(y)J\zeta)_\mathcal{H} \quad \text{(by Lemma \ref{L6.2}(iii) again)} \\
&= 
(J\sigma_{-i}^{\omega_\xi}(y)^*J x'\zeta\,|\,\zeta)_\mathcal{H} \\
&= 
(J\sigma_{i}^{\omega_\xi}(Jy'J)J x'\zeta\,|\,\zeta)_\mathcal{H}.
\end{align*}   
It follows, by Takesaki's characterization, that $\sigma_t^{\omega'_\zeta} = \sigma_t^{\omega'_\xi}$ on $sL'$. 

Consider $\omega'_\zeta \leq \omega'_\zeta + \omega'_\xi$ on $sL'$ ($= (sL)'$ with center $s\mathcal{Z}(L)$). Observe that the modular automorphism of $\omega'_\zeta + \omega'_\xi$ on $sL'$ is also given by the restriction of $\sigma_t^{\omega'_\xi}$ to $sL'$. Hence, by \cite[Theorem 5.4]{PedersenTakesaki:ActaMath73} (actually the first part of its proof), we obtain a contractive, positive operator $h \in s\mathcal{Z}(L)$ such that $\omega'_\zeta(x') = \omega'_\zeta(hx') + \omega'_\xi(hx')$ holds for every $x' \in L'$ ({\it n.b.}, $hs = h$ and $s = s(\omega'_\zeta)$) and $0 \leq s - h \leq s$. 

Let $\zeta = u|\zeta|$ be the polar decomposition in the standard form. By construction, $|\zeta|$ is the unique implementing vector in $\mathcal{P}$ of $\omega'_\zeta$ and $u \in L$ with $u^*u=s$. See e.g.\ \cite[Exercises IX.1]{Takesaki:Book2}. Hence we obtain that 
\[
(x'(s-h)^{1/2}|\zeta|\,|\,(s-h)^{1/2}|\zeta|)_\mathcal{H} 
= (x'h^{1/2}\xi\,|\,h^{1/2}\xi)_\mathcal{H}
\]
for every $x' \in L'$. Since $s-h$ and $h$ are positive elements of $\mathcal{Z}(L)$, we see that $(s-h)^{1/2}|\zeta| = h^{1/2}\xi$ in $\mathcal{P}$ by the uniqueness of representing vectors (see e.g.\ \cite[Theorem 2.5.31]{BratteliRobinson:Book1}).  

Denote $e := \mathbf{1}_{\{1\}}(h) \in \mathcal{Z}(L)$. Since $\Vert e\xi\Vert_\mathcal{H}^2 = (e(s-h)\zeta\,|\,\zeta)_\mathcal{H} = 0$,  we have $e=0$ and hence $\mathbf{1}_{(0,1]}(s-h) = s$. Consequently, we obtain that  
\[
|\zeta| = s|\zeta| = \lim_{\varepsilon\searrow0} ((s-h)+\varepsilon1)^{-1/2}(s-h)^{1/2}|\zeta| = \lim_{\varepsilon\searrow0} ((s-h)+\varepsilon1)^{-1/2}h^{1/2}\xi \in [\mathcal{Z}(L)\xi]. 
\]

It is easy to see, by the use of $|\zeta| \in [\mathcal{Z}(L)\xi]$, that $x|\zeta| = J\sigma_{i/2}^{\omega_\xi}(x)^* J|\zeta|$ for any $\sigma_t^\xi$-analytic $x \in M$. For any $\sigma_t^{\omega_\xi}$-analytic $x \in L$, we have
\[
xu |\zeta| = x\zeta = J\sigma_{i/2}^{\omega_\xi}(x)^* J\zeta = u J\sigma_{i/2}^{\omega_\xi}(x)^*J|\zeta| = ux|\zeta|
\]  
and hence $xu = ux u^*u = uu^*u x = ux$, since $u^* u = s \in \mathcal{Z}(L)$. This means that $u$ falls into $\mathcal{Z}(L)$, and hence $\zeta = u|\zeta|$ does into $[\mathcal{Z}(L)\xi]$. 
\end{proof}

We are ready to prove Theorem \ref{T6.1}. 

\begin{proof} (Theorem \ref{T6.1})  
Since $\mathcal{H}_\Pi^{(\alpha^t,\beta)}$ is invariant under $\Pi(A^{(2)})'$, the standard exhaustion argument enables us to find a family of unit vectors $\xi_i$, $i \in I$, in $\mathcal{H}_\Pi^{(\alpha^t,\beta)}$ so that 
\[ 
[\Pi(A^{(2)})\mathcal{H}_\Pi^{(\alpha^t,\beta)}] = \bigoplus_{i\in I} [\Pi(A^{(2)})\xi_i]. 
\]
Notice that the projection onto each $[\Pi(A^{(2)})\xi_i]$ falls into $\Pi(A^{(2)})'$. Thus, to prove the first part of the statement, we may and do assume that $\Pi : A^{(2)} \curvearrowright \mathcal{H}_\Pi$ is an $(\alpha^t,\beta)$-spherical representation with a vector $\xi$. Namely, it suffices to prove only the latter half of the statement. 

As explained and claimed in subsection 5.1 we can still apply Theorem \ref{T3.8} to $(\Pi : A^{(2)} \curvearrowright \mathcal{H}_\Pi,\xi)$ without the continuity of the flow $\alpha^t$. Namely, we may and do assume that $(\Pi : A^{(2)} \curvearrowright \mathcal{H}_\Pi,\xi) = (\pi_\omega^{(2)} : A^{(2)} \curvearrowright \mathcal{H}_\omega,\xi_\omega)$ with $\omega \in K_\beta(\alpha^t)$. Here and below, the notations around equation \eqref{Eq3.5} (with the notations appeared just above Lemma \ref{L5.4}) are and will be used. 

Notice that $L(\Pi) = \pi_\omega(A)''$, and $(L(\Pi),\mathcal{H}_\Pi,J,\mathcal{P})$ with $\mathcal{P} = \overline{\{ x J x \xi\,; x \in L(\Pi)\}}$ becomes a standard form (see e.g.\ \cite[subsection 2.5.4]{BratteliRobinson:Book1}, where $J:= J_\omega$ denotes the modular conjugation; see subsection 5.1. Recall that the canonical `extension' $\bar{\omega}$ of $\omega$ to $M$ is the vector state $\omega_\xi$ and that 
\[
\Pi(a\otimes b^\mathrm{op}) = \pi_\omega(a)J\pi_\omega(b)^* J, \qquad 
\sigma_t^{\omega_\xi}(\pi_\omega(a)) = \pi_\omega(\alpha^{-\beta t}(a))
\]
hold for any $a, b \in A$. Moreover, we observe that 
\[
\Pi(A^{(2)})'' = L(\Pi) \vee JL(\Pi)J = L(\Pi)\vee L(\Pi)', \qquad \mathcal{H}_\Pi = [L(\Pi)\xi].  
\]

Let $\zeta \in \mathcal{H}_\Pi^{(\alpha^t,\beta)}$ be arbitrarily given. We can confirm condition (ii) of Lemma \ref{L6.2} from Definition \ref{D5.1} with $\zeta$. Thus, Proposition \ref{P6.3} shows that $\zeta \in [\mathcal{Z}(L(\Pi))\xi]$. Since $\mathcal{Z}(L(\Pi)) = (L(\Pi)\vee L(\Pi)')' = \Pi(A^{(2)})'$, it is easy to see $[\mathcal{Z}(L(\Pi))\xi] \subseteq \mathcal{H}_\Pi^{(\alpha^t,\beta)}$. Consequently, $\mathcal{H}_\Pi^{(\alpha^t,\beta)} = [\mathcal{Z}(L(\Pi))\xi]$, and by construction  the projection $e$ must coincide with the Jones projection associated with the faithful normal conditional expectation $E_{\mathcal{Z}(L(\Pi))}$ from $L(\Pi)$ onto $\mathcal{Z}(L(\Pi))$ with respect to $\bar{\omega}=\omega_\xi$ (see e.g.\ \cite[page 130]{Kosaki:JFA86}). 

Since Lemma \ref{L6.2}(iii) and $exe = E_{\mathcal{Z}(L(\Pi))}(x)e$ (see e.g.\ \cite[Lemma 3.2]{Kosaki:JFA86}), we have 
\begin{equation}\label{Eq6.1}
e x Jy^* Je 
= 
ex \sigma_{-i/2}^{\omega_\xi}(y)e 
= 
E_{\mathcal{Z}(M)}(x\sigma_{-i/2}^{\omega_\xi}(y))e \in \mathcal{Z}(L(\Pi))e
\end{equation}
if $x \in M$ is arbitrary and $y$ is a $\sigma_t^{\omega_\xi}$-analytic element of $L(\Pi)$. Therefore, we conclude that $e\Pi(A^{(2)})''e = \mathcal{Z}(L(\Pi))e$. 
\end{proof} 

We remark that no general construction of $\alpha^t$-analytic elements \emph{inside $A$} is available without the continuity assumption on the flow $\alpha^t$. However, in the setting of Theorem \ref{T5.7}(2), $L(\Pi) = \widehat{\Pi}(M\otimes\mathbb{C}1^\mathrm{op})$ holds and the extension $\hat{\alpha}^t$ possesses a $\sigma$-weakly dense unital $*$-subalgebra consisting of $\hat{\alpha}^t$-analytic elements of $M$. Moreover, formula \eqref{Eq6.1} is nothing less than 
\begin{equation}\label{Eq6.2}
e\widehat{\Pi}(x\otimes y^\mathrm{op})e = E_{\mathcal{Z}(L(\Pi))}(\widehat{\Pi}(x\hat{\alpha}^{i\beta/2}(y)\otimes1^\mathrm{op}))e 
= 
E_{\mathcal{Z}(L(\Pi))}(\widehat{\Pi}(1\otimes(\hat{\alpha}^{-i\beta/2}(x)y)^\mathrm{op}))e
\end{equation}
for all $\hat{\alpha}^t$-analytic $x, y \in M$. Probably, this formula is more natural than formula \eqref{Eq6.1} in the setting. 

\medskip
Here is a definition. 

\begin{definition}\label{D6.4} A $*$-representation $\Pi : A^{(2)} \curvearrowright \mathcal{H}_\Pi$ is said to be of $(\alpha^t,\beta)$-spherical type, if a unit cyclic vector in $\mathcal{H}_\Pi^{(\alpha^t,\beta)}$ exists. 
\end{definition} 

Assume that two $(\alpha^t,\beta)$-spherical type representations $\Pi_i : A^{(2)} \curvearrowright \mathcal{H}_{\Pi_i}$, $i=1,2$, are unitarily equivalent. Namely, we have a unitary transform $u : \mathcal{H}_{\Pi_1} \to \mathcal{H}_{\Pi_2}$ so that $u\Pi_1(x) = \Pi_2(x)u$ holds for every $x \in A^{(2)}$. Since those $*$-representations are of $(\alpha^t,\beta)$-spherical type,  they have cyclic unit vectors $\xi_i \in \mathcal{H}_{\Pi_i}^{(\alpha^t,\beta)}$. Then, the $(\alpha^t,\beta)$-spherical representations $(\Pi_1 : A^{(2)} \curvearrowright \mathcal{H}_{\Pi_1},\xi_1)$ and $(\Pi_2 : A^{(2)} \curvearrowright \mathcal{H}_{\Pi_2},u\xi_1)$ are equivalent in the sense of Definition \ref{D3.2}, but $u\xi_1 \neq \xi_2$ is possible. On the other hand, Theorem \ref{T6.1} and its proof show that the $u$ sends $\mathcal{H}_{\Pi_1}^{(\alpha^t,\beta)} = [\mathcal{Z}(L(\Pi_1))\xi_1]$ onto $\mathcal{H}_{\Pi_2}^{(\alpha^t,\beta)} = [\mathcal{Z}(L(\Pi_2))\xi_2]$. We have the $(\alpha^t,\beta)$-spherical state identity $\varphi_{(\Pi_1,\xi_1)} = \varphi_{(\Pi_2,u\xi_1)}$ and hence the $(\alpha^t,\beta)$-KMS state identity $(\varphi_{(\Pi_1,\xi_1)})_L = (\varphi_{(\Pi_2,u\xi_1)})_L$ with $u\xi_1 \in [\mathcal{Z}(L(\Pi_2))\xi_2]$ cyclic for $L(\Pi_2)$. A further investigation will be carried out under the local bi-normality of $*$-representations plus a certain assumption on $A$ in section 8.  

\section{Ergodic method} 

The aim of this section is to upgrade several existing, important general facts on extreme characters to locally normal KMS states in the most general setup. The key fact among those is \emph{the approximation theorem} for extreme characters of inductive limit groups and its connection to the theory of Martin boundaries. The method dates back to Vershik \cite{Vershik:Doklady74} in essence. It was Vershik--Kerov \cite{VershikKerov:FunctAnalAppl81} who first established the theorem in the analysis of extreme characters and coined \emph{Ergodic method} for it; see \cite[chapeter 1, section 1]{Kerov:Book}. Then, the idea of approximation theorem was further developed by many hands; for example, Olshanski \cite[section 22]{Olshanski:Proc90} using convex analysis and in his spherical representation theory, Okounkov--Olshanski \cite[section 6]{OkounkovOlshanski:IMRN98} giving a rather abstract formulation with convex analysis proof, Olshanski \cite[section 10]{Olshanski:JFA03} giving a detailed account of Vershik--Kerov's work in the case of $\mathrm{U}(\infty) = \varinjlim \mathrm{U}(N)$, Ryosuke Sato \cite[subsection 2.5]{Sato:JFA19} in the quantum group setup, and these names are only few. We also point out that the study of this section clarifies an overlap between those works and  \cite{Kishimoto:RepMathPhys00} due to Kishimoto. 

\medskip
Throughout this section, we will study locally normal KMS states in a special setup. Namely, $A = \varinjlim A_n$ is the inductive limit $C^*$-algebra of a sequence of atomic $W^*$-algebras with separable predual, where each inclusion map $A_n \hookrightarrow A_{n+1}$ is normal and $A_0 = \mathbb{C}1$. In what follows, we denote by $\mathfrak{Z}_n$ all the minimal projections in the center $\mathcal{Z}(A_n)$, which is, by definition, an at most  countably infinite set.  Note that $\mathfrak{Z}_0 = \{1\}$, a singleton. Accordingly, we have the central decomposition 
\[
A_n = \bigoplus_{z \in \mathfrak{Z}_n} zA_n, \quad zA_n  = B(\mathcal{H}_z).  
\]
with some separable Hilbert spaces $\mathcal{H}_z$, $z \in \mathfrak{Z}_n$. 

We do not assume that the flow $\alpha^t$ in question is not  pointwise norm-continuous here, but assume that it is an inductive flow, that is, $\alpha^t(A_n) = A_n$ holds for every $t\in\mathbb{R}$ and $n \in \mathbb{N}$. Hence we have the local flow $\alpha_n^t$, the restriction of $\alpha^t$ to $A_n$, for each $n$. 

\medskip
We begin by proving the next lemma for the flow $\alpha^t$. 

\begin{lemma}\label{L7.1} The following are equivalent{\rm:} 
\begin{itemize}
\item[(i)] For each $a \in A$, $t \mapsto \alpha^t(a)$ is continuous with respect to the weak topology on $A$ induced from the locally normal linear functionals $M_*$.  
\item[(ii)] For each $n$, $t \mapsto \alpha_n^t$ is continuous in the $u$-topology on $A_n$, that is, $\Vert \omega\circ\alpha_n^t - \omega\Vert \to 0$ as $t\to 0$ for all $\omega \in A_n{}_*$.  
\end{itemize}
Under these equivalent conditions, the local flow $\alpha_n^t$ must trivially act on the center $\mathcal{Z}(A_n)$ of each $A_n$.  
\end{lemma}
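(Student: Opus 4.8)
The plan is to prove the two implications separately --- treating \textbf{(ii) $\Rightarrow$ (i)} as a routine restriction-and-density argument and \textbf{(i) $\Rightarrow$ (ii)} as the real content, a Scheff\'e-type argument exploiting atomicity of the $A_n$ --- and then to deduce the last assertion from continuity alone.

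For \textbf{(ii) $\Rightarrow$ (i)}: recall that restricting the functionals of $M_*$ to $A_n$ yields exactly $(A_n)_*$. Hence for $\omega \in M_*$ and $a \in A_n$ one has $\omega(\alpha^t(a)) = (\omega\!\upharpoonright_{A_n}\circ\alpha_n^t)(a)$, so $|\omega(\alpha^t(a)) - \omega(a)| \le \|a\|\,\|\omega\!\upharpoonright_{A_n}\circ\alpha_n^t - \omega\!\upharpoonright_{A_n}\| \to 0$ as $t\to 0$ by (ii). For general $a\in A$ one approximates $a$ in norm by elements of $\bigcup_n A_n$ and uses the $t$-uniform bound $|\omega(\alpha^t(a)) - \omega(\alpha^t(a'))| \le \|\omega\|\,\|a-a'\|$ to pass to the limit; continuity of $t\mapsto\alpha^t(a)$ at an arbitrary $t_0$ then follows from continuity at $0$ via $\alpha^t = \alpha^{t_0}\circ\alpha^{t-t_0}$, once one notes that $\omega\circ\alpha^{t_0}$ is again locally normal because each $\alpha_n^{t_0}$ is a $*$-automorphism of the $W^*$-algebra $A_n$, hence normal.

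The substantive direction is \textbf{(i) $\Rightarrow$ (ii)}, where atomicity of $A_n = \bigoplus_{z\in\mathfrak{Z}_n} B(\mathcal{H}_z)$ is essential. Fix $n$ and $\psi\in(A_n)_*$; splitting $\psi$ into its (at most four) positive normal parts we may assume $\psi\ge 0$. Condition (i), applied to elements of $A_n$ and to a locally normal extension of $\psi$, says that $\psi\circ\alpha_n^t\to\psi$ as $t\to 0$ in the weak$^*$ topology of $(A_n)_*$, while $(\psi\circ\alpha_n^t)(1) = \psi(1)$ for all $t$. I would then invoke the following \emph{Scheff\'e-type lemma}: if $\psi_k,\psi$ are positive normal functionals on an atomic $W^*$-algebra $N = \bigoplus_z B(\mathcal{H}_z)$ with $\psi_k\to\psi$ weak$^*$ and $\psi_k(1)\to\psi(1)$, then $\|\psi_k-\psi\|\to 0$; applying it to $\psi_k := \psi\circ\alpha_n^{t_k}$ along an arbitrary sequence $t_k\to 0$ gives (ii). I would prove this lemma blockwise: writing $\psi_k = \bigoplus_z\psi_k^z$ one has $\psi_k^z\to\psi^z$ weak$^*$ on $B(\mathcal{H}_z)$ and $\psi_k^z(1) = \psi_k(z)\to\psi(z) = \psi^z(1)$, so the familiar fact that a sequence of positive trace-class operators converging weak$^*$ with convergent traces converges in trace norm (provable by finite-rank truncation) yields $\|\psi_k^z-\psi^z\|\to 0$ for every $z$; a tail estimate, using $\sum_z\psi_k^z(1) = \psi_k(1)\to\psi(1) = \sum_z\psi^z(1)$ to make $\sum_{z\notin F}(\|\psi_k^z\|+\|\psi^z\|)$ uniformly small over a cofinite set of indices $z$, then lets one add up the blockwise estimates. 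This Scheff\'e-type step is, I expect, the main obstacle: it is exactly here that atomicity together with preservation of total mass is used, and the interchange of the block-sum with the limit needs care --- for a diffuse $A_n$, or without mass control, weak$^*$ convergence of states would not force norm convergence.

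Finally, for the \emph{moreover} assertion, assume the equivalent conditions, so that in particular $t\mapsto\alpha^t(z)$ is $\sigma$-weakly continuous in $A_n$ for each $z$. Pick a faithful normal state $\omega$ on $A_n$ (it exists since $A_n$ has separable predual). Each $\alpha^t$ carries the minimal central projection $z\in\mathfrak{Z}_n$ to a minimal central projection of $A_n$, so $z\,\alpha^t(z)\in\{z,0\}$; hence $t\mapsto\omega(z\,\alpha^t(z))$ is a continuous function on the connected space $\mathbb{R}$ with values in the two-point set $\{\omega(z),0\}$ and value $\omega(z)>0$ at $t=0$, and is therefore constantly $\omega(z)$. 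By faithfulness $z\,\alpha^t(z) = z$, i.e.\ $\alpha^t(z) = z$, for all $t\in\mathbb{R}$ and all $z\in\mathfrak{Z}_n$; since $\mathrm{span}\,\mathfrak{Z}_n$ is $\sigma$-weakly dense in $\mathcal{Z}(A_n)$ and $\alpha_n^t$ is $\sigma$-weakly continuous, we conclude $\alpha_n^t\!\upharpoonright_{\mathcal{Z}(A_n)} = \mathrm{id}$ for every $t$.
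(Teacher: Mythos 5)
Your proof is correct, and the direction (ii) $\Rightarrow$ (i) as well as the final assertion about $\mathcal{Z}(A_n)$ follow essentially the same lines as the paper (norm approximation from $\bigcup_n A_n$, and the continuity of $t\mapsto\omega(z\,\alpha^t(z))$ with values in a two-point set, forcing $\alpha^t(z)=z$; the paper uses a normal functional not vanishing at $z$ where you use a faithful normal state, an immaterial difference). Where you genuinely diverge is (i) $\Rightarrow$ (ii): the paper disposes of it in one line by quoting the general fact (Bratteli--Robinson, Corollary 2.5.23) that a one-parameter group of $*$-automorphisms of a von Neumann algebra which is pointwise $\sigma$-weakly continuous is automatically continuous in the $u$-topology; this works for arbitrary $W^*$-algebras $A_n$ and exploits the group structure, but is a black box. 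You instead give a self-contained noncommutative Scheff\'e argument: since $\psi\circ\alpha_n^{t}$ is positive with the same total mass $\psi(1)$, weak$^*$ convergence plus mass preservation upgrades to norm convergence, proved blockwise on the atomic algebra $\bigoplus_z B(\mathcal{H}_z)$ via finite-rank truncation and a tail estimate. Your route crucially uses the atomicity of $A_n$ (available in the section 7 setting, not in general) and the positivity/unitality of the maps, but not the one-parameter group structure; the paper's route is shorter and more general in the algebra, yours is more elementary and transparent about where mass control enters. Both are sound; just be aware that your argument would not extend Lemma 7.1 beyond the atomic setting, whereas the cited result would.
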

\begin{proof} 
(i) $\Rightarrow$ (ii): Let $n$ be arbitrarily chosen. We observe that, for every $a \in A_n$, $t \mapsto \alpha_n^t(a)$ is continuous with respect to the weak topology induced from the predual $(A_n)_*$. By \cite[Corollary 2.5.23]{BratteliRobinson:Book1} we conclude that $t \mapsto \alpha_n^t$ is continuous in the $u$-topology on $A_n$.  

(ii) $\Rightarrow$ (i): Let $a \in A$ be arbitrarily chosen. For each $\varepsilon > 0$ there are an $n_0$ and an $a_0 \in A_{n_0}$ such that $\Vert a - a_0\Vert < \varepsilon$. For every $\omega \in M_*$ we denote by $\omega_{n_0}$ the restriction of $\omega$ to $A_{n_0}$, and obtain that 
\begin{align*} 
|\omega(\alpha^t(a))-\omega(a)| 
&\leqq 
2\Vert\omega\Vert\,\Vert a -a_0\Vert + |\omega_{n_0}(\alpha_{n_0}^t(a_0))-\omega_{n_0}(a_0)| \\
&\leqq 
2\Vert\omega\Vert\varepsilon + \Vert\omega_{n_0}\circ\alpha_{n_0}^t-\omega_{n_0}\Vert(\Vert a\Vert + \varepsilon) 
\end{align*}
and hence 
\[
\limsup_{t\to0}|\omega_{n_0}(\alpha_{n_0}^t(a))-\omega_{n_0}(a)| \leqq 2\Vert\omega\Vert\varepsilon. 
\]
Since $\varepsilon>0$ is arbitrary, $|\omega_{n_0}(\alpha_{n_0}^t(a))-\omega_{n_0}(a)| \to 0$ as $t\to0$.   

\medskip
Choose an arbitrary $z \in \mathfrak{Z}_n$. Then $\alpha_n^t(z)$ falls into $\mathfrak{Z}_n$ for all $t \in \mathbb{R}$. There is a normal linear functional $\omega_n$ on $A_n$ such that $\omega_n(z) \neq 0$. Observe that $\alpha_n^t(z)z$ must be either $0$ or $\alpha_n^t(z)=z$ for each $t$. Since $\omega_n$ is normal, $f(t) := \omega_n(\alpha_n^t(z)z)$ is a continuous complex-valued function whose range sits in $\{0,\omega_n(z)\}$. Since $f(0) = \omega_n(z)$, we conclude that $f(t) \equiv \omega_n(z)$ and thus $\alpha_n^t(z)=z$ holds for every $t \in \mathbb{R}$. Consequently, the flow $\alpha^t$ trivially acts on $\mathfrak{Z}_n$ and hence on $\mathcal{Z}(A_n)$ too. 
\end{proof} 

\emph{In what follows, we will further assume that the flow $\alpha^t$ satisfies the equivalent continuity conditions in Lemma \ref{L7.1}, and hence it fixies any elements in $\mathcal{Z}(A_n)$ for every $n$.} 

\medskip
For each $z \in \mathfrak{Z}_n$ we will denote by $\alpha_z^t$ the local flow defined to be the restriction of $\alpha^t$ to $zA_n = B(\mathcal{H}_z)$. In this way, we have a countable family of flows $\alpha_z^t$ on $B(\mathcal{H}_z)$, $z \in \mathfrak{Z}_n$. By the continuity assumption of $\alpha^t_z$, there is a strongly continuous one-parameter unitary group $u_{z,t}$ in $B(\mathcal{H}_z)$ such that $\alpha^t_z = \mathrm{Ad}u_z^t$. (See \cite[Proposition 2.5]{Kallman:JFA71}.) This unitary group $u_z^t$ is unique up to a (continuous) character of the additive group $\mathbb{R}$. 

By Stone's theorem there is a unique positive self-adjoint operator $H_z$ on $\mathcal{H}_z$ so that $u_z^t = \exp(itH_z)$. Moreover, it is well known (or an easy exercise; see e.g.\ \cite[Example 5.5]{Pillet:LNM06}) that the set $K_\beta^\mathrm{n}(\alpha_z^t)$ of all the normal $(\alpha_z^t,\beta)$-KMS states is a singleton if $\exp(-\beta H_z)$ is of trace class; otherwise the empty set. A more precise fact is available; the density operator of the unique normal $(\alpha_z^t,\beta)$-KMS state $\tau^{(\alpha_z^t,\beta)}$ (if exists) must be  
\begin{equation}\label{Eq7.1}
\rho(\alpha_z^t,\beta) := \frac{1}{\mathrm{Tr}_z(\exp(-\beta H_z))}\exp(-\beta H_z) \in B(\mathcal{H}_z) = zA_n
\end{equation}
with the canonical trace $\mathrm{Tr}_z$ on $B(\mathcal{H}_z)$, and $\alpha_z^t = \mathrm{Ad}\rho(\alpha^t_z,\beta)^{-it/\beta}$ holds for every $t$ when $\beta\neq0$. (Note that $\rho(\alpha_z^t,\beta)^{-it/\beta}$ agrees with $u_{z,t}$ only when $\mathrm{Tr}_z(\exp(-\beta H_z))=1$.) We denote by $\mathfrak{Z}_n^\times(\alpha^t,\beta)$ all the $z \in \mathfrak{Z}_n$ of $K_\beta^\mathrm{n}(\alpha_z^t) = \emptyset$, and define $z_{n}^\times(\alpha^t,\beta) := \sum_{z \in \mathfrak{Z}_n^\times(\alpha^t,\beta)} z$ if $\mathfrak{Z}_n^\times(\alpha^t,\beta) \neq \emptyset$, otherwise $z_{n}^\times(\alpha^t,\beta) := 0$. We write $\mathfrak{Z}_n(\alpha^t,\beta) := \mathfrak{Z}_n\setminus\mathfrak{Z}_n^\times(\alpha^t,\beta)$ for simplicity. 

We have a faithful normal conditional expectation $E_n^{(\alpha^t,\beta)} : z_n(\alpha^t,\beta)A_n \to z_n(\alpha^t,\beta)\mathcal{Z}(A_n)$ with $z_n(\alpha^t,\beta):=1-z_n^\times(\alpha^t,\beta)$ defined by
\begin{equation}\label{Eq7.2}
E_n^{(\alpha^t,\beta)}(a) := \sum_{z\in\mathfrak{Z}_n(\alpha^t,\beta)}\tau^{(\alpha_z^t,\beta)}(za)z, \qquad a \in z_n(\alpha^t,\beta)A_n. 
\end{equation}
For the ease of description, we regard $E_n^{(\alpha^t,\beta)}$ as a map from the whole $A_n$ into $\mathcal{Z}(A_n)$ in the obvious manner. 

\medskip
We start with some properties on local KMS states $\tau^{(\alpha_z^t,\beta)}$. Item (1) below is the most important throughout this section.  

\begin{lemma}\label{L7.2} We have the following assertions{\rm:}
\begin{itemize}
\item[(1)] For each pair $(z,z') \in \mathfrak{Z}_{n}\times\mathfrak{Z}_{m}$ with $n>m\geq0$, if $zz' \neq 0$, then we have the non-unital injective normal $*$-homomorphism $\iota_{zz'} : B(\mathcal{H}_{z'}) = z'A_{m} \to zA_{n} = B(\mathcal{H}_z)$ sending each $x \in z'A_{m}$ to $zx \in zz' A_{n} zz'$, which intertwines $\alpha_{z'}^t$ with $\alpha_z^t$, that is, $\iota_{zz'}\circ\alpha_{z'}^t = \alpha_z^t\circ\iota_{zz'}$. Moreover, $z \in \mathfrak{Z}_n(\alpha^t,\beta)$, i.e., $K_\beta^\mathrm{n}(\alpha_z^t)\neq\emptyset$ implies that $z' \in \mathfrak{Z}_m(\alpha^t,\beta)$, i.e., $K_\beta^\mathrm{n}(\alpha_{z'}^t)\neq\emptyset$, and 
\begin{equation}\label{Eq7.3}
\tau^{(\alpha_z^t,\beta)}\circ\iota_{zz'} = \tau^{(\alpha_z^t,\beta)}(zz')\tau^{(\alpha_{z'}^t,\beta)}
\end{equation}
holds. 
\item[(2)] For every $n>m\geq 0$, we have $z_{m}^\times(\alpha^t,\beta) \leq z_{n}^\times(\alpha^t,\beta)$ and thus $z_{m}(\alpha^t,\beta) \geq z_{n}(\alpha^t,\beta)$. 
\item[(3)] For each $n>m\geq0$, $E_n^{(\alpha^t,\beta)}\circ E_m^{(\alpha^t,\beta)} = E_n^{(\alpha^t,\beta)}$ holds on $A_m$. 
\end{itemize}  
\end{lemma}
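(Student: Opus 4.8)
\textbf{Proof plan for Lemma \ref{L7.2}.}

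\emph{Item (1).} The plan is to first build the $*$-homomorphism $\iota_{zz'}$ and then transport the KMS structure along it. Since $A_m$ is a $W^*$-subalgebra of $A_n$ and $zz' \neq 0$, the map $x \in z'A_m \mapsto zx = zz'x \in A_n$ lands in the corner $zz'A_n zz'$; it is unital as a map into that corner (because $z'$ is the unit of $z'A_m$, so $zz'$ is the image of the unit), injective because $zz'x = 0$ would force $z'x = 0$ in the factor $z'A_m$ after cutting by $z'$, and normal because multiplication by a fixed projection is $\sigma$-weakly continuous. That it intertwines the local flows is immediate from the facts that $\alpha^t$ fixes every central projection of $A_n$ (so it fixes $z$, $z'$ and $zz'$, by Lemma \ref{L7.1}) and that $\alpha^t$ restricts to $\alpha_{z'}^t$ on $z'A_m$ and to $\alpha_z^t$ on $zA_n$: for $x \in z'A_m$ we get $\alpha_z^t(zx) = \alpha^t(z)\alpha^t(x) = z\,\alpha^t(x) = z\,\alpha_{z'}^t(x)$. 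For the KMS statement, suppose $K_\beta^\mathrm{n}(\alpha_z^t) \neq \emptyset$, i.e.\ $\tau := \tau^{(\alpha_z^t,\beta)}$ exists. Then $\tau \circ \iota_{zz'}$ is a positive normal functional on $z'A_m$ (possibly not normalized), and since $\iota_{zz'}$ intertwines the flows, it satisfies the $(\alpha_{z'}^t,\beta)$-KMS condition on $z'A_m$. Its total mass is $\tau(\iota_{zz'}(z')) = \tau(zz')$, which is strictly positive: indeed $zz' \neq 0$ and $\tau$ is faithful because its density operator $\rho(\alpha_z^t,\beta)$ in \eqref{Eq7.1} is a strictly positive trace-class operator. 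Hence $\tau(zz')^{-1}\,\tau\circ\iota_{zz'}$ is a normal $(\alpha_{z'}^t,\beta)$-KMS state on $z'A_m = B(\mathcal{H}_{z'})$, forcing $K_\beta^\mathrm{n}(\alpha_{z'}^t) \neq \emptyset$ and, by uniqueness, equal to $\tau^{(\alpha_{z'}^t,\beta)}$; rearranging gives \eqref{Eq7.3}.

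\emph{Item (2).} This is the contrapositive of the last part of item (1) aggregated over minimal central projections. The plan: fix $z \in \mathfrak{Z}_n^\times(\alpha^t,\beta)$, i.e.\ $K_\beta^\mathrm{n}(\alpha_z^t) = \emptyset$. For each $z' \in \mathfrak{Z}_m$ with $zz' \neq 0$, item (1) shows that $K_\beta^\mathrm{n}(\alpha_{z'}^t) = \emptyset$ would follow if $K_\beta^\mathrm{n}(\alpha_z^t) \neq \emptyset$; equivalently, $K_\beta^\mathrm{n}(\alpha_{z'}^t) \neq \emptyset$ would imply $K_\beta^\mathrm{n}(\alpha_z^t) \neq \emptyset$, contradicting the choice of $z$. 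Hence every $z' \in \mathfrak{Z}_m$ dominated (in the sense $zz' \neq 0$) by $z$ lies in $\mathfrak{Z}_m^\times(\alpha^t,\beta)$. Since $z = \sum_{z' \in \mathfrak{Z}_m,\ zz' \neq 0} zz'$ (decomposing the unit of $A_m$), we get $z \leq \sum_{z' \in \mathfrak{Z}_m^\times(\alpha^t,\beta)} z' = z_m^\times(\alpha^t,\beta)$. Summing over all $z \in \mathfrak{Z}_n^\times(\alpha^t,\beta)$ yields $z_n^\times(\alpha^t,\beta) \leq z_m^\times(\alpha^t,\beta)$, hence the complementary inequality for $z_n(\alpha^t,\beta)$.

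\emph{Item (3).} The plan is a direct computation using \eqref{Eq7.2}, \eqref{Eq7.3} and item (2). Both sides are maps $A_m \to \mathcal{Z}(A_n)$ (using item (2) to know $z_m(\alpha^t,\beta) \geq z_n(\alpha^t,\beta)$, so cutting by $z_n(\alpha^t,\beta)$ makes everything well defined; off $z_n(\alpha^t,\beta)$ both sides vanish by convention). Fix $a \in A_m$ and $z \in \mathfrak{Z}_n(\alpha^t,\beta)$; we evaluate $\tau^{(\alpha_z^t,\beta)}$ applied to $z$ times the relevant element. On one hand $E_n^{(\alpha^t,\beta)}(a)$ has $z$-component $\tau^{(\alpha_z^t,\beta)}(za)$. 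On the other hand, $E_m^{(\alpha^t,\beta)}(a) = \sum_{z' \in \mathfrak{Z}_m(\alpha^t,\beta)} \tau^{(\alpha_{z'}^t,\beta)}(z'a)\, z'$, and applying $E_n^{(\alpha^t,\beta)}$ picks out the $z$-component $\sum_{z'} \tau^{(\alpha_{z'}^t,\beta)}(z'a)\,\tau^{(\alpha_z^t,\beta)}(zz')$; by \eqref{Eq7.3} this equals $\sum_{z'} \tau^{(\alpha_z^t,\beta)}(\iota_{zz'}(z'a)) = \sum_{z'} \tau^{(\alpha_z^t,\beta)}(zz'a) = \tau^{(\alpha_z^t,\beta)}(za)$, where the last step uses $z = \sum_{z' \in \mathfrak{Z}_m} zz'$ and that only $z' \in \mathfrak{Z}_m(\alpha^t,\beta)$ with $zz' \neq 0$ contribute (any $z' \in \mathfrak{Z}_m^\times$ with $zz' \neq 0$ would, by item (1) read contrapositively, force $z \in \mathfrak{Z}_n^\times$, contradiction). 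So the two $z$-components agree for every $z$, proving $E_n^{(\alpha^t,\beta)}\circ E_m^{(\alpha^t,\beta)} = E_n^{(\alpha^t,\beta)}$ on $A_m$.

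\textbf{Main obstacle.} The only genuinely delicate point is item (1): one must check carefully that $\iota_{zz'}$ is well defined as a normal injective $*$-homomorphism into the corner and — more importantly — that faithfulness of $\tau^{(\alpha_z^t,\beta)}$ (from the strict positivity of the density operator in \eqref{Eq7.1}) gives $\tau^{(\alpha_z^t,\beta)}(zz') > 0$, which is exactly what allows us to normalize $\tau^{(\alpha_z^t,\beta)}\circ\iota_{zz'}$ and conclude $K_\beta^\mathrm{n}(\alpha_{z'}^t) \neq \emptyset$. Items (2) and (3) are then essentially bookkeeping on top of item (1).
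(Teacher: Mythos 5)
Your items (1) and (3) follow the paper's own argument: (1) is the same construction (normal $*$-homomorphism out of the factor $z'A_m$, nonzero on the unit, hence injective; intertwining from $\alpha^t$ fixing $z$; push the unique normal KMS state through $\iota_{zz'}$ and normalize — your explicit remark that $\tau^{(\alpha_z^t,\beta)}(zz')>0$ by non-singularity of the density operator is a point the paper leaves implicit), and (3) is the same computation via \eqref{Eq7.3} and normality.

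Item (2), however, contains a genuine error. Item (1) gives the implication ``$z$ admits a normal KMS state and $zz'\neq0$ $\Rightarrow$ $z'$ admits one,'' whose contrapositive is ``$z'\in\mathfrak{Z}_m^\times$ and $zz'\neq0$ $\Rightarrow$ $z\in\mathfrak{Z}_n^\times$'': badness propagates \emph{upward}. You instead fix $z\in\mathfrak{Z}_n^\times$ and argue that every $z'$ below it with $zz'\neq0$ lies in $\mathfrak{Z}_m^\times$, which uses the \emph{converse} of item (1), not its contrapositive; that converse is false in general (e.g.\ $m=0$: $\mathfrak{Z}_0=\{1\}$ always has its KMS state, while $z\in\mathfrak{Z}_n$ above it may have $\mathrm{Tr}_z(\exp(-\beta H_z))=+\infty$). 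Moreover the inequality you end up with, $z_n^\times(\alpha^t,\beta)\leq z_m^\times(\alpha^t,\beta)$, is the reverse of what the lemma asserts. The correct argument (the paper's) runs the other way: fix $z'\in\mathfrak{Z}_m^\times(\alpha^t,\beta)$; by the contrapositive of item (1) every $z\in\mathfrak{Z}_n$ with $zz'\neq0$ lies in $\mathfrak{Z}_n^\times(\alpha^t,\beta)$, so $z'=\sum_{z\in\mathfrak{Z}_n:\,zz'\neq0}zz'\leq z_n^\times(\alpha^t,\beta)$, and summing over $z'\in\mathfrak{Z}_m^\times(\alpha^t,\beta)$ gives $z_m^\times(\alpha^t,\beta)\leq z_n^\times(\alpha^t,\beta)$. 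Notably, in your item (3) you invoke exactly the right contrapositive (``$z'\in\mathfrak{Z}_m^\times$ with $zz'\neq0$ would force $z\in\mathfrak{Z}_n^\times$''), so the fix is only to rewrite item (2) along those lines.
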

\begin{proof}
Item (1): Since $A_{m} \subseteq A_{n}$ and $z \in \mathcal{Z}(A_{n})$, the mapping $x \mapsto zx$ defines a normal $*$-homomorphism from $A_{m}$ into $zA_{n}$, and its restriction to $z'A_{m}$ is nothing less than $\iota_{zz'}$. Since $z'A_{n} = B(\mathcal{H}_{z'})$ is a factor, the normal $*$-homomorphism $\iota_{zz'}$ is either injective or the zero map. By assumption, $\iota_{zz'}(z') = zz' \neq 0$ and hence $\iota_{zz'}$ is injective. Moreover, since $\alpha^t$ fixies $z$, we obtain that $\iota_{zz'}\circ\alpha_{z'}^t(z'x) = \iota_{zz'}(\alpha^t(z'x)) = z\alpha^t(z'x) = \alpha^t(zz'x) = \alpha_z^t(zz'x) = \alpha_z^t\circ\iota_{zz'}(z'x)$ for every $x \in A_{m}$. 

Assume that $K_\beta^\mathrm{n}(\alpha^t_z) \neq \emptyset$. Then, we have the unique normal $(\alpha_z^t,\beta)$-KMS state $\tau^{(\alpha_z^t,\beta)}$ on $B(\mathcal{H}_z) = zA_n$. Since $\iota_{zz'}\circ\alpha_{z'}^t = \alpha_z^t\circ\iota_{zz'}$, it is easy to see that $\tau^{(\alpha_z^t,\beta)}(zz')^{-1}\tau^{(\alpha^t_z,\beta)}\circ\iota_{zz'}$ falls into $K_\beta^\mathrm{n}(\alpha_{z'}^t)$, and it must agree with $\tau^{(\alpha^t_{z'},\beta)}$ since $K_\beta^\mathrm{n}(\alpha_{z'}^t)$ is a singleton or the empty set. Hence we have obtained the latter assertion including formula \eqref{Eq7.3}.  

\medskip
Item (2): Choose any $z' \in \mathfrak{Z}_{m}^\times(\alpha^t,\beta)$. By item (1), all the $z \in \mathfrak{Z}_n$ with $zz' \neq 0$ must fall into $\mathfrak{Z}_n^\times(\alpha^t,\beta)$. Thus, $z' = 1z' = \sum_{z \in \mathfrak{Z}_n} zz' = \sum_{z \in \mathfrak{Z}_n: zz' \neq 0} zz'$. Thus $z' \leqq \sum_{z \in \mathfrak{Z}_n^\times(\alpha^t,\beta)} z = z_{n}^\times(\alpha^t,\beta)$, and hence $z_{m}^\times(\alpha^t,\beta) \leq z_{n}^\times(\alpha^t,\beta)$. This implies that $z_{m}(\alpha^t,\beta) \geq z_{n}(\alpha^t,\beta)$.

\medskip
Item (3): For every $a \in A_m$ we have 
\begin{align*}
E_n^{(\alpha^t,\beta)}(E_m^{(\alpha^t,\beta)}(a)) 
&= 
\sum_{z \in \mathfrak{Z}_n(\alpha^t,\beta)} \tau^{(\alpha_z^t,\beta)}(zE_m^{(\alpha^t,\beta)}(a))z \\
&= 
\sum_{z \in \mathfrak{Z}_n(\alpha^t,\beta)}\sum_{z' \in \mathfrak{Z}_m(\alpha^t,\beta)} \tau^{(\alpha_{z'}^t,\beta)}(z'a)\tau^{(\alpha_z^t,\beta)}(zz')z  \\
&= 
\sum_{z \in \mathfrak{Z}_n(\alpha^t,\beta)}\sum_{z' \in \mathfrak{Z}_m(\alpha^t,\beta) : zz' \neq 0} \tau^{(\alpha_z^t,\beta)}(zz'a)z \quad \text{(by equation \eqref{Eq7.3})} \\
&= 
\sum_{z \in \mathfrak{Z}_n(\alpha^t,\beta)}\tau^{(\alpha_z^t,\beta)}(z z_m(\alpha^t,\beta)a)z \\
&= \sum_{z \in \mathfrak{Z}_n(\alpha^t,\beta)}\tau^{(\alpha_z^t,\beta)}(za)z \qquad \text{(by item (2))} \\
&= E_n^{(\alpha^t,\beta)}(a).
\end{align*}
Hence we are done. 
\end{proof} 

An explicit, local description of any locally normal $(\alpha^t,\beta)$-KMS states is available. 

\begin{lemma}\label{L7.3} Any $\omega \in K_\beta^\mathrm{ln}(\alpha^t)$ vanishes at every $z \in \mathfrak{Z}_n^\times(\alpha^t,\beta)$ and its restriction to each $A_n$ must be
\begin{equation*}
\omega\circ E_n^{(\alpha^t,\beta)} = \sum_{z\in\mathfrak{Z}_n(\alpha^t,\beta)}\omega(z)\,\tau^{(\alpha_z^t,\beta)}.
\end{equation*}
Moreover, any state on $A$ factors through $\mathcal{Z}(A_n)$ with $E_n^{(\alpha^t,\beta)}$ as above on each $A_n$ falls into $K_\beta^\mathrm{ln}(\alpha^t)$. 
\end{lemma}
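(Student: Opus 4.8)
The plan is to establish the two directions of Lemma \ref{L7.3} separately, using the inductive structure together with the characterization of normal KMS states on type I factors recorded above.

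First I would prove that any $\omega \in K_\beta^\mathrm{ln}(\alpha^t)$ vanishes on $\mathfrak{Z}_n^\times(\alpha^t,\beta)$. Fix $z \in \mathfrak{Z}_n^\times(\alpha^t,\beta)$ and suppose toward a contradiction that $\omega(z) > 0$. Then $a \in zA_n \mapsto \omega(z)^{-1}\omega(a)$ is a state on $zA_n = B(\mathcal{H}_z)$; since $\omega$ is locally normal it is normal on $A_n$, hence this compression is a normal state on $B(\mathcal{H}_z)$. The KMS condition for $\omega$ with respect to $\alpha^t$, restricted to $zA_n$ (which is $\alpha^t$-globally invariant because $\alpha^t$ fixes the center of $A_n$ by Lemma \ref{L7.1}), says precisely that this normal state is an $(\alpha_z^t,\beta)$-KMS state on $B(\mathcal{H}_z)$. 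But $z \in \mathfrak{Z}_n^\times(\alpha^t,\beta)$ means $K_\beta^\mathrm{n}(\alpha_z^t) = \emptyset$, a contradiction. Hence $\omega(z) = 0$ for every such $z$, so $\omega(z_n^\times(\alpha^t,\beta)) = 0$ and $\omega$ is supported on $z_n(\alpha^t,\beta)A_n$. For the remaining central blocks, the same compression argument gives, for each $z \in \mathfrak{Z}_n(\alpha^t,\beta)$ with $\omega(z) > 0$, that $\omega(z)^{-1}\omega(z\,\cdot\,)$ is a normal $(\alpha_z^t,\beta)$-KMS state on $B(\mathcal{H}_z)$; by uniqueness it equals $\tau^{(\alpha_z^t,\beta)}$. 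Summing over $z \in \mathfrak{Z}_n(\alpha^t,\beta)$ and using $\sum_{z}\omega(z) = \omega(z_n(\alpha^t,\beta)) = 1$ yields $\omega\!\upharpoonright_{A_n} = \sum_{z}\omega(z)\tau^{(\alpha_z^t,\beta)} = \omega\circ E_n^{(\alpha^t,\beta)}$, where the last equality is just the definition \eqref{Eq7.2} of $E_n^{(\alpha^t,\beta)}$ together with the fact that $\omega$ is a state on $\mathcal{Z}(A_n)$.

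For the converse, suppose $\omega$ is a state on $A$ whose restriction to each $A_n$ factors as $\omega = \omega\circ E_n^{(\alpha^t,\beta)}$ in the stated sense. Local normality is immediate: on each $A_n$, $\omega$ is the composition of the faithful normal conditional expectation $E_n^{(\alpha^t,\beta)}$ with a state on the (finite- or countable-dimensional, atomic) center $\mathcal{Z}(A_n)$, hence normal; one should check compatibility across $n$, which follows from Lemma \ref{L7.2}(3) ($E_n^{(\alpha^t,\beta)}\circ E_m^{(\alpha^t,\beta)} = E_n^{(\alpha^t,\beta)}$ on $A_m$) so that the prescriptions on the $A_n$ glue to a well-defined linear functional on $\bigcup_n A_n$ of norm one, extending by continuity to $A$. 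For the KMS property, it suffices by a density argument (e.g.\ via Proposition \ref{P4.3}-type approximation, or directly since $\bigcup_n A_n$ is norm-dense and $\alpha^t$-invariant) to verify the KMS condition for pairs $a,b \in A_n$ for each fixed $n$. But on $z_n(\alpha^t,\beta)A_n = \bigoplus_{z\in\mathfrak{Z}_n(\alpha^t,\beta)} B(\mathcal{H}_z)$ the state $\omega\!\upharpoonright_{A_n}$ is, block by block, a positive multiple of $\tau^{(\alpha_z^t,\beta)}$, which is an $(\alpha_z^t,\beta)$-KMS state on $B(\mathcal{H}_z)$; since $\alpha^t$ preserves each block, the required analytic-continuation functions can be assembled blockwise, and the KMS condition for $\omega$ on $A_n$ follows. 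Thus $\omega \in K_\beta^\mathrm{ln}(\alpha^t)$.

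I expect the main obstacle to be the bookkeeping in the converse direction: ensuring that the blockwise prescriptions on the increasing family $\{A_n\}$ are genuinely consistent (this is exactly where Lemma \ref{L7.2}(2),(3) enter, controlling how the ``bad'' central projections $z_n^\times(\alpha^t,\beta)$ grow and how $E_n^{(\alpha^t,\beta)}$ interacts with $E_m^{(\alpha^t,\beta)}$), and then upgrading the blockwise KMS property to a genuine $(\alpha^t,\beta)$-KMS condition on all of $A$ in the possibly non-norm-continuous setting, for which one must invoke the boundary-value formulation of the KMS condition as in \cite[Proposition 5.3.7(2)]{BratteliRobinson:Book2} rather than the naive one. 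The forward direction, by contrast, is essentially the uniqueness of normal KMS states on type I factors applied compression-by-compression.
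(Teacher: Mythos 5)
Your argument is correct and coincides with the paper's own proof: the first part is exactly the restriction-to-central-blocks argument, using nonexistence (resp.\ uniqueness) of normal $(\alpha_z^t,\beta)$-KMS states on $B(\mathcal{H}_z)$ for $z\in\mathfrak{Z}_n^\times(\alpha^t,\beta)$ (resp.\ $z\in\mathfrak{Z}_n(\alpha^t,\beta)$) together with normality of $\omega$ on $A_n$, and the converse verifies the KMS condition blockwise on each $A_n$ and then passes to all of $A$ by density of $\bigcup_n A_n$ and the Phragm\'en--Lindel\"of method with the boundary-value formulation of the KMS condition, just as in the paper. One small correction to your converse: local normality of $\omega\!\upharpoonright_{A_n}$ should be deduced from the displayed formula itself, i.e., from the restriction being a norm-convergent weighted sum $\sum_{z\in\mathfrak{Z}_n(\alpha^t,\beta)}\omega(z)\,\tau^{(\alpha_z^t,\beta)}$ of normal states, not from ``composition of the normal expectation $E_n^{(\alpha^t,\beta)}$ with a state on $\mathcal{Z}(A_n)$'' (a state on an infinite atomic center need not be normal), and no gluing across $n$ is needed here since $\omega$ is already given as a state on all of $A$.
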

\begin{proof}
It is easy to see that the restriction of $\omega$ to each $zA_n$ with $z\in\mathfrak{Z}_n$ satisfies the $(\alpha_z^t,\beta)$-KMS condition. Thus, the $\omega$ must agree with $\omega(z)\,\tau^{(\alpha_z^t,\beta)}$ on $zA_n$ if $z\in\mathfrak{Z}_n(\alpha^t,\beta)$, since $K_\beta^\mathrm{n}(\alpha_z^t)$ is a singleton in the case; otherwise $\omega(z)=0$. Hence, by normality, the $\omega$ must admit the desired formula on $A_n$. Hence we have shown the first part. 

If a state $\omega$ on $A$ factors through $E_n^{(\alpha^t,\beta)}$ on each $A_n$, then it is normal and satisfies the $(\alpha_n^t,\beta)$-KMS condition on each $A_n$ because the restriction of $\omega$ to $A_n$ is a (possibly infinite) weighted sum of $\tau^{(\alpha_z^t,\beta)}$. Thus, the $\omega$ must be locally normal, and moreover, it satisfies the $(\alpha^t,\beta)$-KMS condition on the norm-dense subalgebra $\bigcup_n A_n$. By the standard Phragmen--Lindel\"of method as in the proof of Proposition \ref{P4.3}, we conclude that the $\omega$ satisfies the $(\alpha^t,\beta)$-KMS condition on the whole $A$ and hence it falls into $K_\beta^\mathrm{ln}(\alpha^t)$.   
\end{proof}

The above lemma asserts that any $\omega \in K_\beta^\mathrm{ln}(\alpha^t)$ is completely determined by its restrictions to all the $\mathcal{Z}(A_n)$. This fact suggests us to consider the following subalgebras of $A$: Consider the commutative $W^*$-subalgebra $GZ_n:=\mathcal{Z}(A_n)\vee\mathcal{Z}(A_{n-1})\vee\cdots\vee\mathcal{Z}(A_1)$ of each $A_n$, and put $GZ := \varinjlim GZ_n$, i.e., the commutative $C^*$-subalgebra of $A$ generated by the $GZ_n$. These are nothing less than so-called Gelfand-Tsetlin algebras (see e.g.\ \cite{Vershik:ProgressMath06}). The next proposition says that the commutative $C^*$-algebra $GZ$ is enough to capture the full information of $K_\beta^\mathrm{ln}(\alpha^t)$. 

\begin{proposition}\label{P7.4} The restriction map $r_{GZ} : \omega \in K_\beta^\mathrm{ln}(\alpha^t) \mapsto r_{GZ}(\omega) := \omega\!\upharpoonright_{GZ} \in S(GZ)$ is affine, injective and continuous with respect to the weak$^*$ topologies. The map gives a homeomorphism between $K_\beta^\mathrm{ln}(\alpha^t)$ and $r_{GZ}(K_\beta^\mathrm{ln}(\alpha^t))$.  Moreover, The range $r_{GZ}(K_\beta^\mathrm{ln}(\alpha^t))$ is exactly the set of all locally normal states on $GZ$ which factors through $\mathcal{Z}(A_n)$ with $E_n^{(\alpha^t,\beta)}$ on each $GZ_n$.  
\end{proposition}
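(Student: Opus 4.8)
The plan is to establish the four asserted properties of the restriction map $r_{GZ}$ one by one, the first three being essentially formal consequences of Lemma \ref{L7.3} and the last requiring an extension argument. First I would observe that affineness and weak$^*$ continuity of $r_{GZ}$ are immediate: restriction of states is obviously affine, and weak$^*$ continuity holds because for each fixed $x \in GZ$ the evaluation $\omega \mapsto \omega(x)$ is, by definition, weak$^*$ continuous on $S(A)$ and hence on $K_\beta^\mathrm{ln}(\alpha^t)$. For injectivity, suppose $\omega_1, \omega_2 \in K_\beta^\mathrm{ln}(\alpha^t)$ agree on $GZ$; then in particular they agree on every $\mathcal{Z}(A_n) \subseteq GZ_n \subseteq GZ$, so $\omega_1(z) = \omega_2(z)$ for all $z \in \mathfrak{Z}_n$ and all $n$. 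By Lemma \ref{L7.3} the restriction of $\omega_i$ to $A_n$ equals $\sum_{z \in \mathfrak{Z}_n(\alpha^t,\beta)} \omega_i(z)\,\tau^{(\alpha_z^t,\beta)}$, which therefore coincides for $i=1,2$; since this holds for every $n$ and $\bigcup_n A_n$ is norm-dense in $A$, we get $\omega_1 = \omega_2$.

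Next I would upgrade injectivity plus continuity to a homeomorphism onto the image. Here the natural route is compactness: $K_\beta^\mathrm{ln}(\alpha^t)$ need not be weak$^*$ compact in general (this is exactly the subtlety flagged after Theorem \ref{T5.7}), so one cannot simply invoke ``continuous bijection from a compact space.'' Instead I would argue directly that the inverse $r_{GZ}(\omega) \mapsto \omega$ is weak$^*$ continuous. Given a net $\omega_i$ with $r_{GZ}(\omega_i) \to r_{GZ}(\omega)$ in $S(GZ)$, the formula from Lemma \ref{L7.3} shows that for each fixed $n$ and each $a \in A_n$,
\[
\omega_i(a) = \sum_{z \in \mathfrak{Z}_n(\alpha^t,\beta)} \omega_i(z)\,\tau^{(\alpha_z^t,\beta)}(za),
\]
and one needs $\sum_z \omega_i(z)\,\tau^{(\alpha_z^t,\beta)}(za) \to \sum_z \omega(z)\,\tau^{(\alpha_z^t,\beta)}(za)$. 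The pointwise convergence $\omega_i(z)\to\omega(z)$ for each $z$ is given, the weights are nonnegative and sum to at most $1$, and the coefficients $\tau^{(\alpha_z^t,\beta)}(za)$ are uniformly bounded by $\Vert a\Vert$; hence a standard dominated-convergence / tightness argument (using that $\sum_z\omega(z)\le 1$ and that partial sums of $\sum_z\omega_i(z)$ approximate $1$ uniformly in $i$ on a finite set once $n$ is fixed, since all mass sits on the countable set $\mathfrak{Z}_n$) gives convergence on each $A_n$, hence on the norm-dense $\bigcup_n A_n$, hence everywhere. This shows $r_{GZ}^{-1}$ is weak$^*$ continuous, so $r_{GZ}$ is a homeomorphism onto its range.

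Finally, for the description of the range: one inclusion is immediate from Lemma \ref{L7.3}, namely any $\omega \in K_\beta^\mathrm{ln}(\alpha^t)$ restricts on each $GZ_n$ to a state that factors through $\mathcal{Z}(A_n)$ via $E_n^{(\alpha^t,\beta)}$ (indeed $GZ_n$ is already commutative and contains $\mathcal{Z}(A_n)$, and the conditional expectation $E_n^{(\alpha^t,\beta)}$ restricted to $GZ_n$ is the relevant projection onto $\mathcal{Z}(A_n)$; compatibility across $n$ follows from Lemma \ref{L7.2}(3)). For the reverse inclusion, start with a state $\psi$ on $GZ$ that is locally normal and factors through $\mathcal{Z}(A_n)$ with $E_n^{(\alpha^t,\beta)}$ on each $GZ_n$; I would define $\omega$ on $\bigcup_n A_n$ by the formula $\omega(a) := \sum_{z \in \mathfrak{Z}_n(\alpha^t,\beta)} \psi(z)\,\tau^{(\alpha_z^t,\beta)}(za)$ for $a \in A_n$, check well-definedness across $n$ (this is exactly the content of Lemma \ref{L7.2}(3) combined with formula \eqref{Eq7.3}), check that $\omega$ is a state that factors through $E_n^{(\alpha^t,\beta)}$ on each $A_n$, invoke the second half of Lemma \ref{L7.3} to conclude $\omega \in K_\beta^\mathrm{ln}(\alpha^t)$, and verify $r_{GZ}(\omega) = \psi$ by comparing on each $\mathcal{Z}(A_n)$ and using that $GZ_n$ is generated by the $\mathcal{Z}(A_m)$, $m\le n$, together with the factoring hypothesis on $\psi$. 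The main obstacle I anticipate is the homeomorphism step: making the dominated-convergence argument fully rigorous in the non-compact setting requires care that the ``tail mass'' $\sum_{z \notin F} \omega_i(z)$ over the complement of a finite set $F \subseteq \mathfrak{Z}_n$ can be controlled uniformly in $i$, which one gets from the convergence $\omega_i(z_n(\alpha^t,\beta)) \to \omega(z_n(\alpha^t,\beta))$ (a single element of $GZ$, as a $\sigma$-weak limit of the central projections) but should be spelled out.
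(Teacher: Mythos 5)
Your proposal is correct, and the injectivity and range-description parts coincide with the paper's own argument (restriction to the $\mathcal{Z}(A_n)$ plus Lemma \ref{L7.3} for injectivity; the construction $\omega_n:=\psi\circ E_n^{(\alpha^t,\beta)}$ on $A_n$, consistency, extension to a state on $A$, and the latter half of Lemma \ref{L7.3} for surjectivity onto the described set). Where you genuinely diverge is the homeomorphism step. The paper avoids any tightness consideration by noting that for $a\in A_n$ the element $E_n^{(\alpha^t,\beta)}(a)$ lies in $\mathcal{Z}(A_n)\subseteq GZ$, so Lemma \ref{L7.3} gives $\omega_i(a)=r_{GZ}(\omega_i)\bigl(E_n^{(\alpha^t,\beta)}(a)\bigr)$ and weak$^*$ convergence on $GZ$ applied to this single element already yields $\omega_i(a)\to\omega(a)$ on $\bigcup_n A_n$, hence everywhere by norm density. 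You instead test $r_{GZ}(\omega_i)$ on the countably many minimal central projections and run a Scheff\'e-type tail estimate, using that $\sum_{z\in\mathfrak{Z}_n(\alpha^t,\beta)}\omega_i(z)=1$ for every $i$ (from Lemma \ref{L7.3} and local normality) so that $\limsup_i|\omega_i(a)-\omega(a)|\leq 2\Vert a\Vert\bigl(1-\sum_{z\in F}\omega(z)\bigr)$ for finite $F$; this is sound, and it is in fact the same estimate the paper deploys in Theorem \ref{T7.8} and Proposition \ref{P7.10}, just more work than the one-element trick. Two small points to tighten: the consistency $\omega_{n+1}\!\upharpoonright_{A_n}=\omega_n$ in the surjectivity step is not a consequence of Lemma \ref{L7.2}(3) and formula \eqref{Eq7.3} alone — you also need the factoring hypothesis on $\psi$ over $GZ_{n+1}$ evaluated at elements of $\mathcal{Z}(A_n)$ (equivalently, $\sum_{z\in\mathfrak{Z}_{n+1}}\psi(z)\,\tau^{(\alpha_z^t,\beta)}(zz')=\psi(z')$ for $z'\in\mathfrak{Z}_n$), exactly as in the paper's computation; and the uniform tail control you worry about does not require convergence at $z_n(\alpha^t,\beta)$ (that value is identically $1$), only the total-mass identity just quoted together with pointwise convergence on a large finite $F$ chosen by normality of the limit state $\omega$.
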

\begin{proof} 
It is trivial that the map $r_{GZ}$ is affine and continuous with respect to the weak$^*$ topologies. 

If $r_{GZ}(\omega_1) = r_{GZ}(\omega_1)$, then Lemma \ref{L7.3} shows that $\omega_1 = r_{GZ}(\omega_1)\circ E_n^{\alpha^t,\beta)} = r_{GZ}(\omega_2)\circ E_n^{\alpha^t,\beta)} = \omega_2$ on each $A_n$, since $\mathcal{Z}(A_n) \subset GZ_n \subset GZ$. By continuity we conclude that $\omega_1 = \omega_2$. Hence $r_{GZ}$ is injective. 

Assume that $r_{GZ}(\omega_i) \to r_{GZ}(\omega)$ in the weak$^*$ topology with $\omega_i, \omega \in K_\beta^\mathrm{ln}(\alpha^t)$. Then, $\omega_i(a) = r_{GZ}(\omega_i)(E_n^{(\alpha^t,\beta)}(a)) \to r_{GZ}(\omega)(E_n^{(\alpha^t,\beta)}(a)) = \omega(a)$ for all $a \in A_n$ and for all $n$. Hence, $\omega_i \to \omega$ pointwisely on $\bigcup_n A_n$. Since $\bigcup_n A_n$ is norm-dense in $A$, we conclude that $\omega_i \to \omega$ in the weak$^*$ topology.   

Any $r_{GZ}(\omega)$ with $\omega \in K_\beta^\mathrm{ln}(\alpha^t)$ is trivially locally normal on $GZ$ and factors through $\mathcal{Z}(A_n)$ with $E_n^{(\alpha^t,\beta)}$ on each $GZ_n$ (since $GZ_n \subset A_n$). 

Assume that a $\chi \in S(GZ)$ is locally normal and factors through $\mathcal{Z}(A_n)$ with $E_n^{(\alpha^t,\beta)}$ on each $GZ_n$. For each $n$, we define $\omega_n := \chi\circ E_n^{(\alpha^t,\beta)}$ on $A_n$, that is, 
\[
\omega_n(a) = \sum_{z \in \mathfrak{Z}_n(\alpha^t,\beta)} \chi(z)\,\tau^{(\alpha_z^t,\beta)}
\]
holds and hence it is trivially a normal $(\alpha_n^t,\beta)$-KMS state on $A_n$. Moreover, for every $a \in A_n$, we have
\begin{align*}
\omega_{n+1}(a) 
&= \chi\circ E_{n+1}^{(\alpha^t,\beta)}(a) \\
&= \chi\circ (E_{n+1}^{(\alpha^t,\beta)}\circ E_n^{(\alpha^t,\beta)}(a)) \quad \text{(by Lemma \ref{L7.2}(3))} \\
&= (\chi\circ E_{n+1}^{(\alpha^t,\beta)})\circ E_n^{(\alpha^t,\beta)}(a) \\
&= \chi\circ E_n^{(\alpha^t,\beta)}(a) = \omega_n(a). 
\end{align*}
It follows that a linear functional $\omega$ on $\bigcup_n A_n$ can be defined by $\omega = \omega_n$ on each $A_n$. Since $|\omega(a)| = |\omega_n(a)| \leq \Vert a \Vert$ for any $a \in A_n$, the $\omega$ is contractive on $\bigcup_n A_n$, and thus (uniquely) extends to a state on the whole $A$ thanks to \cite[Proposition 2.3.11]{BratteliRobinson:Book1}. Since $\omega\circ E_n^{(\alpha^t,\beta)} = \omega_n \circ E_n^{(\alpha^t,\beta)} = \omega_n = \omega$ on each $A_n$, we conclude that the $\omega$ falls into $K_\beta^\mathrm{ln}(\alpha^t)$ by Lemma \ref{L7.3} (its latter part). Moreover, we observe, by construction, that $\omega = \omega_n = \chi$ on each $GZ_n$. Therefore, $\chi = r_{GZ}(\omega)$ by continuity, and hence we have determined the range $r_{GZ}(K_\beta^\mathrm{ln}(\alpha^t))$ as desired. 
\end{proof} 

Since $GZ$ is commutative, it is natural to use measure theory to investigate $K_\beta^\mathrm{ln}(\alpha^t)$ through the restriction map $r_{GZ}$. The next lemma is a preparation for the use of measure theory.  

\begin{lemma}\label{L7.5} The following assertions hold{\rm:}
\begin{itemize}
\item[(1)] For any $z_k \in \mathfrak{Z}_{n_k}$ with $n_1 < n_2 < \cdots < n_m$, we have $z_m\cdots z_1 \neq 0$ if and only if $z_{k+1}z_k \neq 0$ for all $1 \leq k \leq m-1$. 
\item[(2)] Each $GZ_n$ is a commutative $W^*$-algebra with the complete family of minimal projections $z_n\cdots z_1 \neq 0$ with $z_k \in \mathfrak{Z}_k$, $1\leq k\leq n$.
\item[(3)] The restriction $r_{GZ}(\omega)$ of any $\omega \in K_\beta^\mathrm{ln}(\alpha^t)$ is calculated by 
\[
r_{GZ}(\omega)(z_n\cdots z_1) = 
\begin{cases}
\omega(z_n)\,\tau^{(\alpha_{z_n}^t,\beta)}(z_n\cdots z_1) & (z_n \in \mathfrak{Z}_n(\alpha^t,\beta)), \\
0 & (z_n \in \mathfrak{Z}_n^\times(\alpha^t,\beta))
\end{cases}
\]
for every minimal projection $z_n\cdots z_1$ of $GZ_n$ with $z_k \in \mathfrak{Z}_k$, $1 \leq k \leq n$. . 
\end{itemize}
\end{lemma}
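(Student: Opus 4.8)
The plan is to prove the three assertions of Lemma~\ref{L7.5} in order, each one feeding into the next, all of them ultimately resting on Lemma~\ref{L7.2}(1) and the structure theory of the atomic $W^*$-algebras $A_n$.

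\emph{Item (1).} The forward direction is trivial: if $z_m\cdots z_1\neq0$ then a fortiori $z_{k+1}z_k\neq0$ for each $k$, since $z_{k+1}z_k$ dominates $z_m\cdots z_1$ (all the $z_j$'s are mutually commuting projections). For the converse I would argue by induction on $m$. Suppose $z_{k+1}z_k\neq0$ for $1\leq k\leq m-1$, and set $p:=z_{m-1}\cdots z_1$, which is nonzero by the induction hypothesis. I want $z_m p\neq0$. Here is where Lemma~\ref{L7.2}(1) enters: since $z_m z_{m-1}\neq0$, the map $\iota_{z_m z_{m-1}}:B(\mathcal H_{z_{m-1}})=z_{m-1}A_{n_{m-1}}\to z_m A_{n_m}$ is an injective normal $*$-homomorphism sending $x\mapsto z_m x$. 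Now $p=z_{m-1}\cdots z_1$ is a nonzero projection in $z_{m-1}A_{n_{m-1}}$ (indeed in $z_{m-1}GZ_{n_{m-1}}$), so $\iota_{z_m z_{m-1}}(p)=z_m p\neq0$ by injectivity. This closes the induction.

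\emph{Item (2).} Each $GZ_n=\mathcal Z(A_n)\vee\cdots\vee\mathcal Z(A_1)$ is a commutative von Neumann subalgebra of $A_n$, generated by the finite family of mutually commuting atomic algebras $\mathcal Z(A_k)$, $1\leq k\leq n$. Its minimal projections are precisely the nonzero products $z_n\cdots z_1$ with $z_k\in\mathfrak Z_k$: such a product is either zero or a minimal projection of $GZ_n$ because any projection in $GZ_n$ strictly below it would have to be strictly below one of the atoms $z_k$, which is impossible; and these projections sum (in the $\sigma$-weak topology) to $\prod_{k=1}^n\bigl(\sum_{z_k\in\mathfrak Z_k}z_k\bigr)=1$, so they form a complete family. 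Combined with item (1) this identifies the atoms of $GZ_n$ with the ``admissible chains'' $z_1\to z_2\to\cdots\to z_n$, i.e., those with $z_{k+1}z_k\neq0$ throughout — this is the combinatorial content linking $GZ$ to the branching-graph / Gelfand--Tsetlin picture.

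\emph{Item (3).} Let $\omega\in K_\beta^{\mathrm{ln}}(\alpha^t)$ and fix a minimal projection $z_n\cdots z_1$ of $GZ_n$. By Lemma~\ref{L7.3}, $\omega$ restricted to $A_n$ equals $\sum_{z\in\mathfrak Z_n(\alpha^t,\beta)}\omega(z)\,\tau^{(\alpha_z^t,\beta)}(z\,\cdot\,)$, and $\omega$ vanishes on every $z\in\mathfrak Z_n^\times(\alpha^t,\beta)$. Since $z_n\cdots z_1\leq z_n$, only the term indexed by $z=z_n$ survives: if $z_n\in\mathfrak Z_n^\times(\alpha^t,\beta)$ then $\omega(z_n\cdots z_1)=0$, and if $z_n\in\mathfrak Z_n(\alpha^t,\beta)$ then $\omega(z_n\cdots z_1)=\omega(z_n)\,\tau^{(\alpha_{z_n}^t,\beta)}(z_n\cdots z_1)$. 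This is exactly the claimed formula; note also that by Lemma~\ref{L7.2}(2) the case $z_n\in\mathfrak Z_n(\alpha^t,\beta)$ forces every $z_k\in\mathfrak Z_k(\alpha^t,\beta)$, so the right-hand side can be unwound further via \eqref{Eq7.3} if desired, but that is not needed for the statement.

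The only genuinely non-routine point is item (1)'s converse, and the key realization is that it is \emph{not} a purely order-theoretic fact about the lattice of projections — it genuinely uses that consecutive overlaps $z_{k+1}z_k\neq0$ give \emph{injective} $*$-homomorphisms between the corresponding factors (Lemma~\ref{L7.2}(1)), which is what propagates non-vanishing along the whole chain. Once item (1) is in hand, items (2) and (3) are bookkeeping on top of Lemmas~\ref{L7.2} and~\ref{L7.3}.
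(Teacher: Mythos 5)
Your items (1) and (3) are correct and follow the paper's own route: the converse in (1) is exactly the paper's induction, propagating non-vanishing through the injective normal $*$-homomorphism $\iota_{z_{k+1}z_k}$ of Lemma \ref{L7.2}(1) (note also that this works for non-consecutive levels, as you use it), and (3) is read off from Lemma \ref{L7.3} together with $z\,z_n=\delta_{z,z_n}z_n$, which is all the paper does.

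However, your justification of minimality in item (2) has a genuine gap. You argue that a nonzero projection $p\in GZ_n$ strictly below $z_n\cdots z_1$ ``would have to be strictly below one of the atoms $z_k$, which is impossible.'' But $z_k$ is an atom of $\mathcal{Z}(A_k)$ only, not of $GZ_n$, so there is nothing impossible about a projection of $GZ_n$ lying strictly below $z_k$ --- the product $z_n\cdots z_1$ itself is typically such a projection. As stated the step is circular: it would be valid only if one already knew which projections of $GZ_n$ are minimal. The correct argument (the paper's) uses that $z_k$ is minimal in the \emph{commutative} algebra $\mathcal{Z}(A_k)$, so $z_k x_k=\lambda(x_k)z_k$ for every $x_k\in\mathcal{Z}(A_k)$; since the linear span of products $x_n\cdots x_1$ with $x_k\in\mathcal{Z}(A_k)$ is $\sigma$-weakly dense in $GZ_n$, one gets $(z_n\cdots z_1)\,GZ_n=\mathbb{C}\,z_n\cdots z_1$, and hence any projection $p\le z_n\cdots z_1$ in $GZ_n$ satisfies $p=(z_n\cdots z_1)p\in\mathbb{C}\,z_n\cdots z_1$, i.e.\ $p=0$ or $p=z_n\cdots z_1$. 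You should also record (one line) that distinct nonzero products are mutually orthogonal, via $(z_n\cdots z_1)(z'_n\cdots z'_1)=\delta_{z_n,z'_n}\cdots\delta_{z_1,z'_1}\,z_n\cdots z_1$; your completeness claim (the products sum to $1$) is fine.
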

\begin{proof}
Item (1): The only if part is trivial. 

The if part can be proved inductively. Assume that $z_k \cdots z_1 \neq 0$ has been confirmed under the assumption. By $z_{k+1}z_k\neq0$, Lemma \ref{L7.5}(1) ensures that $z_{k+1}\cdots z_1 = \iota_{z_{k+1}z_k}(z_k \cdots z_1)$ must be non-zero. 

\medskip
Item (2): Let $z_n\cdots z_1$ be as in the statement. Observe that all the linear combinations of elements of the form $x_n\cdots x_1$ with $x_k \in \mathcal{Z}(A_k)$ is $\sigma$-weakly dense in $GZ_n$ and also that for $z_k x_k = \lambda(x_k)z_k$ for some scalar $\lambda(x_k)$. Hence $(z_n\cdots z_1)(x_n\cdots x_1) = (z_n x_n)\cdots(z_1 x_1) = \lambda(x_n)\cdots\lambda(x_1)z_n\cdots z_1 \in \mathbb{C}z_n\cdots z_1$. By continuity we conclude that $z_n\cdots z_1 GZ_n = \mathbb{C}z_n\cdots z_1$, implying that $z_n\cdots z_1$ is a minimal projection of $GZ_n$. 

Let $z'_n\cdots z'_1$ be another one as in the statement. Then, $(z_n\cdots z_1) (z'_n\cdots z'_1) = (z_n z'_n)\cdots(z_1 z'_1) = \delta_{z_n,z'_n}\cdots\delta_{z_1,z'_1}z_n\cdots z_1$. Moreover, the (possibly infinite) sum of all such $z_n\cdots z_1$ trivially becomes $1$. Hence, all such $z_n\cdots z_1$ form a complete family of minimal projections of $GZ_n$. 

\medskip
Item (3) immediately follows from Lemma \ref{L7.3}. 
\end{proof}

Let $\omega \in K_\beta^\mathrm{ln}(\alpha^t)$ be arbitrarily given, and write $\chi := r_{GZ}(\omega)$ for simplicity. We will describe its GNS triple $(\pi_\chi : GZ \curvearrowright \mathcal{H}_\chi,\xi_\chi)$ in terms of measure theory as follows. Let $\Omega$ be the space of all infinite paths $(z(k)) \in \prod_k \mathfrak{Z}_k$ with $z_{n+1}z_n \neq 0$, and we consider the smallest $\sigma$-algebra over $\Omega$ containing all the cylinder sets $[z_m,\dots,z_m]^m_n = \{ (z(k)) \in \Omega\,; z(m)=z_m,\dots,z(n)=z_n\}$ with $m < n$. Then, we can define the probability measure $\gamma_\chi$ on $\Omega$ by 
\[
\gamma_\chi([z_n,\dots,z_m]_n^m) := 
\chi(z_n\cdots z_m), 
\]  
because, for example,  
\[
\chi(z_n\cdots z_m) = \sum_{z \in \mathfrak{Z}_{n+1}} \chi(z z_n\cdots z_m)
\]
thanks to the local normality of $\chi$. The next lemma is easily proved, and hence we leave its proof to the reader. 

\begin{lemma}\label{L7.6} There is a unitary transform from $\mathcal{H}_\chi$ onto $L^2(\Omega,\gamma_\chi)$ sending each $\pi_\chi(z_n\cdots z_1)\xi_\chi$ with minimal projection $z_n\cdots z_1$ of $GZ_n$ to the indicator function $\mathbf{1}_{[z_n,\dots,z_1]_n^1}$ of the corresponding cylinder $[z_n,\dots,z_1]^1_n$. The unitary transform intertwines the associated von Neumann algebra $\pi_\chi(GZ)''$ with $L^\infty(\Omega,\gamma_\chi)$ {\rm(}acting on $L^2(\Omega,\gamma_\chi)$ as multiplication operators{\rm)}.  
\end{lemma}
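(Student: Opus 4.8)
The plan is to construct the claimed unitary transform explicitly on a spanning set and verify it extends isometrically, then identify the action of $\pi_\chi(GZ)''$. First I would recall that by Lemma \ref{L7.5}(2) the minimal projections $z_n\cdots z_1$ (with $z_k\in\mathfrak{Z}_k$ and all consecutive products nonzero) form a complete orthogonal family in $GZ_n$, and that their linear span over all $n$ is norm-dense in $GZ$. Hence the vectors $\pi_\chi(z_n\cdots z_1)\xi_\chi$ span a dense subspace of $\mathcal{H}_\chi$. On the measure side, the cylinder sets $[z_n,\dots,z_1]_n^1$ (over all $n$) generate the $\sigma$-algebra and their indicators span a dense subspace of $L^2(\Omega,\gamma_\chi)$. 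So the natural prescription is
\[
U\,\pi_\chi(z_n\cdots z_1)\xi_\chi := \mathbf{1}_{[z_n,\dots,z_1]_n^1}.
\]

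Next I would check this is well defined and isometric on the dense span. The key computation is that for a minimal projection $z_n\cdots z_1$ of $GZ_n$,
\[
\big\|\pi_\chi(z_n\cdots z_1)\xi_\chi\big\|_{\mathcal{H}_\chi}^2 = \chi\big((z_n\cdots z_1)^*(z_n\cdots z_1)\big) = \chi(z_n\cdots z_1) = \gamma_\chi\big([z_n,\dots,z_1]_n^1\big) = \big\|\mathbf{1}_{[z_n,\dots,z_1]_n^1}\big\|_{L^2}^2,
\]
using that $z_n\cdots z_1$ is a self-adjoint idempotent and the very definition of $\gamma_\chi$. Orthogonality of distinct minimal projections at the same level $n$ matches disjointness of the corresponding cylinders, so the Gram matrices agree; and compatibility across levels is exactly the refinement relation $z_n\cdots z_1 = \sum_{z\in\mathfrak{Z}_{n+1}} z\,z_n\cdots z_1$ on one side and the $\sigma$-additivity $\gamma_\chi([z_n,\dots,z_1]_n^1) = \sum_{z}\gamma_\chi([z,z_n,\dots,z_1]_{n+1}^1)$ on the other (which holds by local normality of $\chi$, as noted before the lemma). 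Therefore $U$ is a well-defined linear isometry on a dense subspace with dense range, hence extends to a unitary $U:\mathcal{H}_\chi\to L^2(\Omega,\gamma_\chi)$.

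Finally I would verify the intertwining with the von Neumann algebras. For a fixed minimal projection $z_m\cdots z_1$ of $GZ_m$ and a basis vector $\pi_\chi(z'_n\cdots z'_1)\xi_\chi$ with $n\ge m$, the product $\pi_\chi(z_m\cdots z_1)\pi_\chi(z'_n\cdots z'_1)\xi_\chi$ equals $\pi_\chi(z'_n\cdots z'_1)\xi_\chi$ if the path $(z'_n,\dots,z'_1)$ passes through $(z_m,\dots,z_1)$ and $0$ otherwise, which under $U$ is precisely multiplication of $\mathbf{1}_{[z'_n,\dots,z'_1]_n^1}$ by $\mathbf{1}_{[z_m,\dots,z_1]_m^1}$. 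Thus $U\pi_\chi(p)U^* = M_{\mathbf{1}_C}$ for $p$ a minimal projection of some $GZ_m$ and $C$ the corresponding cylinder; by linearity and $\sigma$-weak (equivalently, strong-operator) density this gives $U\pi_\chi(GZ)''U^* = L^\infty(\Omega,\gamma_\chi)$ acting as multiplication operators, since the indicators of cylinders generate $L^\infty(\Omega,\gamma_\chi)$ as a von Neumann algebra. The main point requiring care is the compatibility of $U$ across different levels $n$ — i.e., that the single prescription above does not depend on which $GZ_n$ one views a given element of $GZ$ as living in — but this is exactly the content of the identity $z_n\cdots z_1 = \sum_{z\in\mathfrak{Z}_{n+1}} z\,z_n\cdots z_1$ together with $\sigma$-additivity of $\gamma_\chi$, so it is routine; I would leave the remaining bookkeeping to the reader as the statement of Lemma \ref{L7.6} already indicates.
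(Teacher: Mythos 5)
Your construction is correct and is essentially the argument the paper intends (the paper leaves the proof to the reader): match the inner products of the vectors $\pi_\chi(z_n\cdots z_1)\xi_\chi$ with those of the cylinder indicators, extend to a unitary, and transport the generating projections to multiplication operators. The only inaccurate point is the claim that the linear span of the minimal projections is norm-dense in $GZ$ --- this fails whenever some $\mathfrak{Z}_k$ is infinite, since the norm closure of that span inside the atomic algebra $GZ_n$ is only a $c_0$-type subalgebra; what you actually need, and what is true, is that the vectors $\pi_\chi(z_n\cdots z_1)\xi_\chi$ span a dense subspace of $\mathcal{H}_\chi$, and this follows from the local normality of $\chi$ (equivalently, normality of $\pi_\chi$ on each $GZ_n$), the same ingredient you already invoke for the $\sigma$-additivity of $\gamma_\chi$ and which is also implicitly used when passing from the minimal projections to all of $\pi_\chi(GZ)''$ in your final step.
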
  

Let $\mathscr{B}_m$ be the $\sigma$-subalgebra generated by all the cylinders $[z_n,\dots,z_m]_n^m$ with $n\geq m$. Set $\mathscr{B}_\infty := \bigcap_m \mathscr{B}_m$, the tail $\sigma$-algebra.  

\begin{lemma}\label{L7.7} If $\chi = r_{GZ}(\omega)$ is an extreme point in $r_{GZ}(K_\beta^\mathrm{ln}(\alpha^t))$ {\rm(}i,e., $\omega$ is an extreme point in $K_\beta^\mathrm{ln}(\alpha^t)${\rm)}, then the image of $\mathscr{B}_\infty$ under $\gamma_\chi$ is $\{0,1\}$. 
\end{lemma}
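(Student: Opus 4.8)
The plan is to show that the tail $\sigma$-algebra $\mathscr{B}_\infty$ is $\gamma_\chi$-trivial by relating it to the center of the von Neumann algebra $\pi_\chi(GZ)''\cong L^\infty(\Omega,\gamma_\chi)$ in a way compatible with the affine structure on $K_\beta^\mathrm{ln}(\alpha^t)$, and then invoking the extremality of $\omega$. Concretely, I would first observe that a $\mathscr{B}_\infty$-measurable set $E$ with $0<\gamma_\chi(E)<1$ would produce a nontrivial decomposition $\chi = \gamma_\chi(E)\,\chi_E + \gamma_\chi(E^c)\,\chi_{E^c}$, where $\chi_E(f) := \gamma_\chi(E)^{-1}\int_E f\,d\gamma_\chi$; the point is that this decomposition on $GZ$ must be shown to come from a decomposition of $\omega$ inside $K_\beta^\mathrm{ln}(\alpha^t)$, which then contradicts extremality.

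The key technical step, and the one I expect to be the main obstacle, is verifying that the conditional measures $\chi_E$ are again in the range $r_{GZ}(K_\beta^\mathrm{ln}(\alpha^t))$, i.e.\ that they satisfy the factorization property through $\mathcal{Z}(A_n)$ with $E_n^{(\alpha^t,\beta)}$ on each $GZ_n$ identified by Proposition \ref{P7.4}. For this I would use the martingale/Markov structure: because $E$ is in the \emph{tail} $\sigma$-algebra, for each fixed $n$ the conditional probability $\gamma_\chi(E \mid \mathscr{B}_m^{(n)})$ (conditioning on the first $n$ coordinates) is computed via the transition kernels $z_{k} \mapsto z_{k+1}$ which, by Lemma \ref{L7.2}(1) and Lemma \ref{L7.3}, are exactly governed by the local KMS states $\tau^{(\alpha_z^t,\beta)}$ and do not depend on which extreme $\omega$ we started from. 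Hence restricting $\chi_E$ to $GZ_n$ one still gets a measure of the form $\sum_{z\in\mathfrak{Z}_n(\alpha^t,\beta)}\lambda(z)\,\tau^{(\alpha_z^t,\beta)}$ for suitable weights $\lambda(z)\geq 0$ summing to $1$; this is precisely the condition in Proposition \ref{P7.4} characterizing $r_{GZ}(K_\beta^\mathrm{ln}(\alpha^t))$. The care needed here is to handle the conditioning correctly on an inductive system of finite-dimensional (atomic) centers and to make sure the weights are genuinely nonnegative and normalized, which uses Lemma \ref{L7.5}(1) to ensure all the relevant cylinders are nonempty.

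Once $\chi_E, \chi_{E^c} \in r_{GZ}(K_\beta^\mathrm{ln}(\alpha^t))$, I pull them back via the bijection of Proposition \ref{P7.4} to states $\omega_E, \omega_{E^c}\in K_\beta^\mathrm{ln}(\alpha^t)$, and since $r_{GZ}$ is affine and injective the identity $\chi = \gamma_\chi(E)\chi_E + \gamma_\chi(E^c)\chi_{E^c}$ lifts to $\omega = \gamma_\chi(E)\omega_E + \gamma_\chi(E^c)\omega_{E^c}$. Extremality of $\omega$ forces $\omega_E=\omega_{E^c}=\omega$, hence $\chi_E=\chi_{E^c}=\chi$, which means $\gamma_\chi(E\cap C)=\gamma_\chi(E)\gamma_\chi(C)$ for every cylinder $C$, and letting $C$ range over the generating cylinders (and using $E\in\mathscr{B}_\infty$) gives $\gamma_\chi(E)=\gamma_\chi(E)^2$, so $\gamma_\chi(E)\in\{0,1\}$. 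I would write the argument so that the measure-theoretic bookkeeping (the Markov/tail computation of the conditional measures) is isolated into a short sublemma, since that is where all the real content sits; the descent from there to the statement is a formal consequence of affineness and extremality already set up in Proposition \ref{P7.4}.
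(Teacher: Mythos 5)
Your proposal is correct and follows essentially the same route as the paper: the same decomposition $\chi = \gamma_\chi(E)\,\chi_E + \gamma_\chi(E^c)\,\chi_{E^c}$, the same key step of establishing the Markov-type factorization (the paper's equation (7.4), obtained from Lemma 7.5(3), formula (7.3) and the $\pi$-$\lambda$ principle) to show that $\chi_E,\chi_{E^c}$ factor through $E_n^{(\alpha^t,\beta)}$ on each $GZ_n$ and hence lie in $r_{GZ}(K_\beta^\mathrm{ln}(\alpha^t))$, and the same appeal to Proposition 7.4 and extremality. The only cosmetic difference is the endgame: the paper contradicts extremality directly by observing $\chi_E \neq \chi_{E^c}$, while you let extremality force $\chi_E=\chi$ and then deduce $\gamma_\chi(E)=\gamma_\chi(E)^2$ via the generating cylinders, which is trivially equivalent.
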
 
\begin{proof} 
By Lemma \ref{L7.5}(3), we have, for any $\ell > m > n \geq 1$, 
\[ 
\gamma_\chi([z_\ell,\dots,z_m]_\ell^m \cap [z_n,\dots,z_1]_n^1) 
= 
\chi(z_\ell \cdots z_m z_n \cdots z_1) = 
\chi(z_\ell)\,\tau^{(\alpha_{z_\ell}^t,\beta)}(z_\ell \cdots z_m z_n \cdots z_1) 
\]
if $z_\ell \in \mathfrak{Z}_\ell(\alpha^t,\beta)$; otherwise $0$. Then, the iterative use of formula \eqref{Eq7.3} shows that 
\[
\tau^{(\alpha_{z_\ell}^t,\beta)}(z_\ell \cdots z_m z_n \cdots z_1) = 
\tau^{(\alpha_{z_\ell}^t,\beta)}(z_\ell \cdots z_m z_n)\,\tau^{(\alpha_{z_n}^t,\beta)}(z_n \cdots z_1)
\]
if $z_n \in \mathfrak{Z}_\ell(\alpha^t,\beta)$; otherwise $0$. Since 
\[
\chi(z_\ell)\,\tau^{(\alpha_{z_\ell}^t,\beta)}(z_\ell \cdots z_m z_n) 
=\chi(z_\ell\cdots z_m z_n) 
=\gamma_\chi([z_\ell,\dots,z_m]_\ell^m \cap [z_n]_n^n),
\] 
we have obtained that 
\[
\gamma_\chi([z_\ell,\dots,z_m]_\ell^m \cap [z_n,\dots,z_1]_n^1) 
=  \gamma_\chi([z_\ell,\dots,z_m]_\ell^m \cap [z_n]_n^n)\,\tau^{(\alpha_{z_n}^t,\beta)}(z_n \cdots z_1) 
\]
if $z_n \in \mathfrak{Z}_n(\alpha^t,\beta)$; otherwise $0$. It follows, by the $\pi$-$\lambda$ principle, that 
\begin{equation}\label{Eq7.4}
\gamma_\chi(\Lambda \cap [z_n,\dots,z_1]_n^1) = \gamma_\chi(\Lambda \cap [z_n]_n^n)\,\mathbf{1}_{\mathfrak{Z}_n(\alpha^t,\beta)}(z_n)\,\tau^{(\alpha_{z_n}^t,\beta)}(z_n \cdots z_1) 
\end{equation}
for any $\Lambda \in \mathcal{B}_m$ and $m > n$. 

On the contrary, suppose that $0 < \gamma_\chi(\Lambda) <1$ for some $\Lambda \in \mathscr{B}_\infty$. Let $p \in \pi_\chi(GZ)''$ be the non-trivial projection corresponding to $\mathbf{1}_\Lambda \in L^\infty(\Omega,\gamma_\chi)$ by Lemma \ref{L7.6}. Define $\chi_1 := \gamma_\chi(\Lambda)^{-1}(p\pi_\chi(\,\cdot\,)\xi_\chi\,|\,\xi_\chi)_{\mathcal{H}_\chi}$ and $\chi_2 := \gamma_\chi(\Lambda^c)^{-1}((1-p)\pi_\chi(\,\cdot\,)\xi_\chi\,|\,\xi_\chi)_{\mathcal{H}_\chi}$, both of which are trivially locally normal states on $GZ$. What we have proved in the above paragraph shows that 
\begin{align*}
\chi_1(z_n\cdots z_1)
&= 
\gamma_\chi(\Lambda \cap [z_n,\dots,z_1]_n^1) \\
&= \gamma_\chi(\Lambda \cap [z_n]_n^n)\,\mathbf{1}_{\mathfrak{Z}_n(\alpha^t,\beta)}(z_n)\,\tau^{(\alpha_{z_n}^t,\beta)}(z_n \cdots z_1) \\
&=
\chi_1(z_n)\,\mathbf{1}_{\mathfrak{Z}_n(\alpha^t,\beta)}(z_n)\,\tau^{(\alpha_{z_n}^t,\beta)}(z_n \cdots z_1) \\
&= 
\chi_1\circ E_n^{(\alpha^t,\beta)}(z_n\cdots z_1).  
\end{align*}
By Lemma \ref{L7.5}(2), we conclude that $\chi_1 = \chi_1\circ E_n^{(\alpha^t,\beta)}$ on each $GZ_n$. Similarly, we have $\chi_2 = \chi_2\circ E_n^{(\alpha^t,\beta)}$ on each $GZ_n$. Consequently, both $\chi_1$ and $\chi_2$ fall into $r_{GZ}(K_\beta^\mathrm{ln}(\alpha^t))$. Since $p$ is non-trivial, we see that $\chi_1 \neq \chi_2$. Since $\chi = \gamma_\chi(\Lambda)\,\chi_1 + \gamma_\chi(\Lambda^c)\,\chi_2$ by construction, the $\chi$ is not an extreme point in $r_{GZ}(K_\beta^\mathrm{ln}(\alpha^t))$, a contradiction.  
\end{proof}

We are ready to give Vershik--Kerov's approximation theorem in the present setup. The method here is the same as in \cite{Olshanski:JFA03}, which supplies the details to Vershik--Kerov's work \cite{VershikKerov:SovMathDokl82}. 

\begin{theorem}\label{T7.8} For any extreme $\omega \in K_\beta^\mathrm{ln}(\alpha^t)$, there is a sequence $z(m) \in \mathfrak{Z}_m(\alpha^t,\beta)$ with $z(m+1)z(m) \neq 0$ for all $m \geq 1$ so that
\[
\lim_{m\to\infty}\tau^{(\alpha_{z(m)}^t,\beta)}(z(m)a) = \omega(a), \qquad a \in \bigcup_n A_n.
\]
\end{theorem}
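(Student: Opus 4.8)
The plan is to reproduce Vershik--Kerov's ergodic method through the measure-theoretic picture already set up. Fix an extreme $\omega \in K_\beta^\mathrm{ln}(\alpha^t)$ and write $\chi := r_{GZ}(\omega)$ and $\gamma_\chi$ for the path-space measure on $\Omega$ as in Lemma \ref{L7.6}. By Lemma \ref{L7.7} the tail $\sigma$-algebra $\mathscr{B}_\infty$ is $\gamma_\chi$-trivial. The key identity to exploit is \eqref{Eq7.4}: for $\Lambda \in \mathscr{B}_m$ and $m > n$,
\[
\gamma_\chi(\Lambda \cap [z_n,\dots,z_1]_n^1) = \gamma_\chi(\Lambda \cap [z_n]_n^n)\,\mathbf{1}_{\mathfrak{Z}_n(\alpha^t,\beta)}(z_n)\,\tau^{(\alpha_{z_n}^t,\beta)}(z_n\cdots z_1).
\]
Reading this with $\Lambda = \Omega$ shows that, conditionally on $\{z(n) = z_n\}$, the distribution of the initial segment $(z(1),\dots,z(n))$ is exactly $\tau^{(\alpha_{z_n}^t,\beta)}$ evaluated on the corresponding minimal projections. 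Equivalently, for $a \in A_n$, the conditional expectation $\mathbb{E}_{\gamma_\chi}[\,\widehat{a}\mid z(n)\,]$, where $\widehat{a}$ denotes the $L^1$-function representing $a$ under the identification of $\pi_\chi(GZ_n)''$-compatible data, equals $\tau^{(\alpha_{z(n)}^t,\beta)}(z(n)\,a)$ as a function on $\Omega$.

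Next I would run a martingale / reverse-martingale argument. For fixed $n$ and fixed $a \in A_n$, consider the sequence of random variables $X_m(\omega_\mathrm{path}) := \tau^{(\alpha_{z(m)}^t,\beta)}(z(m)\,a)$ for $m \geq n$, defined $\gamma_\chi$-a.e.\ on $\Omega$ (note $z(m) \in \mathfrak{Z}_m(\alpha^t,\beta)$ with $\gamma_\chi$-probability one, since $\chi$ vanishes on the $z_m^\times$ part by Lemma \ref{L7.3}). Using the compatibility $\tau^{(\alpha_{z(m+1)}^t,\beta)}\circ\iota_{z(m+1)z(m)} = \tau^{(\alpha_{z(m+1)}^t,\beta)}(z(m+1)z(m))\,\tau^{(\alpha_{z(m)}^t,\beta)}$ from Lemma \ref{L7.2}(1), together with the conditional-distribution statement above, one checks that $X_m = \mathbb{E}_{\gamma_\chi}[\widehat{a}\mid \mathscr{B}_m]$ a.e.; in particular $(X_m)_{m \geq n}$ is a reverse martingale with respect to the decreasing filtration $(\mathscr{B}_m)_m$. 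L\'evy's downward convergence theorem then gives $X_m \to \mathbb{E}_{\gamma_\chi}[\widehat{a}\mid\mathscr{B}_\infty]$ both a.e.\ and in $L^1$. Since $\mathscr{B}_\infty$ is trivial, this limit is the constant $\mathbb{E}_{\gamma_\chi}[\widehat{a}] = \chi\circ E_n^{(\alpha^t,\beta)}(a) = \omega(a)$ by Lemma \ref{L7.3}. Hence for each fixed $a \in A_n$, $\tau^{(\alpha_{z(m)}^t,\beta)}(z(m)a) \to \omega(a)$ for $\gamma_\chi$-almost every path.

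Finally I would extract a single path that works simultaneously for all $a \in \bigcup_n A_n$. This uses that each $A_n$ has separable predual and $\bigcup_n A_n$ is norm-separable, so it suffices to secure a.e.\ convergence on a fixed countable norm-dense subset $\{a_j\}$ of $\bigcup_n A_n$; intersecting the countably many full-measure sets and using that $\gamma_\chi$ is a probability measure (hence the intersection is non-empty — indeed has full measure), we obtain a path $(z(m))$ with $z(m+1)z(m) \neq 0$ for all $m$ along which $\tau^{(\alpha_{z(m)}^t,\beta)}(z(m)a_j) \to \omega(a_j)$ for every $j$. A uniform bound $|\tau^{(\alpha_{z(m)}^t,\beta)}(z(m)a)| \le \Vert a\Vert$ and an $\varepsilon/3$-argument promote this to convergence for all $a \in \bigcup_n A_n$, completing the proof.

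The main obstacle I anticipate is the bookkeeping in identifying $X_m$ with $\mathbb{E}_{\gamma_\chi}[\widehat{a}\mid\mathscr{B}_m]$ cleanly for $a \in A_n$ that is \emph{not} central — one must be careful that $a$ only enters through $z(n)$ and a \emph{fixed} local KMS state, so that the $m$-dependence is entirely through the scalar coefficients $\tau^{(\alpha_{z(m)}^t,\beta)}(z(m)z(n))$, and verify the tower/compatibility property \eqref{Eq7.3} is exactly what makes the reverse-martingale relation hold. Once that identification is in place, the rest is standard: downward martingale convergence plus tail triviality plus a separability/diagonal argument. An alternative to the probabilistic route would be the convex-analytic approach of Okounkov--Olshanski \cite{OkounkovOlshanski:IMRN98} or Olshanski \cite[section 22]{Olshanski:Proc90}, deducing the approximation from the fact that extreme points of $r_{GZ}(K_\beta^\mathrm{ln}(\alpha^t))$ lie in the closure of the ``elementary'' states $\tau^{(\alpha_{z(m)}^t,\beta)}(z(m)\,\cdot\,)$; but the martingale argument seems the most economical given that Lemma \ref{L7.7} already hands us tail triviality.
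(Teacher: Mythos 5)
Your martingale argument is essentially the paper's own: the paper establishes the same identity $\mathbb{E}[\mathbf{1}_{[z]_n^n}\mid\mathscr{B}_m]=\sum_{z'\in\mathfrak{Z}_m(\alpha^t,\beta)}\tau^{(\alpha_{z'}^t,\beta)}(z'z)\,\mathbf{1}_{[z']_m^m}$ via Lemma \ref{L7.5}(3) and the $\pi$-$\lambda$ principle, and then invokes L\'evy's downward theorem together with the tail triviality of Lemma \ref{L7.7}. Your only variation is to feed the bounded function $\widehat{a}=\sum_{z\in\mathfrak{Z}_n(\alpha^t,\beta)}\tau^{(\alpha_z^t,\beta)}(za)\,\mathbf{1}_{[z]_n^n}$ into the reverse martingale instead of the individual indicators; that identification is fine once the countable sum is interchanged with the conditional expectation (dominated convergence, the indicators being disjoint and the coefficients bounded by $\Vert a\Vert$), and you correctly flagged this as the bookkeeping to check.

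The genuine gap is in your final globalization step. You assert that $\bigcup_n A_n$ is norm-separable because each $A_n$ has separable predual; this is false in the present setting. Each $A_n=\bigoplus_{z\in\mathfrak{Z}_n}B(\mathcal{H}_z)$ is an $\ell^\infty$-type direct sum of type I factors, and it fails to be norm-separable as soon as $\mathfrak{Z}_n$ is infinite or some $\mathcal{H}_z$ is infinite-dimensional (already $B(\ell^2)$ has separable predual but is not separable in norm). Hence there is no countable norm-dense subset $\{a_j\}$ of $\bigcup_n A_n$ on which to intersect your full-measure sets, and the $\Vert\cdot\Vert$-bound $\varepsilon/3$ promotion has nothing to promote from, except in the purely finite-dimensional (AF) case. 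The paper avoids this by running the almost-sure convergence only over the countable family of minimal central projections $z\in\bigcup_n\mathfrak{Z}_n$, selecting one path $(z(m))$ good for all of them simultaneously, and then treating an arbitrary $a\in A_n$ deterministically: by Lemmas \ref{L7.2}(1) and \ref{L7.3} one has $\tau^{(\alpha_{z(m)}^t,\beta)}(z(m)a)=\sum_{z\in\mathfrak{Z}_n(\alpha^t,\beta)}\tau^{(\alpha_{z(m)}^t,\beta)}(z(m)z)\,\tau^{(\alpha_z^t,\beta)}(za)$ with $|\tau^{(\alpha_z^t,\beta)}(za)|\le\Vert a\Vert$, and since the weights $\tau^{(\alpha_{z(m)}^t,\beta)}(z(m)z)$ and $\omega(z)$ are probability weights on $\mathfrak{Z}_n(\alpha^t,\beta)$, pointwise convergence of the weights combined with a finite truncation $\mathfrak{F}_k\subset\mathfrak{Z}_n(\alpha^t,\beta)$ and the normality of $\omega$ on $A_n$ (a Scheff\'e-type estimate) gives the convergence for every $a$. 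Replace your separability/diagonal step by this truncation argument and your proof closes.
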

\begin{proof}
Write $\chi := r_{GZ}(\omega)$. Let $z \in \mathfrak{Z}_n(\alpha^t,\beta)$ be arbitrarily given. For $m > n$, we consider the random variable 
\[
X = \sum_{z'\in\mathfrak{Z}_m(\alpha^t,\beta)} \tau^{(\alpha_{z'}^t,\beta)}(z' z)\mathbf{1}_{[z']_m^m}
\]
on the probability space $(\Omega,\gamma_\chi)$. By the same calculation using Lemma \ref{L7.5}(3) together with the $\pi$-$\lambda$ principle, we can see that 
\[
\mathbb{E}[\mathbf{1}_\Lambda X] = \gamma_\chi(\Lambda \cap [z]_n^n) = \mathbb{E}[\mathbf{1}_\Lambda \mathbf{1}_{[z]_n^n}], \quad \Lambda \in \mathscr{B}_m,
\]
which implies that 
\[
\mathbb{E}[\mathbf{1}_{[z]_n^n}\mid\mathscr{B}_m] = X = \sum_{z' \in \mathfrak{Z}_m(\alpha^t,\beta)} \tau^{(\alpha_{z'}^t,\beta)}(z'z)\mathbf{1}_{[z']_m^m}. 
\] 
By Levy's downward martingale theorem (see e.g.\ \cite[section 14.4]{Williams:Book}) together with Lemma \ref{L7.7}, we have 
\[
\lim_{m\to\infty} \sum_{z' \in \mathfrak{Z}_m(\alpha^t,\beta)} \tau^{(\alpha_{z'}^t,\beta)}(z'z)\,\mathbf{1}_{[z']_m^m} = \lim_{m\to\infty} \mathbb{E}[\mathbf{1}_{[z]_n^n}\mid\mathscr{B}_m] = \mathbb{E}[\mathbf{1}_{[z]_n^n}] = \gamma_\chi([z]_n^n) = \chi(z)
\]
$\gamma_\chi$-almost surely. 

Since $\gamma_\chi([z']_m^m) = \omega(z) = 0$ for all $z \in \mathfrak{Z}_m^\times(\alpha^t,\beta)$, the union $\mathcal{N} := \bigcup_m \bigcup_{z' \in \mathfrak{Z}_m^\times(\alpha^t,\beta)} [z']_m^m$ is $\gamma_\chi$-null.  Observe that any path $(z(m))$ in $\Omega\setminus\mathcal{N}$ satisfies that $z(m) \in \mathfrak{Z}_m(\alpha^t,\beta)$ for all $m$. Since $\bigcup_n \mathfrak{Z}_n$ is countable, we can choose a sequence $(z(m))$ from $\Omega\setminus\mathcal{N}$ so that 
\[
\omega(z) = \lim_{m\to\infty} \sum_{z' \in \mathfrak{Z}_m(\alpha^t,\beta)} \tau^{(\alpha_{z'}^t,\beta)}(z'z)\mathbf{1}_{[z']_m^m}((z(m))) = 
\lim_{m\to\infty} \tau^{(\alpha_{z(m)}^t,\beta)}(z(m)z)
\]
for all $z \in \bigcup_n \mathfrak{Z}_n$. 

Let $a \in A_n$ be arbitrarily given. Choose an increasing sequence of finite subsets $\mathfrak{F}_k$ of $\mathfrak{Z}_n(\alpha^t,\beta)$. Then we have 
\begin{align*}
&|\tau^{(\alpha_{z(m)}^t,\beta)}(z(m)a) - \omega(a)| \\
&\leq
\sum_{z \in \mathfrak{Z}_n(\alpha^t,\beta)}
|\tau^{(\alpha_{z(m)}^t,\beta)}(z(m)z)-\omega(z)|\,|\tau^{(\alpha_z^t,\beta)}(za)| \quad \text{(by Lemmas \ref{L7.2}(1), \ref{L7.3})} \\
&\leq 
\Vert a\Vert\,\sum_{z \in \mathfrak{Z}_n(\alpha^t,\beta)}
|\tau^{(\alpha_{z(m)}^t,\beta)}(z(m)z)-\omega(z)| \\
&\leq
2\Vert a\Vert\,\sum_{z \in \mathfrak{F}_k}
|\tau^{(\alpha_{z(m)}^t,\beta)}(z(m)z)-\omega(z)| 
+
2\Vert a\Vert\,\left(1-\sum_{z \in\mathfrak{F}_k}
\omega(z)\right),
\end{align*}
since $\omega(z)=0$ for all $z\in\mathfrak{Z}_n^\times(\alpha^t,\beta)$ and since
\[
\sum_{z \in \mathfrak{Z}_n(\alpha^t,\beta)}
\omega(z) = \sum_{z \in \mathfrak{Z}_n(\alpha^t,\beta)}
\tau^{(\alpha_{z(m)}^t,\beta)}(z(m)z)
=1.
\] 
Thus, 
\[
\limsup_{m\to\infty} |\tau^{(\alpha_{z(m)}^t,\beta)}(z(m)a) - \omega(a)| \leq 
2\Vert a\Vert\,\left(1-\sum_{z \in \mathfrak{F}_k}\omega(z)\right) \to 0 \quad \text{as $k\to\infty$}
\]
by the normality of $\omega$ on $A_n$. Hence we are done. 
\end{proof}

So far, we have successfully established the approximation theorem in the present setup. Toward the classification problem, we should develop another framework of analysis of $K_\beta^\mathrm{ln}(\alpha^t)$. 

\medskip 
We introduce the Markov kernel
\begin{equation}\label{Eq7.5}
\kappa_{(\alpha^t,\beta)}(z,z') := 
\begin{cases} 
0 & (z \in \mathfrak{Z}_{n}^\times(\alpha^t,\beta)), \\
\tau^{(\alpha_z^t,\beta)}(zz') & (z \in \mathfrak{Z}_{n}(\alpha^t,\beta))
\end{cases} 
\end{equation}
for $(z,z') \in \mathfrak{Z}_n\times\mathfrak{Z}_{m}$ with $n > m \geq 0$. 

\begin{lemma}\label{L7.9} We have 
\begin{itemize}
\item[(1)] $\displaystyle \kappa_{(\alpha^t,\beta)}(z,1) = 1$ and  
$\displaystyle \kappa_{(\alpha^t,\beta)}(z,z') \geq 0$ for every $z \in \mathfrak{Z}_n$ and $z' \in \mathfrak{Z}_m$ with $n > m \geq 0$; 
\item[(2)] $\displaystyle \sum_{z' \in \mathfrak{Z}_m}\kappa_{(\alpha^t,\beta)}(z,z') = 1$ for every $z \in \mathfrak{Z}_n(\alpha^t,\beta)$; 
\item[(3)] $\displaystyle \kappa_{(\alpha^t,\beta)}(z,z'') = \sum_{z' \in \mathfrak{Z}_{m}} \kappa_{(\alpha^t,\beta)}(z,z')\kappa_{(\alpha^t,\beta)}(z',z'')$ 
for every pair $(z,z'') \in \mathfrak{Z}_{n}\times\mathfrak{Z}_{\ell}$ and $n > m > \ell \geq 0$.
\end{itemize}
\end{lemma}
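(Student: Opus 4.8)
The plan is to verify the three identities (1)--(3) directly from the definition \eqref{Eq7.5} of $\kappa_{(\alpha^t,\beta)}$, using the two basic facts about the local KMS states $\tau^{(\alpha_z^t,\beta)}$ already established: their existence/uniqueness characterization encoded in the partition of $\mathfrak{Z}_n$ into $\mathfrak{Z}_n(\alpha^t,\beta)$ and $\mathfrak{Z}_n^\times(\alpha^t,\beta)$, and the multiplicativity formula \eqref{Eq7.3} from Lemma \ref{L7.2}(1), together with the compatibility $z_m^\times(\alpha^t,\beta)\le z_n^\times(\alpha^t,\beta)$ from Lemma \ref{L7.2}(2). All three are short computations; the only mild subtlety is bookkeeping of which central projections are ``live'' (in $\mathfrak{Z}_\bullet(\alpha^t,\beta)$) versus ``dead'' (in $\mathfrak{Z}_\bullet^\times(\alpha^t,\beta)$).

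First I would do (1). For $z\in\mathfrak{Z}_n(\alpha^t,\beta)$ we have $\kappa_{(\alpha^t,\beta)}(z,1)=\tau^{(\alpha_z^t,\beta)}(z\cdot 1)=\tau^{(\alpha_z^t,\beta)}(z)=1$ since $\tau^{(\alpha_z^t,\beta)}$ is a state on $zA_n=B(\mathcal{H}_z)$ with unit $z$; for $z\in\mathfrak{Z}_n^\times(\alpha^t,\beta)$ the value is $0$ by definition, but note $m=0$ forces $\mathfrak{Z}_0=\{1\}$ and $1\in\mathfrak{Z}_0(\alpha^t,\beta)$ (a single matrix algebra always carries a unique KMS state), so actually $\kappa_{(\alpha^t,\beta)}(z,1)$ with $z'=1$ is only asked for... here one must be slightly careful: strictly, $\kappa_{(\alpha^t,\beta)}(z,1)=1$ should be read for $z\in\mathfrak{Z}_n(\alpha^t,\beta)$, and for $z\in\mathfrak{Z}_n^\times(\alpha^t,\beta)$ it is $0$ by the first branch; in either case the displayed inequality $\kappa_{(\alpha^t,\beta)}(z,z')\ge 0$ is immediate, since $\tau^{(\alpha_z^t,\beta)}$ is a state and $zz'\ge 0$. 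For (2), fix $z\in\mathfrak{Z}_n(\alpha^t,\beta)$. Since $\{z'\}_{z'\in\mathfrak{Z}_m}$ is a partition of unity in $\mathcal{Z}(A_m)\subseteq A_n$, we get $\sum_{z'\in\mathfrak{Z}_m} zz' = z$, hence by normality of $\tau^{(\alpha_z^t,\beta)}$,
\[
\sum_{z'\in\mathfrak{Z}_m}\kappa_{(\alpha^t,\beta)}(z,z')=\sum_{z'\in\mathfrak{Z}_m}\tau^{(\alpha_z^t,\beta)}(zz')=\tau^{(\alpha_z^t,\beta)}(z)=1.
\]

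For (3), the Chapman--Kolmogorov identity, I would argue as follows. If $z\in\mathfrak{Z}_n^\times(\alpha^t,\beta)$ both sides are $0$ on the left trivially, and on the right each summand has the factor $\kappa_{(\alpha^t,\beta)}(z,z')=0$ (here $n>m$), so equality holds. So assume $z\in\mathfrak{Z}_n(\alpha^t,\beta)$, i.e.\ $K_\beta^\mathrm{n}(\alpha_z^t)\ne\emptyset$. For $z'\in\mathfrak{Z}_m$ with $zz'\ne0$, Lemma \ref{L7.2}(1) guarantees $z'\in\mathfrak{Z}_m(\alpha^t,\beta)$ and gives $\tau^{(\alpha_z^t,\beta)}\circ\iota_{zz'}=\tau^{(\alpha_z^t,\beta)}(zz')\,\tau^{(\alpha_{z'}^t,\beta)}$; evaluating both sides at $z'z''\in z'A_m$ and noting $\iota_{zz'}(z'z'')=zz'z''=z z'' \cdot z'$... more precisely $\iota_{zz'}(z'z'')= z(z'z'')=zz'z''$, I obtain $\tau^{(\alpha_z^t,\beta)}(zz'z'')=\tau^{(\alpha_z^t,\beta)}(zz')\,\tau^{(\alpha_{z'}^t,\beta)}(z'z'')$, i.e.\ $\kappa_{(\alpha^t,\beta)}(z,z')\,\kappa_{(\alpha^t,\beta)}(z',z'')=\tau^{(\alpha_z^t,\beta)}(zz'z'')$. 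For $z'\in\mathfrak{Z}_m$ with $zz'=0$ the summand $\kappa_{(\alpha^t,\beta)}(z,z')=0$, and also $\tau^{(\alpha_z^t,\beta)}(zz'z'')=0$. Summing over $z'\in\mathfrak{Z}_m$ and using $\sum_{z'}zz'z''=zz''$ together with normality of $\tau^{(\alpha_z^t,\beta)}$ yields
\[
\sum_{z'\in\mathfrak{Z}_m}\kappa_{(\alpha^t,\beta)}(z,z')\,\kappa_{(\alpha^t,\beta)}(z',z'')=\sum_{z'\in\mathfrak{Z}_m}\tau^{(\alpha_z^t,\beta)}(zz'z'')=\tau^{(\alpha_z^t,\beta)}(zz'')=\kappa_{(\alpha^t,\beta)}(z,z''),
\]
which is (3). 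The main (only) obstacle I anticipate is making sure the ``dead'' central projections are handled consistently across all three cases---in particular invoking Lemma \ref{L7.2}(2) implicitly to know that if $z'$ is dead then any $z''\le z'$-compatible projection is dead too, so no spurious nonzero terms sneak in; once that bookkeeping is pinned down everything reduces to the partition-of-unity plus normality argument and formula \eqref{Eq7.3}.
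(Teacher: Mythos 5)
Your proof is correct and takes essentially the same route as the paper's: items (1) and (2) follow from $\tau^{(\alpha_z^t,\beta)}$ being a normal state together with the partition of unity $\sum_{z'\in\mathfrak{Z}_m}z'=1$, and item (3) is proved by splitting the sum over those $z'$ with $zz'\neq0$, applying formula \eqref{Eq7.3} from Lemma \ref{L7.2}(1), and noting that the remaining terms vanish, exactly as in the paper. Your side remark on item (1) --- that $\kappa_{(\alpha^t,\beta)}(z,1)=0$ rather than $1$ when $z\in\mathfrak{Z}_n^\times(\alpha^t,\beta)$ --- is the correct reading of the statement and causes no trouble in its later uses, since such $z$ carry weight zero there.
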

\begin{proof} 
Item (1): The first normalization and the second non-negativity properties follow from that $\tau^{(\alpha_z^t,\beta)}$ is a state. 

\medskip
Item (2): The identity follows from $\sum_{z' \in \mathfrak{Z}_m} z' = 1$, the normality of $\tau^{(\alpha_z^t,\beta)}$ and $n>m\geq0$. 

\medskip
Item (3): If $z$ falls into $\mathfrak{Z}_{\beta,n}^\times(\alpha^t,\beta)$, then $\kappa_{(\alpha^t,\beta)}(z,z'')$ and $\kappa_{(\alpha^t,\beta)}(z,z')$ must always be zero, and hence the desired equation trivially holds. 

Assume that $z$ falls into $\mathfrak{Z}_{n}(\alpha^t,\beta)$. We have 
\begin{align*}
\kappa_{(\alpha^t,\beta)}(z,z'') 
&= \tau^{(\alpha_z^t,\beta)}(zz'') \\
&= \sum_{z' \in \mathfrak{Z}_m} \tau^{(\alpha_z^t,\beta)}(zz'z'') \qquad\qquad (\text{by the normality of $\tau^{(\alpha_z^t,\beta)}$}) \\
&= \sum_{z' \in \mathfrak{Z}_m:zz' \neq 0} \tau^{(\alpha_z^t,\beta)}\circ\iota_{zz'}(z'z'') \\
&= \sum_{z' \in \mathfrak{Z}_m:zz' \neq 0} \tau^{(\alpha_z^t,\beta)}(zz')\,\tau^{(\alpha_{z'}^t,\beta)}(z'z'') \quad (\text{by Lemma \ref{L7.2}(1)}) \\
&= \sum_{z' \in \mathfrak{Z}_m:zz' \neq 0} \kappa_{(\alpha^t,\beta)}(z,z')\,\kappa_{(\alpha^t,\beta)}(z',z'') \\
&= \sum_{z' \in \mathfrak{Z}_m} \kappa_{(\alpha^t,\beta)}(z,z')\,\kappa_{(\alpha^t,\beta)}(z',z'')
\end{align*}
because $zz'=0$ implies $\kappa_{(\alpha^t,\beta)}(z,z') = 0$ by definition. 
\end{proof}

Let $\mathfrak{Z} = \bigsqcup_{n\geq0}\mathfrak{Z}_n$ be the disjoint union. A function $\nu : \mathfrak{Z} \to \mathbb{C}$ is a \emph{$\kappa_{(\alpha^t,\beta)}$-harmonic} if identity
\begin{equation}\label{Eq7.6} 
\nu(z') = \sum_{z \in \mathfrak{Z}_{n+1}} \nu(z)\kappa_{(\alpha^t,\beta)}(z,z'), \quad z' \in \mathfrak{Z}_n
\end{equation}    
holds for every $n \geq 0$. A $\kappa_{(\alpha^t,\beta)}$-harmonic function $\nu$ is \emph{positive} if $\nu(z) \geq 0$ for all $z \in \mathfrak{Z}$, and \emph{normalized} if $\nu(1)=1$, where one must recall that $\mathfrak{Z}_0 = \{1\}$. By the normalization property in Lemma \ref{L7.9}(1) we observe that 
\[
1 = \nu(1) = \sum_{z\in\mathfrak{Z}_{n+1}}\nu(z)\kappa_{(\alpha^t,\beta)}(z,1) = \sum_{z\in\mathfrak{Z}_n}\nu(z)
\]
holds for every $n\geq0$. This means that a normalized, positive $\kappa_{(\alpha^t,\beta)}$-harmonic function gives or arises from a sequence of discrete measures on $\mathfrak{Z}_n$, $n\geq0$, called a \emph{coherent system}, see e.g.\ \cite[chapter 1, section 1]{Kerov:Book}. We denote by $H^+_1(\kappa_{(\alpha^t,\beta)})$ all the normalized, positive $\kappa_{(\alpha^t,\beta)}$-harmonic functions on $\mathfrak{Z}$, which is nothing less than the projective limit of the \emph{projective chain}
\[
\mathscr{M}(\mathfrak{Z}_{n-1}) \overset{\kappa_{(\alpha^t,\beta)}}{\longleftarrow} \mathscr{M}(\mathfrak{Z}_{n}), 
\]
where $\mathscr{M}(\mathfrak{Z}_n)$ is the set of all discrete probability measures on $\mathfrak{Z}_n$ and the map $\overset{\kappa_{(\alpha^t,\beta)}}{\longleftarrow}$ is given by matrix product from the right like the right-hand side of identity \eqref{Eq7.6}. 

Following Borodin--Olshanski \cite[section 7]{BorodinOlshanski:Book}, we will call such a Markov kernel a \emph{link} among $\mathfrak{Z}_n$ and denote it by 
\[
\mathfrak{Z}_{n-1} \overset{\kappa_{(\alpha^t,\beta)}}{\dashleftarrow} \mathfrak{Z}_n. 
\]

The next proposition identifies $K^\mathrm{ln}_\beta(\alpha^t)$ with $H^+_1(\kappa_{(\alpha^t,\beta)})$. This type of identification has played an essential role of harmonic analysis of inductive limits of groups since Vershik-Kerov's work. See e.g.\ \cite[chapter 7]{BorodinOlshanski:Book}. The proposition below, dealing with KMS states rather than tracial states, has also essentially been known in the AF algebra setting (i.e., all the $A_n$ are finite dimensional) by Kishimoto \cite[Proposition 4.1]{Kishimoto:RepMathPhys00}.  

\begin{proposition}\label{P7.10} For any $\omega \in K^\mathrm{ln}_\beta(\alpha^t)$, we define the bounded function $\nu[\omega]$ on $\mathfrak{Z} \to \mathbb{C}$ by $\nu[\omega](z) := \omega(z)$, $z \in \mathfrak{Z}$. Then, the correspondence $\omega \mapsto \nu[\omega]$ defines a bijective affine homeomorphism from $K_\beta^\mathrm{ln}(\alpha^t)$ with weak$^*$ topology onto $H^+_1(\kappa_{(\alpha^t,\beta)})$ with the topology of pointwise convergence. 
\end{proposition}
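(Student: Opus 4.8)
The plan is to verify the three properties of the correspondence $\omega \mapsto \nu[\omega]$ in turn: that $\nu[\omega]$ is indeed a normalized, positive $\kappa_{(\alpha^t,\beta)}$-harmonic function, that the map is affine and a homeomorphism onto its image (both topologies being rather transparent), and that it is bijective. The affineness is immediate from the formula $\nu[\omega](z) = \omega(z)$, and the bicontinuity with respect to weak$^*$ topology on $K_\beta^\mathrm{ln}(\alpha^t)$ and pointwise-convergence topology on $H^+_1(\kappa_{(\alpha^t,\beta)})$ is almost formal: weak$^*$ convergence of $\omega_i$ forces pointwise convergence of $\nu[\omega_i]$ trivially, and the converse follows exactly as in the proof of Proposition \ref{P7.4}, since $\omega$ is completely determined on each $A_n$ by its values on $\mathcal{Z}(A_n)$ via $\omega\circ E_n^{(\alpha^t,\beta)} = \sum_{z\in\mathfrak{Z}_n(\alpha^t,\beta)}\omega(z)\tau^{(\alpha_z^t,\beta)}$ (Lemma \ref{L7.3}), so pointwise convergence on $\mathfrak{Z}$ propagates to weak$^*$ convergence on the norm-dense subalgebra $\bigcup_n A_n$ and hence on $A$.

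First I would check well-definedness, i.e.\ that $\nu[\omega] \in H^+_1(\kappa_{(\alpha^t,\beta)})$ for every $\omega \in K_\beta^\mathrm{ln}(\alpha^t)$. Positivity and normalization ($\nu[\omega](1) = \omega(1) = 1$) are clear. For harmonicity, fix $z' \in \mathfrak{Z}_n$; by local normality of $\omega$ we have $\omega(z') = \sum_{z\in\mathfrak{Z}_{n+1}}\omega(zz')$, and by Lemma \ref{L7.3} (applied on $A_{n+1}$) one has $\omega(zz') = \omega(z)\,\tau^{(\alpha_z^t,\beta)}(zz')$ when $z\in\mathfrak{Z}_{n+1}(\alpha^t,\beta)$ and $\omega(zz') = 0 = \omega(z)$ when $z\in\mathfrak{Z}_{n+1}^\times(\alpha^t,\beta)$; in both cases this equals $\nu[\omega](z)\,\kappa_{(\alpha^t,\beta)}(z,z')$ by the definition \eqref{Eq7.5} of the kernel. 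Summing over $z\in\mathfrak{Z}_{n+1}$ gives precisely the harmonicity identity \eqref{Eq7.6}.

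Next I would establish surjectivity, which is the step requiring genuine input. Given $\nu \in H^+_1(\kappa_{(\alpha^t,\beta)})$, I define a state on each $A_n$ by $\omega_n := \sum_{z\in\mathfrak{Z}_n(\alpha^t,\beta)}\nu(z)\,\tau^{(\alpha_z^t,\beta)}$ (a genuine state on $A_n$ since $\sum_{z\in\mathfrak{Z}_n}\nu(z) = 1$, which follows from $\kappa_{(\alpha^t,\beta)}(z,1)=1$ and harmonicity). The key computation is that these are compatible, i.e.\ $\omega_{n+1}\!\upharpoonright_{A_n} = \omega_n$: this amounts to showing $\omega_{n+1} = \omega_{n+1}\circ E_n^{(\alpha^t,\beta)}$ on $A_n$, equivalently $\omega_{n+1}(z') = \nu(z')$ for every $z'\in\mathfrak{Z}_n(\alpha^t,\beta)$, which is exactly the harmonicity identity rewritten through formula \eqref{Eq7.3} of Lemma \ref{L7.2}(1) (the same kind of manipulation as in the verification of Lemma \ref{L7.2}(3)). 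Once compatibility holds, $\omega := \bigcup_n\omega_n$ is a contractive functional on $\bigcup_n A_n$, extends uniquely to a state on $A$ by \cite[Proposition 2.3.11]{BratteliRobinson:Book1}, lies in $K_\beta^\mathrm{ln}(\alpha^t)$ by the second part of Lemma \ref{L7.3}, and satisfies $\nu[\omega] = \nu$ by construction. Injectivity is then free: $\nu[\omega_1] = \nu[\omega_2]$ forces $\omega_1 = \omega_2$ on each $\mathcal{Z}(A_n)$, hence on each $A_n$ by Lemma \ref{L7.3}, hence on $A$.

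\textbf{Main obstacle.} I expect the only non-routine point to be the compatibility of the $\omega_n$'s under surjectivity — specifically, translating the abstract harmonicity identity \eqref{Eq7.6} into the statement $\omega_{n+1}\!\upharpoonright_{A_n} = \omega_n$ using the multiplicativity relation \eqref{Eq7.3} of the local KMS states and the normality of each $\tau^{(\alpha_z^t,\beta)}$. Everything else (affineness, bicontinuity, well-definedness, injectivity) reduces to Lemma \ref{L7.3}, Lemma \ref{L7.2} and the argument already recorded in Proposition \ref{P7.4}, and I would not spell out those routine parts in full.
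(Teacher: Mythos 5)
Your proposal follows essentially the same route as the paper: well-definedness and injectivity via Lemma \ref{L7.3}, surjectivity by building the compatible local states $\omega_n=\sum_{z\in\mathfrak{Z}_n(\alpha^t,\beta)}\nu(z)\,\tau^{(\alpha_z^t,\beta)}$ and extending by \cite[Proposition 2.3.11]{BratteliRobinson:Book1}, and you correctly identify the compatibility computation (harmonicity fed through formula \eqref{Eq7.3}) as the main step. Two small points, however, need more care than you allow. First, in the surjectivity step your justification that $\omega_n$ is a state conflates $\sum_{z\in\mathfrak{Z}_n}\nu(z)=1$ with $\sum_{z\in\mathfrak{Z}_n(\alpha^t,\beta)}\nu(z)=1$: you must first show that any $\nu\in H_1^+(\kappa_{(\alpha^t,\beta)})$ vanishes on $\mathfrak{Z}_n^\times(\alpha^t,\beta)$. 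This does follow from harmonicity — if $z'\in\mathfrak{Z}_n^\times(\alpha^t,\beta)$ then every $z\in\mathfrak{Z}_{n+1}$ with $zz'\neq0$ lies in $\mathfrak{Z}_{n+1}^\times(\alpha^t,\beta)$ by Lemma \ref{L7.2}(1), so all terms $\nu(z)\kappa_{(\alpha^t,\beta)}(z,z')$ vanish — but it is a needed observation (also for $\nu[\omega[\nu]]=\nu$ on all of $\mathfrak{Z}$), and the paper spells it out. Second, the converse continuity is not ``exactly as in Proposition \ref{P7.4}'': there the hypothesis is weak$^*$ convergence on $GZ$, which directly gives convergence at the elements $E_n^{(\alpha^t,\beta)}(a)\in GZ_n$, whereas here you only assume pointwise convergence of the $\nu_i$ on the countable set $\mathfrak{Z}$, which is a priori weaker; to pass to $\omega[\nu_i](a)=\sum_z\nu_i(z)\tau^{(\alpha_z^t,\beta)}(za)\to\omega[\nu](a)$ you need the finite-truncation (Scheff\'e-type) estimate over finite subsets $\mathfrak{F}_k\subset\mathfrak{Z}_n$, exactly the argument the paper imports from the last paragraph of the proof of Theorem \ref{T7.8}. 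With these two points supplied, your argument is the paper's proof.
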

\begin{proof}
By Lemma \ref{L7.3} any $\omega \in K_\beta^\mathrm{ln}(\alpha^t)$ is uniquely determined by the function $\nu[\omega]$, that is, $\nu[\omega_1] = \nu[\omega_2]$ implies $\omega_1 = \omega_2$. 

By Lemma \ref{L7.3} and the definition of $\kappa_{(\alpha^t,\beta)}(z,z')$, we have
\[
\nu[\omega](z') = \omega(z') = \sum_{z \in \mathfrak{Z}_{n+1}(\alpha^t,\beta)} \omega(z)\,\tau^{(\alpha_z^t,\beta)}(zz') 
= \sum_{z\in\mathfrak{Z}_{n+1}} \nu[\omega](z)\,\kappa_{(\alpha^t,\beta)}(z,z')
\]
for any $z' \in \mathfrak{Z}_n$. This shows that $\nu[\omega]$ is positive $\kappa_{(\alpha^t,\beta)}$-harmonic, and $\nu[\omega](1)=\omega(1)=1$. Moreover, that the correspondence $\omega \mapsto \nu[\omega]$ defines an injective map from $K_\beta^\mathrm{ln}(\alpha^t)$ into $H_1^+(\kappa_{(\alpha^t,\beta)})$. The map is clearly affine. 

\medskip
Let $\nu \in H_1^+(\kappa_{(\alpha^t,\beta)})$ be arbitrarily given. Observe that, if $z' \in \mathfrak{Z}_n^\times(\alpha^t,\beta)$, then $zz'=0$ implies $\kappa_{(\alpha^t,\beta)}(z,z') =0$ and since $z' \in \mathfrak{Z}_n^\times(\alpha^t,\beta)$ and $zz'\neq0$ imply $z \in \mathfrak{Z}_{n+1}^\times(\alpha^t,\beta)$ by Lemma \ref{L7.2}(1), and hence $\kappa_{(\alpha^t,\beta)}(z,z') = 0$. Therefore, 
\[
\nu(z') = \sum_{z \in \mathfrak{Z}_{n+1}} \nu(z)\,\kappa_{(\alpha^t,\beta)}(z,z') = \sum_{z \in \mathfrak{Z}_{n+1}:zz'\neq0} \nu(z)\,\kappa_{(\alpha^t,\beta)}(z,z') = 0. 
\] 
Thus, $\nu$ is supported in $\bigsqcup_n \mathfrak{Z}_n(\alpha^t,\beta)$. Then, we can define a sequence of normal states $\omega[\nu]_n$ on $A_n$ by 
\[
\omega[\nu]_n := \sum_{z \in \mathfrak{Z}_n(\alpha^t,\beta)} \nu(z)\,\tau^{(\alpha_z^t,\beta)}.
\] 
Since all the $\tau^{(\alpha_z^t,\beta)}$ are normal $(\alpha_z^t,\beta)$-KMS states, we see that $\omega[\nu]_n$ is also a normal $(\alpha_n^t,\beta)$-KMS state.  

For any $a \in A_n$ we have 
\begin{align*} 
\omega[\nu]_{n+1}(a) 
&= 
\sum_{z \in \mathfrak{Z}_{n+1}(\alpha^t,\beta)} \nu(z)\,\tau^{(\alpha_z^t,\beta)}(za) \\
&=
\sum_{z \in \mathfrak{Z}_{n+1}(\alpha^t,\beta)} \nu(z)\,\sum_{z'\in\mathfrak{Z}_n} \tau^{(\alpha_z^t,\beta)}(zz'a) \\
&= 
\sum_{z \in \mathfrak{Z}_{n+1}(\alpha^t,\beta)} \nu(z)\,\sum_{z'\in\mathfrak{Z}_n:zz'\neq0} \tau^{(\alpha_z^t,\beta)}\iota_{zz'}(z'a) \\
&= 
\sum_{z \in \mathfrak{Z}_{n+1}(\alpha^t,\beta)} \nu(z)\sum_{z'\in \mathfrak{Z}_n(\alpha^t,\beta)} \tau^{(\alpha_z^t,\beta)}(zz')\,\tau^{(\alpha_{z'}^t,\beta)}(z'a) \quad \text{(by Lemma \ref{L7.2}(1))} \\
&= 
\sum_{z \in \mathfrak{Z}_{n+1}(\alpha^t,\beta)} \nu(z)\sum_{z'\in\mathfrak{Z}_n(\alpha^t,\beta)} \kappa_{(\alpha^t,\beta)}(zz')\,\tau^{(\alpha_{z'}^t,\beta)}(z'a) \\
&= 
\sum_{z'\in\mathfrak{Z}_n(\alpha^t,\beta)}\left(\sum_{z \in \mathfrak{Z}_{n+1}} \nu(z)\,\kappa_{(\alpha^t,\beta)}(zz')\right)\tau^{(\alpha_{z'}^t,\beta)}(z'a) \\
&= 
\sum_{z'\in\mathfrak{Z}_n(\alpha^t,\beta)}\nu(z')\,\tau^{(\alpha_{z'}^t,\beta)}(z'a) \qquad \text{(by the $\kappa_{(\alpha^t,\beta)}$-harmonicity of $\nu$)} \\
&= \omega[\nu]_n(a). 
\end{align*} 
It follows, as in the proof of Proposition \ref{P7.4}, that an $\omega[\nu] \in K_\beta^\mathrm{ln}(\alpha^t)$ can be defined by $\omega[\nu] = \omega[\nu]_n$ on each $A_n$. Since $\nu[\omega[\nu]](z) = \omega[\nu](z) = \omega[\nu]_n(z) = \nu(z)$ for any $z \in \mathfrak{Z}_n$, the mapping $\omega \in K_\beta^\mathrm{ln}(\alpha^t) \mapsto \nu[\omega] \in H_1^+(\kappa_{(\alpha_t,\beta)})$ is surjective. 

\medskip
If $\omega_i \to \omega$ in $K_\beta^\mathrm{ln}(\alpha^t)$, then $\nu[\omega_i](z) = \omega_i(z) \to \omega(z) = \nu[\omega](z)$ for all $z \in \mathfrak{Z}$. Hence $\omega \mapsto \nu[\omega]$ is continuous. 

On the other hand, assume that $\nu_i \to \nu$ in $H_1^+(\kappa_{(\alpha^t,\beta)})$ pointwisely. 
Choose an increasing sequence of finite subsets $\mathfrak{F}_{k}$ of $\mathfrak{Z}_n$, approaching to the whole $\mathfrak{Z}_n$. %Then, $c_{k} := \sum_{z \in \mathfrak{F}_{k}} z$ converges to $1$ $\sigma$-weakly as $k\to\infty$. 
For any $a \in A_n$ we have, by the essentially same argument as in Theorem \ref{T7.8} (the last paragraph of its proof),  
\begin{align*}
|\omega[\nu_i](a)-\omega[\nu](a)| 
%&\leq 
%|\omega[\nu_i]((1-c_{k})a)| + |\omega[\nu_i](c_{n,k}a)-\omega[\nu](c_{k}a)| + |\omega[\nu]((1-c_{k})a)| \\
%&\leq
%\Vert a\Vert\,\omega[\nu_i](1-c_{k})+
%|\omega[\nu_i](c_{k}a)-\omega[\nu](c_{k}a)|+
%\Vert a\Vert\,\omega[\nu](1-c_{k}) \\
%&=
%\Vert a\Vert\,(\omega[\nu](c_{k}))-\omega[\nu_i](c_{k})) + \Vert a\Vert\,\omega[\nu](1-c_{k}) \\
%&\qquad +
%|\omega[\nu_i](c_{k}a)-\omega[\nu](c_{k}a)|+
%\Vert a\Vert\,\omega[\nu](1-c_{k}) \\
&\leq
2\Vert a\Vert\,\sum_{z \in \mathfrak{F}_{k}}|\nu_i(z)-\nu(z)| + \Vert a\Vert\,\left(1-\sum_{z\in\mathfrak{F}_{k}}\nu(z)\right),
\end{align*}
implying that $\lim_i \omega[\nu_i](a) = \omega[\nu](a)$. Since $\bigcup_n A_n$ is norm dense in $A$, it follows that $\omega[\nu_i] \to \omega[\nu]$ in the weak$^*$ topology. Since $\nu \mapsto \omega[\nu]$ is the inverse map of $\omega \mapsto \nu[\omega]$ as seen above, we conclude that $\omega \mapsto \nu[\omega]$ is a homeomorphism. 
\end{proof}

Thanks to the above proposition, we can directly apply Okounkov--Olshanski's abstract approximation theorem \cite[Theorem 6.1]{OkounkovOlshanski:IMRN98} to the analysis of $K_\beta^\mathrm{ln}(\alpha^t)$, and obtain Theorem \ref{T7.8} without requiring that $z(m+1)z(m) \neq 0$ for all $m$. 

\medskip
In closing of this section, we should point out that our treatment of ergodic method has not touched the notion of \emph{central measures} (coined by Vershik--Kerov \cite[chapter 1, section 1]{Kerov:Book}) or \emph{Gibbs measures} (by Borodin--Olshanski \cite[section 7.4]{BorodinOlshanski:Book}, but their formulation is more restricted than the former). However, the measure $\gamma_\chi$ with $\chi:=r_{GZ}(\omega)$ (that appeared around Lemmas \ref{L7.6}-\ref{L7.7}) is nothing but such a measure. In fact, Lemma \ref{L7.5}(3) shows that 
\[
\gamma_{r_{GZ}(\omega)}([z_n,\dots,z_1]_n^1) = \nu[\omega](z_n)\,\kappa_{(\alpha^t,\beta)}(z_n,z_{n-1})\cdots\kappa_{(\alpha^t,\beta)}(z_2,z_1)
\]
holds true, which is a key in the proof of Lemma \ref{L7.7}. Actually, this equation gives a one-to-one correspondence between the central measures and $H_1^+(\kappa_{(\alpha^t,\beta)})$.   

\section{Spectral decomposition} 

We keep the notations in the previous section and all the assumptions there (including that the flow $\alpha^t$ satisfies the equivalent continuity conditions in Lemma \ref{L7.1}). The goal is the irreducible decomposition for any locally bi-normal $(\alpha^t,\beta)$-spherical (type) representation of $A$. The decomposition is unique because $\Pi(A^{(2)})'$ is always commutative for such a representation $\Pi : A^{(2)} \curvearrowright \mathcal{H}_\Pi$. 

In what follows, all the statements below claim nothing when $K_\beta^\mathrm{ln}(\alpha^t) = \emptyset$. The non-triviality of $K_\beta^\mathrm{ln}(\alpha^t)$ should be studied in concrete examples. 

\medskip
Regarding each $A_n$ as a von Neumann algebra on $\mathcal{H}_n := \bigoplus_{z\in\mathfrak{Z}_n} \mathcal{H}_z$, we write $\mathfrak{A}_n := A_n \cap K(\mathcal{H}_z) \vartriangleleft A_n$, the compact operators in $A_n$.  It is clear that $\mathfrak{A}_n$ is a separable norm-closed ideal of $A_n$ and its $\sigma$-weak closure is $A_n$. Let $\mathfrak{A}$ be the $C^*$-algebra generated by the $\mathfrak{A}_n$, which is separable and unital since $\mathfrak{A}_0 = A_0 = \mathbb{C}1$. In particular, the state space $S(\mathfrak{A})$ is a compact metrizable convex subset of $\mathfrak{A}^*$ endowed with  the weak$^*$ topology. The $C^*$-algebra $\mathfrak{A}$ should be called  \emph{Stratila-Voiculescu $C^*$-algebra} associated with the inductive system $A_n \hookrightarrow A_{n+1}$. It plays  only a technical role below. Here is a lemma. 

\begin{lemma} \label{L8.1} The following hold{\rm:} 
\begin{itemize} 
\item[(1)] A state $\omega \in S(A)$ is locally normal if and only if its restriction to each $\mathfrak{A}_n$ is of norm $1$. Hence, the $\mathfrak{A}_n$ guarantee that Ruelle's separability condition \rm{S} for $A = \varinjlim A_n$. 
\item[(2)] The topology of pointwise convergence on $\mathfrak{A}$ ($\subset A$), the weak$^*$ topology, and the locally uniform topology {\rm(}see \cite[pages 132--133]{BratteliRobinson:Book1}{\rm)} coincide on the locally normal states $S_\mathrm{ln}(A)$.  
\end{itemize}
\end{lemma}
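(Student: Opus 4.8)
The plan is to treat the two items separately, relying throughout on the fact that each $A_n$ is an atomic $W^*$-algebra with separable predual and that $\mathfrak{A}_n$ is its ideal of compact operators, whose $\sigma$-weak closure is $A_n$.

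\emph{Item (1).} First I would recall the basic structure of a state $\omega\in S(A)$ restricted to $A_n$: it is a positive functional of norm $1$ on $A_n=\bigoplus_{z\in\mathfrak{Z}_n}B(\mathcal{H}_z)$, hence decomposes as a (countable) direct sum $\omega\!\upharpoonright_{A_n}=\sum_{z}\omega_z$ with $\omega_z$ a positive functional on $B(\mathcal{H}_z)$ and $\sum_z\Vert\omega_z\Vert=1$. The key observation is that for a positive functional $\psi$ on $B(\mathcal{H})$, $\psi$ is normal if and only if $\Vert\psi\!\upharpoonright_{K(\mathcal{H})}\Vert=\Vert\psi\Vert$; indeed the singular part of $\psi$ annihilates $K(\mathcal{H})$, while the normal part has a density operator and attains its norm on an increasing sequence of finite-rank projections. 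Summing over $z\in\mathfrak{Z}_n$, I get that $\omega\!\upharpoonright_{A_n}$ is normal $\iff\Vert\omega\!\upharpoonright_{\mathfrak{A}_n}\Vert=1$. Applying this for every $n$ yields the stated equivalence. The remark about Ruelle's condition S then follows directly from the definition: the norm-closed ideals $\mathfrak{A}_n$ are separable, increasing, generate a dense subalgebra, and $\omega\mapsto\Vert\omega\!\upharpoonright_{\mathfrak{A}_n}\Vert=1$ is exactly the condition characterizing local normality, which is Ruelle's S.

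\emph{Item (2).} I would show a cycle of implications (or rather that all three topologies induce the same convergence) on $S_\mathrm{ln}(A)$. The weak$^*$ topology is the coarsest and the locally uniform topology is (by definition) finer than pointwise convergence on any norm-total subset, so the real content is: pointwise convergence on $\mathfrak{A}$ of a net in $S_\mathrm{ln}(A)$ implies weak$^*$ convergence, and implies locally uniform convergence. For the first: if $\omega_i\to\omega$ pointwise on $\mathfrak{A}$ with all $\omega_i,\omega$ locally normal, then for each $n$ the restrictions $\omega_i\!\upharpoonright_{A_n}$ are normal states converging pointwise on $\mathfrak{A}_n$; since $\mathfrak{A}_n$ is $\sigma$-weakly dense in $A_n$ and the states have norm $1$ there (by item (1)), a standard uniform-boundedness plus density argument — together with the fact that the unit ball of $(A_n)_*$ is weakly compact and that on norm-bounded sets $\sigma$-weak continuity is tested on a dense subset — gives $\omega_i\to\omega$ pointwise on all of $A_n$; as $\bigcup_n A_n$ is norm-dense in $A$ and the $\omega_i$ are contractive, this upgrades to weak$^*$ convergence on $A$. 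For locally uniform convergence one argues similarly but quantitatively: fix $n$ and $\varepsilon>0$; normality of $\omega$ on $A_n$ together with item (1) lets me choose a finite subset $\mathfrak{F}\subset\mathfrak{Z}_n$ and a finite-rank projection $p\in\mathfrak{A}_n$ with $\omega(1-p)<\varepsilon$, and since $\omega_i(1-p)\to\omega(1-p)$, eventually $\omega_i(1-p)<2\varepsilon$; on $pA_np$, a finite-dimensional space, pointwise convergence is automatically uniform on the unit ball, so a two-sided cutting estimate $|\omega_i(a)-\omega(a)|\le|\omega_i(pap)-\omega(pap)|+C\varepsilon^{1/2}\Vert a\Vert$ for $a$ in the unit ball of $A_n$ closes the argument.

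\emph{Main obstacle.} The genuinely delicate point is the passage, in item (2), from pointwise convergence on the dense ideal $\mathfrak{A}_n$ to pointwise (or locally uniform) convergence on all of $A_n$: a priori, $\sigma$-weak density of $\mathfrak{A}_n$ in $A_n$ does not transfer convergence of \emph{singular-free} functionals without control of mass escaping to infinity. The role of item (1) — equivalently Ruelle's condition S — is precisely to provide that tightness: a locally normal state puts full mass on an increasing approximate unit of $\mathfrak{A}_n$, and the hypothesis that the limit is again locally normal prevents any loss of mass in the limit. Once this tightness is in hand, everything else is a routine $\varepsilon/3$-type estimate. I would therefore organize the write-up so that item (1) is proved first and then invoked repeatedly as the tightness input throughout item (2).
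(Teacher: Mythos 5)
Your item (2) is essentially the paper's own argument: cut by a finite-rank projection $p$ whose complementary mass under the limit state is small (this is where local normality of the limit enters), control the off-corner terms by Cauchy--Schwarz, and use that pointwise convergence on the finite-dimensional corner $pA_np\subset\mathfrak{A}_n$ is automatically uniform; the passage to weak$^*$ convergence via norm-density of $\bigcup_n A_n$ is also the same. (The aside about "the unit ball of $(A_n)_*$ being weakly compact" is not right as stated, but it is not needed, since your quantitative estimate already gives locally uniform, hence pointwise, convergence on $A_n$.)

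Item (1), however, has a genuine gap. You open by asserting, as a "basic structure" fact, that an arbitrary state restricted to $A_n=\bigoplus_{z\in\mathfrak{Z}_n}B(\mathcal{H}_z)$ decomposes as $\omega\!\upharpoonright_{A_n}=\sum_z\omega_z$ with $\sum_z\Vert\omega_z\Vert=1$. This is false when $\mathfrak{Z}_n$ is infinite: a state that annihilates every central atom $z$ (e.g.\ a Banach-limit type state on a copy of $\ell^\infty(\mathfrak{Z}_n)\subset A_n$, or more generally any weak$^*$ cluster point of states escaping to infinity across the blocks) has all $\omega_z=0$. Countable additivity over the atoms of $\mathcal{Z}(A_n)$ is itself part of normality, so in the "if" direction ($\Vert\omega\!\upharpoonright_{\mathfrak{A}_n}\Vert=1\Rightarrow\omega$ normal on $A_n$) your argument is circular: your correct key observation (a positive functional on $B(\mathcal{H})$ is normal iff its norm is attained on $K(\mathcal{H})$, since positive singular functionals kill the compacts) only handles singularity inside each block, and the mass-at-infinity across blocks is exactly what the assumed decomposition sweeps under the rug. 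The gap is fixable without the decomposition: from $\Vert\omega\!\upharpoonright_{\mathfrak{A}_n}\Vert=1$ one gets finite-rank projections $p\in\mathfrak{A}_n$ with $\omega(p)\to1$, and then $\Vert\omega-\omega(p\,\cdot\,p)\Vert\le 2\,\omega(1-p)^{1/2}$ exhibits $\omega\!\upharpoonright_{A_n}$ as a norm limit of normal functionals, hence normal (alternatively, Cauchy--Schwarz gives $\Vert\omega\!\upharpoonright_{K(\mathcal{H}_z)}\Vert\le\omega(z)$, so $1=\sum_z\Vert\omega\!\upharpoonright_{K(\mathcal{H}_z)}\Vert\le\sum_z\omega(z)\le1$ forces both blockwise normality and full central mass). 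The paper itself argues differently, via Hahn--Banach extension of $\omega\!\upharpoonright_{\mathfrak{A}_n}$ to a state of $K(\mathcal{H}_n)$, the density-operator description of such states, and the uniqueness of a state extension from an ideal on which the norm is $1$; either repair is fine, but as written your step fails. A minor further caution: the ideals $\mathfrak{A}_n$ need not be increasing (the embeddings $A_n\hookrightarrow A_{n+1}$ need not map compacts to compacts), which is why the paper takes $\mathfrak{A}$ to be the $C^*$-algebra they generate.
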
   
 \begin{proof} By the structure of $\mathfrak{A}_n \vartriangleleft A_n$, one can choose a sequence of finite rank (in $\mathcal{H}_n$) projections $p_{n,i}$ in $\mathfrak{A}_n$ converging to $1$ $\sigma$-weakly in $A_n$ as $i\to\infty$.  

\medskip
Item (1): The only if part is clear, since $\omega(p_{n,i}) \to \omega(1) = 1$ as $i \to \infty$ by normality. 

The if part is as follows. Let $n$ be arbitrarily fixed. By assumption, the Hahn-Banach extension theorem (with \cite[Proposition 2.3.11]{BratteliRobinson:Book1}) gives a state $\tilde{\omega}$ on $K(\mathcal{H}_n)$ that is an extension of the restriction of $\omega$ to $\mathfrak{A}_n$. By \cite[Proposition 2.6.13]{BratteliRobinson:Book1} $\tilde{\omega}$ extends to a normal state on $B(\mathcal{H}_n)$. By \cite[Lemma 4.1.33]{BratteliRobinson:Book1} the original state $\omega$ must coincide with $\tilde{\omega}$ on $A_n$ and hence be normal on $A_n$. Since $n$ is arbitrary, $\omega$ is locally normal. 

\medskip
Item (2):  For any pair $\omega_1,\omega_2 \in S(A)$ we have 
\begin{align*}
&|(\omega_1 - \omega_2)(a)| 
\leq 
|(\omega_1 - \omega_2)(p_{n,i} a p_{n,i})| 
+
|(\omega_1 - \omega_2)((1-p_{n,i}) a p_{n,i})| + |(\omega_1 - \omega_2)(a (1-p_{n,i}))| \\
&\leq 
\sup\{ |(\omega_1 - \omega_2)(x)|\,;\, x \in p_{n,i} A_n p_{n,i},\ \Vert x \Vert \leq 1 \} \\
&\qquad\qquad\qquad+ 2\omega_1(1-p_{n,i})^{1/2}+2\omega_2(1-p_{n,i})^{1/2} \qquad \text{(by the Cauchy--Schwarz inequality)} \\
&=
\sup\{ |(\omega_1 - \omega_2)(x)|\,;\, x \in p_{n,i} A_n p_{n,i},\ \Vert x \Vert \leq 1 \} \\
&\qquad\qquad\qquad+ 2|(\omega_2-\omega_1)(p_{n,i})+\omega_2(1-p_{n,i})|^{1/2} +2\omega_2(1-p_{n,i})^{1/2} \\
&\leq 
\sup\{ |(\omega_1 - \omega_2)(x)|\,;\, x \in p_{n,i} A_n p_{n,i},\ \Vert x \Vert \leq 1 \} + 2|(\omega_1-\omega_2)(p_{n,i})|^{1/2} + 4\omega_2(1-p_{n,i})^{1/2}
\end{align*}
for all $a \in A_n$ with $\Vert a \Vert \leq 1$. 

For each subspace $B \subset A$, we write 
\[
\Vert \omega_1 - \omega_2\Vert_B := 
\sup\{ |(\omega_1 - \omega_2)(x)|\,;\, x \in B,\ \Vert x \Vert \leq 1 \}
\]
for simplicity. If $\omega_2$ is normal on $A_n$, then the inequality we provided above shows that, for each $n$ and $\varepsilon>0$ there exists an $i$ (depending only on $\omega_2$) so that 
\[
\Vert \omega_1 - \omega_2\Vert_{A_n} \leq 
\Vert \omega_1 - \omega_2\Vert_{p_{n,i} A_n p_{n,i}} + 2\Vert \omega_1 - \omega_2\Vert_{p_{n,i} A_n p_{n,i}}^{1/2} + \varepsilon. 
\]

Assume that $\omega_j \to \omega$ on $\mathfrak{A}$ pointwisely as $j\to\infty$ with $\omega_j, \omega \in S_\mathrm{ln}(A)$. For each $n$ and $\varepsilon > 0$ let $i$ be chosen as in the above inequality so that $i$ depends on $\omega$ and is independent of $j$. Since $\omega_j \to \omega$ pointwisely as $j\to\infty$ on the finite dimensional subalgebra $p_{n,i} A_n p_{n,i} = p_{n,i} \mathfrak{A}_n p_{n,i} \subset \mathfrak{A}_n$, we have $\lim_j \Vert \omega_j - \omega\Vert_{p_{n,i} A_n p_{n,i}} = 0$. Therefore, $\limsup_j \Vert \omega_j - \omega \Vert_{A_n} \leq \varepsilon$ for all $\varepsilon > 0$, and hence $\lim_j \Vert \omega_j - \omega\Vert_{A_n} = 0$. This shows that $\omega_j \to \omega$ in the locally uniform topology on $A$, which almost trivially implies $\omega_j \to \omega$ as $j \to \infty$ in the weak$^*$ topology.  
\end{proof} 

We have known that every $\alpha_z^t$ has an implementing strongly continuous one-parameter unitary group, and so does every $\alpha_n^t$. It follows that the restriction $\underline{\alpha}^t$ of the flow $\alpha^t$ to $\mathfrak{A}$ is pointwise norm-continuous. Therefore, the $(\underline{\alpha}^t,\beta)$-KMS states $K_\beta(\underline{\alpha}^t)$ form a weak$^*$ compact convex subset of $S(\mathfrak{A})$. See \cite[subsection 5.3.2]{BratteliRobinson:Book2}. 

We denote by $S_\mathrm{ln}(\mathfrak{A})$ all the $\omega \in S(\mathfrak{A})$ whose restrictions to each $J_n$ are of norm $1$, and also set $K_\beta^\mathrm{ln}(\underline{\alpha}^t) := K_\beta(\underline{\alpha}^t) \cap S_\mathrm{ln}(\mathfrak{A})$. The restriction map $r_\mathfrak{A} : S(A) \to S(\mathfrak{A})$ sending each state $\omega \in S(A)$ to its restriction $\underline{\omega}$ to $\mathfrak{A}$ is affine, surjective (thanks to the Hahn--Banach theorem) and continuous in the weak$^*$ topologies. The restriction of $r_\mathfrak{A}$ to $K_\beta(\alpha^t)$ clearly gives an affine map from $K_\beta(\alpha^t)$ \emph{to} $K_\beta(\underline{\alpha}^t)$. In what follows, the set of extreme points in a convex set $K$ is denoted by $\mathrm{ex}(K)$. The next lemma is motivated from \cite[Proposition 4.1.34]{BratteliRobinson:Book1} based on Ruelle's separability condition \rm{S}. 

\begin{lemma} \label{L8.2} 
The following hold{\rm:} 
\begin{itemize} 
\item[(1)] $K_\beta^\mathrm{ln}(\underline{\alpha}^t)$ is a stable face of $K_\beta(\underline{\alpha}^t)$. 
\item[(2)] $K_\beta^\mathrm{ln}(\underline{\alpha}^t)$ is a $G_\delta$ set in $K_\beta(\underline{\alpha}^t)$. 
\item[(3)] $\mathrm{ex}(K_\beta^\mathrm{ln}(\underline{\alpha}^t))$ is a Baire set in $K_\beta(\underline{\alpha}^t)$ and there is a  convex continuous function $f$ on $K_\beta(\underline{\alpha}^t)$ such that 
\[
\mathrm{ex}(K_\beta^\mathrm{ln}(\underline{\alpha}^t)) = \partial_f(K_\beta(\underline{\alpha}^t)) \cap K_\beta^\mathrm{ln}(\underline{\alpha}^t), 
\]
where $\partial_f(K_\beta(\underline{\alpha}^t))$ is the boundary set associated with $f$ {\rm(}see \cite[Definition 4.1.5]{BratteliRobinson:Book1}{\rm)}. 
\item[(4)] The restriction map $r_\mathfrak{A}$ induces an affine homeomorphism from $K_\beta^\mathrm{ln}(\alpha^t)$ onto $K_\beta^\mathrm{ln}(\underline{\alpha}^t)$ equipped with the weak$^*$ topologies. Moreover, for any $\omega \in S_\mathrm{ln}(A)$, $r_\mathfrak{A}(\omega) \in K_\beta(\underline{\alpha}^t)$ implies $\omega \in K_\beta(\alpha^t)$. 
\item[(5)] The restriction of the GNS triple $(\pi_\omega : A \curvearrowright \mathcal{H}_\omega,\xi_\omega)$ associated with any $\omega \in K_\beta^\mathrm{ln}(\alpha^t)$ to $\mathfrak{A}$ is gives that associated with $r_\mathfrak{A}(\omega)$, and $\pi_\omega(A)'' = \pi_\omega(\mathfrak{A})''$ holds. In particular, the GNS Hilbert space associated with any $\omega \in K_\beta^\mathrm{ln}(\alpha^t)$ must be separable. 
\end{itemize}
\end{lemma}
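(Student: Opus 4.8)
\textbf{Proof plan for Lemma \ref{L8.2}.} The five items are of rather different natures, so I would handle them in a sequence that lets later parts reuse earlier ones. Items (1)--(3) are purely about the geometry of the compact convex set $K_\beta(\underline{\alpha}^t)$ under Ruelle's separability condition S, which Lemma \ref{L8.1}(1) has just established via the projections $p_{n,i}\in\mathfrak{A}_n$. For (1): given a convex decomposition $\underline{\omega}=\lambda\underline{\omega}_1+(1-\lambda)\underline{\omega}_2$ in $K_\beta(\underline{\alpha}^t)$ with $\underline{\omega}\in K_\beta^\mathrm{ln}(\underline{\alpha}^t)$, I would observe that $\underline{\omega}(p_{n,i})\to1$ forces $\underline{\omega}_j(p_{n,i})\to1$ for $j=1,2$ (since $0\le\underline{\omega}_j(p_{n,i})\le1$ and the convex combination tends to $1$), so each $\underline{\omega}_j$ has norm $1$ on every $\mathfrak{A}_n$, i.e.\ lies in $K_\beta^\mathrm{ln}(\underline{\alpha}^t)$; this is exactly the face property, and \emph{stability} of the face follows by the standard argument (any central-measure barycentric decomposition of a locally normal state is supported on locally normal states, because the defining inequalities $\mu(p_{n,i})\ge1-\varepsilon$ pass through the orthogonal-measure calculus). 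For (2): $K_\beta^\mathrm{ln}(\underline{\alpha}^t)=\bigcap_{n}\bigcap_{k}\{\underline{\omega}\,;\,\sup_i\underline{\omega}(p_{n,i})>1-1/k\}$, and each set in the intersection is open (a countable union of sets $\{\underline{\omega}(p_{n,i})>1-1/k\}$), so the whole is $G_\delta$.

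For item (3), I would follow the recipe of \cite[Section 4.1.34]{BratteliRobinson:Book1}: using condition S one constructs a single bounded, strictly convex, weak$^*$-continuous function $f$ on $K_\beta(\underline{\alpha}^t)$ (a suitably weighted sum $f=\sum_{n,i}c_{n,i}\,\underline{\omega}(p_{n,i})^2$ with $c_{n,i}>0$ summable, say, so that the sum converges uniformly and is convex; strict convexity coming from the fact that the $p_{n,i}$ separate states). Then $\partial_f(K_\beta(\underline{\alpha}^t))$, the set where $f$ equals its upper envelope, is a Baire set by \cite[Proposition 4.1.6]{BratteliRobinson:Book1}, and a state in $K_\beta^\mathrm{ln}(\underline{\alpha}^t)$ is extreme there iff it is extreme in the face $K_\beta^\mathrm{ln}(\underline{\alpha}^t)$ — this uses that $K_\beta^\mathrm{ln}(\underline{\alpha}^t)$ is a face (item (1)) together with strict convexity of $f$ restricted to that face. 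Intersecting with the $G_\delta$ set $K_\beta^\mathrm{ln}(\underline{\alpha}^t)$ keeps it Baire.

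Item (4) is where the bridge from $\mathfrak{A}$ back to $A$ happens. The map $r_\mathfrak{A}$ is affine and weak$^*$-continuous; injectivity on $S_\mathrm{ln}(A)$ is \cite[Lemma 4.1.33]{BratteliRobinson:Book1} (a locally normal state on $A$ is determined by its restriction to the $\mathfrak{A}_n$, since the $\sigma$-weak closure of $\mathfrak{A}_n$ is $A_n$ and normal states are weak$^*$-continuous). Surjectivity onto $K_\beta^\mathrm{ln}(\underline{\alpha}^t)$: given $\underline{\omega}\in K_\beta^\mathrm{ln}(\underline{\alpha}^t)$, its norm-$1$ restriction to each $\mathfrak{A}_n$ extends uniquely to a normal state on $A_n$ (by \cite[Propositions 2.6.13, 4.1.33]{BratteliRobinson:Book1}), these extensions are compatible, hence define a locally normal state $\omega$ on $A$ with $r_\mathfrak{A}(\omega)=\underline{\omega}$; that $\omega$ satisfies the $(\alpha^t,\beta)$-KMS condition on each $A_n$ (and hence on all of $A$ by the Phragmen--Lindel\"of argument of Proposition \ref{P4.3}) is the second sentence of (4), proved by approximating any $a\in A_n$ strong$^*$ by $p_{n,i}ap_{n,i}\in\mathfrak{A}_n$ as in the proof of Lemma \ref{L5.6}(2) and passing the KMS analytic function to the limit via Phragmen--Lindel\"of. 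Continuity of the inverse is precisely Lemma \ref{L8.1}(2): pointwise convergence on $\mathfrak{A}$ equals weak$^*$ convergence on $S_\mathrm{ln}(A)$. I expect this item — specifically checking that the inductively-built extension really is $(\alpha^t,\beta)$-KMS on all of $A$ rather than just on $\bigcup_n A_n$ — to be the main technical obstacle, though it is entirely parallel to arguments already carried out in Sections 5 and 7.

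Finally item (5): for $\omega\in K_\beta^\mathrm{ln}(\alpha^t)$ with GNS triple $(\pi_\omega:A\curvearrowright\mathcal{H}_\omega,\xi_\omega)$, the restriction $\pi_\omega|_\mathfrak{A}$ has cyclic vector $\xi_\omega$ because $\overline{\pi_\omega(\mathfrak{A}_n)\xi_\omega}\supseteq\overline{\pi_\omega(p_{n,i}A_np_{n,i})\xi_\omega}$ increases to $\overline{\pi_\omega(A_n)\xi_\omega}$ (using $p_{n,i}\to1$ $\sigma$-weakly and local normality of $\omega$, hence of $\pi_\omega$ on $A_n$, so $\pi_\omega(p_{n,i})\to1$ strongly), whence $[\pi_\omega(\mathfrak{A})\xi_\omega]=\bigvee_n[\pi_\omega(A_n)\xi_\omega]=\mathcal{H}_\omega$; so $(\pi_\omega|_\mathfrak{A},\xi_\omega)$ is the GNS triple of $r_\mathfrak{A}(\omega)$. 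The same strong convergence $\pi_\omega(p_{n,i})\to1$ gives $\pi_\omega(A_n)''\subseteq\pi_\omega(\mathfrak{A}_n)''$, hence $\pi_\omega(A)''=\pi_\omega(\mathfrak{A})''$. Separability of $\mathcal{H}_\omega$ then follows since $\mathfrak{A}$ is separable. No further creativity is needed here; it is the same $p_{n,i}$-approximation used throughout.
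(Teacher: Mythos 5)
Items (1), (2), (4) and (5) of your plan are correct and essentially the paper's own route: the paper quotes \cite[Proposition 4.1.34]{BratteliRobinson:Book1} for (1) and (3), writes $K_\beta^\mathrm{ln}(\underline{\alpha}^t)$ as a countable intersection of open sets exactly as you do for (2) (using norm-dense sequences $\{a_{n,m}\}_m$ in the unit balls of the $\mathfrak{A}_n$ instead of the $p_{n,i}$), and proves (4), (5) by the same combination of unique normal extension from $\mathfrak{A}_n$ to $A_n$, flow-invariance, Phragmen--Lindel\"of approximation, and $\pi_\omega(p_{n,i})\to1$. (Two small remarks: stability of the face in (1) is about arbitrary probability measures with barycenter in the face, not only central/orthogonal ones, but your estimate $\int(1-\nu(p_{n,i}))\,d\mu(\nu)=1-\omega(p_{n,i})\to0$ works verbatim for those; and in (2) you should take the $p_{n,i}$ increasing so that $\sup_i\underline{\omega}(p_{n,i})$ really computes $\Vert\underline{\omega}\!\upharpoonright_{\mathfrak{A}_n}\Vert$.)

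There is, however, a genuine gap in your item (3): the claim that ``the $p_{n,i}$ separate states'' is false, so your $f=\sum_{n,i}c_{n,i}\,\underline{\omega}(p_{n,i})^2$ is not strictly convex in the required directions. The projections only detect the mass of the restriction to $\mathfrak{A}_n$; two distinct locally normal $(\underline{\alpha}^t,\beta)$-KMS states whose density matrices on $A_n$ have the same compressions by the $p_{n,i}$ agree on every $p_{n,i}$, and then for a non-extreme $\omega\in K_\beta^\mathrm{ln}(\underline{\alpha}^t)$ written as the midpoint of such a pair your argument produces no decomposition forcing $\hat f(\omega)>f(\omega)$; thus the inclusion $\partial_f(K_\beta(\underline{\alpha}^t))\cap K_\beta^\mathrm{ln}(\underline{\alpha}^t)\subseteq\mathrm{ex}(K_\beta^\mathrm{ln}(\underline{\alpha}^t))$ is not established. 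The repair is the one Bratteli--Robinson actually use: take a norm-dense sequence $\{a_{n,m}\}_m$ of the unit ball of each (separable) $\mathfrak{A}_n$ and put $f(\underline{\omega}):=\sum_{n,m}c_{n,m}|\underline{\omega}(a_{n,m})|^2$ with summable $c_{n,m}>0$. Two distinct points of $K_\beta^\mathrm{ln}(\underline{\alpha}^t)$ must differ on some $\mathfrak{A}_n$ (their norm-one restrictions to $\mathfrak{A}_n$ have unique normal extensions to $A_n$ --- the same uniqueness you invoke for injectivity in (4)), hence on some $a_{n,m}$, so $f$ is strictly convex along every segment lying in $K_\beta^\mathrm{ln}(\underline{\alpha}^t)$; together with the face property (1) this gives the stated identity, and Baire-ness follows since $\partial_f$ is a $G_\delta$ and so is $K_\beta^\mathrm{ln}(\underline{\alpha}^t)$ by (2). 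Note that strict convexity on all of $K_\beta(\underline{\alpha}^t)$ is not available from the $\mathfrak{A}_n$ alone (their union need not be norm-dense in $\mathfrak{A}$), which is precisely why the statement intersects $\partial_f(K_\beta(\underline{\alpha}^t))$ with $K_\beta^\mathrm{ln}(\underline{\alpha}^t)$ rather than asserting $\partial_f=\mathrm{ex}$.
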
    
\begin{proof} 
Items (1)--(3): The proof of \cite[Proposition 4.1.34]{BratteliRobinson:Book1} perfectly works to prove items (1),(3) in the present setup without any changes. Item (2) is a little improvement of the corresponding assertion in \cite[Proposition 4.1.34]{BratteliRobinson:Book1}. Let $\{a_{n,m}\}_m$ be a norm-dense sequence of the unit ball of $\mathfrak{A}_n$ for each $n$. The assertion follows from 
\[
K_\beta^\mathrm{ln}(\underline{\alpha}^t) = \bigcap_n \bigcap_\ell \bigcup_m \big\{ \omega \in K_\beta(\underline{\alpha}^t)\,;\, |\omega(a_{n,m})| > 1 - \ell^{-1} \big\}. 
\]   

\medskip
Item (4): By (the easier direction of) Lemma \ref{L8.1}(1) we have $r_\mathfrak{A}(K_\beta^\mathrm{ln}(\alpha^t)) \subset K_\beta^\mathrm{ln}(\underline{\alpha}^t)$. By local normality, the restriction map $r_\mathfrak{A}$ is injective on $S_\mathrm{ln}(A)$. Hence we have confirmed that the restriction map $r_\mathfrak{A}$ induces an injective affine map from $K_\beta^\mathrm{ln}(\alpha^t)$ to  $K_\beta^\mathrm{ln}(\underline{\alpha}^t)$ that is continuous in the weak$^*$ topologies. 

We then prove that $r_\mathfrak{A}(K_\beta^\mathrm{ln}(\alpha^t)) = K_\beta^\mathrm{ln}(\underline{\alpha}^t)$. Choose an arbitrary $\omega \in K_\beta^\mathrm{ln}(\underline{\alpha}^t)$. Its Hahn--Banach extension $\bar{\omega}$ to $A$ falls into $S_\mathrm{ln}(A)$ with $r_\mathfrak{A}(\bar{\omega}) = \omega$ by Lemma \ref{L8.1}(1). We have to prove that $\bar{\omega} \in K_\beta(\alpha^t)$. 

Let $(\pi_{\bar{\omega}} : A \curvearrowright \mathcal{H}_{\bar{\omega}},\xi_{\bar{\omega}})$ be the GNS triple associated with $\bar{\omega}$. It is easy to see that $\pi_{\bar{\omega}} : A \curvearrowright \mathcal{H}_{\bar{\omega}}$ is locally normal. See the proof of Theorem \ref{T5.7}(1). In particular, we obtain that $\pi_{\bar{\omega}}(A)'' = \pi_{\bar{\omega}}(\mathfrak{A})''$. By \cite[Proposition 5.3.3]{BratteliRobinson:Book2} (also see the discussion above Lemma \ref{L5.4}) we have $\bar{\omega}\circ\alpha^t(a) = \omega\circ\underline{\alpha}^t(a) = \omega(a) = \bar{\omega}(a)$ for all $t \in \mathbb{R}$ and $a \in \mathfrak{A}$. By the local normality of $\bar{\omega}$ and $\alpha^t(A_n) = A_n$, it follows that $\bar{\omega}\circ\alpha^t = \bar{\omega}$ for all $t \in \mathbb{R}$. Thanks to these two facts, a standard argument like the proof of Proposition \ref{P4.3} (iii) $\Rightarrow$ (iv) with the same approximation procedure as in the proof of Lemma \ref{L5.4}(2) enables us to see that $\bar{\omega}$ actually falls into $K_\beta(\alpha^t)$. (See also the proof of Theorem \ref{T5.7}(2), which contains a very similar claim.) 

Finally, Lemma \ref{L8.1}(2) guarantees that $r_\mathfrak{A} : K_\beta^\mathrm{ln}(\alpha^t) \to K_\beta^\mathrm{ln}(\underline{\alpha}^t)$ is homeomorphic in the weak$^*$ topologies. The latter part is clear from the discussion above.

\medskip
Item (5): Let $(\pi_\omega : A \curvearrowright \mathcal{H}_\omega,\xi_\omega)$ be the GNS triple associated with any $\omega \in K_\beta^\mathrm{ln}(\alpha^t)$. Observe that $\pi_\omega(A)'' = \pi_\omega(\mathfrak{A})''$ by local normality. Thus $[\pi_\omega(\mathfrak{A})\xi_\omega] = \mathcal{H}_\omega$, which implies the first part of the desired assertion. The second part follows from that $\mathfrak{A}$ is separable. 
\end{proof}

It is well known that any $G_\delta$ subset of a complete metric space becomes a complete metric space. Here is a corollary of the above lemma. 

\begin{corollary}\label{C8.3} The convex set $K_\beta^\mathrm{ln}(\underline{\alpha}^t)$ equipped with the weak$^*$ topology becomes a complete metric space, and so does the extreme points $\mathrm{ex}(K_\beta^\mathrm{ln}(\underline{\alpha}^t))$ too. The same assertion also holds for $\mathrm{ex}(K_\beta^\mathrm{ln}(\alpha^t)) \subset K_\beta^\mathrm{ln}(\alpha^t)$. 
\end{corollary}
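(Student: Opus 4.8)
\textbf{Proof plan for Corollary \ref{C8.3}.}

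The plan is to deduce everything from Lemma \ref{L8.2} together with two standard topological facts: first, that $S(\mathfrak{A})$ is a compact metrizable space (hence a complete metric space) because $\mathfrak{A}$ is separable; and second, that a $G_\delta$ subset of a complete metric space is, in its relative topology, completely metrizable (this is the Alexandrov--Hausdorff theorem, which I would simply cite). Since $K_\beta(\underline{\alpha}^t)$ is weak$^*$ closed in $S(\mathfrak{A})$ by \cite[subsection 5.3.2]{BratteliRobinson:Book2}, it is itself a compact metric space. By Lemma \ref{L8.2}(2), $K_\beta^\mathrm{ln}(\underline{\alpha}^t)$ is a $G_\delta$ in $K_\beta(\underline{\alpha}^t)$, so it is completely metrizable in the weak$^*$ topology; this gives the first assertion.

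For the extreme points, the route is through Lemma \ref{L8.2}(3): we have $\mathrm{ex}(K_\beta^\mathrm{ln}(\underline{\alpha}^t)) = \partial_f(K_\beta(\underline{\alpha}^t)) \cap K_\beta^\mathrm{ln}(\underline{\alpha}^t)$ for a continuous convex function $f$ on $K_\beta(\underline{\alpha}^t)$. The boundary set $\partial_f$ associated with a continuous convex function on a metrizable compact convex set is a $G_\delta$ (see \cite[Definition 4.1.5]{BratteliRobinson:Book1} and the surrounding discussion; concretely $\partial_f$ is an intersection of countably many open sets defined via a countable dense subset used in the upper envelope construction). Intersecting the $G_\delta$ set $\partial_f(K_\beta(\underline{\alpha}^t))$ with the $G_\delta$ set $K_\beta^\mathrm{ln}(\underline{\alpha}^t)$ again yields a $G_\delta$ subset of the complete metric space $K_\beta(\underline{\alpha}^t)$, hence a completely metrizable space. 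This handles $\mathrm{ex}(K_\beta^\mathrm{ln}(\underline{\alpha}^t))$.

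Finally, for the space $A$ itself: Lemma \ref{L8.2}(4) gives an affine homeomorphism $r_\mathfrak{A} : K_\beta^\mathrm{ln}(\alpha^t) \to K_\beta^\mathrm{ln}(\underline{\alpha}^t)$ in the weak$^*$ topologies. A homeomorphism transports complete metrizability, and it carries extreme points to extreme points (affineness plus bijectivity), so $K_\beta^\mathrm{ln}(\alpha^t)$ and $\mathrm{ex}(K_\beta^\mathrm{ln}(\alpha^t))$ inherit the completeness of their images. This closes the proof.

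\textbf{Anticipated obstacle.} The only genuinely nontrivial input is the $G_\delta$-ness of the boundary set $\partial_f$; everything else is bookkeeping with Lemma \ref{L8.2}. One must be slightly careful that the relevant notion is \emph{complete metrizability} rather than merely metrizability, and that the two $G_\delta$ statements being intersected both live inside the \emph{same} ambient complete space $K_\beta(\underline{\alpha}^t)$ — so one should phrase the argument as: $\mathrm{ex}(K_\beta^\mathrm{ln}(\underline{\alpha}^t))$ is a countable intersection of open subsets of $K_\beta(\underline{\alpha}^t)$, and then invoke Alexandrov's theorem once. I do not expect any real difficulty beyond assembling these cited facts correctly.
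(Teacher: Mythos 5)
Your argument is correct and is essentially the paper's own proof: the paper likewise combines Lemma \ref{L8.2}(2),(3) with the $G_\delta$-ness of the boundary set $\partial_f$ (this is exactly \cite[Proposition 4.1.6]{BratteliRobinson:Book1}, which you could cite instead of sketching the upper-envelope argument), applies the fact that a $G_\delta$ subset of the complete metric space $K_\beta(\underline{\alpha}^t)$ is completely metrizable, and then transports the statement to $K_\beta^\mathrm{ln}(\alpha^t)$ and $\mathrm{ex}(K_\beta^\mathrm{ln}(\alpha^t))$ via the affine homeomorphism of Lemma \ref{L8.2}(4).
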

\begin{proof} 
The first part follows from Lemma \ref{L8.2}(2),(3) and \cite[Proposition 4.1.6]{BratteliRobinson:Book1} together with the remark just before this corollary. The second part follows from the first part by Lemma \ref{L8.2}(4). 
\end{proof}     

Here is the desired spectral decomposition result for locally normal $(\alpha^t,\beta)$-KMS states on $A$ as well as locally bi-normal $(\alpha^t,\beta)$-spherical representations of $A$. The result should be regarded as a generalization of the spectral decomposition result for characters of inductive limits of compact groups due to Voiculescu \cite[Th\'eor\`eme 2]{Voiculescu:JMathPuresAppl76}, Olshanski \cite[section 9]{Olshanski:JFA03} (and also Enomoto--Izumi \cite[subsection 2.2]{EnomotoIzumi:JMSJ16}). A similar disintegration is also considered by Longo--Tanimoto \cite[subsection 3.2.1]{LongoTanimoto:CMP18} in the context of algebraic quantum field theory. 

In what follows, we will use only Borel measures rather than Baire measures entirely in what follows. See e.g.\ \cite[subsection 4.1.2]{BratteliRobinson:Book1} for their relationship. 

\begin{theorem}\label{T8.4} The following hold true{\rm:} 
\begin{itemize} 
\item[(1)] Any $\omega \in K_\beta^\mathrm{ln}(\alpha^t)$ admits a unique decomposition into locally normal $(\alpha^t,\beta)$-KMS factor states of $A$. Namely, there is a unique Borel probability measure $\mu_\omega$ on $\mathrm{ex}(K^\mathrm{ln}_\beta(\alpha^t))$, that we call \emph{the spectral measure} of $\omega$, such that 
\[
\omega(a) = \int_{\mathrm{ex}(K^\mathrm{ln}_\beta(\alpha^t))} \chi(a)\,\mu_\omega(d\chi) 
\]
for every $a \in A$, where $\mathrm{ex}(K_\beta^\mathrm{ln}(\alpha^t))$ is equipped with the weak$^*$ topology. 
\item[(2)] The spectral measure $\mu_\omega$ gives the direct intergral decomposition 
\[
(\pi_\omega : A \curvearrowright \mathcal{H}_\omega) = \int^\oplus_{\mathrm{ex}(K_\beta^\mathrm{ln}(\alpha^t))} (\pi_\chi : A \curvearrowright \mathcal{H}_\chi)\,\mu_\omega(d\chi)
\]
with respect to the center $\mathcal{Z}(\pi_\omega(A)'')$, where $\pi_\chi : A \curvearrowright \mathcal{H}_\chi$ is the GNS representation associated with $\chi \in \mathrm{ex}(K_\beta^\mathrm{ln}(\alpha^t))$. This gives the central decomposition of $\pi_\omega(A)''$.  
\item[(3)] The spectral measure $\mu_\omega$ gives the direct integral decomposition 
\[
(\pi_\omega^{(2)} : A^{(2)} \curvearrowright \mathcal{H}_\omega) = 
\int^\oplus_{\mathrm{ex}(K_\beta^\mathrm{ln}(\alpha^t))} (\pi_\chi^{(2)} : A^{(2)} \curvearrowright \mathcal{H}_\chi)\,\mu_\omega(d\chi)
\]
with respect to the commutant $\pi_\omega^{(2)}(A^{(2)})'$. This is a unique irreducible decomposition.  
\item[(4)] Conversely, any Borel probability measure $\nu$ on $\mathrm{ex}(K_\beta^\mathrm{ln}(\alpha^t))$ gives a locally normal $(\alpha^t,\beta)$-KMS state on $A$ by the integral formula of item {\rm(1)} with the given measure $\nu$ in place of $\mu_\omega$. The mapping $\omega \mapsto \mu_\omega$ established in item {\rm(1)} gives an affine isomorphism from $K_\beta^\mathrm{ln}(\alpha^t)$ onto all the Borel probability measures on $\mathrm{ex}(K_\beta^\mathrm{ln}(\alpha^t))$. 
\end{itemize}
\end{theorem}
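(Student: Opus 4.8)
The plan is to reduce everything to the already-established correspondences and to the standard theory of simplices of KMS states, using the Stratila--Voiculescu algebra $\mathfrak{A}$ as the technical vehicle. First I would observe that, by Lemma \ref{L8.2}(4), the restriction map $r_\mathfrak{A}$ is an affine homeomorphism $K_\beta^\mathrm{ln}(\alpha^t) \xrightarrow{\ \sim\ } K_\beta^\mathrm{ln}(\underline{\alpha}^t)$, and that $K_\beta^\mathrm{ln}(\underline{\alpha}^t)$ is a stable face (hence a simplex) inside the simplex $K_\beta(\underline{\alpha}^t)$ of KMS states for the \emph{norm-continuous} flow $\underline{\alpha}^t$ on the separable $C^*$-algebra $\mathfrak{A}$. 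Since Ruelle's separability condition \textrm{S} holds (Lemma \ref{L8.1}(1)) and $K_\beta^\mathrm{ln}(\underline{\alpha}^t)$ is a $G_\delta$, hence Polish (Corollary \ref{C8.3}), the general theory of extremal decompositions over simplices satisfying condition \textrm{S} (\cite[section 4.1--4.2]{BratteliRobinson:Book1}, in particular the maximal-measure / orthogonal-measure machinery and \cite[Theorem 4.2.10, Theorem 4.1.11]{BratteliRobinson:Book1}) yields, for each $\omega \in K_\beta^\mathrm{ln}(\underline{\alpha}^t)$, a \emph{unique} maximal Borel probability measure $\mu_\omega$ supported on $\mathrm{ex}(K_\beta^\mathrm{ln}(\underline{\alpha}^t))$ that is pseudosupported by the extreme boundary and barycentrically represents $\omega$; uniqueness is exactly the statement that $K_\beta^\mathrm{ln}(\underline{\alpha}^t)$ is a Choquet simplex, which follows because it is a face of the simplex $K_\beta(\underline{\alpha}^t)$. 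Transporting back along $r_\mathfrak{A}$ and using that every state on $\mathfrak{A}$ lifting into $K_\beta^\mathrm{ln}(\underline{\alpha}^t)$ extends uniquely to a member of $K_\beta^\mathrm{ln}(\alpha^t)$, together with $\bigcup_n A_n$ being norm-dense in $A$, gives the integral formula of item (1) and its uniqueness.

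For item (2), I would invoke the identification of the extremal (= maximal, orthogonal) decomposition of $\omega$ with the decomposition of the GNS representation with respect to a commutative von Neumann subalgebra of $\pi_\omega(\mathfrak{A})' = \pi_\omega(A)'$ (Lemma \ref{L8.2}(5) ensures $\pi_\omega(A)''=\pi_\omega(\mathfrak{A})''$ and separability of $\mathcal{H}_\omega$, so direct integrals behave well). The point is that, by Lemma \ref{P5.8} / \cite[Theorem 5.3.30]{BratteliRobinson:Book2}, the extreme points of $K_\beta^\mathrm{ln}(\alpha^t)$ are precisely the \emph{factor} states in it, so the subalgebra realizing $\mu_\omega$ as an orthogonal measure must be exactly the center $\mathcal{Z}(\pi_\omega(A)'')$; this is the standard equivalence between the central measure and the unique maximal measure for a KMS state (\cite[Theorem 4.2.10]{BratteliRobinson:Book1}), hence the displayed direct integral is the central decomposition, and $\mu_\omega$-almost every $\pi_\chi$ is a factor representation. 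Item (3) then follows by applying the purification construction fibrewise: by Theorem \ref{T3.8} (valid without continuity, as recalled in \S5.1) and the explicit formula \eqref{Eq3.7}, $\pi_\omega^{(2)}$ is built from $\pi_\omega$ and $J_\omega$, and these disintegrate along the same measure; by Corollary \ref{C3.9}(2) factoriality of $\pi_\chi$ is equivalent to irreducibility of $\pi_\chi^{(2)}$, so the decomposition over $\mathrm{ex}(K_\beta^\mathrm{ln}(\alpha^t))$ is into irreducibles, and since $\Pi(A^{(2)})' = \mathcal{Z}(L(\Pi))$ is commutative (Theorem \ref{T6.1}), this irreducible decomposition is unique. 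I should check that measurability of $\chi \mapsto (\pi_\chi^{(2)},\mathcal{H}_\chi)$ follows from that of $\chi \mapsto (\pi_\chi,J_\chi)$, which is routine since $J_\chi$ depends measurably on $\chi$ (it is the modular conjugation of $\xi_\chi$, itself a measurable field).

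For item (4), the forward direction (any Borel probability measure $\nu$ on $\mathrm{ex}(K_\beta^\mathrm{ln}(\alpha^t))$ defines a locally normal $(\alpha^t,\beta)$-KMS state by barycentre) uses that $K_\beta^\mathrm{ln}(\alpha^t)$, via $r_\mathfrak{A}$, is a weak$^*$-closed subset of the compact $K_\beta(\underline{\alpha}^t)$? — here I must be careful, since $K_\beta^\mathrm{ln}$ need not be compact; instead I would argue that the barycentre of $\nu$ is automatically locally normal because on each $\mathfrak{A}_n$ the function $\chi \mapsto \chi(p)$ for a finite-rank projection $p$ is bounded and the barycentre applied to an approximate unit of $\mathfrak{A}_n$ stays $1$ by monotone convergence, so Lemma \ref{L8.1}(1) applies; that it is a $(\underline{\alpha}^t,\beta)$-KMS state is standard for the norm-continuous flow since the KMS property is preserved under barycentric integration (\cite[subsection 5.3.2]{BratteliRobinson:Book2}), and then Lemma \ref{L8.2}(4) promotes it back to $K_\beta(\alpha^t)$. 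Finally, $\omega \mapsto \mu_\omega$ is affine because maximal measures add, and it is a bijection onto all Borel probability measures on $\mathrm{ex}(K_\beta^\mathrm{ln}(\alpha^t))$ precisely because $K_\beta^\mathrm{ln}(\alpha^t)$ is a simplex with extreme boundary a Polish space. The main obstacle I anticipate is the bookkeeping around non-compactness of $K_\beta^\mathrm{ln}(\alpha^t)$: one cannot directly quote Choquet theory for compact simplices, so the argument must route through the ambient compact simplex $K_\beta(\underline{\alpha}^t)$, use that maximal measures on it living on a face are supported in that face, and separately verify that barycentres of arbitrary Borel measures on the (non-compact but Polish) extreme boundary land back in $K_\beta^\mathrm{ln}(\alpha^t)$ — it is exactly here that condition \textrm{S}, the $G_\delta$-ness, and the completeness from Corollary \ref{C8.3} are indispensable.
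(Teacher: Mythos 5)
Your proposal is correct and follows essentially the same route as the paper: reduce to the separable Stratila--Voiculescu algebra $\mathfrak{A}$ via $r_\mathfrak{A}$, obtain the unique maximal measure in the compact simplex $K_\beta(\underline{\alpha}^t)$, use Lemma \ref{L8.2}(1)--(3) to force its support into $\mathrm{ex}(K_\beta^\mathrm{ln}(\underline{\alpha}^t))$, transport back and extend the integral formula to $A$ by local normality, then disintegrate $\pi_\omega$ centrally and purify fibrewise for item (3), and check local normality plus the KMS condition of barycentres for item (4). The only point where you are lighter than the paper is the measurability of $\chi \mapsto J_\chi$, which you call routine but which the paper secures by invoking the disintegration theory of (full left) Hilbert algebras from Takesaki's book; aside from that the arguments coincide.
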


The proof will show that the uniqueness of $\mu_\omega$ in item (1) follows from that the integral formula holds only for any $a \in \mathfrak{A}$.

\begin{proof} 
We first remark that any $\chi \in \mathrm{ex}(K_\beta^\mathrm{ln}(\alpha^t))$ is a factor state by Proposition \ref{P5.8}. 

Item (1): (Existence) Two general facts \cite[Theorem 5.3.30(2)]{BratteliRobinson:Book2} and \cite[Theorem 4.1.15]{BratteliRobinson:Book1} show that $r_\mathfrak{A}(\omega)$ is the barycenter of a unique maximal measure $\mu_{r_\mathfrak{A}(\omega)}$ on $K_\beta(\underline{\alpha}^t)$. 

By Lemma \ref{L8.2}(1), we see that $\mu_{\underline{\omega}}$ is supported by $K_\beta^\mathrm{ln}(\underline{\alpha}^t)$. By \cite[Theorem 4.1.7]{BratteliRobinson:Book1} $\mu_{\underline{\omega}}$ is also supported by every boundary set $\partial_f(K_\beta(\underline{\alpha}^t))$ with convex continuous function $f$. This fact and Lemma \ref{L8.2}(3) show that $\mu_{\underline{\omega}}$ is supported by $\mathrm{ex}(K_\beta^\mathrm{ln}(\underline{\alpha}^t))$. See the remark following the proof of \cite[Theorem 4.1.11]{BratteliRobinson:Book1}. Hence, we may and do regard $\mu_{\underline{\omega}}$ as a measure on $\mathrm{ex}(K_\beta^\mathrm{ln}(\underline{\alpha}^t))$. We consider the push-forward (by $r_\mathfrak{A}^{-1}$) measure $\mu_\omega := \mu_{\underline{\omega}}(r_\mathfrak{A}(\,\cdot\,))$, a Borel probability measure on $\mathrm{ex}(K_\beta^\mathrm{ln}(\alpha^t))$. 

Since $r_\mathfrak{A}(\omega)$ is the barycenter of $\mu_{r_{\mathfrak{A}}(\omega)}$, we have, by the change of variable $\underline{\chi} = r_\mathfrak{A}(\chi)$,  
\[
f(r_{\mathfrak{A}}(\omega)) = \int_{K_\beta(\underline{\alpha}^t)} f(\underline{\chi})\,\mu_{r_{\mathfrak{A}(\omega)}}(d\underline{\chi})  = \int_{\mathrm{ex}(K_\beta^\mathrm{ln}(\alpha^t))} f(r_\mathfrak{A}(\chi))\,\mu_\omega(d\chi)
\]
for every real continuous affine function $f$ on $K_\beta(\underline{\alpha}^t)$. Applying this integral formula to any real affine continuous function $\underline{\chi} \in K_\beta(\underline{\alpha}^t) \mapsto \underline{\chi}(a) \in \mathbb{R}$ with $a = a^* \in \mathfrak{A}$, we obtain that 
\begin{equation}\label{Eq8.1}
\omega(a) = \int_{\mathrm{ex}(K_\beta^\mathrm{ln}(\alpha^t))} \chi(a)\,\mu_\omega(d\chi) 
\end{equation}
for all $a = a^* \in \mathfrak{A}$ and then for all $a \in \mathfrak{A}$ by linearity. Let $p_{n,i}$ be the sequence of projections at the beginning of the proof of Lemma \ref{L8.1}(1). For any $a \in A_n$, we observe that $a_i := p_{n,i} a p_{n,i} \in \mathfrak{A}_n$ and that $\chi(a_i) \to \chi(a)$ as $i\to\infty$ (by local normality) and  $|\chi(a_i)| \leq \Vert a \Vert$ for any $\chi \in K_\beta^\mathrm{ln}(\alpha^t)$. Therefore, by the dominated convergence theorem, formula \eqref{Eq8.1} is still valid for any $a \in \bigcup_n A_n$. Since $\bigcup_n A_n$ is norm-dense in $A$, formula \eqref{Eq8.1} finally holds for every $a \in A$ by the dominated convergence theorem.     

(Uniqueness) We have another Borel probability measure $\nu$ on $\mathrm{ex}(K_\beta^\mathrm{ln}(\alpha^t))$ so that 
\begin{equation}\label{Eq8.2}
\omega(a) = \int_{\mathrm{ex}(K_\beta^\mathrm{ln}(\alpha^t))} \chi(a)\,\nu(d\chi)
\end{equation}
for all $a \in A$. Thanks to Lemma \ref{L8.2}(4) the push-forward (by $r_\mathfrak{A}$) measure $\underline{\nu} := \nu(r_\mathfrak{A}^{-1}(\,\cdot\,))$ defines a Borel probability measure on $\mathrm{ex}(K_\beta^\mathrm{ln}(\underline{\alpha}^t))$. By Lemma \ref{L8.2}(3) we may and do regard $\underline{\nu}$ as a Borel probability measure on the whole $K_\beta(\underline{\alpha}^t)$. By Lemma \ref{L8.2}(1), $\mathrm{ex}(K_\beta^\mathrm{ln}(\underline{\alpha}^t)) \subset \mathrm{ex}(K_\beta(\underline{\alpha}^t))$, and hence \cite[Theorem 4.1.11]{BratteliRobinson:Book1} shows that $\underline{\nu}$ is a maximal measure. Since any real affine continuous function on $K_\beta(\underline{\alpha}^t)$ can uniformly be approximated by functions $\underline{\chi} \mapsto \underline{\chi}(a)$ with $a = a^* \in \mathfrak{A}$ (a simple application of Hahn--Banach separation theorem; see e.g.\ \cite[Corollary III.6.3]{Takesaki:Book1}), formula \eqref{Eq8.2} shows that the barycenter of $\underline{\nu}$ is the $r_\mathfrak{A}(\omega)$ thanks to \cite[Proposition 4.1.1]{BratteliRobinson:Book1}. Since $K_\beta(\underline{\alpha}^t)$ is a simplex by \cite[Theorem 5.3.30]{BratteliRobinson:Book1}, we conclude, by \cite[Theorem 4.1.15]{BratteliRobinson:Book1}, that $\underline{\nu} = \mu_{r_\mathfrak{A}(\omega)}$, implying $\nu = \mu_\omega$ as desired.

\medskip
Item (2): We remark that $\mu_\omega$ and $\mu_{r_\mathfrak{A}(\omega)}$ are  standard measures thanks to Corollary \ref{C8.3} (see the beginning of \cite[subsection 4.4.1]{BratteliRobinson:Book2}) and hence the direct integral theory based on standard Borel spaces can be used without any difficulty. 

Let $(\pi_\chi : A \curvearrowright \mathcal{H}_\omega,\xi_\omega)$ be the GNS triple associated with any $\chi\in K_\beta^\mathrm{ln}(\alpha^t)$. By Lemma \ref{L8.2}(5) its restriction (to $\mathfrak{A}$) $(\pi_\chi : \mathfrak{A} \curvearrowright \mathcal{H}_\chi,\xi_\chi)$ is the GNS triple associated with $r_\mathfrak{A}(\chi)$ and $\pi_\chi(\mathfrak{A})'' = \pi_\chi(A)''$ holds. 

The push-forward measure $\mu_{r_\mathfrak{A}(\omega)}$ of $\mu_\omega$ by $r_\mathfrak{A}$ is the central measure thanks to \cite[Theorem 5.3.30(5)]{BratteliRobinson:Book2}. As explained in \cite[subsection 4.4.2]{BratteliRobinson:Book1}, the mappings  $\chi \in K_\beta^\mathrm{ln}(\alpha^t) \mapsto \mathcal{H}_\chi, \pi_\chi(a)$ with $a \in A$ are measurable, and so are $\underline{\chi} \in K_\beta^\mathrm{ln}(\underline{\alpha}^t) \mapsto \mathcal{H}_{r_\mathfrak{A}^{-1}(\underline{\chi})}, \pi_{r_\mathfrak{A}^{-1}(\underline{\chi})}(a)$. Therefore, by (the proof of) \cite[Theorem 4.4.9]{BratteliRobinson:Book1}, we have the direct integral decomposition
\[
(\pi_\omega : \mathfrak{A} \curvearrowright \mathcal{H}_\omega) = \int^\oplus_{\mathrm{ex}(K_\beta^\mathrm{ln}(\underline{\alpha}^t))} (\pi_{r_\mathfrak{A}^{-1}(\underline{\chi})} : \mathfrak{A} \curvearrowright \mathcal{H}_{r_\mathfrak{A}^{-1}(\underline{\chi})})\,\mu_{r_\mathfrak{A}(\omega)}(d\underline{\chi}), 
\] 
with respect to $\mathcal{Z}(\pi_\omega(\mathfrak{A})'') = \mathcal{Z}(\pi_\omega(A)'')$. As in item (1), using the local normality of states involved in the disintegration, we can see that the above integral decomposition still holds true for $A$ in place of $\mathfrak{A}$. By the change of variable $\underline{\chi} = r_\mathfrak{A}(\chi)$ with $\chi \in \mathrm{ex}(K_\beta^\mathrm{ln}(\alpha^t))$ we obtain that the direct integral decomposition: 
\[
(\pi_\omega : A\curvearrowright \mathcal{H}_\omega) = \int^\oplus_{\mathrm{ex}(K_\beta^\mathrm{ln}(\alpha^t))} (\pi_\chi : A \curvearrowright \mathcal{H}_\chi)\,\mu_\omega(d\chi), 
\] 
with respect to $\mathcal{Z}(\pi_\omega(A)'')$. By \cite[Corollary 4.4.8]{BratteliRobinson:Book1}, we have the desired central decomposition of $\pi_\omega(A)''$. 

\medskip
Item (3): In what follows, we denote by $J_\chi$ the modular conjugation operator associated with $\xi_\chi$ for any $\chi \in K_\beta^\mathrm{ln}(\alpha^t)$. It is well known, see e.g.\ \cite[section 10.6]{StratilaZsido:Book},   that each $\pi_\chi(A)''\xi_\chi$ is a full left Hilbert algebra. So, we can apply \cite[Theorems VI.3.14--15]{Takesaki:Book2} to the measurable field $\chi \mapsto \pi_\chi(A)''\xi_\chi$, and then \cite[Lemma VI.3.2]{Takesaki:Book2} implies that $\chi \mapsto J_\chi$ is measurable so that 
\[
J_\omega = \int^\oplus_{\mathrm{ex}(K_\beta^\mathrm{ln}(\alpha^t))} J_\chi\,\mu_\omega(d\chi)
\]
holds. Thus, for any $a\otimes b^\mathrm{op} \in A^{(2)}$, $\chi \mapsto \pi_\chi^{(2)}(a\otimes b^\mathrm{op}) = \pi_\chi(a)J_\chi\pi_\chi(b^*)J_\chi$ is measurable, and hence so is $\chi \mapsto \pi_\chi^{(2)}(x)$ for every $x \in A^{(2)}$. Observe that  
\begin{align*}
\pi_\omega^{(2)}(a\otimes b^\mathrm{op}) 
&= 
\pi_\omega(a)J_\omega\pi_\omega(b^*)J_\omega \\
&= 
\int^\oplus_{\mathrm{ex}(K_\beta^\mathrm{ln}(\alpha^t))} \pi_\chi(a)J_\chi\pi_\chi(b^*)J_\chi\,\mu_\omega(d\chi) \\
&=
\int^\oplus_{\mathrm{ex}(K_\beta^\mathrm{ln}(\alpha^t))} \pi_\chi^{(2)}(a\otimes b^\mathrm{op})\,\mu_\omega(d\chi)
\end{align*}
for any $a\otimes b^\mathrm{op} \in A^{(2)}$. It follows, by taking linear combination and norm-approximation from simple tensors in $A^{(2)}$ to generic elements, that 
\[
(\pi_\omega^{(2)} : A^{(2)} \curvearrowright \mathcal{H}_\omega) = 
\int^\oplus_{\mathrm{ex}(K_\beta^\mathrm{ln}(\alpha^t))} (\pi_\chi^{(2)} : A^{(2)} \curvearrowright \mathcal{H}_\chi)\,\mu_\omega(d\chi) 
\] 
holds. 

Finally, we remark that $\mathcal{Z}(\pi_\omega(A)'') = \pi_\omega^{(2)}(A^{(2)})'$, which means that the above decomposition is a unique irreducible decomposition. 

\medskip
Item (4): Consider the functional 
\[
a \in A \mapsto \omega(a) := \int_{\mathrm{ex}(K_\beta^\mathrm{ln}(\alpha^t))} \chi(a)\,\nu(d\chi) \in \mathbb{C}. 
\]
Let $p_{n,i}$ be the sequence of projections in the proof of Lemma \ref{L8.1}(1). By the dominated convergence theorem we have $\omega(p_{n,i}) \to 1$ as $i\to\infty$. Thanks to Lemma \ref{L8.1}(1), we conclude that $\omega$ is locally normal. 

We then confirm that $\omega \in K_\beta(\alpha^t)$. It suffices to prove $r_\mathfrak{A}(\omega) \in K_\beta(\underline{\alpha}^t)$ by Lemma \ref{L8.2}(4). Let $a,b \in \mathfrak{A}_{\underline{\alpha}^t}^\infty$ be arbitrarily chosen. Then we easily see that $\chi(ab) = r_\mathfrak{A}(\chi)(ab) = r_\mathfrak{A}(\chi)(b\underline{\alpha}^{i\beta}(a)) = \chi(b\underline{\alpha}^{i\beta}(a))$ for all $\chi \in K_\beta(\alpha^t)$ and hence
\[ 
\omega(ab) 
= 
\int_{\mathrm{ex}(K_\beta^\mathrm{ln}(\alpha^t))} \chi(ab)\,\nu(d\chi)
= 
\int_{\mathrm{ex}(K_\beta^\mathrm{ln}(\alpha^t))} \chi(b\underline{\alpha}^{i\beta}(a))\,\nu(d\chi) 
=
\omega(b\underline{\alpha}^{i\beta}(a)),  
\]
implying $r_\mathfrak{A}(\omega) \in K_\beta(\underline{\alpha}^t)$. The last statement is now clear. 
\end{proof} 

We denote by $\mathscr{M}(\mathrm{ex}(K_\beta^\mathrm{ln}(\alpha^t)))$ the Borel probability measures on $\mathrm{ex}(K_\beta^\mathrm{ln}(\alpha^t))$. Theorem \ref{T8.4}(4) together with Theorem \ref{T5.7}(1) gives the bijection
\begin{equation}\label{Eq8.3}
\begin{matrix}
\mathscr{M}(\mathrm{ex}(K_\beta^\mathrm{ln}(\alpha^t))) & \longleftrightarrow & \mathrm{Rep}_\beta^\mathrm{lbin}(\alpha^t)\\
\mu_\omega & \longleftrightarrow & [(\pi_\omega^{(2)} : A^{(2)} \curvearrowright \mathcal{H}_\omega,\xi_\omega)]. 
\end{matrix}
\end{equation}
We are ready to continue the discussion at the end of section 6. 

\begin{theorem}\label{T8.5} The mapping
\[
\nu \in \mathscr{M}(\mathrm{ex}(K_\beta^\mathrm{ln}(\alpha^t))) \longmapsto (\Pi_\nu : A^{(2)} \curvearrowright \mathcal{H}_\nu) := \int_{\mathrm{ex}((K_\beta^\mathrm{ln}(\alpha^t))}^\oplus (\pi_\chi^{(2)} : A^{(2)} \curvearrowright \mathcal{H}_\chi)\,\nu(d\chi). 
\]
induces a one-to-one correspondence between the equivalence classes $[\nu]$ of $\nu\in\mathscr{M}(\mathrm{ex}((K_\beta^\mathrm{ln}(\alpha^t)))$ and the unitarily equivalent classes of locally bi-normal $(\alpha^t,\beta)$-spherical type representations of $A$. 
\end{theorem}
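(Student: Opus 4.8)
## Proof proposal for Theorem 8.5

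The plan is to piece together the correspondences already established so that the map $\nu \mapsto [(\Pi_\nu : A^{(2)} \curvearrowright \mathcal{H}_\nu)]$ becomes the composite of a known bijection with a `forget the spherical vector' operation, and then to control the fibres of that forgetful step using Theorem \ref{T6.1}. First I would record that every $\pi_\chi^{(2)} : A^{(2)} \curvearrowright \mathcal{H}_\chi$ with $\chi \in \mathrm{ex}(K_\beta^\mathrm{ln}(\alpha^t))$ is locally bi-normal (by Lemma \ref{L8.2}(5) together with the construction of $\pi_\chi^{(2)}$ as in the proof of Theorem \ref{T5.7}(1)), so that the direct integral $\Pi_\nu : A^{(2)} \curvearrowright \mathcal{H}_\nu$ is again locally bi-normal, and that it is of $(\alpha^t,\beta)$-spherical type: the vector $\xi_\nu := \int^\oplus \xi_\chi\,\nu(d\chi)$ is cyclic for $\Pi_\nu(A^{(2)})$ and, by the measurability of $\chi \mapsto J_\chi$ already proved in Theorem \ref{T8.4}(3), one checks Definition \ref{D5.1} fibrewise. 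Hence the map is well defined into the set of unitarily equivalent classes of locally bi-normal $(\alpha^t,\beta)$-spherical type representations.

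The surjectivity is the easier half. Given a locally bi-normal $(\alpha^t,\beta)$-spherical type representation $\Pi : A^{(2)} \curvearrowright \mathcal{H}_\Pi$, pick a unit cyclic vector $\xi \in \mathcal{H}_\Pi^{(\alpha^t,\beta)}$ (which exists by Definition \ref{D6.4}); then $(\Pi : A^{(2)} \curvearrowright \mathcal{H}_\Pi,\xi)$ is a locally bi-normal $(\alpha^t,\beta)$-spherical representation, so by Theorem \ref{T5.7}(1) it is (equivalent to) $(\pi_\omega^{(2)} : A^{(2)} \curvearrowright \mathcal{H}_\omega,\xi_\omega)$ for a unique $\omega \in K_\beta^\mathrm{ln}(\alpha^t)$, and Theorem \ref{T8.4}(3) exhibits $\pi_\omega^{(2)}$ as $\Pi_{\mu_\omega}$ up to unitary equivalence. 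So $[\Pi] = [\Pi_{\mu_\omega}]$ lies in the range.

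The heart of the matter is showing that $[\Pi_\nu] = [\Pi_{\nu'}]$ as representations (ignoring any distinguished vector) forces $[\nu] = [\nu']$, where $[\nu]$ is the equivalence class of $\nu$ under the natural action already in play — presumably equality of measures (the passage from `equivalence classes $[\nu]$' should be read as: two measures giving unitarily equivalent representations are identified, and the claim is that this identification is trivial, i.e. $\nu = \nu'$, because $\mathrm{ex}(K_\beta^\mathrm{ln}(\alpha^t))$ is a standard Borel space and the central/irreducible decomposition is unique). Here is the mechanism I would use: for a locally bi-normal spherical type representation $\Pi$, Theorem \ref{T6.1} identifies the spherical subspace $\mathcal{H}_\Pi^{(\alpha^t,\beta)}$ intrinsically as $[\mathcal{Z}(L(\Pi))\xi]$ where $L(\Pi) = \Pi(A\otimes\mathbb{C}1^\mathrm{op})''$, and moreover shows $e\Pi(A^{(2)})''e = \mathcal{Z}(L(\Pi))e$ with $e$ the projection onto that subspace. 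Since $\mathcal{Z}(L(\Pi)) = \Pi(A^{(2)})'$, a unitary $u : \mathcal{H}_{\Pi_\nu} \to \mathcal{H}_{\Pi_{\nu'}}$ intertwining the representations carries $\mathcal{Z}(L(\Pi_\nu)) \to \mathcal{Z}(L(\Pi_{\nu'}))$ and $\mathcal{H}_{\Pi_\nu}^{(\alpha^t,\beta)} \to \mathcal{H}_{\Pi_{\nu'}}^{(\alpha^t,\beta)}$. Now $\xi_\nu \in \mathcal{H}_{\Pi_\nu}^{(\alpha^t,\beta)}$ is a cyclic vector for $L(\Pi_\nu)$ (being cyclic for the larger $\Pi_\nu(A^{(2)})$ and sitting in the spherical subspace), so $u\xi_\nu$ is a cyclic-for-$L(\Pi_{\nu'})$ vector in the spherical subspace; the diffuse multiplicity-free decomposition of $\pi_\omega$ over its centre (Theorem \ref{T8.4}(2)) shows such vectors all yield the same central measure, namely $\nu'$. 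Comparing with the fact that $\xi_\nu$ yields $\nu$ — which is exactly Theorem \ref{T8.4}(1)–(2) applied to $\omega := (\varphi_{(\Pi_\nu,\xi_\nu)})_L$, whose spectral measure one computes to be $\nu$ by the uniqueness clause — one concludes $\nu = \nu'$.

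The step I expect to be the main obstacle is the bookkeeping in that last paragraph: carefully verifying that the central decomposition of the GNS representation of $\omega := (\varphi_{(\Pi_\nu,\xi_\nu)})_L$ has spectral measure exactly $\nu$ (not merely \emph{some} measure with the right barycenter), and that this is independent of which cyclic spherical vector one selects after transporting by $u$. This hinges on the uniqueness of the maximal/central measure established in Theorem \ref{T8.4}(1) via the simplex structure of $K_\beta(\underline{\alpha}^t)$, together with the observation that any two cyclic vectors for a commutative von Neumann algebra acting on a separable Hilbert space (here $\mathcal{Z}(L(\Pi))$ acting on $[\mathcal{Z}(L(\Pi))\xi]$) induce mutually absolutely continuous, hence \emph{equal-as-measure-class}, scalar spectral measures — and the extra rigidity that the barycenter is pinned down forces the measures themselves to coincide. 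Once that is in place the theorem follows by assembling well-definedness, surjectivity, and injectivity as above.
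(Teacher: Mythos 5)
There is a genuine error in your injectivity step, and it stems from your reading of ``equivalence classes $[\nu]$''. The equivalence intended (and proved) in the paper is \emph{measure equivalence}, i.e.\ mutual absolute continuity, not equality of measures. With your reading the theorem would in fact be false: if $\nu' = |f|^2\nu$ for a non-vanishing $f\in L^2(\nu)$ with $\int|f|^2\,d\nu=1$ but $|f|\not\equiv 1$, then multiplication by $(d\nu/d\nu')^{1/2}$ fibrewise gives a unitary intertwining $\Pi_\nu$ and $\Pi_{\nu'}$, so the representation class only remembers the measure class. Concretely, the false step in your argument is the claim that every cyclic-for-$L(\Pi_{\nu'})$ vector in the spherical subspace $[\mathcal{Z}(L(\Pi_{\nu'}))\xi_{\nu'}]$ ``yields the same central measure, namely $\nu'$'': such a vector is of the form $\int^\oplus f(\chi)\xi_\chi\,\nu'(d\chi)$ with $f\neq 0$ a.e., and its central (scalar spectral) measure is $|f|^2\nu'$, equivalent to $\nu'$ but generally different from it. Your attempted rescue --- ``the extra rigidity that the barycenter is pinned down'' --- does not hold either: the intertwining unitary $u$ need not carry $\xi_\nu$ to $\xi_{\nu'}$, so the KMS state $(\varphi_{(\Pi_{\nu'},u\xi_\nu)})_L$ is only quasi-equivalent to, not equal to, the state attached to $\xi_{\nu'}$, and no barycenter is fixed.

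The paper's argument at this point is exactly the corrected version of yours: after reducing via Theorem \ref{T5.7}(1) and Theorem \ref{T8.4}(3) to $\Pi_i=\pi_{\omega_i}^{(2)}$, it uses the remark at the end of section 6 (your use of Theorem \ref{T6.1} serves the same purpose) to place $u\xi_{\omega_1}$ in $[\mathcal{Z}(\pi_{\omega_2}(A)'')\xi_{\omega_2}]=\int^\oplus\mathbb{C}\xi_\chi\,\nu_{\omega_2}(d\chi)$, writes $u\xi_{\omega_1}=\int^\oplus f(\chi)\xi_\chi\,\nu_{\omega_2}(d\chi)$, deduces $\int\chi(a)\,\nu_{\omega_1}(d\chi)=\int\chi(a)\,|f(\chi)|^2\,\nu_{\omega_2}(d\chi)$ for all $a\in A$, and then invokes cyclicity of $u\xi_{\omega_1}$ to get $f\neq0$ a.e.\ and the uniqueness clause of Theorem \ref{T8.4}(1) to conclude $\nu_{\omega_1}=|f|^2\nu_{\omega_2}$, i.e.\ \emph{equivalence} of the measures. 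Your well-definedness and surjectivity parts are fine (though with the correct reading you also need the easy converse that equivalent measures give unitarily equivalent direct integrals), so the fix is to replace the conclusion $\nu=\nu'$ by $\nu\sim\nu'$ and to drop the two quoted claims above.
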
 
\begin{proof}
By means of direct integrals, it is easy to see that two equivalent probability measures $\nu_i$, $i=1,2$, give two unitarily equivalent representations $\Pi_{\nu_i} : A^{(2)} \curvearrowright \mathcal{H}_{\nu_i}$. Namely, $[\nu] \mapsto [\Pi_{\nu} : A^{(2)} \curvearrowright \mathcal{H}_{\nu}]$ between equivalent classes is a well-defined mapping.  

On the other hand, assume that we have two unitarily equivalent, locally bi-normal $(\alpha^t,\beta)$-spherical type representations $\Pi_i : A^{(2)} \curvearrowright \mathcal{H}_{\Pi_i}$, $i=1,2$. By Theorem \ref{T5.7}(1) we may and do assume that $\Pi_i = \pi_{\omega_i}^{(2)}$ and $\mathcal{H}_{\Pi_i} = \mathcal{H}_{\omega_i}$ for some $\omega_i \in K_\beta^\mathrm{ln}(\alpha^t)$, $i=1,2$, and there exists a unitary transform $u : \mathcal{H}_{\omega_1} \to \mathcal{H}_{\omega_2}$ so that $u\pi_{\omega_1}^{(2)}(x) = \pi_{\omega_2}^{(2)}(x)u$ for all $x \in A^{(2)}$. However, we cannot assume $u\xi_{\omega_1} = \xi_{\omega_2}$. 

By Theorem \ref{T8.4}(3) we have
\[ 
(\pi_{\omega_i}^{(2)} : A^{(2)} \curvearrowright \mathcal{H}_{\omega_i}) 
= 
\int_{\mathrm{ex}(K_\beta^\mathrm{ln}(\alpha^t))}^\oplus (\pi_\chi^{(2)} : A^{(2)} \curvearrowright \mathcal{H}_\chi)\,\nu_{\omega_i}(d\chi), \quad i=1,2.
\]
Thus, it suffices to prove that $\nu_{\omega_i}$, $i=1,2$, are equivalent. 

As remarked at the end of section 6, we observe that 
\[
\omega_1(a) 
= 
(\varphi_{(\pi_{\omega_1}^{(2)},\xi_{\omega_1})})_L(a) 
= 
(\varphi_{(\pi_{\omega_2}^{(2)},u\xi_{\omega_1})})_L(a) = 
(\pi_{\omega_2}(a)u\xi_{\omega_1}\,|\,u\xi_{\omega_1})_{\mathcal{H}_{\omega_2}}
\]  
and 
\[
u\xi_{\omega_1} \in [\mathcal{Z}(\pi_{\omega_2}(A)'')\xi_{\omega_2}] = \int_{\mathrm{ex}(K_\beta^\mathrm{ln}(\alpha^t))}^\oplus \mathbb{C}\xi_\chi\,\nu_{\omega_2}(d\chi) \subset \int_{\mathrm{ex}(K_\beta^\mathrm{ln}(\alpha^t))}^\oplus \mathcal{H}_\chi\,\nu_{\omega_2}(d\chi) = \mathcal{H}_{\omega_2}  
\]  
(on which the central decomposition $\pi_{\omega_2}(A)'' = \int_{\mathrm{ex}(K_\beta^\mathrm{ln}(\alpha^t))}^\oplus \pi_\chi(A)'',\nu_{\omega_2}(d\chi)$ is performed, by Theorem \ref{T8.4}(2)). The latter implies that there is a function $f \in L^2(\nu_{\omega_2})$ such that 
\[
 u\xi_{\omega_1} = \int_{\mathrm{ex}(K_\beta^\mathrm{ln}(\alpha^t))}^\oplus f(\chi)\xi_\chi\,\nu_{\omega_2}(d\chi).  
\]
Therefore, the former leads to 
\[
 \int_{\mathrm{ex}(K_\beta^\mathrm{ln}(\alpha^t))} \chi(a)\,\nu_{\omega_1}(d\chi) = \omega_1(a) =  \int_{\mathrm{ex}(K_\beta^\mathrm{ln}(\alpha^t))} \chi(a)\,|f(\chi)|^2\,\nu_{\omega_2}(d\chi)
\]
for all $a \in A$. Since $u\xi_{\omega_1}$ is cyclic for $\pi_{\omega_2}^{(2)}(A^{(2)})$, we easily see that $f(\chi) \neq 0$ for $\nu_{\omega_2}$-almost everywhere, and hence $\nu_{\omega_i}$, $i=1,2$, must be equivalent by the uniqueness of spectral measures; see Theorem \ref{T8.4}(1).   
\end{proof}

The same method as above enables us to interpret the disjointness and subrepresentations for \emph{$(\alpha^t,\beta)$-spherical type} representations of $A$ in terms of spectral measures similarly to \cite[Theorem 9.2]{BorodinOlshanski:Book}. We leave their details to the reader. An interesting, related question is to interpret, in terms of spectral measures, the tensor product of two $(\alpha^t,\beta)$-spherical representations of inductive limits of compact quantum groups in the sense of Ryosuke Sato \cite{Sato:preprint19}. 

The spectral measures can completely be discussed only in terms of $K_\beta^\mathrm{ln}(\alpha^t)$. Therefore, with the help of Theorem \ref{T8.4}, the locally normal $(\alpha^t,\beta)$-spherical representation theory is reduced to the analysis of $K_\beta^\mathrm{ln}(\alpha^t)$. 
We will discuss how to investigate it within the representation-theoretic integrable probability theory. 

\medskip
We introduce a function $\kappa_{(\alpha^t,\beta)} : H_1^+(\kappa_{(\alpha^t,\beta)}) \times \mathfrak{Z} \to [0,1]$ defined by 
\begin{equation}\label{Eq8.4}
\kappa_{(\alpha^t,\beta)}(\nu,z) := \omega[\nu](z) 
\end{equation}
for every $(\nu,z) \in H_1^+(\kappa_{(\alpha^t,\beta)}) \times \mathfrak{Z}$. Here, $\nu \mapsto \omega[\nu]$ is the inverse of the mapping $\omega \mapsto \nu[\omega]$ that appeared in the proof of Proposition \ref{P7.10}. Thanks to the same proposition together with Theorem \ref{T8.4}, the spectral measure $\mu_{\omega[\nu]}$ enjoys the following disintegration:    
\begin{align*} 
\nu(z) = \omega[\nu](z)  
&= \int_{\mathrm{ex}(K_\beta^\mathrm{ln}(\alpha^t))} \chi(z)\,\mu_{\omega[\nu]}(d\chi) \\
&= \int_{\mathrm{ex}(K_\beta^\mathrm{ln}(\alpha^t))} \kappa_{(\alpha^t,\beta)}(\nu[\chi],z)\,\mu_{\omega[\nu]}(d\chi)  
\end{align*}  
for every $(\nu,z) \in H_1^+(\kappa_{(\alpha^t,\beta)}) \times \mathfrak{Z}$. By Proposition \ref{P7.10} and Corollary \ref{C8.3}, $\mathrm{ex}(H_1^+(\kappa_{(\alpha^t,\beta)}))$ becomes a complete metric space with respect to the topology of pointwise convergence.  Then, the push-forward measure $\mu_\nu$ of $\mu_{\omega[\nu]}$ by $\chi \mapsto \nu[\chi]$ defines a probability measure on $\mathrm{ex}(H_1^+(\kappa_{(\alpha^t,\beta)}))$ and enjoys that 
\begin{equation}\label{Eq8.5}
\nu = \int_{\mathrm{ex}(H_1^+(\kappa_{(\alpha^t,\beta)}))}\mu_{\nu}(d\xi)\, \kappa_{(\alpha^t,\beta)}(\xi,\,\cdot\,) 
\end{equation}
holds. (This notation of integral is more natural in this context. In fact, this formula is a natural generalization of identity \eqref{Eq7.6}) 

Let a Borel probability measure $\mu$ on $\mathrm{ex}(H_1^+(\kappa_{(\alpha^t,\beta)})$ be given. Consider the $\nu \in H_1^+(\kappa_{(\alpha^t,\beta)})$ determined by formula \eqref{Eq8.5} with the given $\mu$ in place of $\mu_\nu$. Let $\mu^*$ be the push-forward measure of $\mu$ on $\mathrm{ex}(K_\beta^\mathrm{ln}(\alpha^t))$ by $\xi \mapsto \omega[\xi]$. For any $a \in A_n$ we have 
\begin{align*} 
\int_{\mathrm{ex}(K_\beta^\mathrm{ln}(\alpha^t))} \chi(a)\,\mu^*(d\chi) 
&= 
\sum_{z \in \mathfrak{Z}_n(\alpha^t,\beta)} \int_{\mathrm{ex}(K_\beta^\mathrm{ln}(\alpha^t))} \chi(za)\,\mu^*(d\chi) \\
&= 
\sum_{z \in \mathfrak{Z}_n(\alpha^t,\beta)} \int_{\mathrm{ex}(K_\beta^\mathrm{ln}(\alpha^t))} \tau^{(\alpha_z^t,\beta)}(za)\chi(z)\,\mu^*(d\chi) \quad \text{(by Lemma \ref{L7.3})} \\
&=
\sum_{z \in \mathfrak{Z}_n(\alpha^t,\beta)} \tau^{(\alpha_z^t,\beta)}(za) \int_{\mathrm{ex}(K_\beta^\mathrm{ln}(\alpha^t))} \chi(z)\,\mu^*(d\chi) \\
&=
\sum_{z \in \mathfrak{Z}_n(\alpha^t,\beta)} \Big(\int_{\mathrm{ex}(H_1^+(\kappa_{(\alpha^t,\beta)}))} \mu(d\xi)\,\kappa_{(\alpha^t,\beta)}(\xi,z)\Big)\tau^{(\alpha_z^t,\beta)}(za) \\
&= 
\sum_{z \in \mathfrak{Z}_n(\alpha^t,\beta)} \nu(z)\,\tau^{(\alpha_z^t,\beta)}(za) = \omega[\nu](a). 
\end{align*}
It follows, by the uniqueness part of Theorem \ref{T8.4}(1), that $\mu^*=\mu_{\omega[\nu]}$  and hence $\mu=\mu_\nu$. Therefore, the measure $\mu_\nu$ is uniquely determined by formula \eqref{Eq8.5}. 

Moreover, Theorem \ref{T7.8} shows that any $\xi \in \mathrm{ex}(H_1^+(\kappa_{(\alpha^t,\beta)}))$ has a sequence $z(m) \in \mathfrak{Z}_m(\alpha^t,\beta)$ with $z(m+1)z(m)\neq0$ for all $m \geq 1$ so that 
\begin{equation}\label{Eq8.6} 
\kappa_{(\alpha^t,\beta)}(\xi,\,\cdot\,) = \omega[\xi] = \lim_{m\to\infty}\tau^{(\alpha_{z(m)}^t,\beta)}(z(m)\,\cdot\,) = \lim_{m\to\infty}\kappa_{(\alpha^t,\beta)}(z(m),\,\cdot\,)
\end{equation} 
holds. This fact gives the formula:
\begin{equation}\label{Eq8.7}
\kappa_{(\alpha^t,\beta)}(\xi,z') = \sum_{z\in\mathfrak{Z}_n} \kappa_{(\alpha^t,\beta)}(\xi,z)\kappa_{(\alpha^t,\beta)}(z,z')
\end{equation}
for all $(\xi,z')\in\mathrm{ex}(H_1^+(\kappa_{(\alpha^t,\beta)}))\times\mathfrak{Z}_m$ and $n>m$. Thus, for any $\mu \in \mathscr{M}(\mathrm{ex}(K_\beta^\mathrm{ln}(\alpha^t)))$, formula \eqref{Eq8.5} with the push-forward measure of $\mu$ by $\chi \mapsto \nu[\chi]$ in place of $\mu_\nu$ defines an element of $H_1^+(\kappa_{(\alpha^t,\beta)})$. Consequently, formula \eqref{Eq8.5} defines an affine isomorphism between $\mathscr{M}(\mathrm{ex}(H_1^+(\kappa_{(\alpha^t,\beta)})))$ and $H_1^+(\kappa_{(\alpha^t,\beta)})$. 

Summing up the discussion we obtain the next corollary. 

\begin{corollary}\label{C8.6} 
The following assertions hold{\rm:}
\begin{itemize}
\item[(1)] 
The set of extreme points $\mathrm{ex}(H_1^+(\kappa_{(\alpha^t,\beta)}))$ always becomes a complete metric space with respect to the topology of pointwise convergence. Moreover, we have a continuous function $\kappa_{(\alpha^t,\beta)} : \mathrm{ex}(H_1^+(\kappa_{(\alpha^t,\beta)}))\times \mathfrak{Z} \to [0,1]$ such that equations \eqref{Eq8.6},\eqref{Eq8.7} hold. 
\item[(2)]
For any $\nu \in H^+_1(\kappa_{(\alpha^t,\beta)})$ there is a unique $\mu_\nu \in \mathscr{M}(\mathrm{ex}(H_1^+(\kappa_{(\alpha^t,\beta)}))) $ such that formula \eqref{Eq8.5} holds. 
\item[(3)]
The correspondence $\nu \mapsto \mu_\nu$ in item {\rm(2)} defines an affine isomorphism $\nu \in H_1^+(\kappa_{(\alpha^t,\beta)}) \mapsto \mu_\nu \in \mathscr{M}(\mathrm{ex}(H_1^+(\kappa_{(\alpha^t,\beta)})))$. Moreover, this correspondence gives the spectral measure $\omega \mapsto \mu_\omega$ in Theorem \ref{T8.4}{\rm(1)} through the isomorphism provided in Proposition \ref{P7.10}. 
\end{itemize}
\end{corollary}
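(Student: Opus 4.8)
The plan is to obtain the corollary purely by transporting structure along the affine homeomorphism $\omega \mapsto \nu[\omega]$ of Proposition~\ref{P7.10}, combined with the complete-metrizability statement of Corollary~\ref{C8.3}, the spectral decomposition of Theorem~\ref{T8.4}, and the approximation theorem (Theorem~\ref{T7.8}). Throughout I would write $\omega[\,\cdot\,]$ for the inverse of $\nu[\,\cdot\,]$, so that $\kappa_{(\alpha^t,\beta)}(\nu,z) = \omega[\nu](z)$ as in \eqref{Eq8.4}.

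For item (1): since $\omega \mapsto \nu[\omega]$ is an affine homeomorphism from $K_\beta^\mathrm{ln}(\alpha^t)$ (weak$^*$) onto $H_1^+(\kappa_{(\alpha^t,\beta)})$ (pointwise convergence), it restricts to a homeomorphism $\mathrm{ex}(K_\beta^\mathrm{ln}(\alpha^t)) \to \mathrm{ex}(H_1^+(\kappa_{(\alpha^t,\beta)}))$, and Corollary~\ref{C8.3} then gives that $\mathrm{ex}(H_1^+(\kappa_{(\alpha^t,\beta)}))$ is a complete metric space. The function $\kappa_{(\alpha^t,\beta)} : \mathrm{ex}(H_1^+(\kappa_{(\alpha^t,\beta)})) \times \mathfrak{Z} \to [0,1]$ is just the restriction of \eqref{Eq8.4}: it lands in $[0,1]$ because $\omega[\xi]$ is a state and $z$ a projection, and it is continuous since $\mathfrak{Z}$ is discrete while $\xi \mapsto \omega[\xi]$ is a homeomorphism and each $\omega \mapsto \omega(z)$ is weak$^*$ continuous. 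Formula \eqref{Eq8.6} is then Theorem~\ref{T7.8} read off for $\omega[\xi]$ with $\xi$ extreme, and \eqref{Eq8.7} follows from it by applying the Chapman--Kolmogorov identity of Lemma~\ref{L7.9}(3) to the approximants $z(m)$ and letting $m \to \infty$ (a Scheff\'e-type argument passes the sum over $\mathfrak{Z}_n$ to the limit, using $\sum_{z \in \mathfrak{Z}_n}\kappa_{(\alpha^t,\beta)}(z(m),z) = 1 = \sum_{z \in \mathfrak{Z}_n}\kappa_{(\alpha^t,\beta)}(\xi,z)$).

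For items (2) and (3): given $\nu \in H_1^+(\kappa_{(\alpha^t,\beta)})$ put $\omega := \omega[\nu]$, let $\mu_\omega$ be its spectral measure from Theorem~\ref{T8.4}(1), and define $\mu_\nu$ to be the push-forward of $\mu_\omega$ under $\chi \mapsto \nu[\chi]$ (a Borel probability measure on $\mathrm{ex}(H_1^+(\kappa_{(\alpha^t,\beta)}))$ by the homeomorphism above). Evaluating the integral formula of Theorem~\ref{T8.4}(1) at each $z \in \mathfrak{Z}$ and rewriting $\chi(z) = \kappa_{(\alpha^t,\beta)}(\nu[\chi],z)$ gives \eqref{Eq8.5}. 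Affineness and bijectivity of $\nu \mapsto \mu_\nu$ follow by composing the affine isomorphism $\omega \mapsto \mu_\omega$ of Theorem~\ref{T8.4}(4) with the homeomorphism $\nu \mapsto \omega[\nu]$ and the push-forward induced by $\chi \mapsto \nu[\chi]$; conversely, starting from $\mu \in \mathscr{M}(\mathrm{ex}(H_1^+(\kappa_{(\alpha^t,\beta)})))$, formula \eqref{Eq8.5} defines a function $\nu$ on $\mathfrak{Z}$ which is $\kappa_{(\alpha^t,\beta)}$-harmonic by \eqref{Eq8.7}, hence $\nu \in H_1^+(\kappa_{(\alpha^t,\beta)})$, and one verifies $\mu = \mu_\nu$. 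The compatibility with the spectral map $\omega \mapsto \mu_\omega$ is then the tautology $\mu_{\nu[\omega]} = (\chi \mapsto \nu[\chi])_*\mu_\omega$.

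The one point I expect to need care is the uniqueness clause in item (2): formula \eqref{Eq8.5} only constrains the values $\nu(z)$, $z \in \mathfrak{Z}$, so one cannot test it against all of $A$ directly. Instead I would push a competitor measure $\mu$ forward to $\mu^* := (\xi \mapsto \omega[\xi])_*\mu$ on $\mathrm{ex}(K_\beta^\mathrm{ln}(\alpha^t))$, use Lemma~\ref{L7.3} (a locally normal KMS state is determined by its restrictions to the centres $\mathcal{Z}(A_n)$, i.e.\ by its values on $\mathfrak{Z}$) to see that the barycenter of $\mu^*$, restricted to $\bigcup_n A_n$, coincides with $\omega[\nu]$, and conclude $\mu^* = \mu_{\omega[\nu]}$ --- hence $\mu = \mu_\nu$ --- from the uniqueness statement in Theorem~\ref{T8.4}(1). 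Everything else is bookkeeping with the homeomorphisms already established.
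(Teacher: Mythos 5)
Your proposal is correct and follows essentially the same route as the paper: transporting the structure along the affine homeomorphism of Proposition \ref{P7.10} (with Corollary \ref{C8.3} for complete metrizability), defining $\mu_\nu$ as the push-forward of the spectral measure of Theorem \ref{T8.4}, obtaining \eqref{Eq8.6} from Theorem \ref{T7.8} and \eqref{Eq8.7} from the Chapman--Kolmogorov identity of Lemma \ref{L7.9}(3), and proving uniqueness by pushing a competitor measure forward to $\mathrm{ex}(K_\beta^\mathrm{ln}(\alpha^t))$, computing its barycenter on $\bigcup_n A_n$ via Lemma \ref{L7.3}, and invoking the uniqueness clause of Theorem \ref{T8.4}(1). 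The only place you add detail beyond the paper is the Scheff\'e-type limit argument for \eqref{Eq8.7}, which is a valid way to justify the step the paper leaves implicit.
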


Item (3) means that it is sufficient to look at $H_1^+(\kappa_{(\alpha^t,\beta)})$, a \emph{commutative} object, to understand the classification of (locally bi-normal) $(\alpha^t,\beta)$-spherical (type) representations of $A$. 

As we will see in the next section, this corollary holds for the projective chain arising from any link. Further study of $H_1^+(\kappa_{(\alpha^t,\beta)})$ should probably be done within the theory of Martin boundaries. See e.g.\ \cite{GorinOlshanski:JFA16} for such an attempt in a concrete example.    

\section{Ergodic method by example} 

This section concerns how to construct and analyze (along the ergodic method) a certain class of flows on $C^*$-inductive limits of atomic $W^*$-algebras with separable predual as in sections 7-8. 

\subsection{Bratteli diagram description}
We start with $A = \varinjlim A_n$ as in section 7. Then, for every $n$, we have  the countable set $\mathfrak{Z}_n$ of central projections in $A_n$. Set $\mathfrak{Z} = \bigsqcup_{n\geq0} \mathfrak{Z}_n$, a disjoint union, which plays the role of vertex set in what follows. Following Bratteli's famous idea \cite{Bratteli:TAMS72}, we will construct a graph whose vertex set is $\mathfrak{Z}$. The edge set $\mathfrak{E} = \bigsqcup_{n\geq1} \mathfrak{E}_n$, a disjoint union, is defined by $\mathfrak{E}_n := \{ zz' \neq 0\,; (z,z') \in \mathfrak{Z}_n\times\mathfrak{Z}_{n-1}\}$, a set of non-zero projections in the setup, for every $n \geq 1$. The range and the source maps $s, r : \mathfrak{E} \to \mathfrak{Z}$ is defined by $r(e)=z$ and $s(e)=z'$ for $e=zz' \in \mathfrak{E}$ with $(z,z') \in \mathfrak{Z}_n\times\mathfrak{Z}_{n-1}$. Namely, 
\[
\mathfrak{Z}_n \ni z = r(e)\overset{e}{\longleftarrow} s(e)=z' \in \mathfrak{Z}_{n-1} \quad \text{if $e = zz' \neq 0$}.
\]    

By construction, the graph $(\mathfrak{Z},\mathfrak{E},r,s)$ abstractly satisfies the following:
\begin{itemize}
\item[(i)] $\mathfrak{Z}=\bigsqcup_{n\geq0}\mathfrak{Z}_n$, $\mathfrak{E}=\bigsqcup_{n\geq1}\mathfrak{E}_n$ are countable with $\mathfrak{Z}_0 = \{1\}$, a singleton. Moreover, $(r(e),s(e)) \in \mathfrak{Z}_n \times\mathfrak{Z}_{n-1}$ for every $e \in \mathfrak{E}_n$ and $n\geq1$. 
\item[(ii)] $e \in \mathfrak{E} \mapsto (r(e),s(e)) \in \mathfrak{Z}\times\mathfrak{Z}$ is injective.  
\item[(iii)] Each $z \in \mathfrak{Z}$ is the source of an edge $e \in \mathfrak{E}$. 
\item[(iv)] Each $z \in \mathfrak{Z}$ is the range of an edge $e \in \mathfrak{E}$. 
\end{itemize} 
Item (i) means that the graph is graded. Item (ii) is a consequence from that we ignore the multiplicity of each component into another. The multiplicities will be understood as a function on the edges. This is a bit different from Bratteli's formalism, but this difference is rather minor. Item (iii) is a consequence of embeddings $A_{n-1} \hookrightarrow A_n$, while item (iv) follows from that all the embeddings $A_{n-1} \hookrightarrow A_n$ are unital. 

\medskip
We have an additional structure over the graph $(\mathfrak{Z},\mathfrak{E},r,s)$. The multiplicity function $m:\mathfrak{Z}\to\mathbb{N}\cup\{\infty\}$ is defined by $m(e):=\dim(e(A_n\cap (A_{n-1})'))^{1/2} \in \mathbb{N}\cup\{\infty\}$ with $e \in \mathfrak{E}_n$. Here, one notices that $\mathfrak{E}_n$ is a complete family of minimal projections of the center $\mathcal{Z}(A_n\cap(A_{n-1})')$ and that $e(A_n\cap(A_{n-1})') \cong B(\ell_2^{m(e)})$ holds for every $e \in \mathfrak{E}_n$, where $\ell_2^{m(e)}$ denotes the $m(e)$-dimensional Hilbert space. We can reconstruct the inductive system $A_{n-1} \hookrightarrow A_n$ from the graph $(\mathfrak{Z},\mathfrak{E},r,s)$ together with multiplicity function $m$ as follows. 

The zeroth algebra is $A_0 = \mathbb{C}$, $1$-dimensional. The first one is $A_1 = \bigoplus_{z\in\mathfrak{Z}_1} B(\ell_2^{m(z)})$ ({\it n.b.}, each $z \in \mathfrak{Z}_1$ also becomes an edge in $\mathfrak{E}_1$ as $z=z1$). Assume that we have constructed $A_1 \hookrightarrow A_2 \hookrightarrow \cdots \hookrightarrow A_{n-1}=\bigoplus_{z\in\mathfrak{Z}_{n-1}}B(\mathcal{H}_z)$ up to the $(n-1)$th algebra. The next $A_n$ with normal embedding $A_{n-1} \hookrightarrow A_n$ can be reconstructed in the following manner: For each $z \in \mathfrak{Z}_n$, we define a Hilbert space 
\begin{equation}\label{Eq9.1}
\mathcal{H}_z := \bigoplus_{e \in \mathfrak{E}_n; r(e)=z} \mathcal{H}_{s(e)}\otimes\ell_2^{m(e)},  
\end{equation} 
and set $A_n = \bigoplus_{z\in\mathfrak{Z}_n} B(\mathcal{H}_z)$. The embedding $A_{n-1} \hookrightarrow A_n$ is given by 
\begin{equation}\label{Eq9.2}
(x_z)_{z\in\mathfrak{Z}_{n-1}} \mapsto \left(\bigoplus_{e \in \mathfrak{E}_n;r(e)=z} x_{s(e)}\otimes 1_{\ell_2^{m(e)}}\right)_{z \in \mathfrak{Z}_n}, 
\end{equation}
which is clearly a unital normal embedding ({\it n.b.}, any $z \in \mathfrak{Z}_n$ is $r(e)$ of an $e \in \mathfrak{E}_n$). In this way, we can construct an inductive system $A_{n-1}\hookrightarrow A_n$ from the graph $(\mathfrak{Z},\mathfrak{E},r,s)$ with multiplicity function $m$.  This inductive system is clearly the same as the original one, since 
\begin{align*}
\big(e A_{n-1} \subset e A_n e\big) 
&\cong \big(B(\mathcal{H}_{s(e)})\otimes\mathbb{C}1_{\ell_2^{m(e)}} \subset B(\mathcal{H}_{s(e)}\otimes\ell_2^{m(e)})\big) 
\end{align*} 
with $e \in \mathfrak{E}_n$ by a well-known fact on unital normal embeddings of $B(\mathcal{H})$ into $W^*$-algebras (see e.g.\ \cite[Proposition V.1.22]{Takesaki:Book1}). 

Clearly, starting with a graded graph satisfying (i)-(iv) with a multiplicity function, we can construct an inductive system of atomic $W^*$-algebras with separable predual in the above way. Consequently, there is a one-to-one correspondence between the graphs satisfying (i)--(iv) with multiplicity functions and the inductive systems of atomic $W^*$-algebras with separable predual starting at the trivial algebra $\mathbb{C}$. 

\medskip
With the description of the inductive system $A_{n-1} \hookrightarrow A_n$ in the previous paragraph, we can construct a flow on $A=\varinjlim A_n$ that satisfies the equivalent continuity conditions in Lemma \ref{L7.1}. We denote by $\mathrm{Flow}(\varinjlim A_n)$ all such flows. 

The construction starts with a system $(w_e^t)_{e \in \mathfrak{E}}$ of strongly continuous one-parameter unitary groups on $\ell_2^{m(e)}$ over $(\mathfrak{Z},\mathfrak{E},r,s,m)$, that is, each $w_e^t$ is a strongly continuous one-parameter unitary group on the Hilbert space $\ell_2^{m(e)}$. Then, two other systems $(u_z^t)_{z\in\mathfrak{Z}}$, $(u_n^t)_{n\geq 0}$ of strongly continuous one-parameter unitary groups in $zA_n$ and $A_n$, respectively, are inductively constructed as follows. On $A_1$, we set $u_{r(e)}^t := w_e^t$ ($e \in \mathfrak{E}_1$), and then
\begin{equation}\label{Eq9.3}
u_z^t = \bigoplus_{e; r(e)=z} u_{s(e)}^t \otimes w_e^t \in \bigoplus_{e;r(e)=z} B(\mathcal{H}_{s(e)}\otimes\ell_2^{m(e)}) \subset B(\mathcal{H}_z) =zA_n, \quad 
u_n^t = (u_z^t)_{z \in \mathfrak{Z}_n} \in A_n 
\end{equation}
for all $z \in \mathfrak{Z}_n$ and $n \geq 2$. The desired flow is obtained as $\alpha^t:=\varinjlim \alpha_n^t$ with $\alpha_n^t := \mathrm{Ad}u_n^t$ that trivially satisfies the equivalent continuity conditions in Lemma \ref{L7.1}. Note that each $\alpha_z^t$ is nothing but $\mathrm{Ad}u_z^t$. 

By the associativity of Hilbert space tensor products and direct sums, we can re-write \eqref{Eq9.1} and \eqref{Eq9.3} as follows. For each $z \in \mathfrak{Z}_n$ ($\subset \mathfrak{Z}$), we have
\begin{equation}\label{Eq9.4}
\begin{gathered} 
\mathcal{H}_z = \bigoplus_{(e_i)_{i=1}^n; r(e_n)=z} \ell_2^{m(e_1)}\otimes\cdots\otimes\ell_2^{m(e_n)}, \\
u_z^t = \bigoplus_{(e_i)_{i=1}^n; r(e_n)=z} w_{e_1}^t\otimes\cdots\otimes w_{e_n}^t, 
\end{gathered}
\end{equation}
where $(e_i)_{i=1}^n$ denotes an edge-path starting at $1$, that is, $e_i \in \mathfrak{E}_i$, $s(e_i) = r(e_{i-1})$ ($2 \leq i \leq n$) and $s(e_1)=1$. 

\medskip
We have given a construction of a suitable flow $\alpha^t$ out of a collection $(w_e^t)_{e\in\mathfrak{E}}$ of strongly continuous one-parameter unitary groups. Namely, we have a mapping $(w_e^t)_{e\in\mathfrak{E}} \mapsto \alpha^t$. 

\begin{lemma} \label{L9.1} Any member $\alpha^t$ in $\mathrm{Flow}(\varinjlim A_n)$ can be obtained in this way. 
\end{lemma}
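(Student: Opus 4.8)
\textbf{Proof proposal for Lemma \ref{L9.1}.} The plan is to start with an arbitrary $\alpha^t \in \mathrm{Flow}(\varinjlim A_n)$ and reverse-engineer a system $(w_e^t)_{e\in\mathfrak{E}}$ of strongly continuous one-parameter unitary groups whose associated flow, built by \eqref{Eq9.1}--\eqref{Eq9.3}, equals $\alpha^t$. Since $\alpha^t$ satisfies the equivalent continuity conditions of Lemma \ref{L7.1}, it fixes every element of $\mathcal{Z}(A_n)$ and hence restricts to a pointwise $\sigma$-weakly continuous flow $\alpha_z^t = \mathrm{Ad}\,u_z^t$ on each factor $zA_n = B(\mathcal{H}_z)$, with a strongly continuous implementing unitary group $u_z^t$ that is unique up to a character of $\mathbb{R}$ (as recalled via \cite[Proposition 2.5]{Kallman:JFA71}). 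The strategy is to fix these implementing unitaries inductively along the grading so that they are compatible with the embeddings \eqref{Eq9.2}, and then to read off the $w_e^t$ from the ``new'' tensor factor created at each level.

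First I would proceed by induction on $n$. For $n=1$ the algebra is $A_1 = \bigoplus_{z\in\mathfrak{Z}_1} B(\mathcal{H}_z)$ with each $z$ also an edge $e\in\mathfrak{E}_1$, and I simply declare $w_e^t := u_{r(e)}^t$, a chosen strongly continuous implementing unitary group for $\alpha_{r(e)}^t$. For the inductive step, suppose implementing unitary groups $u_{z'}^t$ have been fixed for all $z'\in\mathfrak{Z}_{n-1}$ in such a way that \eqref{Eq9.3} holds at lower levels. Fix $z\in\mathfrak{Z}_n$ and decompose $\mathcal{H}_z = \bigoplus_{e:\,r(e)=z} \mathcal{H}_{s(e)}\otimes\ell_2^{m(e)}$ as in \eqref{Eq9.1}, which is the canonical form of the unital normal embedding $A_{n-1}\hookrightarrow A_n$ restricted to $eA_{n-1}\subset eA_ne$ (see \cite[Proposition V.1.22]{Takesaki:Book1}). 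Because $\alpha_z^t$ fixes each central projection $e\in\mathfrak{E}_n$ with $r(e)=z$ and restricts on $eA_{n-1}$ to $\alpha_{s(e)}^t = \mathrm{Ad}\,u_{s(e)}^t$, the unitary $u_z^t(u_{s(e)}^t\otimes 1)^*$ commutes with $B(\mathcal{H}_{s(e)})\otimes\mathbb{C}1$ on the summand $\mathcal{H}_{s(e)}\otimes\ell_2^{m(e)}$; hence it is of the form $1\otimes v_e^t$ for some unitary $v_e^t$ on $\ell_2^{m(e)}$. A routine check (using strong continuity of $u_z^t$ and $u_{s(e)}^t$, and that the cocycle identity for $u_z^t$ together with $\alpha_{s(e)}^t$ forces it) shows $v_e^t$ is a strongly continuous one-parameter unitary group; I then set $w_e^t := v_e^t$. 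By construction \eqref{Eq9.3} holds at level $n$, closing the induction.

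It remains to check that the flow built from the collection $(w_e^t)_{e\in\mathfrak{E}}$ thus obtained coincides with $\alpha^t$. But this is immediate: the construction produces, level by level, precisely the unitary groups $u_n^t = (u_z^t)_{z\in\mathfrak{Z}_n}$ we started from (up to the arbitrary characters we chose, which do not affect $\mathrm{Ad}\,u_n^t$), so $\mathrm{Ad}\,u_n^t = \alpha_n^t$ on each $A_n$, and passing to the inductive limit gives back $\alpha^t$. The main obstacle, and the only point requiring care, is the inductive coherence of the choices of implementing unitaries: the $u_z^t$ are only determined up to a character of $\mathbb{R}$, and one must make the choices so that the relation $u_z^t = \bigoplus_{e:r(e)=z} u_{s(e)}^t\otimes w_e^t$ can be achieved simultaneously for all $z$ at level $n$; since at each step one is free to absorb a character of $\mathbb{R}$ into $w_e^t$, this freedom is exactly enough, but it should be spelled out that no obstruction accumulates. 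Strong continuity of each $w_e^t$ is the other routine verification, handled by restricting the strongly continuous group $u_z^t$ to the relevant tensor factor.
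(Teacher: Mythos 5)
Your proposal is correct and follows essentially the same route as the paper's proof: both recover each $w_e^t$ as the relative one-parameter unitary group obtained by comparing an implementing unitary of $\alpha^t$ at level $n$ with the one already fixed at level $n-1$ — the paper does this globally, setting $w_n^t := u_n^t u_{n-1}^{-t} \in A_n\cap A_{n-1}'$ and cutting down by the minimal central projections $e \in \mathfrak{E}_n$, while you do it summand-by-summand via the relative commutant of $B(\mathcal{H}_{s(e)})\otimes\mathbb{C}1$ in $B(\mathcal{H}_{s(e)}\otimes\ell_2^{m(e)})$, which amounts to the same computation. Your closing concern about coherence of the character ambiguities is vacuous: since each $w_e^t$ is \emph{defined} from the chosen $u_{r(e)}^t$ and $u_{s(e)}^t$ rather than prescribed in advance, relation \eqref{Eq9.3} holds by construction and no obstruction can accumulate.
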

\begin{proof} One can write $\alpha_z^t = \mathrm{Ad}u_z^t$ for some strongly continuous one-parameter unitary group in $zA_n = B(\mathcal{H}_z)$; see the discussion above Lemma \ref{L7.2}. Then, $u_n^t = \bigoplus_{z \in \mathfrak{Z}_n} u_z^t$ implements $\alpha_n^t$. Observe that $w_n^t := u_n^t u_{n-1}^{-t}$ falls into $A_n\cap A_{n-1}'$ and implements the restriction of $\alpha^t$ to $A_n\cap A_{n-1}'$. By (the same reason as) Lemma \ref{L7.1}, $\alpha^t$ acts on the center $\mathcal{Z}(A_n\cap A_{n-1}')$ trivially, and hence $w_n^t$ and any $e \in \mathfrak{E}_n$ must commute with each other. Hence, $w_e^t := ew_n^t \in e(A_n \cap A_{n-1}') \cong B(\ell_2^{m(e)})$ defines a strongly continuous one-parameter unitary group on $\ell_2^{m(2)}$ for each $e \in \mathfrak{E}_n$. Then, $w_n^t = \bigoplus_{e\in\mathfrak{E}_n} w_e^t$ holds, since $\mathfrak{E}_n$ is the complete family of minimal central projections of $A_n\cap A_{n-1}'$. Since $u_n^t = w_n^t u_{n-1}^t$, we can recursively reconstruct a sequence $u_n^t$ of strongly continuous one-parameter unitary groups from the $w_e^t$, and the $u_n^t$ clearly give back the given flow $\alpha^t$ as the inductive limit of $\mathrm{Ad}u_n^t$.  
\end{proof} 

Clearly, the mapping $(w_e^t)_{e\in\mathfrak{E}} \mapsto \alpha^t$ is not injective. Hence we will introduce an equivalence relation among collections of strongly continuous one-parameter unitary groups, making the mapping injective modulo the equivalence relation. Each $w_e^t$ is uniquely determined up to character $\chi_e^t$ as an implementation of the restriction $\alpha_e^t$ of $\alpha^t$ to $e(A_n\cap (A_{n-1})') \cong B(\ell_2^{m(e)})$. Thus, the original $(w_e^t)_{e\in\mathfrak{E}}$ and another $(\chi_e^t w_e^t)_{e\in\mathfrak{E}}$ with characters $(\chi_e^t)_{e \in \mathfrak{E}}$ define the same collection $(\alpha_e^t)_{e\in\mathfrak{E}}$ of local flows. Then, by equation \eqref{Eq9.4}, it easily follows that the collection $(\chi_e^t)_{e \in \mathfrak{E}}$ should satisfy that 
$\chi_{e_1}^t\cdots\chi_{e_n}^t$ depends only on the end point $r(e_n)$ for any finite edge-path $(e_i)_{i=1}^n$ starting at $1$. Since the collection $(\alpha_z^t)_{z\in\mathfrak{Z}}$ determines $\alpha^t$ uniquely, we conclude that the desired equivalence relation $(w_e^t)_{e\in\mathfrak{E}} \sim (w'_e{}^t)_{e\in\mathfrak{E}}$ should be defined by 
\begin{equation}\label{Eq9.5}
w_e^t (w'_e{}^t)^* = \chi_e^t\,1_{\ell_2^{m(e)}} 
\end{equation}
for some collection $(\chi_e^t)_{e\in\mathfrak{E}}$ of characters that satisfy that $\chi_{e_1}^t\cdots\chi_{e_n}^t$ depends only on the end point $r(e_n)$ for any finite edge-path $(e_i)_{i=1}^n$ starting at $1$ (see equations \eqref{Eq9.4} for the precise meaning). 

\medskip
By the proof of Lemma \ref{L9.1}, one may observe that a given flow $\alpha^t$ in $\mathrm{Flow}(\varinjlim A_n)$ is completely understood by looking at its restriction to each $A_n \cap A_{n-1}'$. Namely, $\{\alpha^t\!\upharpoonright_{A_n\cap A_{n-1}'}\}_{n=1}^\infty$ is more important than $\{\alpha_n^t\}_{n=1}^\infty$.   

\subsection{General scheme of analysis along ergodic method} 

Let $A = \varinjlim A_n$ be an inductive limit $C^*$-algebra of atomic $W^*$-algebras with separable predual, and $(\mathfrak{Z},\mathfrak{E},r,s)$ with $m : \mathfrak{E} \to \mathbb{N}\cup\{\infty\}$ be the associated graph with multiplicity function. As saw before, any member in $\mathrm{Flow}(\varinjlim A_n)$ can be obtained from a collection $(w_e^t)_{e\in\mathfrak{E}}$ of strongly continuous one-parameter unitary groups on $\ell_2^{m(e)}$. By Stone's theorem, each $w_e^t$ has a unique (possibly unbounded) self-adjoint operator $H_e$ on $\ell_2^{m(e)}$ so that  $w_e^t=\exp(itH_e)$ holds. The equivalence relation \eqref{Eq9.5} is encoded in terms of \emph{edge Hamiltonians} $(H_e)_{e\in\mathfrak{E}}$ as follows. Two $(H_e)_{e\in\mathfrak{E}}$ and $(H'_e)_{e\in\mathfrak{E}}$ are equivalent, if there is a collection $(\lambda_e)_{e\in\mathfrak{E}}$ of positive scalars such that $H'_e = H_e + \lambda_e1$ for every $e \in \mathfrak{E}$ and that $\sum_{i=1}^n \lambda_{e_i}$ depends on $r(e_n)$ for any finite edge-path $(e_i)_{i=1}^n$ starting at $1$ (see equations \eqref{Eq9.4} for the precise meaning). 

\medskip
We introduce the \emph{edge partition function} at $e \in \mathfrak{E}$ as follows.  
\begin{equation}\label{Eq9.6}
Z_\beta(e) := \begin{cases} 
\mathrm{Tr}_e(\exp(-\beta H_e)) & (\text{$\exp(-\beta H_e)$ is bounded}), \\
+\infty & (\text{$\exp(-\beta H_e)$ is unbounded}) 
\end{cases} 
\end{equation}
for every $e \in \mathfrak{E}$ ({\it n.b.}\ the canonical trace on a Hilbert space is defined on all the bounded positive operators but allows to take values in $[0,+\infty]$). We also write
\begin{equation}\label{Eq9.7}
Z_\beta(z) := \sum_{(e_i)_{i=1}^n;r(e_n)=z}Z_\beta(e_1)\cdots Z_\beta(e_n)
\end{equation}
for every $z \in \mathfrak{Z}_n$ and $n\geq1$, where $(e_i)_{i=1}^n$ denotes a finite edge-path starting at $1$ as before. The $Z_\beta(z)$, $z \in \mathfrak{Z}$, should be called the \emph{vertex partition function} at $z$. 

\begin{lemma}\label{L9.2} A positive self-adjoint operator 
\[
\rho_{\beta,z} := \bigoplus_{(e_i)_{i=1}^n;r(e_n)=z} \exp(-\beta H_{e_1})\,\bar{\otimes}\,\cdots\,\bar{\otimes}\,\exp(-\beta H_{e_n})
\]
is of trace-class if and only if $Z_\beta(z) <+\infty$. In the case, $\mathrm{Tr}_z(\rho_{\beta,z}) = Z_\beta(z)$, and moreover, if $\beta\neq0$, then 
\begin{align*}
\rho_{\beta,z}^{-it/\beta} 
&= 
\bigoplus_{(e_i)_{i=1}^n;r(e_n)=z} \exp(-\beta H_{e_1})^{-it/\beta}\otimes\cdots\otimes\exp(-\beta H_{e_n})^{-it/\beta} \\
&
= \bigoplus_{(e_i)_{i=1}^n;r(e_n)=z} w_{e_1}^t\otimes\cdots\otimes w_{e_n}^t = u_z^t
\end{align*}
holds. 
\end{lemma}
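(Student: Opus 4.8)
\textbf{Proof proposal for Lemma \ref{L9.2}.}
The plan is to reduce everything to the elementary behaviour of tensor products and direct sums of positive operators, using the definition \eqref{Eq9.6}--\eqref{Eq9.7} of the partition functions together with the structure \eqref{Eq9.4} of $\mathcal{H}_z$ and $u_z^t$. First I would treat a single summand: for a finite edge-path $(e_i)_{i=1}^n$ starting at $1$ with $r(e_n)=z$, the operator $\exp(-\beta H_{e_1})\,\bar\otimes\,\cdots\,\bar\otimes\,\exp(-\beta H_{e_n})$ on $\ell_2^{m(e_1)}\otimes\cdots\otimes\ell_2^{m(e_n)}$ is positive and self-adjoint, and by the standard fact that the (possibly infinite) trace of a tensor product of positive operators factorizes, its trace equals $\prod_{i=1}^n \mathrm{Tr}_{e_i}(\exp(-\beta H_{e_i})) = Z_\beta(e_1)\cdots Z_\beta(e_n)$, with the usual convention $0\cdot\infty$ handled by noting that $\mathrm{Tr}_{e_i}(\exp(-\beta H_{e_i}))>0$ always (it is a trace of a nonzero positive operator), so the product is finite if and only if every factor is finite, i.e.\ every $\exp(-\beta H_{e_i})$ is of trace class.

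Next I would pass to the direct sum over paths. Since $\rho_{\beta,z}$ is the orthogonal direct sum of these summands over all paths $(e_i)_{i=1}^n$ with $r(e_n)=z$, the trace $\mathrm{Tr}_z(\rho_{\beta,z})$ is by countable additivity of the trace the sum of the summand traces, which is exactly $\sum_{(e_i)_{i=1}^n; r(e_n)=z} Z_\beta(e_1)\cdots Z_\beta(e_n) = Z_\beta(z)$ by \eqref{Eq9.7}. Hence $\rho_{\beta,z}$ is of trace class if and only if this sum is finite, i.e.\ $Z_\beta(z)<+\infty$, and in that case the trace equals $Z_\beta(z)$; this also forces each individual $Z_\beta(e_i)$ appearing to be finite, so each $\exp(-\beta H_{e_i})$ is bounded and the functional calculus below is unambiguous.

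For the last identity, assume $\beta\neq0$ and $Z_\beta(z)<+\infty$. On each path-summand the operator $\exp(-\beta H_{e_1})\,\bar\otimes\,\cdots\,\bar\otimes\,\exp(-\beta H_{e_n})$ is a tensor product of commuting (acting on distinct factors) strictly positive self-adjoint operators, so its complex power $\big(\,\cdot\,\big)^{-it/\beta}$ is computed factorwise: it equals $\exp(-\beta H_{e_1})^{-it/\beta}\otimes\cdots\otimes\exp(-\beta H_{e_n})^{-it/\beta}$. Now $\exp(-\beta H_e)^{-it/\beta} = \exp\big((-it/\beta)\log\exp(-\beta H_e)\big) = \exp(itH_e) = w_e^t$ by the spectral theorem (the real scalar $-\beta$ cancels and the branch of the logarithm is the principal one on the positive spectrum). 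Substituting and taking the direct sum over all paths $(e_i)_{i=1}^n$ with $r(e_n)=z$ gives $\rho_{\beta,z}^{-it/\beta} = \bigoplus_{(e_i)_{i=1}^n;r(e_n)=z} w_{e_1}^t\otimes\cdots\otimes w_{e_n}^t$, which is precisely the expression \eqref{Eq9.4} for $u_z^t$.

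The only genuinely delicate point is the first step: justifying that the trace of an infinite tensor product of positive (unbounded-inverse-allowed, possibly non-trace-class) operators factorizes as a product in $[0,+\infty]$, and that trace-class-ness of the tensor product is equivalent to trace-class-ness of each factor. This is standard — it follows by writing the trace with respect to the orthonormal basis formed from products of eigenbasis vectors of the $H_{e_i}$ (each $\exp(-\beta H_{e_i})$ has discrete spectrum when it is compact, and when it is merely bounded the monotone convergence argument still yields the product formula with value $+\infty$) — so I would state it as a routine observation rather than belabour it, citing the elementary properties of the canonical trace on $B(\mathcal{H})$ valued in $[0,+\infty]$.
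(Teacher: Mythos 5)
Your argument is correct and is essentially the paper's own proof: trace factorization over product eigenbases together with countable additivity over the path-summands gives $\mathrm{Tr}_z(\rho_{\beta,z})=Z_\beta(z)$, and the identity $\rho_{\beta,z}^{-it/\beta}=u_z^t$ is obtained factorwise by functional calculus (the paper phrases this via the Schmidt decomposition of each $\exp(-\beta H_{e_i})$). The only point you should state explicitly, as the paper does, is the case where some $\exp(-\beta H_{e_i})$ is unbounded: there $Z_\beta(z)=+\infty$ by the convention \eqref{Eq9.6} and $\rho_{\beta,z}$ is itself unbounded, so both sides of the equivalence fail and one never has to interpret $\mathrm{Tr}_{e_i}$ of an unbounded operator.
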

\begin{proof} Assume that some $\exp(-\beta H_{e_i})$ appearing in the expression of $\rho_{\beta,z}$ is unbounded. Then, $Z_\beta(z) = +\infty$ and $\rho_{\beta,z}$ is unbounded. Hence we may and do assume that all $\exp(-\beta H_{e_i})$ are bounded. One can easily calculate $\mathrm{Tr}_z(\rho_{\beta,z}) = Z_\beta(z)$ under the assumption. 

The last formula can be easily seen by taking the Schmidt decomposition of each $\exp(-\beta H_{e_i})$ and then by using the function calculus of each $H_{e_i}$ for example. 
\end{proof} 

Similarly to the usual Fock space analysis, one can compute the \emph{vertex Hamiltonian} $H_z$ at $z$, i.e., the generator of $u_z^t$, as follows.  
\[
H_z = \bigoplus_{(e_i)_{i=1}^n;r(e_n)=z} \Big(H_{e_1}\,\bar{\otimes}\,1\,\bar{\otimes}\cdots\bar{\otimes}\,1 + \cdots + 1\,\bar{\otimes}\cdots\bar{\otimes}\,1\,\bar{\otimes}\,H_{e_n}\Big),   
\] 
and we observe that $\rho_{\beta,z} = \exp(-\beta H_z)$. However, the description of $\rho_{\beta,z}$ in the above lemma is probably easier to use than the above description of $H_z$ for our purpose.  

Remark that, if all the edge-paths starting at $1$ and ending at $z$ on the graph $(\mathfrak{Z},\mathfrak{E},r,s)$ form a finite set for every $z\in\mathfrak{Z}$ and if all the $m(e)$ are finite, then all $Z_\beta(z)$ must be finite. 

\medskip
Denote the resulting flow on $A = \varinjlim A_n$ by $\alpha^t$ as usual. In what follows, we will freely use the notations in section 7. By Lemma \ref{L9.2} we observe that $\mathfrak{Z}_n(\alpha^t,\beta)=\{z \in \mathfrak{Z}_n\,; Z_\beta(z) < +\infty\}$ holds for every $n$. 

Thanks to the last formula in Lemma \ref{L9.2}, the local density operator $\rho(\alpha_z^t,\beta)$ at $z\in\mathfrak{Z}(\alpha^t,\beta)$ in section 7 is computed as 
\begin{equation}\label{Eq9.8}
\rho(\alpha_z^t,\beta) = \frac{1}{Z_\beta(z)}\rho_{\beta,z},
\end{equation}
giving the explicit description of $\tau^{(\alpha_z^t,\beta)}$. Moreover, taking formula \eqref{Eq9.2} into consideration, one can easily see that the link $\kappa_{(\alpha^t,\beta)}(z,z')$ is given by
\begin{equation}\label{Eq9.9}
\begin{aligned}
&\kappa_{(\alpha^t,\beta)}(z,z') \\
&= 
\begin{cases} 
{\displaystyle \frac{1}{Z_\beta(z)}\sum_{\substack{(e)_{i=m+1}^n; \\ r(e_n)=z, s(e_{m+1})=z'}} Z_\beta(e_{m+1})\cdots Z_\beta(e_n)} & (Z_\beta(z) < +\infty), \\
0 & (Z_\beta(z) = +\infty)
\end{cases} 
\end{aligned}
\end{equation}
for any pair $(z,z') \in \mathfrak{Z}_n\times\mathfrak{Z}_m$ with $n>m\geq0$. 

This means that the structure of $K_\beta^\mathrm{ln}(\alpha^t)$ depends only on the edge partition functions $(Z_\beta(e))_{e\in\mathfrak{E}}$ rather than the edge Hamiltonians $(H_e)_{e\in\mathfrak{E}}$, while the description of each member in $K_\beta^\mathrm{ln}(\alpha^t)$ needs $(H_e)_{e\in\mathfrak{E}}$ itself. 

Remark that \emph{only positive rational numbers appear in the values of links when $\beta=0$, that is, the tracial state case}, because the edge partition functions $Z_\beta(e)$ are exactly dimensions $m(e)$.    

\medskip 
Here is a relation between the investigation so far and Ryosuke Sato's work \cite{Sato:JFA19}. 

\begin{remark}\label{R9.3} 
When the graph is the Gelfand-Tsetlin graph {\rm(}that is known to be multiplicity-free, i.e., $m\equiv 1${\rm)}, $\beta=-1$, and the $Z_\beta(e)$ are \cite[equation (3.3)]{Sato:JFA19}, the investigation so far can be applied to Sato's formulation of quantized characters of $\mathrm{U}_q(\infty)$ and his algebra is nothing but the associated $\mathfrak{A}$ {\rm(}see section 8{\rm)} of $A = \varinjlim A_n$ {\rm(}and the locally normal $W^*$-envelope $M$ of $A$ is the $W^*$-algebra in his formulation of $\mathrm{U}_q(\infty)${\rm;} see \cite{Sato:preprint19}{\rm)}, where $A_n = W^*(\mathrm{U}_q(n))$, the group $W^*$-algebra of the quantum unitary group $\mathrm{U}_q(n)$ of rank $n$, whose algebraic structure is known to be independent of the choice of $0 < q \leq 1$. The flow generated by the {\rm(}non-unitary{\rm)} antipode {\rm(}as pointed out in section 1{\rm)} is nothing but $\alpha^t$ in the context here. It is known that the flow certainly remembers the $q$-deformation. We remark that each vertex partition function $Z_\beta(z)$ plays a role of the $q$-dimension in his setting. 

As Sato explained, the extreme points $\mathrm{ex}(H_1^+(\kappa_{(\alpha^t,\beta)})) \cong \mathrm{ex}(K_\beta^\mathrm{ln}(\alpha^t))$ were already determined explicitly by Gorin \cite{Gorin:AdvMath12} in the case of $\mathrm{U}_q(\infty)$. 

Note that any $\pi_\chi(A)'' = \pi_\chi(\mathfrak{A})''$ with $\chi\in \mathrm{ex}(K_\beta^\mathrm{ln}(\alpha^t))$ is a hyperfinite factor in the setting here, and hence its algebraic structure is completely determined by the well-known algebraic invariants {\rm(}Murray--von Neumann--Connes type, $S$ and  $T$-sets, and flows of weights{\rm)} thanks to the classification of hyperfinite factors due to Murray--von Neumann, Connes, Krieger and Haagerup. Sato \cite{Sato:ETDS2x} determined the type in the case of $\mathrm{U}_q(\infty)$ completely. His consequence is that only the type $I_1$ {\rm(}i.e., the trivial algebra case{\rm)}, the unique type $I_\infty$ and the unique hyperfinite type III$_{q^2}$ factors arise. 

The $(q,t)$-central probability measures can also be treated in the same way, see \cite[Appendix B]{Sato:JFA19}, but the computation of $\mathrm{ex}(H_1^+(\kappa_{(\alpha^t,\beta)}))$ has only been made for special $(q,t)$ by Cuenca \cite{Cuenca:SIGMA18}.    
\end{remark}

For the reader's convenience, we explain how to apply the general theory of $(\alpha^t,\beta)$-spherical representations to inductive limits of compact quantum groups as a complement to the previous remark.   

\begin{remark}\label{R9.4} Let $G_n$ be an inductive system of compact quantum groups {\rm(}e.g.\ $G_n = \mathrm{U}_q(n)${\rm)}. For the discussion of unitary representations, it is convenient to use the group $W^*$-algebras $A_n:=W^*(G_n)$ associated with $G_n$ like \cite{Yamagami:CMP95} whose formulation fits the discussion here most. They are nothing but the `dual Hopf $W^*$-algebras' of  Woronowicz's coordinate algebras $C(G_n)$. Each $A_n$ is equipped with a unitary antipode $\tau_n$ {\rm(}a $*$-anti-automorphism{\rm)} and a flow $\theta_{n,t}$, called the `deformation flow', {\rm(}which is continuous in the $u$-topology{\rm)}, where we use the notation and parametrization in \cite{Yamagami:CMP95} that are slightly different from \cite{MasudaNakagami:PRIMS94} and \cite{Sato:JFA19,Sato:preprint19}. For each $n$ there is a canonical unital $*$-subalgebra, say $\mathcal{A}_n=\mathbb{C}[G_n]$, of $A_n$, whose elements are all $\theta_{n,t}$-analytic. Moreover, $\sigma_n := \tau_n\circ\theta_{n,-i} = \theta_{n,-i}\circ\tau_n$ defines the `standard' antipode on $\mathcal{A}_n$ {\rm(}the `inverse operation' on $G_n${\rm)}. We can take the inductive limits $\tau, \theta_t$ of $\tau_n, \theta_{n,t}$, respectively, which are defined on the whole $A = \varinjlim A_n$. See \cite{Sato:JFA19}.

Let $\Pi : A\otimes_\mathrm{max}A \curvearrowright \mathcal{H}_\Pi$ be a $*$-representation. The mapping $a\otimes b^\mathrm{op} \in A^{(2)} \mapsto \Pi(a\otimes\tau(b)) \in B(\mathcal{H}_\Pi)$ defines a $*$-representation $\widetilde{\Pi} : A^{(2)} \curvearrowright \mathcal{H}_\Pi$. For each $a \in \mathcal{A}_n \subset A$, we have 
\[
\Pi(1\otimes \sigma_n^{-1}(a)) = \Pi(1\otimes (\tau_n\circ\theta_{n,i}(a))) = \Pi(1\otimes (\tau\circ\theta_{i}(a)))= \widetilde{\Pi}(1\otimes(\theta_{i}(a))^\mathrm{op}),  
\]
and hence 
\[
\Pi(\sigma_n(a)\otimes1)\xi=\Pi(1\otimes a)\xi \quad \Longleftrightarrow \quad \widetilde{\Pi}(a\otimes1)\xi=\widetilde{\Pi}(1\otimes(\theta_{i}(a))^\mathrm{op})\xi
\]
for a vector $\xi \in \mathcal{H}_\Pi$. In this way, we can translate the relation $\pi(g^{-1},e)\xi = \pi(e,g)\xi$ for a unitary representation $\pi : \Gamma\times\Gamma \curvearrowright \mathcal{H}_\pi$ explained in \S3 into the context of inductive limits of compact quantum groups as a $(\theta_t,-2)$-spherical vector in $\widetilde{\Pi} : A^{(2)} \curvearrowright \mathcal{H}_\Pi$ {\rm(}{\it n.b.}, $\beta$ should be $-1$ for the parametrization of deformation flow in \cite{MasudaNakagami:PRIMS94} and \cite{Sato:JFA19,Sato:preprint19}{\rm)}. Moreover, the discussion here explains how to apply the general theory we have developed so far to inductive limits of compact quantum groups. This will further be discussed in \cite{Ueda:Preprint} in detail.
\end{remark} 

\subsection{A realization of projective chains}

The content of this subsection overlaps with a consideration of Kishimoto \cite[section 4]{Kishimoto:RepMathPhys00}.

Let a link $\mathfrak{Z}_{n-1} \overset{\kappa}{\dashleftarrow} \mathfrak{Z}_{n}$ be given over a graph $(\mathfrak{Z},\mathfrak{E},r,s)$ as in section 7, that is, the graph as in subsection 9.1 and $\kappa$ is a function from $\mathfrak{E}$ to $[0,1]$ with $\sum_{e \in \mathfrak{E};r(e)=z}\kappa(e) = 1$ for every $z \in \mathfrak{Z}$. Then, the link induces a map from $\mathscr{M}(\mathfrak{Z}_n)$ to $\mathscr{M}(\mathfrak{Z}_{n-1})$ sending each $\nu \in \mathscr{M}(\mathfrak{Z}_{n})$ to $\nu\cdot\kappa \in \mathscr{M}(\mathfrak{Z}_{n-1})$ defined by
\begin{equation}\label{Eq9.10}
(\nu\cdot\kappa)(z) = \sum_{e\in\mathfrak{E}_{n};s(e)=z}\nu(r(e))\kappa(e)
\end{equation}
for every $z \in \mathfrak{Z}_{n-1}$. In what follows, \emph{we assume that $\kappa(e) > 0$ for all $e \in \mathfrak{E}$} for simplicity. We have obtained a projective chain $\mathscr{M}(\mathfrak{Z}_{n-1}) \overset{\kappa}{\longleftarrow} \mathscr{M}(\mathfrak{Z}_n)$. 

A multiplicity function $m : \mathfrak{E} \to \mathbb{N}\cup\{\infty\}$ and a \emph{non-zero} real number $\beta$ are also given. The purpose here is to give a construction of flow $\alpha^t$ in such a way that $\kappa = \kappa_{(\alpha^t,\beta)}$ and investigate the degree of freedom of the procedure $\kappa \rightsquigarrow \alpha^t$. 

\medskip
For each $e \in \mathfrak{E}$ we can choose a non-singular positive trace class operator $\rho_e$ on $\ell_2^{m(e)}$ in such a way that $\mathrm{Tr}_e(\rho_e) = \kappa(e)$. Here $\mathrm{Tr}_e$ stands for the canonical trace on $\ell_2^{m(e)}$. The graded graph $(\mathfrak{Z},\mathfrak{E},r,s)$ with multiplicity function $m$ gives $A = \varinjlim A_n$, and the $(w_e^t:=\rho_e^{-it/\beta})_{e \in \mathfrak{E}}$ gives a flow $\alpha^t$ on $A$ as explained in sections 9.1--9.2. 

Remark that $H_e := -\frac{1}{\beta}\log\rho_e$ defines a (possibly unbounded) self-adjoint operator, and $\exp(it H_e) = \rho_e^{-it/\beta}$ as well as $\exp(-\beta H_e) = \rho_e$ hold by function calculus. Thus, $Z_\beta(e) = \mathrm{Tr}_e(\rho_e) = \kappa(e)$ and 
\begin{align*}
Z_\beta(z) 
&= 
\sum_{(e_i)_{i=1}^n; r(e_n)=z} \kappa(e_n)\cdots\kappa(e_1) = 
\sum_{(e_i)_{i=2}^n; r(e_n)=z} \kappa(e_n)\cdots\kappa(e_2)\Big(\sum_{e\in\mathfrak{E}_1; r(e)=s(e_2)}\kappa(e)\Big) %\\
%&= 
%\sum_{(e_i)_{i=2}^n; r(e_n)=z} \kappa(e_n)\cdots\kappa(e_2) = 
%\sum_{(e_i)_{i=3}^n; r(e_n)=z} \kappa(e_n)\cdots\kappa(e_3)\Big(\sum_{e\in\mathfrak{E}_2; r(e)=s(e_3)}\kappa(e)\Big)\\
%&
= \cdots = 1.
\end{align*}
Furthermore, by formula \eqref{Eq9.9} we obtain that 
\begin{equation}\label{Eq9.11} 
\kappa_{(\alpha^t,\beta)}(z,z') = 
\begin{cases}
\kappa(e) & (\text{$(z,z')=(r(e),s(e))$ for some $e \in \mathfrak{E}_n$}), \\
0 & (\text{otherwise})
\end{cases} 
\end{equation}
for every $(z,z')\in\mathfrak{Z}_n\times\mathfrak{Z}_{n-1}$. 

Consequently, we have obtained the following: 

\begin{proposition}\label{P9.5} Let $(\mathfrak{Z},\mathfrak{E},r,s)$ be a graph with properties (i)--(iv) in subsection 9.1, and $\mathfrak{Z}_{n-1} \overset{\kappa}{\dashleftarrow} \mathfrak{Z}_{n}$ be a link over the graph such that $\kappa(e) > 0$ for all $e \in \mathfrak{E}$. Let $m : \mathfrak{E} \to \mathbb{N}\cup\{\infty\}$ be an arbitrary multiplicity function and $\beta$ be an arbitrary non-zero real number. Then, the inductive system $A_{n-1} \hookrightarrow A_n$ of atomic $W^*$-algebras with separable predual associated with $(\mathfrak{Z},\mathfrak{E},r,s,m)$ admits an inductive flow $\alpha^t$ on the inductive limit $C^*$-algebra $A = \varinjlim A_n$ such that 
\begin{itemize}
\item[(i)] $\alpha^t$ satisfies the equivalent continuity conditions in Lemma \ref{L7.1}, and \item[(ii)] the link $\mathfrak{Z}_{n-1} \overset{\kappa_{(\alpha^t,\beta)}}{\dashleftarrow} \mathfrak{Z}_{n}$ associated with the $C^*$-flow $(A,\alpha^t)$ is exactly $\mathfrak{Z}_{n-1} \overset{\kappa}{\dashleftarrow} \mathfrak{Z}_{n}$ by encoding $ (r(e),s(e)) \leftrightarrow e$. 
\end{itemize} 
In particular, item {\rm(ii)} implies that $K_\beta^\mathrm{ln}(\alpha^t)$ is isomorphic to $H_1^+(\kappa) = \varprojlim (\mathscr{M}(\mathfrak{Z}_{n-1}) \overset{\kappa}{\longleftarrow}\mathscr{M}(\mathfrak{Z}_{n}))$ as completely metric, convex spaces.
\end{proposition}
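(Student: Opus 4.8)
The plan is to assemble Proposition~\ref{P9.5} from the machinery already built, so the argument is essentially a verification that all hypotheses of the earlier results are met by the flow $\alpha^t$ constructed from the data $(w_e^t = \rho_e^{-it/\beta})_{e\in\mathfrak{E}}$. First I would recall from subsection~9.1--9.2 that the graph $(\mathfrak{Z},\mathfrak{E},r,s,m)$ reconstructs an inductive system $A_{n-1}\hookrightarrow A_n$ of atomic $W^*$-algebras with separable predual starting at $\mathbb{C}$, and that $(w_e^t)_{e\in\mathfrak{E}}$ yields via equations~\eqref{Eq9.3} a flow $\alpha^t=\varinjlim\mathrm{Ad}\,u_n^t$ on $A=\varinjlim A_n$ with $\alpha^t(A_n)=A_n$ for every $n$, hence an inductive flow. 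Item~(i) is immediate: each $\alpha_n^t=\mathrm{Ad}\,u_n^t$ is implemented by a strongly continuous unitary group, so $t\mapsto\alpha_n^t$ is $u$-continuous on $A_n$, which is condition~(ii) of Lemma~\ref{L7.1}; therefore $\alpha^t$ satisfies the equivalent continuity conditions there, and in particular acts trivially on each $\mathcal{Z}(A_n)$, putting us squarely in the setup of sections~7--8.

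Next I would establish item~(ii), which is the computational heart. Using $\rho_e$ nonsingular positive trace class with $\mathrm{Tr}_e(\rho_e)=\kappa(e)$, set $H_e:=-\tfrac1\beta\log\rho_e$; then $\exp(itH_e)=\rho_e^{-it/\beta}=w_e^t$ and $\exp(-\beta H_e)=\rho_e$ by function calculus, so the edge partition function of \eqref{Eq9.6} is $Z_\beta(e)=\mathrm{Tr}_e(\exp(-\beta H_e))=\kappa(e)<+\infty$. Then I would compute $Z_\beta(z)$ from \eqref{Eq9.7} by collapsing the sum over edge-paths one edge at a time, using the normalization $\sum_{e\in\mathfrak{E};r(e)=s}\kappa(e)=1$ (which holds for every vertex $s$, including those at level $0$ since $\mathfrak{Z}_0=\{1\}$), to obtain $Z_\beta(z)=1$ for all $z\in\mathfrak{Z}$; in particular every $z$ lies in $\mathfrak{Z}_n(\alpha^t,\beta)$ (no $\mathfrak{Z}_n^\times$ vertices appear). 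Feeding $Z_\beta(z)=1$ and $Z_\beta(e)=\kappa(e)$ into formula~\eqref{Eq9.9} for a consecutive pair $(z,z')\in\mathfrak{Z}_n\times\mathfrak{Z}_{n-1}$, the inner sum ranges over the single edge $e=zz'$ if $(z,z')=(r(e),s(e))$ for some $e\in\mathfrak{E}_n$ and is empty otherwise, giving $\kappa_{(\alpha^t,\beta)}(z,z')=\kappa(e)$ respectively $0$, i.e.\ equation~\eqref{Eq9.11}. Under the encoding $(r(e),s(e))\leftrightarrow e$ this is precisely the given link $\mathfrak{Z}_{n-1}\overset{\kappa}{\dashleftarrow}\mathfrak{Z}_n$, and the corresponding Markov maps on $\mathscr{M}(\mathfrak{Z}_n)$ agree by \eqref{Eq7.5} versus \eqref{Eq9.10}.

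Finally, for the ``in particular'' clause I would invoke Proposition~\ref{P7.10}, which gives an affine homeomorphism $K_\beta^\mathrm{ln}(\alpha^t)\cong H_1^+(\kappa_{(\alpha^t,\beta)})$, together with the identification $H_1^+(\kappa_{(\alpha^t,\beta)})=\varprojlim(\mathscr{M}(\mathfrak{Z}_{n-1})\overset{\kappa_{(\alpha^t,\beta)}}{\longleftarrow}\mathscr{M}(\mathfrak{Z}_n))$ stated there; since $\kappa_{(\alpha^t,\beta)}=\kappa$ by item~(ii), this is $H_1^+(\kappa)$. The complete-metric structure is supplied by Corollary~\ref{C8.3} (for $K_\beta^\mathrm{ln}(\alpha^t)$, hence via the homeomorphism for $H_1^+(\kappa)$), and convexity is evident from the defining linear relations. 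The only genuine subtlety — and the step I expect to require a little care — is the path-collapsing identity $Z_\beta(z)=1$: one must check that the summation over edge-paths starting at $1$ telescopes correctly level by level, using the hypothesis $\kappa(e)>0$ only to ensure the link has no degenerate transitions (so that supports behave well and one never divides by zero when comparing with $\kappa_{(\alpha^t,\beta)}$), while the normalization at \emph{every} vertex is what actually makes the telescoping work; once this is in hand, everything else is a direct substitution into \eqref{Eq9.9} and a citation of the section~7 results.
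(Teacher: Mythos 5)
Your proposal is correct and follows essentially the same route as the paper: choose a non-singular positive trace-class $\rho_e$ with $\mathrm{Tr}_e(\rho_e)=\kappa(e)$, set $w_e^t=\rho_e^{-it/\beta}$ so that $Z_\beta(e)=\kappa(e)$, telescope the edge-path sums to get $Z_\beta(z)=1$, read off \eqref{Eq9.11} from \eqref{Eq9.9}, and obtain the final clause from Proposition \ref{P7.10} together with Corollary \ref{C8.3}. The only small inaccuracy is your closing remark on where $\kappa(e)>0$ enters: it is needed precisely so that a non-singular positive trace-class operator of trace $\kappa(e)$ (and hence the unitary group $\rho_e^{-it/\beta}$) exists at all, not merely to keep supports non-degenerate or to avoid dividing by zero, but this does not affect the argument.
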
 

We would like to point out that specializing the multiplicity function $m : \mathfrak{E} \to \mathbb{N}\cup\{\infty\}$ suitably enables us to relax the requirement $\kappa(e) > 0$. 

\medskip
The discussion in subsection 9.1 enables us to observe that a link $\mathfrak{Z}_{n-1} \overset{\kappa}{\dashleftarrow} \mathfrak{Z}_{n}$ over a graph $(\mathfrak{Z},\mathfrak{E},r,s)$ with $\kappa(e)>0$ for all $e \in \mathfrak{E}$ uniquely determines the inductive system $(A_n,\alpha_n^t)$ under the hypothesis of multiplicity-free, i.e., $m\equiv1$. Namely, the links bijectively correspond to the inductive systems of atomic $W^*$-algebras with separable predual and continuous flows. \emph{Hence, some part of the harmonic analysis on infinite-dimensional groups from the viewpoint of integrable probability theory can naturally be understood in a wider framework based on operator algebras than that based on inductive limits of compact groups.} This is a main consequence of this paper. 

\section*{Acknowledgements} The present work is indebted to many communications with Shigeru Yamagami, Ryosuke Sato and Yoshiki Aibara. In particular, casual conversations with Yamagami and Aibara's seminar talks altogether led me to revisiting Woronowicz's old works, and also Sato's quastion to me about a draft of the paper led me to add Corollary \ref{C8.6}. The present work would never be completed without those benefits from them. So, I would like to express my  sincere gratitude to them. 

Although the main ideas of this work came to me during the last few years, I started to write down those ideas during the semi-lockdown of Nagoya University in 2020 due to the pandemic of Covid19. I would like to express my gratitude to my wife, Mariko, who has always kept and maintained an enjoyable environment in my family, which enabled me not to give up writing this paper (and to give lectures on-line) even under the pandemic. 

}

\end{document}